\let\oldmarginpar\marginpar
\renewcommand\marginpar[1]{\oldmarginpar[\raggedleft\footnotesize #1]%
{\raggedright\footnotesize #1}}
\renewcommand{\setminus}{{\smallsetminus}}
\newcommand{\bdy}{{\partial}}
\newcommand{\tw}{{\rm tw}}
\def\co{\colon\thinspace}
\theoremstyle{plain}
\newtheorem{theorem}{Theorem}[section]
\newtheorem{lemma}[theorem]{Lemma}
\newtheorem{prop}[theorem]{Proposition}
\newtheorem*{namedtheorem}{\theoremname}
\newcommand{\theoremname}{testing}
\theoremstyle{definition}
\newtheorem{define}[theorem]{Definition}
\begin{document}
\title{Essential twisted surfaces in alternating link complements}
\author{Marc Lackenby}
\address{Mathematical Institute, University of Oxford, Oxford, OX2 6GG, UK}

\author{Jessica S. Purcell}
\address{School of Mathematical Sciences, Monash University, Clayton, VIC 3800, Australia}



\begin{abstract}
Checkerboard surfaces in alternating link complements are used
frequently to determine information about the link.  However, when
many crossings are added to a single twist region of a link diagram,
the geometry of the link complement stabilizes (approaches a
geometric limit), but a corresponding checkerboard surface increases
in complexity with crossing number.  In this paper, we generalize
checkerboard surfaces to certain immersed surfaces, called twisted
checkerboard surfaces, whose geometry better reflects that of the
alternating link in many cases.  We describe the surfaces,
show that they are essential in the complement of an alternating link,
and discuss their properties, including an analysis of homotopy
classes of arcs on the surfaces in the link complement.
\end{abstract}

\maketitle

\section{Introduction}
Essential surfaces in link complements have played an important role in geometric topology and knot theory.  The checkerboard surfaces in alternating links are particularly important.  They have been used to analyze volumes \cite{lackenby:volume-alt}, to obtain singular structures \cite{alr:alternating}, and to give a polyhedral decomposition \cite{menasco:polyhedra}, among other things.  Menasco and Thistlethwaite proved they are incompressible and boundary incompressible \cite{menasco-thist:classif-alt}.

The genus of a checkerboard surface is determined by the crossing number of the diagram.  When more and more crossings are added to a single twist region of a diagram of a hyperbolic link, the genus of a corresponding checkerboard surface increases without bound, while the link complement approaches a geometric limit.  For this reason, checkerboard surfaces are not always ideally suited for analyzing geometric properties of a hyperbolic link complement.

In this paper we generalize checkerboard surfaces in alternating link complements to another class of surfaces, which we call twisted checkerboard surfaces.  These surfaces are immersed in the link complement rather than embedded, but they capture the geometry of the link complement in useful ways when the link has many crossings in some twist regions.  These surfaces feature prominently in our recent proof that alternating knots have cusp volume bounded below by a linear function of the twist number of the knot \cite{lp:acv}.

The main result of this paper is to show that these surfaces are essential.  We also analyze geometric and homotopic properties of these surfaces.  For example, we determine when two distinct arcs on the surface will be homotopic in the link complement.

To define the surfaces and state our results precisely, we recall some definitions.

\subsection{Definitions}


A diagram of a link $K$ is said to be \emph{prime} if, for each simple closed curve $\gamma$ that lies on the plane of projection and meets the diagram transversely exactly twice in the interiors of edges, the curve $\gamma$ bounds on one side a portion of the diagram with no crossings.

Menasco showed that any nonsplit, prime, alternating diagram specifies either a hyperbolic link or a $(2,q)$--torus link \cite{menasco}.  We will be concerned only with hyperbolic alternating links in this paper.  

For an alternating link, we define a \emph{twist region} to be a string of bigons arranged end to end in the diagram graph, which is maximal in that there are no additional bigons on either end.  A single crossing adjacent to no bigons is also defined to be a twist region.

When we consider diagrams of alternating links, we often want them to have as few twist regions as possible, in the following sense.

\begin{define}\label{def:twist-reduced}
A diagram is \emph{twist reduced} if any simple closed curve that meets the diagram graph in exactly two vertices  and that, at each crossing, runs between opposite regions, encloses a string of bigons of the diagram on one side.  (See, for example, \cite[Figure~3]{lackenby:volume-alt}.)
\end{define}

Suppose a simple closed curve $\gamma$ in the projection plane meets the diagram in exactly two vertices and that, at each crossing, runs between opposite regions.  By sliding $\gamma$ to contain both vertices on one side, and then applying a flype to the other side, we can either remove both crossings, or move one of the crossings to be in the same twist region as the other.  Thus every alternating link has a twist reduced alternating diagram.

\begin{define}\label{def:twist-number}
The \emph{twist number} of an alternating link is the number of twist regions in a twist reduced diagram.  We denote the twist number of the link $K$ by $\tw(K)$.
\end{define}

The twist number of an alternating knot is an invariant of the knot. For example, this follows by the invariance of characteristic squares under flyping \cite{lackenby:volume-alt} along with the solution of the Tait flyping conjecture \cite{menasco-thist:classif-alt}, or by relating twist number to the Jones polynomial as in \cite{dasbach-lin:volumish}.

\subsection{Twisted surfaces}

In this subsection, we will define the twisted checkerboard surfaces. First, fix a prime, twist reduced, alternating diagram of the hyperbolic alternating link $K$.  Throughout, we will abuse notation and refer to the link and its diagram by $K$.

For each twist region of $K$ with at least $N_\tw$ crossings, where $N_\tw$ will be determined later, we will \emph{augment} the diagram to obtain a new link diagram, as in Figure \ref{fig:links-ex}.  That is, to the diagram, add a \emph{crossing circle}, which is a simple closed curve encircling the twist region and bounding a disk in $S^3$. We will always ensure that this new crossing circle introduces exactly four new crossings. So, the crossing circle is divided into four arcs. We ensure that two of these arcs are parallel in the diagram, and are as close as possible to the twist region. More precisely, we ensure that there is a square-shaped region of the diagram that includes two of the arcs of the crossing circle, and that there is a triangular region of the diagram that includes a crossing of the twist region and an arc of the associated crossing circle.

Let $L$ be the link consisting of $K$ along with all such crossing circles.  By work of Adams \cite{adams:aug}, the complement of $L$ is hyperbolic.  For $C$ a crossing circle, note that $S^3\setminus C$ is a solid torus.  Hence $S^3 \setminus L$ is homeomorphic to $S^3\setminus \tilde{L}$ where $\tilde{L}$ is obtained from $L$ by removing any even number of crossings from the twist region encircled by $C$, and the homeomorphism is given by twisting the solid torus $S^3\setminus C$.  

\begin{define}\label{def:L0-L2}
Define $L_2$ to be the diagram in which all except one or two crossings have been removed from each twist region of $L$ encircled by a crossing circle, depending on whether the number of crossings in that twist region in $L$ is odd or even, respectively.  When two crossings  remain in a twist region, we place the crossing circle so that two of its arcs  run through the bigon in the twist region between these two crossings.  

Define $L_0$ to be the diagram in which all except one or zero crossings have been removed. When one crossing remains in a twist region, either in $L_0$ or $L_2$, we still require that the crossing and the crossing circle that encircles it form a triangle in the diagram.

Finally, we let $K_i$ be the (diagram of the) link given by removing the crossing circles from the diagram of $L_i$, where $i=0$ or $2$.
An example of $K$, $L$, $L_2$ and $K_2$ is shown in Figure \ref{fig:links-ex}. \end{define}

\begin{figure}
  \includegraphics{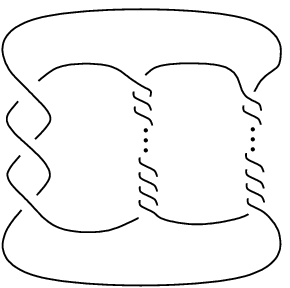}
  \hspace{.1in}
  \includegraphics{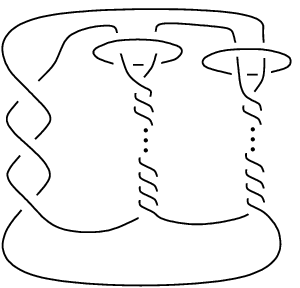}
  \hspace{.1in}
  \includegraphics{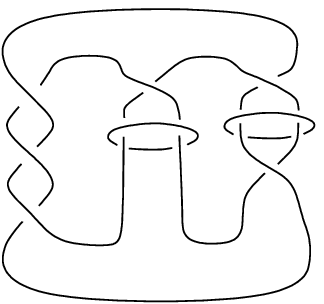}
  \hspace{.1in}
  \includegraphics{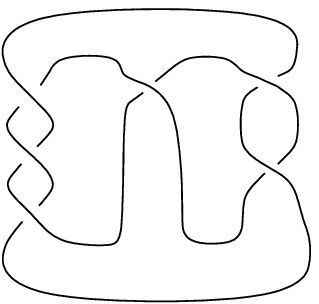}
  \caption{An example of link diagrams, left to right, $K$, $L$, $L_2$ and $K_2$.}
  \label{fig:links-ex}
\end{figure}

\begin{define}\label{def:crossing-assoc}
For any crossing encircled by a crossing circle of $L_i$, for $i=0$ or $2$, we say the crossing is \emph{associated} with the crossing circle.
\end{define}

We build twisted checkerboard surfaces as follows.  Note the diagram of $K_i$ is alternating.  Start with its checkerboard surfaces, colored blue and red.  Now, when we put the crossing circles of $L_i$ back into the diagram, a small regular neighborhood of each crossing circle intersects a checkerboard surface (either blue or red) in two meridian disks, shown 
on the left of Figure~\ref{fig:solidtorus}.

\begin{figure}[h]
  \input{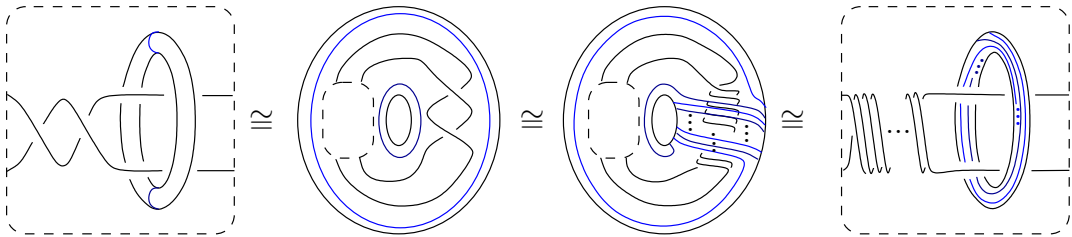}
  \caption{Effect of twisting on the intersection of the checkerboard surfaces of $K_i$ with neighborhood of a crossing circle.}
  \label{fig:solidtorus}
\end{figure}

Define the \emph{red and blue surfaces}, embedded in the exterior of $L_i$, to be the punctured red and blue checkerboard surfaces of $K_i$, respectively, punctured by the crossing circles of $L_i$.  Denote these by $R_i$ and $B_i$, where $i=0$ or $2$ depending on how many crossings are left in twist regions with an even number of crossings.

Consider what happens to $R_i$ and $B_i$ under the homeomorphism $(S^3\setminus L_i) \to (S^3 \setminus L)$.  In a neighborhood of the disk bounded by each crossing circle, the surface is twisted.  The two meridian curves go to $1/n$ curves, where $2|n|$ is the number of crossings removed to go from the twist region of $L$ to that of $L_i $. (The sign on $n$ must be chosen appropriately.) Let $R_\tw$ denote the minimal number of crossings removed from a twist region. Thus, $R_\tw$ is an even integer, and $2|n| \geq R_\tw$. Note also that crossings are only removed from a twist region if it has at least $N_\tw$ crossings, and hence
\begin{equation}\label{eqn:Rtw}
R_\tw \geq 2 \lfloor N_\tw /2 \rfloor \mbox{ if } i = 0 \mbox{ and }
R_\tw \geq 2 \lceil N_\tw /2 \rceil - 2 \mbox{ if } i = 2.
\end{equation}

To obtain $S^3 \setminus K$ from $S^3 \setminus L$, we do a meridian Dehn filling on each crossing circle.  To construct the twisted checkerboard surfaces, which we continue to color red and blue, do the following.  Each cross--sectional meridional disk of a crossing circle in $L$ intersects the punctured blue (or red) surface in $2|n|$ points on the boundary of the disk.  Connect opposite points on that disk by attaching an interval that runs through the center of the disk.  In other words, attach an $I$--bundle over $S^1$ that runs through the center of the Dehn--filling solid torus $|n|$ times.  See Figure \ref{fig:cross-sec}. If $n$ is odd, we attach an annulus.  If $n$ is even, each interval has both endpoints on the same curve, and so we attach two M\"obius bands.  In either case, the result is an immersed surface in $S^3\setminus K$ which we call the \emph{twisted checkerboard surface}.  We continue to color it red or blue, and we denote it by $S_{R,i}$ and $S_{B,i}$, respectively, where $i=0$ or $2$ depending on how many crossings we leave in twist regions containing an even number of crossings.

\begin{figure}
  \includegraphics{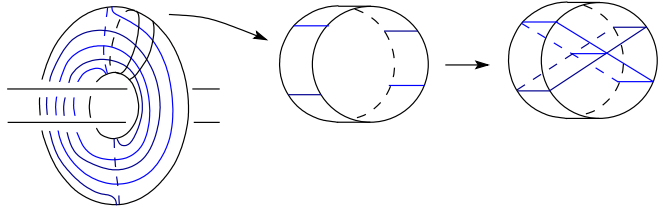}
  \caption{A cross section of the solid torus added to $S^3 \setminus L$, and how the surface extends into it.}
  \label{fig:cross-sec}
\end{figure}

The following is one of the main results of this paper.  

\begin{theorem}\label{thm:Bincompressible}
Let $f\co S_{B,i} \to S^3 \setminus K$ be the immersion of $S_{B,i}$ into $S^3\setminus K$.  Then this immersion is $\pi_1$--injective, provided $N_\tw \geq 54$ if $i=0$, and $N_\tw \geq 91$ if $i=2$.
\end{theorem}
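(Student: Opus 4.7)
The plan is to argue by contradiction: assume that $f_*\colon \pi_1(S_{B,i})\to \pi_1(S^3\setminus K)$ has nontrivial kernel and build from this a combinatorial configuration inside the augmented link complement $S^3\setminus L$ that cannot exist, given the large lower bound on $R_\tw$ coming from \eqref{eqn:Rtw}.

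\textbf{Step 1: Reduction to a singular compression.} A standard tower/Dehn's lemma argument for immersions produces, from the failure of $\pi_1$-injectivity, an essential simple loop $\alpha$ on $S_{B,i}$ together with a map of a disk $D\to S^3\setminus K$ with $\partial D = f\circ \alpha$, and (after taking a minimal counterexample) a disk whose intersection pattern with $f(S_{B,i})$ and with the filling tori of the crossing circles is as simple as possible. I would put $D$ in general position with respect to each filling solid torus $V_C$ and with respect to the cross-sectional twist disks of $V_C$ shown in Figure~\ref{fig:cross-sec}.

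\textbf{Step 2: Reduce to the augmented complement.} The twisted surface $S_{B,i}$ meets each $V_C$ as an $I$-bundle over a circle, wrapping $|n|$ times, where $2|n|\geq R_\tw$. I would cut $D$ along the meridian twist disks of each $V_C$ and reassemble the pieces inside $S^3\setminus L$, where the immersed $S_{B,i}$ pulls back to the embedded, punctured checkerboard surface $B_i$. Equivalently, one may work in a suitable cyclic cover of each $V_C$ in which the $1/n$ curves become honest meridians. The cost of this translation is the appearance of new arcs of $\partial D$ lying on the boundaries of the crossing-disk punctures of $B_i$, whose number is controlled by the number of times $D$ crosses each twist disk.

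\textbf{Step 3: Essentiality of $B_i\subset S^3\setminus L$.} The diagram $K_i$ is prime, twist-reduced and alternating, so by Menasco--Thistlethwaite its blue checkerboard surface is essential in $S^3\setminus K_i$. Puncturing transversely by meridians of the crossing circles preserves essentiality, so $B_i$ is essential in $S^3\setminus L$. In particular, $B_i$ admits no compressing or boundary-compressing disk in $S^3\setminus L$. Any configuration produced by Step 2 which is a genuine compression of $B_i$ is therefore ruled out immediately.

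\textbf{Step 4: The counting.} The remaining, and genuinely delicate, case is when the compression of $D$ in $S^3\setminus K$ arises not from a compression of $B_i$ but from arcs of $\partial D$ that spiral many times through the $V_C$'s before closing up. I would analyze these arcs combinatorially on $B_i$, tracking how many times $\partial D$ meets each crossing-circle puncture and each twist region. The bound \eqref{eqn:Rtw} implies that for $N_\tw$ sufficiently large the arcs of $\partial D$ on $B_i$ must contain a short sub-arc that $\partial$-compresses $B_i$ in $S^3\setminus L$, contradicting Step~3. The two thresholds $N_\tw\geq 54$ (for $i=0$) and $N_\tw\geq 91$ (for $i=2$) are the explicit numbers that make this counting go through; the weaker bound for $i=0$ reflects the stronger inequality for $R_\tw$ in \eqref{eqn:Rtw} in that case.

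\textbf{Main obstacle.} The Menasco--Thistlethwaite input in Step~3 is routine and the tower argument in Step~1 is standard. The real difficulty is the bookkeeping in Step~4: an immersed compressing disk can spiral through each $V_C$ many times, and the surface itself spirals there, so after unwinding, the arcs of $D\cap S_{B,i}$ translate into a potentially complicated pattern of arcs on the embedded $B_i$. Organising this so that a single quantitative inequality on the number of crossings per twist region suffices—uniformly over all twist regions—is where the substantive work lies, and it is precisely what forces the concrete constants $54$ and $91$.
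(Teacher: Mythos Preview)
Your proposal does not amount to a proof: Steps 1--3 are preliminaries, and Step 4, which you yourself identify as ``where the substantive work lies,'' is only a promise that some counting will work. You never say what is being counted, what inequality you derive, or why the thresholds $54$ and $91$ are the right numbers rather than, say, $10$ or $10^6$. Moreover, two of the preparatory steps are flawed. In Step 3 you assert that $K_i$ is prime and twist reduced, so that Menasco--Thistlethwaite applies; but for $i=0$ the diagram $K_0$ need not be prime (see Lemma~\ref{lemma:K2-0-prime}), and neither $K_0$ nor $K_2$ is twist reduced in general. In Step 2, ``cut $D$ along the meridian twist disks of each $V_C$ and reassemble the pieces inside $S^3\setminus L$'' is not a well-defined operation: $D$ can meet the core of $V_C$ many times, and there is no canonical way to glue the resulting pieces into a single disk in the augmented complement.

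The paper's approach is quite different from what you sketch. It does \emph{not} pass to the augmented complement or cut $D$. Instead it keeps the singular disk $\phi\colon D\to S^3\setminus K$ and studies the planar graph $\Gamma_B=\phi^{-1}(f(S_{B,i}))$ directly on $D$. The key structural fact is that each interior vertex of $\Gamma_B$ (a point where $D$ hits a crossing circle) has valence at least $R_\tw$; an Euler-characteristic argument (Lemmas~\ref{lemma:planar-graph}--\ref{lemma:adj-bigons}) then forces $\Gamma_B$ to contain more than $(R_\tw/18)-1$ adjacent non-trivial bigons. The heart of the proof is a long diagrammatic analysis of how such bigons sit over the alternating diagram $L_{B,i}$ (Sections~\ref{sec:surfaces}--\ref{sec:trisquares}), culminating in Propositions~\ref{prop:L0-no-3adjbigons} and \ref{prop:L2-no-5adjbigons}: for $i=0$ there cannot be $3$ adjacent non-trivial bigons, and for $i=2$ there cannot be $5$. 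The constants $54$ and $91$ then drop out of $(R_\tw/18)-1\geq 2$ and $(R_\tw/18)-1\geq 4$ via equation~\eqref{eqn:Rtw}. None of this is visible in your outline.
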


Note that by switching the roles of the blue and red surfaces, Theorem \ref{thm:Bincompressible} also implies that the red surface $S_{R,i}$ is $\pi_1$--injective.  

There is also a version of this theorem which establishes that, in a suitable sense, the surfaces $S_{B,i}$ and $S_{R,i}$ are boundary--incompressible. Since this term is used in several distinct ways in the literature, we introduce an alternative. We say that a map $f\co S \to M$ between a surface $S$ and a 3--manifold $M$ satisfying $f(\partial S) \subset \partial M$ is \emph{boundary--$\pi_1$--injective} if, for any arc $\alpha \co I \to S$ with endpoints in $\partial S$, the existence of a homotopy (rel endpoints) of $f \circ \alpha$ into $\partial M$ implies the existence of a homotopy (rel endpoints) of $\alpha$ into $\partial S$.

\begin{theorem}\label{thm:bdryincompressible}
The surface $S_{B,i}$ is boundary--$\pi_1$--injective in $S^3 \setminus {\rm int}(N(K))$, provided $N_\tw \geq 54$ if $i=0$ and $N_\tw \geq 91$ if $i=2$. 
\end{theorem}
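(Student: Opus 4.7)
The plan is to parallel the proof of Theorem \ref{thm:Bincompressible}, replacing the compressing disk by a boundary-compressing half-disk. Suppose for contradiction that there is an arc $\alpha \co I \to S_{B,i}$ with endpoints on $\partial S_{B,i}$ such that $f\circ\alpha$ is homotopic rel endpoints into $\partial N(K)$, while $\alpha$ itself is not homotopic rel endpoints into $\partial S_{B,i}$. The homotopy provides a map $g \co D^2 \to M := S^3 \setminus {\rm int}(N(K))$ whose boundary is divided by two marked points into two arcs: a \emph{vertical} arc mapped via $f\circ\alpha$ to $f(S_{B,i})$ and a \emph{horizontal} arc mapped into $\partial N(K)$. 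This is the boundary-$\pi_1$-injectivity analogue of the compressing disk dealt with in the proof of Theorem \ref{thm:Bincompressible}.

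I would first put $g$ in general position with respect to $S_{B,i}$ and with respect to the polyhedral decomposition of $S^3 \setminus K$ used to prove Theorem \ref{thm:Bincompressible}. The preimage $g^{-1}(f(S_{B,i}))$ then consists of the vertical arc together with some simple closed curves and arcs properly embedded in the interior of $D^2$. For each innermost circle, the subdisk it bounds gives a null-homotopy in $M$ of a loop in $S_{B,i}$; by Theorem \ref{thm:Bincompressible} this loop is already null-homotopic in $S_{B,i}$, so a homotopy of $g$ removes the circle. Outermost interior arcs are handled similarly by choosing $\alpha$ to minimise an appropriate complexity: any such outermost arc would produce a shorter boundary-compressing half-disk for $S_{B,i}$, contradicting minimality. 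After these reductions, $g$ meets $f(S_{B,i})$ only along the vertical arc of $\partial D^2$, and $g$ is a half-disk lying in the closure of one side of $f(S_{B,i})$.

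From here I would apply the combinatorial area / Gauss--Bonnet machinery of Theorem \ref{thm:Bincompressible} in this half-disk setting. Decomposing $g$ via the polyhedral structure, each face receives a combinatorial area, and the sum of these areas over $g$ is forced to equal the value that Gauss--Bonnet assigns to a disk. The individual face contributions are bounded in terms of $N_\tw$, and for $N_\tw \geq 54$ (respectively $91$) when $i=0$ (respectively $i=2$), the total combinatorial area should come out strictly less than what a disk requires, giving the contradiction.

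The main technical obstacle, and the source of these threshold constants, is the new face type created by the horizontal arc $\partial_\partial g \subset \partial N(K)$: one must analyse carefully how $S_{B,i}$ intersects $\partial N(K)$ inside each twist region --- both the original crossings and the augmented ones --- and verify that the combinatorial curvature contributed near the cusp is small enough that the overall area budget still fails. Everything else I expect to be a direct transcription of the argument already carried out for Theorem \ref{thm:Bincompressible}, with the two numerical thresholds arising only from the cusp analysis described above.
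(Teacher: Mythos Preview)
Your proposal has a genuine gap at the very first step: the preimage $g^{-1}(f(S_{B,i}))$ is \emph{not} a 1-manifold. The surface $S_{B,i}$ is only immersed, and its self-intersection locus is exactly the union of crossing circles. When $g$ meets a crossing circle transversely at a point $p$, locally $f(S_{B,i})$ looks like the product of an interval with a star having $2n_j$ prongs, so $g^{-1}(f(S_{B,i}))$ acquires a vertex of valence $2n_j$ at $p$ (and valence $n_j+1$ at points on the vertical arc of $\partial D$). This is precisely the content of Lemma~\ref{lemma:graph-bdyincompr}: the preimage is an embedded \emph{graph} $\Gamma_B$, not a collection of circles and arcs. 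Consequently there is no innermost-circle or outermost-arc argument available, and you cannot homotope $g$ off $f(S_{B,i})$; the phrase ``a half-disk lying in the closure of one side of $f(S_{B,i})$'' has no meaning here since an immersed surface does not separate.

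The paper's argument keeps these high-valence vertices and exploits them. An Euler-characteristic count on the planar graph $\Gamma_B$ (Lemmas~\ref{lemma:planar-graph2} and~\ref{lemma:adj-bigons-2}) forces more than $(R_\tw/18)-1$ adjacent non-trivial bigons or that many adjacent triangles with one edge on $\phi^{-1}(\partial N(K))$. The diagrammatic analysis of Sections~\ref{sec:surfaces}--\ref{sec:boundary} then shows such configurations cannot occur (Propositions~\ref{prop:L0-no-3adjbigons}, \ref{prop:L2-no-5adjbigons}, and Lemma~\ref{lemma:no-3BBKtriangles}). The thresholds $54$ and $91$ come from making $(R_\tw/18)-1$ exceed $2$ and $4$ respectively, not from any cusp analysis; and there is no polyhedral ``combinatorial area'' argument in the proof of Theorem~\ref{thm:Bincompressible} for you to parallel.
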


Here and throughout, $N(K)$ denotes an embedded regular neighborhood of $K$ in
$S^3$. 

\subsection{Acknowledgements}
Purcell is supported in part by NSF grant DMS--1252687, and by a Sloan Research Fellowship. The authors thank the referee for helpful comments on an earlier version of the paper, and for spotting an error in the original proof of Lemma \ref{lemma:adj-bigons}.

\section{Graphs from surfaces}\label{sec:graphs}
In the proofs of Theorems \ref{thm:Bincompressible} and \ref{thm:bdryincompressible}, the arguments for $i=0$ and $i=2$ are slightly different, but use much of the same machinery. In particular, both involve an analysis of a graph in a disk, obtained by the following lemmas.

\begin{lemma}\label{lemma:graph-bluevalence}
If $f\co S_{B,i} \to S^3\setminus K$ is not $\pi_1$--injective, then there is a map of a disk $\phi\co D \to S^3\setminus K$ with $\phi|_{\bdy D} = f \circ \ell$ for some essential loop $\ell$ in $S_{B,i}$, such that $\Gamma_B=\phi^{-1}(f(S_{B,i}))$ is a collection of embedded closed curves and an embedded graph in $D$.  The vertices of the graph are those points in  $D$ that map to a crossing circle. Each vertex in the interior of $D$ has valence a non-zero multiple of $2n_j$, where $2n_j$ is the number of crossings removed from the twist region at the relevant crossing circle. Each vertex on $\bdy D$ has  valence $n_j+1$.
\end{lemma}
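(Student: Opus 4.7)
The plan is to combine the failure of $\pi_1$-injectivity with a general-position argument, and then read off the combinatorial structure of $\Gamma_B$ from the local structure of $f(S_{B,i})$ near the crossing circles.

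By the assumed failure of $\pi_1$-injectivity, I would choose an essential loop $\ell \co S^1 \to S_{B,i}$ such that $f\circ\ell$ is null-homotopic in $S^3\setminus K$; a null-homotopy provides a map $\phi \co D \to S^3\setminus K$ with $\phi|_{\partial D} = f\circ\ell$. After a homotopy of $\ell$ in $S_{B,i}$ and of $\phi$ rel boundary, we may assume that $\ell$ meets the core circles of the $I$-bundles attached to form $S_{B,i}$ transversely in finitely many points, and that $\phi$ is in general position with respect to the stratified image $f(S_{B,i})$: transverse to its smooth $2$-dimensional part and transverse to its self-intersection locus (which is the union of the crossing circles of $L$).

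The key local fact is that near each crossing circle $C_j$, the immersed surface $f(S_{B,i})$ is a ``book'' of $n_j$ embedded pages meeting along the binding $C_j$. Indeed, by construction the attached $I$-bundle over $S^1$ runs through the Dehn-filling solid torus $n_j$ times, so its image slices each meridional disk of $C_j$ in $n_j$ diameters that glue along the core into $n_j$ pages; away from the crossing circles $f(S_{B,i})$ is embedded. It follows that $\Gamma_B$ is the union of $\partial D$ (which must lie in $\Gamma_B$ since $\phi|_{\partial D}$ maps into $f(S_{B,i})$) and smooth arcs and closed curves in the interior of $D$, meeting at the finitely many vertices corresponding to preimages of crossing circles; transversality yields that $\Gamma_B$ is an embedded $1$-complex. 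At an interior vertex $v$ with $\phi(v) \in C_j$, transversality of $\phi$ to each of the $n_j$ pages produces $n_j$ smooth arcs through $v$, contributing valence $2n_j$; if $\phi$ hits $C_j$ at $v$ with local multiplicity $m$, one obtains $2mn_j$ edges, a non-zero multiple of $2n_j$.

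For a boundary vertex $p \in \partial D$ with $\phi(p) \in C_j$, identify $\partial D = S^1$ and consider the point $\ell(p) \in S_{B,i}$, which lies on the core of the $I$-bundle attached at $C_j$. A small disk neighborhood of $\ell(p)$ in $S_{B,i}$ is mapped diffeomorphically by $f$ onto a piece of a single page of the book at $C_j$ (this page is the image of the family of $I$-fibers through the particular core point $\ell(p)$). Hence $f\circ\ell$ crosses $C_j$ while remaining on this one ``ambient'' page near $p$, so the preimage of that page in a small disk neighborhood of $p$ in $D$ is exactly the two boundary arcs of $\partial D$ meeting at $p$, contributing two edges. Each of the remaining $n_j - 1$ pages meets $\phi(D)$ transversely in a smooth arc that emanates from $p$ into the interior of $D$, contributing one additional edge per page. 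Thus the valence at $p$ equals $2 + (n_j - 1) = n_j + 1$. The main subtlety I expect is the boundary general-position argument: because $\phi|_{\partial D}$ is prescribed by $f\circ\ell$ and cannot be freely perturbed, one must be careful to arrange transversality along $\partial D$ so that no spurious preimage arcs of the ambient page run parallel to $\partial D$ at each boundary vertex, which would otherwise inflate the valence beyond $n_j+1$.
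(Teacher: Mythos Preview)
Your overall strategy is the same as the paper's, and your local computations at interior and boundary vertices are correct. The gap is precisely the point you flag at the end but do not resolve: arranging $\phi$ near $\partial D$ so that the ``ambient'' sheet contributes exactly the two boundary arcs and nothing else. This is not merely a matter of generic transversality. The surface $S_{B,i}$ can be non-orientable (M\"obius bands are attached when $n_j$ is even), so an arbitrary essential loop $\ell$ need not admit a consistent transverse direction along $f\circ\ell$. If $\ell$ is orientation-reversing, any attempt to define $\phi$ on a collar of $\partial D$ by pushing off $f(S_{B,i})$ will be forced to switch sides somewhere, and the collar will pick up an extra intersection arc with the ambient sheet; this is exactly the ``spurious preimage arc parallel to $\partial D$'' you worry about, and it genuinely occurs.

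The paper's fix is simple but essential: replace $\gamma$ by $\gamma^2$. Since $\gamma^2$ always lifts to the orientable double cover of $S_{B,i}$, one obtains a consistent transverse orientation along it, and the collar can be chosen to stay on one side of the ambient sheet globally. With that in place, your argument goes through: on each arc of $\partial D$ away from $f^{-1}(N(C))$ the collar meets $f(S_{B,i})$ only along $\partial D$, and at each boundary vertex the half-disk sees the $n_j-1$ non-ambient sheets once each plus the two boundary edges, giving valence $n_j+1$. So the missing ingredient is not a new idea but a specific device---squaring the loop to force orientability along it---without which the boundary valence claim cannot be justified.
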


\begin{proof}
If $f\co S_{B,i}\to S^3\setminus K$ is not $\pi_1$--injective, then there is some essential closed curve curve $\gamma$ in $S_{B,i}$ such that $[f(\gamma)]=0$ in $\pi_1(S^3\setminus K)$. Consider the curve $\gamma^2$.  Note we still have $[f(\gamma^2)]=0$ in $\pi_1(S^3\setminus K)$.  However, $\gamma^2$ lifts to the orientable double cover of $S_{B,i}$.  We will use $\gamma^2$ for this reason.

The fact that $[f(\gamma^2)]=0$ in the fundamental group gives a map $\phi\co D \to S^3\setminus K$ of a disk $D$ into $S^3\setminus K$ with $\phi|_{\bdy D} = f \circ \gamma^2$. However, we need to ensure that $\phi$ has the correct behaviour near $\partial D$, and so we construct $\phi$ in two stages, first near $\partial D$, and then over the remainder of the interior of $D$.

Let $C$ denote the crossing circles in $S^3\setminus K$, and let $N(C)$ denote a small regular neighborhood of $C$ in $S^3\setminus K$. By construction, $f^{-1}(C)$ is a collection of simple closed curves in $S_{B,i}$, with $f^{-1}(N(C))$ a collection of annuli and M\"obius bands.  We may ensure that the loop $\gamma^2$ in $S_{B,i}$ is transverse to these curves, and so it intersects the annuli and M\"obius bands transversely.  Each component of $\gamma^2 \cap f^{-1}(N(C))$ is therefore an arc which is mapped into $D^2 \times \{p\}$, for some $p \in S^1$, in a component of $N(C)$ and which runs from one prong of the relevant star to the opposite one.  We call this a \emph{sheet} of the star.

Now, $\gamma^2$ lifts to the orientable double cover of $S_{B,i}$. We may pick a consistent transverse orientation on this double cover, and using this, we may homotope $f \circ \gamma^2$ in this transverse direction. This homotopy is a map  of an annulus into $S^3 \setminus K$, which we take to be the restriction of $\phi$ to a collar neighbourhood of $D$. By carefully choosing this homotopy, we can ensure that the image of this homotopy has well--behaved intersection with $N(C)$, as follows.

Consider any arc component of $\gamma^2 \cap f^{-1}(N(C))$, mapping to $D^2 \times \{ p \}$ in a component of $N(C)$. Now note $\phi(\bdy D)$ runs through $\{ 0 \} \times \{ p \}$ in a prong of a star.  In a ball around $\{ 0 \} \times \{ p \}$, the image of $f(S_{B,i})$ is homeomorphic to the product of an open interval and that star.  The homotopy of $\phi(N(\bdy D))$  pushes it so that it remains on one side of this sheet of the star. As for an arc component of $\gamma^2 \setminus f^{-1}(N(C))$, homotoping a neighborhood of that arc in the direction of the transverse orientation ensures the neighborhood only intersects $f(S_{B,i})$ on that arc. So, in all cases, near $\partial D$, $\phi^{-1}(f(S_{B,i}))$ looks like a graph with each vertex on $\bdy D$ having valence $n_j + 1$. 

We have thus defined $\phi$ in a collar neighbourhood of $\partial D$. Since we are assuming that the $[f(\gamma^2)]$ is trivial in $\pi_1(S^3 \setminus K)$, we may extend this to a map $\phi \co D \rightarrow S^3 \setminus K$. Using a small homotopy supported away from a neighbourhood of $\partial D$, make $\phi$ transverse to all crossing circles, and transverse to $f(S_{B,i})$.  Consider $\Gamma_B =\phi^{-1}(f(S_{B,i}))$ on $D$. Because $S_{B,i}$ is embedded in $S^3\setminus K$ except at crossing circles, $\Gamma_B$ consists of embedded closed curves, and embedded arcs (edges) with endpoints corresponding to points of intersection of crossing circles (vertices). Each vertex in the interior of $D$ corresponds to the intersection of $\phi(D)$ with a crossing circle in $S^3\setminus K$, and the intersection is transverse.  Hence it meets the boundary of a neighborhood of that crossing circle in a meridian.  The blue surface $f(S_{B,i})$ meets the boundary of the neighborhood of that crossing circle in two curves of slope $\pm 1/n_j$, where $2n_j$ is the number of crossings in $K$ removed from the twist region of that crossing circle.  Hence the valence of the vertex in the interior of $D$ is $2 n_j$.
\end{proof}

Note that the vertices in the interior of $D$ have valence precisely $2n_j$ in the above construction. However, later, it will be convenient to permit the existence of vertices that have valence a non-zero multiple of $2n_j$.

A similar result holds when $f$ is not boundary--$\pi_1$--injective.

\begin{lemma}\label{lemma:graph-bdyincompr}
Suppose $f\co S_{B,i} \to S^3\setminus {\rm int}(N(K))$ is not boundary--$\pi_1$--injective. Then there is a map of a disk $\phi\co D \to S^3\setminus {\rm int}(N(K))$ with $\partial D$ expressed as a concatenation of two arcs, one mapped by $\phi$ into $\partial N(K)$, and the other factoring through an essential arc in $S_{B,i}$. Moreover, $\Gamma_B =\phi^{-1}(f(S_{B,i}))$ is a collection of embedded closed curves and an embedded graph on $D$ whose edges have endpoints either at vertices where $\phi(D)$ meets a crossing circle, or on $\phi^{-1}(\partial N(K))$ on $\bdy D$.  Each vertex in the interior of $D$ has valence a non-zero multiple of $2n_j$, where $2n_j$ is the number of crossings removed from the twist region at the relevant crossing circle. Each vertex in the interior of the arc in $\bdy D$ that maps to $S_{B,i}$ has valence $n_j+1$. Each vertex on the arc in $\bdy D$ that maps to $\partial N(K)$ has valence one.
\end{lemma}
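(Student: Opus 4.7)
The plan is to mimic the proof of Lemma \ref{lemma:graph-bluevalence}, replacing the essential closed curve by an essential arc and keeping track of a second kind of boundary behaviour coming from $\partial N(K)$. If $f$ is not boundary--$\pi_1$--injective, there exists an essential arc $\alpha \co I \to S_{B,i}$ with endpoints in $\partial S_{B,i}$ such that $f\circ\alpha$ is homotopic (rel endpoints) into $\partial N(K)$, while $\alpha$ is not homotopic (rel endpoints) into $\partial S_{B,i}$. This homotopy provides a map $\phi\co D\to S^3\setminus \mathrm{int}(N(K))$, where $D$ is a square whose boundary is the concatenation of the arc $\alpha$ (running along one side) and three sides mapping into $\partial N(K)$; the latter three sides I will treat as a single arc in $\partial D$ mapping to $\partial N(K)$, so that $\partial D$ is the union of two arcs as in the statement.

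Next I would refine $\phi$ in a collar of $\partial D$ so that the graph $\Gamma_B=\phi^{-1}(f(S_{B,i}))$ has the asserted local behaviour. Along the arc mapping through $\alpha$ the argument is identical to the previous lemma, except that here no passage to $\gamma^2$ is needed: since $I$ is simply connected, the normal bundle of $\alpha$ in $S_{B,i}$ pulls back to a trivial line bundle on $I$, so we may choose a consistent transverse orientation along $\alpha$ and use it to produce a homotopy of $f\circ\alpha$ that, near each component of $\alpha\cap f^{-1}(N(C))$, pushes $\phi$ to one chosen side of the relevant sheet of the star. This yields vertices of valence $n_j+1$ along this portion of $\partial D$. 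Along the complementary arc mapping into $\partial N(K)$, I would modify $\phi$ to be transverse within $\partial N(K)$ to the curves $f(S_{B,i})\cap\partial N(K)$ (which are simple closed curves on the boundary torus, coming from the boundary of the blue checkerboard surface and from the crossing arcs of the twist regions); a generic transverse intersection contributes a single arc of $\Gamma_B$ into the interior of $D$, producing a vertex of valence one on $\partial D$.

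Having fixed $\phi$ in a collar of $\partial D$, I would extend it over the rest of $D$ using the homotopy whose existence is guaranteed by the definition of boundary--$\pi_1$--injectivity. A small interior homotopy, supported off the collar, makes $\phi$ transverse to every crossing circle and to $f(S_{B,i})$. Since $f$ is an embedding away from the crossing circles, the preimage $\Gamma_B=\phi^{-1}(f(S_{B,i}))$ consists of embedded closed curves and an embedded graph whose only vertices lie at the places where $\phi(D)$ meets a crossing circle (in the interior of $D$) or at the transverse intersections set up above (on $\partial D$). The interior vertices are analyzed exactly as in Lemma \ref{lemma:graph-bluevalence}: near such a vertex $\phi(D)$ meets $\partial N(C)$ in a meridian, while $f(S_{B,i})$ meets $\partial N(C)$ in two curves of slope $\pm 1/n_j$, so the algebraic intersection is $2n_j$, giving valence a nonzero multiple of $2n_j$ as claimed.

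The only delicate point, and the step I expect to demand the most care, is constructing the collar homotopy along the arc in $\partial D$ mapping to $\partial N(K)$: one must simultaneously push $\phi$ off $f(S_{B,i})$ on each side of each transverse intersection point while keeping the image in $\partial N(K)$, and reconcile this with the transverse-orientation pushoff performed at the two endpoints of $\alpha$, where the two arcs of $\partial D$ meet. Choosing the pushoff in $\partial N(K)$ to agree with the chosen transverse orientation at these endpoints handles the compatibility, after which the rest of the analysis proceeds as in Lemma \ref{lemma:graph-bluevalence}.
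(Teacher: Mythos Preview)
Your argument is correct and follows essentially the same approach as the paper's proof: start from the homotopy giving the disk, build the collar in two pieces (along the arc factoring through $S_{B,i}$ via a transverse pushoff, and along the arc in $\partial N(K)$ via the transverse orientation of $\partial N(K)$), then extend and make transverse in the interior. Your observation that no doubling of the arc is needed because the normal line bundle over $I$ is automatically trivial is exactly the paper's point that the arc lifts to the orientable double cover, and your attention to compatibility at the corners is a detail the paper leaves implicit.
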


\begin{proof}
If $f\co S_{B,i} \to S^3\setminus {\rm int}(N(K))$ is not boundary--$\pi_1$--injective, then there is a nontrivial arc on $S_{B,i}$ which is homotopic (rel endpoints) into $\partial N(K)$ in $S^3\setminus {\rm int}(N(K))$.  This gives us a map of a disk $\phi\co D \to S^3\setminus {\rm int}(N(K))$ with $\bdy D$ consisting of the two arcs required by the lemma. Let $\alpha$ be the sub-arc in $\partial D$ that maps via the essential arc in $S_{B,i}$.  Again, we need to control $\phi$ near $\partial D$, and so we construct $\phi$ in two stages. The arc $\alpha$ lifts to the orientable double cover of $S_{B,i}$, which is transversely orientable, and by pushing $\alpha$ in this transverse direction, we obtain the map $\phi$ in a neighbourhood of $\alpha$. Similarly, using the fact that $\partial N(K)$ is transversely orientable, we can extend the definition of $\phi$ over a collar neighbourhood of $\partial D$. Now extend $\phi$ over all of $D$, and then make it transverse to to all crossing circles, and transverse to $f(S_{B,i})$. Let $\Gamma_B=\phi^{-1}(f(S_{B,i}))$ on $D$.  Because $S_{B,i}$ is embedded in $S^3\setminus K$ except at crossing circles, $\Gamma_B$ consists of embedded closed curves, embedded arcs (edges) with endpoints corresponding to points of intersection of crossing circles (vertices), or with endpoints on $\partial N(K)$.

As in the proof of Lemma~\ref{lemma:graph-bluevalence}, a vertex in the interior of $D$ corresponds to a transverse intersection of $\phi(D)$ with a crossing circle in $S^3\setminus K$.  Hence the vertex has valence $2n_j$. Near a vertex in the interior of the arc in $\bdy D$ that maps to $S_{B,i}$, the graph looks like half a meridian disk for a crossing circle, and so has valence $n_j+1$. At a vertex on the arc in $\bdy D$ that maps to $\partial N(K)$, the arc in $\bdy D$ is transverse to $S_{B,i}$, and so this vertex of $\Gamma_B$ has valence one.
\end{proof}

The following well--known result will be central to our proof.

\begin{lemma}\label{lemma:graph-in-2-sphere}
Let $\Gamma$ be a connected graph in the 2-sphere that has no bigons and no monogons, and that is neither an isolated vertex nor a single edge joining two vertices. Then $\Gamma$ contains at least three vertices with valence less than $6$.
\end{lemma}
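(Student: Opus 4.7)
The plan is to run a standard Euler characteristic argument for the graph $\Gamma \subset S^2$. Since $\Gamma$ is connected, the complementary regions are open disks, so we may apply $V-E+F=2$. The absence of monogons and bigons means every face has at least three edge-sides on its boundary (counted with multiplicity). Summing over faces, $\sum_f (\text{boundary length of } f) = 2E$, so $2E \geq 3F$. Substituting $F \leq 2E/3$ into Euler's formula yields $V \geq E/3 + 2$, i.e.\ $E \leq 3V - 6$, and hence
\[ \sum_{v} \deg(v) \;=\; 2E \;\leq\; 6V - 12. \]

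Next I would argue by contradiction: suppose at most two vertices of $\Gamma$ have valence less than $6$. Since $\Gamma$ is connected and not an isolated vertex, every vertex has valence at least $1$. Letting $k \in \{0,1,2\}$ denote the number of low-valence vertices, the remaining $V-k$ vertices contribute at least $6(V-k)$ to the valence sum, giving
\[ \sum_v \deg(v) \;\geq\; 6(V-k) + k \;\geq\; 6V - 10. \]
Combining with the previous bound yields $6V - 10 \leq 6V - 12$, which is absurd. Hence at least three vertices of $\Gamma$ have valence strictly less than $6$.

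The main thing to be careful about is ensuring the combinatorial setup is valid for small graphs, so that the counting above is not vacuous. Specifically, I would check that the hypotheses force $V \geq 3$: a connected graph on one vertex is either the excluded isolated vertex or carries a loop whose interior disk (on the side containing none of the rest of $\Gamma$) is a monogon face; a connected graph on two vertices is either the excluded single edge, or contains a multi-edge or loop, again producing a bigon or monogon face. With these edge cases removed by hypothesis, the inequality $2E \geq 3F$ and the counting step above go through uniformly, and the Euler-characteristic calculation is otherwise automatic.
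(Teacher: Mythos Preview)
Your proof is correct and takes essentially the same Euler-characteristic approach as the paper. The only cosmetic difference is that the paper first adds edges to make every face triangular (obtaining $2E = 3F$ exactly and then writing $2 = \sum_v (1 - d(v)/6)$), whereas you work directly with the inequality $2E \geq 3F$; both routes yield $\sum_v d(v) \leq 6V - 12$ and finish identically.
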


\begin{proof} We may add edges to the graph until every complementary region is triangular. Let $V$, $E$ and $F$ denote the number of vertices, edges and faces. Then $2E = 3F$. Hence,
$$2 = V - E + F = V - E/3 = \sum_v (1 - (d(v)/6)),$$
where the sum runs over each vertex $v$, and $d(v)$ denotes the valence of a vertex. Since $d(v) > 0$ for each $v$, we deduce that $1 - (d(v)/6) < 1$, and hence there must be at least three vertices with valence less than $6$.
\end{proof}

\begin{lemma}\label{lemma:planar-graph}
Let $\Gamma$ be a connected graph in the disk $D$ that includes $\partial D$, that contains no bigons and no monogons. Then either there is some vertex in the interior of $D$ with valence at most $5$, or there are at least three vertices on the boundary with valence at most $3$.
\end{lemma}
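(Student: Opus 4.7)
The plan is to deduce this from Lemma~\ref{lemma:graph-in-2-sphere} by a doubling argument. Let $\Sigma$ be the $2$-sphere obtained by gluing two copies $D$ and $D'$ of the disk along their common boundary $\partial D$, and let $\Gamma^d \subset \Sigma$ be the graph consisting of $\Gamma$ in $D$ together with its mirror image in $D'$. Since $\partial D \subset \Gamma$, each complementary face of $\Gamma^d$ lies entirely in $D$ or entirely in $D'$, and is a face of $\Gamma$ (or its mirror) in the original disk. Hence $\Gamma^d$ is connected, has no monogon or bigon faces, and, via the assumption that $\Gamma$ itself has no monogon or bigon faces, cannot reduce to an isolated vertex or a single edge joining two vertices. (For instance, if the boundary had only one vertex then $\partial D$ would be a loop bounding a monogon, and if $\Gamma$ were just $\partial D$ with two vertices then each side of $\partial D$ would be a bigon face.)

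The key computation is the valence of a vertex of $\Gamma^d$. An interior vertex of $\Gamma$ of valence $d$ contributes a single vertex of $\Gamma^d$ of the same valence $d$. A boundary vertex of $\Gamma$ of valence $d$ has exactly $2$ of its edges lying along $\partial D$ and the remaining $d-2$ edges pointing into the interior of $D$; doubling identifies the two copies of this vertex and yields a vertex of $\Gamma^d$ of valence $2 + 2(d-2) = 2d - 2$.

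Next I apply Lemma~\ref{lemma:graph-in-2-sphere} to $\Gamma^d$, obtaining at least three vertices of $\Gamma^d$ of valence strictly less than $6$. Translating back via the valence formula, each such vertex is either an interior vertex of $\Gamma$ with valence at most $5$, or a boundary vertex of $\Gamma$ with $2d - 2 < 6$, i.e.\ $d \leq 3$. If any of the three is of the first type, the first alternative of the conclusion holds; otherwise all three are boundary vertices with valence at most $3$, giving the second alternative.

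The only real obstacle is verifying that the hypotheses of Lemma~\ref{lemma:graph-in-2-sphere} transfer cleanly to $\Gamma^d$, and in particular ruling out the small degenerate cases where $\Gamma^d$ could be an isolated vertex or a single edge between two vertices. Each such degeneracy forces $\Gamma$ to contain a monogon or bigon face in $D$, contradicting the hypothesis, so this step is straightforward.
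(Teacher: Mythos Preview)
Your proof is correct and follows essentially the same approach as the paper: double the disk along $\partial D$ to obtain a $2$-sphere, apply Lemma~\ref{lemma:graph-in-2-sphere} to the doubled graph, and translate back using the valence relation $2d-2$ for boundary vertices. The paper's proof is terser (it simply asserts that a boundary vertex of valence less than $6$ in the double has valence at most $3$ in $\Gamma$), but the argument is the same.
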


\begin{proof}
Double the disk $D$ to form a 2--sphere, and double $\Gamma$ to form a graph $\Gamma^+$ in this 2--sphere. Now $\Gamma^+$ contains no bigons or monogons, since this was true of $\Gamma$. By Lemma \ref{lemma:graph-in-2-sphere}, $\Gamma^+$ must have at least three vertices with valence less than $6$. If one of these vertices is disjoint from the copy of $\partial D$ in the 2--sphere, the lemma is proved. On the other hand, if all three vertices lie on $\partial D$, then their valence in $\Gamma$ is at most $3$.
\end{proof}

\begin{lemma}\label{lemma:planar-graph2}
Let $\Gamma$ be a connected graph on a disk that includes the boundary of the disk and contains no monogons. Suppose each interior vertex of $\Gamma$ has valence at least $R_\tw$ and each boundary vertex has valence at least $(R_\tw/2)+1$, with at most two exceptions. Then $\Gamma$ must have more than $(R_\tw/6)-1$ adjacent bigons. 
\end{lemma}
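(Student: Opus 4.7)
I would argue by contradiction, assuming that every maximal chain of adjacent bigons in $\Gamma$ contains at most $(R_\tw/6)-1$ bigons. Since a chain of $k$ adjacent bigons between two vertices is bounded by $k+1$ parallel edges, this assumption bounds the number of parallel edges between any two vertices of $\Gamma$ by $R_\tw/6$.

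I then construct a new graph $\Gamma'$ in the disk $D$ by replacing each maximal family of parallel edges with a single edge, always retaining an edge on $\partial D$ as the representative whenever one is present in the family. Then $\Gamma'$ is connected and contains $\partial D$, has no bigons by construction, and has no monogons, since $\Gamma$ has none and collapsing a family of parallel edges joining distinct vertices creates no new loops.

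Next I compare valences. At each vertex $v$, the valence $d(v)$ in $\Gamma$ is partitioned into parallel edge families of size at most $R_\tw/6$, so the valence of $v$ in $\Gamma'$, which equals the number of families at $v$, is at least $d(v)/(R_\tw/6) = 6d(v)/R_\tw$. For an interior vertex with $d(v) \geq R_\tw$, this gives valence at least $6$ in $\Gamma'$. For a non-exceptional boundary vertex with $d(v) \geq (R_\tw/2)+1$, it gives at least $3 + 6/R_\tw > 3$, so at least $4$, in $\Gamma'$.

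Applying Lemma \ref{lemma:planar-graph} to $\Gamma'$ then forces either some interior vertex of $\Gamma'$ to have valence at most $5$, contradicting the lower bound of $6$, or at least three boundary vertices of $\Gamma'$ to have valence at most $3$, which contradicts the hypothesis since at most two boundary vertices are exceptional. Either way we reach a contradiction, completing the proof. The most delicate point I expect is verifying that the collapse really yields a valid planar graph containing $\partial D$ with no monogons or bigons, which requires extra care when a parallel edge family meets $\partial D$ or when the chain structure interacts with the exceptional boundary vertices.
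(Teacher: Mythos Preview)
Your proof is correct and takes essentially the same approach as the paper: both collapse each maximal bigon family to a single edge and then invoke Lemma~\ref{lemma:planar-graph} to produce a low-valence vertex, contradicting the assumed valence bounds. One minor imprecision: the contradiction hypothesis bounds the number of edges in each maximal bigon family by $R_\tw/6$, not the total number of parallel edges between a given pair of vertices, but you use the bound correctly in the valence estimate, so the argument is sound.
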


\begin{proof}
Suppose that the lemma is not true. Then every collection of adjacent bigons has at most $(R_\tw/6)$ edges. 
Collapse each family of adjacent bigons to a single edge, forming a graph $\overline{\Gamma}$. By Lemma~\ref{lemma:planar-graph},
$\overline{\Gamma}$ contains a vertex in the interior of the disk with valence at most $5$ or at least three vertices on the boundary with 
valence at most $3$. In the former case, the vertex came from a vertex of $\Gamma$ with valence at most
$5(R_\tw/6)$, which is less than $R_\tw$. In the latter case, each vertex came from a vertex of $\Gamma$ with
valence at most $3(R_\tw/6)$, which is less than $(R_\tw/2)+1$. In both cases, we get a contradiction.
\end{proof}

We now focus on the graph $\Gamma_B$ provided by Lemma~\ref{lemma:graph-bluevalence} or Lemma~\ref{lemma:graph-bdyincompr}.

We declare certain edges of $\Gamma_B$ with at least one endpoint on $\bdy D$ to be \emph{trivial}.  The precise condition will be given in
Definition~\ref{def:trivial-edge}, but it does not concern us here. However, if there is a bigon region of $\Gamma_B$, then either both of its edges are trivial or neither are. (See Lemma~\ref{all-but-one-edge-trivial}.) As a result, we say that a \emph{trivial bigon family} is a connected union of trivial bigons, homeomorphic to a disk, and which is maximal, in the sense that none of the bigons are incident along an edge to a trivial bigon not in the family. In addition, when two edges of a triangular region of $\Gamma_B$ are trivial, then so is the third, again by Lemma~\ref{all-but-one-edge-trivial}. Another property of trivial edges is that the vertices at their endpoints correspond to the same crossing circle.

\begin{lemma}\label{lemma:adj-bigons}
Let $\Gamma_B$ be the graph in $D$ provided by Lemma~\ref{lemma:graph-bluevalence}. Suppose that $\Gamma_B$ has no monogons. Assume also that there are no trivial edges of $\Gamma_B$ in $\partial D$. Then $\Gamma_B$ must have more than $(R_{\tw}/18)-1$ adjacent non-trivial bigons, where $R_\tw$ is the minimal number of crossings removed from a twist region.
\end{lemma}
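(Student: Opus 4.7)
The plan is to argue by contradiction, mimicking the proof of Lemma~\ref{lemma:planar-graph2}. I would assume that every family of adjacent non-trivial bigons in $\Gamma_B$ contains at most $R_\tw/18$ bigons, and aim to produce a vertex of $\Gamma_B$ whose valence is too small to be consistent with Lemma~\ref{lemma:graph-bluevalence}.

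The first step is to simplify $\Gamma_B$ by successively collapsing bigon families in two phases. In the first phase, I collapse each trivial bigon family to a single edge. This is well-defined because, by Lemma~\ref{all-but-one-edge-trivial} and the definition of a trivial bigon family, the family is a disk whose bigon edges all share a pair of endpoints corresponding to a common crossing circle. The hypothesis that there are no trivial edges in $\partial D$ ensures the collapse does not create a boundary monogon, and any interior monogon so created would already have appeared in $\Gamma_B$, contradicting the no-monogon hypothesis. Call the resulting graph $\Gamma'_B$. In the second phase, I collapse each family of adjacent bigons of $\Gamma'_B$ to a single edge; the remaining bigons are precisely (the descendants of) non-trivial bigons of $\Gamma_B$. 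Call the result $\overline{\Gamma}_B$. By construction, $\overline{\Gamma}_B$ has neither bigons nor monogons.

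The second step applies Lemma~\ref{lemma:planar-graph} to $\overline{\Gamma}_B$, yielding either an interior vertex of valence at most $5$, or three boundary vertices of valence at most $3$. I would then translate this back to $\Gamma_B$: the central combinatorial claim is that each edge of $\overline{\Gamma}_B$ incident to a vertex $v$ accounts for at most $3 \cdot (R_\tw/18) = R_\tw/6$ edges of $\Gamma_B$ at $v$, where the factor of $3$ reflects the worst-case chain in which one edge of $\overline{\Gamma}_B$ is obtained by collapsing a trivial bigon family, a non-trivial bigon family, and a second trivial bigon family in succession. Granting this bound, the interior case gives a vertex of $\Gamma_B$ of valence at most $5(R_\tw/6) < R_\tw$, and the boundary case gives three boundary vertices of valence at most $3(R_\tw/6) = R_\tw/2 < (R_\tw/2)+1$. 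Either way, this contradicts the valence bounds of Lemma~\ref{lemma:graph-bluevalence}.

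The main obstacle is verifying the factor-of-$3$ accounting used in the translation step. This reduces to a structural claim about how trivial edges organize themselves in $\Gamma_B$: using the fact that endpoints of trivial edges share a crossing circle, together with the assumption that there are no boundary trivial edges and no monogons, one must show that no more than two trivial bigon families can be stacked around a single non-trivial bigon family before the configuration is forced to close up into a structure already excluded by the hypotheses. I expect this to be the most delicate part of the argument, and would carry it out by a case analysis on whether the shared endpoints lie in the interior of $D$ or on $\partial D$, using the sheet structure around each crossing-circle vertex described in the proof of Lemma~\ref{lemma:graph-bluevalence}.
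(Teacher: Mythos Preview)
Your overall strategy—collapse bigons, then apply Lemma~\ref{lemma:planar-graph}—is the right instinct, but the accounting step fails in a way that cannot be repaired within your scheme.

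The central problem is the size of trivial bigon families. Your contradiction hypothesis bounds only the size of \emph{non-trivial} bigon families by roughly $R_\tw/18$; it says nothing about trivial ones. And trivial families can be much larger. A trivial bigon family has both endpoints corresponding to the same crossing circle, and (using later results, or just allowing it here) one endpoint is typically on $\partial D$ with valence $n_j+1$; the other endpoint is interior with valence $2n_j$. Two of the $n_j+1$ edges at the boundary vertex lie in $\partial D$, so up to $n_j-1\approx R_\tw/2$ edges can belong to a single trivial bigon family. When you collapse such a family to one edge in Phase~1, you drop roughly $R_\tw/2$ from the valence at each endpoint. Your claimed bound of $R_\tw/6$ per edge of $\overline{\Gamma}_B$ is therefore false in general, and the contradiction does not follow.

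Your ``chain'' picture (trivial, then non-trivial, then trivial) also does not arise in your construction. By Lemma~\ref{all-but-one-edge-trivial}, a bigon with one trivial edge has both edges trivial, so a trivial bigon family and a non-trivial bigon family can never share an edge. After Phase~1 the bigons of $\Gamma'_B$ are exactly the non-trivial bigons of $\Gamma_B$, and a collapsed trivial family is a lone edge in $\Gamma'_B$ that is not part of any bigon there. So Phase~2 never touches Phase~1's output, and an edge of $\overline{\Gamma}_B$ is either a collapsed trivial family, a collapsed non-trivial family, or a single original edge—never a concatenation. The factor of $3$ has no source.

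The paper handles this by a genuinely different move. Rather than collapsing a trivial bigon family to an edge, it first passes to a subdisk so that each interior vertex meets at most one trivial bigon family with its other endpoint on $\partial D'$ (handling ``trivial stars'' and arcs with both endpoints on $\partial D$ along the way, at the cost of at most two exceptional boundary vertices). It then \emph{removes} the edges of each remaining trivial family and relocates the interior endpoint to $\partial D'$ in place of the old boundary endpoint. The point of this maneuver is the arithmetic $2n_j-(n_j-1)=n_j+1$: the interior vertex had valence $2n_j$, the family had at most $n_j-1$ edges, so the relocated vertex still has boundary valence at least $n_j+1\ge R_\tw/2+1$. Now Lemma~\ref{lemma:planar-graph2} applies directly. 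New bigons in the modified graph come from at most two collapsed triangles or one collapsed square of $\Gamma_B$, which is where the factor $18=3\cdot 6$ actually enters: a run of adjacent bigons in the new graph splits into at most three runs of adjacent non-trivial bigons of $\Gamma_B$.

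If you want to salvage your outline, you would need to replace the Phase~1 collapse by this ``push the interior vertex to the boundary'' step, and to precede it by the subdisk reduction that isolates trivial stars. The case analysis you anticipated is real, but it is about arranging each interior vertex to meet at most one trivial family, not about bounding family size.
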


\begin{proof}
Note that it follows immediately from Lemma \ref{lemma:planar-graph2} that $\Gamma_B$ must have more than $(R_\tw/6)-1$ adjacent bigons (which may possibly be trivial). Thus, the main challenge in the proof is to deal with trivial bigons.
To do so, we will form a new graph, closely related to $\Gamma_B$, with all trivial bigons removed. We form the graph in several steps, in order to keep track of its properties, including the valence of its vertices and the nature of its bigons.

\vspace{.1in}

\underline{Step 1: Restrict to a subgraph in a subdisk.} In this step, we will focus on a subdisk $D'$ of $D$ (which may be all of $D$). We will also focus on
$\Gamma_B' = \Gamma_B \cap D'$, which will be a subgraph of $\Gamma_B$. This will have various properties which we will enumerate below, including the following:

\begin{enumerate}
\item[(1)] $\partial D' \subset \Gamma_B'$.
\item[(2)] The interior of $D'$ has non-empty intersection with $\Gamma_B'$.
\item[(3)] $\Gamma_B'$ is connected.
\item[(4)] $\Gamma_B'$ contains no edge loops, which are edges that start and end at the same vertex, except possibly one lying in $\partial D'$.
\end{enumerate}

We will also declare that certain vertices on the boundary of $D'$ are \emph{exceptional}. We will have the following property:
\begin{enumerate}
\item[(5)] Any unexceptional vertex of $\Gamma_B'$ is equal to a vertex of $\Gamma_B$ with the same valence.
\end{enumerate}

Initially, we set $D' = D$ and $\Gamma_B' = \Gamma_B$ and we have no exceptional vertices.
This disk and graph satisfy (1) and (5). We may also assume that they satisfy (2), as otherwise the lemma holds trivially.

If the graph $\Gamma_B$ is not connected, then there must be an innermost component. Because $\partial D$ is connected, that innermost component contains no edges meeting $\partial D$. In particular, it contains no trivial bigon families. Then Lemma~\ref{lemma:planar-graph2} immediately implies the result. 
Thus, we may now assume that $\Gamma_B'$ is connected, and so satisfies (1), (2) and (3).

The graph $\Gamma_B'$ may contain an edge loop. Because $\Gamma_B$ has no monogons, an edge loop must enclose parts of $\Gamma_B$ in its interior in $D$. If $\Gamma_B'$ contains an edge loop, pick an innermost one and replace $D'$ by the disk bounded by that edge loop, and replace $\Gamma_B'$ by the portion of $\Gamma_B$ in that subdisk. In this case, declare that the vertex in the boundary of $D'$ is exceptional. Thus, we obtain property (4).

The next property that we wish to ensure is:
\begin{enumerate}
\item[(6)] If two trivial bigon families share an interior vertex, then their vertices on $\partial D'$ are distinct.
\end{enumerate}

Suppose we had two trivial edges with same endpoints, one of which is a vertex in the interior of $D'$, but that are not part of the same trivial bigon family. 
These edges bound a subdisk. In this case, we set $D'$ to be an innermost
such subdisk. Since the edges were not part of a trivial bigon family, (2) continues to hold.
In this case, we declare that the two vertices in the boundary of $D'$ are exceptional.

At this stage, if $D' \not= D$, 
the subdisk $D'$ and graph $\Gamma_B'$ satisfy properties (1) - (6), as well as two additional properties (7) and (8) below, and no further work is needed. So in this case, we finish Step 1, and pass to Step 2. 

So, we now assume that $D' = D$, and that (1) - (6) hold.

Next, the graph $\Gamma_B$ may contain \emph{trivial stars}, which are defined to be a collection of at least two trivial bigon families that are all incident to the same interior vertex. By (6), the trivial bigon families that are part of a trivial star intersect $\partial D$ in distinct vertices. See Figure~\ref{fig:trivialbigonsfig}. The graph $\Gamma_B$ may also contain trivial edges with both endpoints on $\partial D$. Because $\Gamma_B$ has no trivial edges in $\partial D$, each such edge splits $D$ into two disks. 

\begin{figure}
 \includegraphics[width=3in]{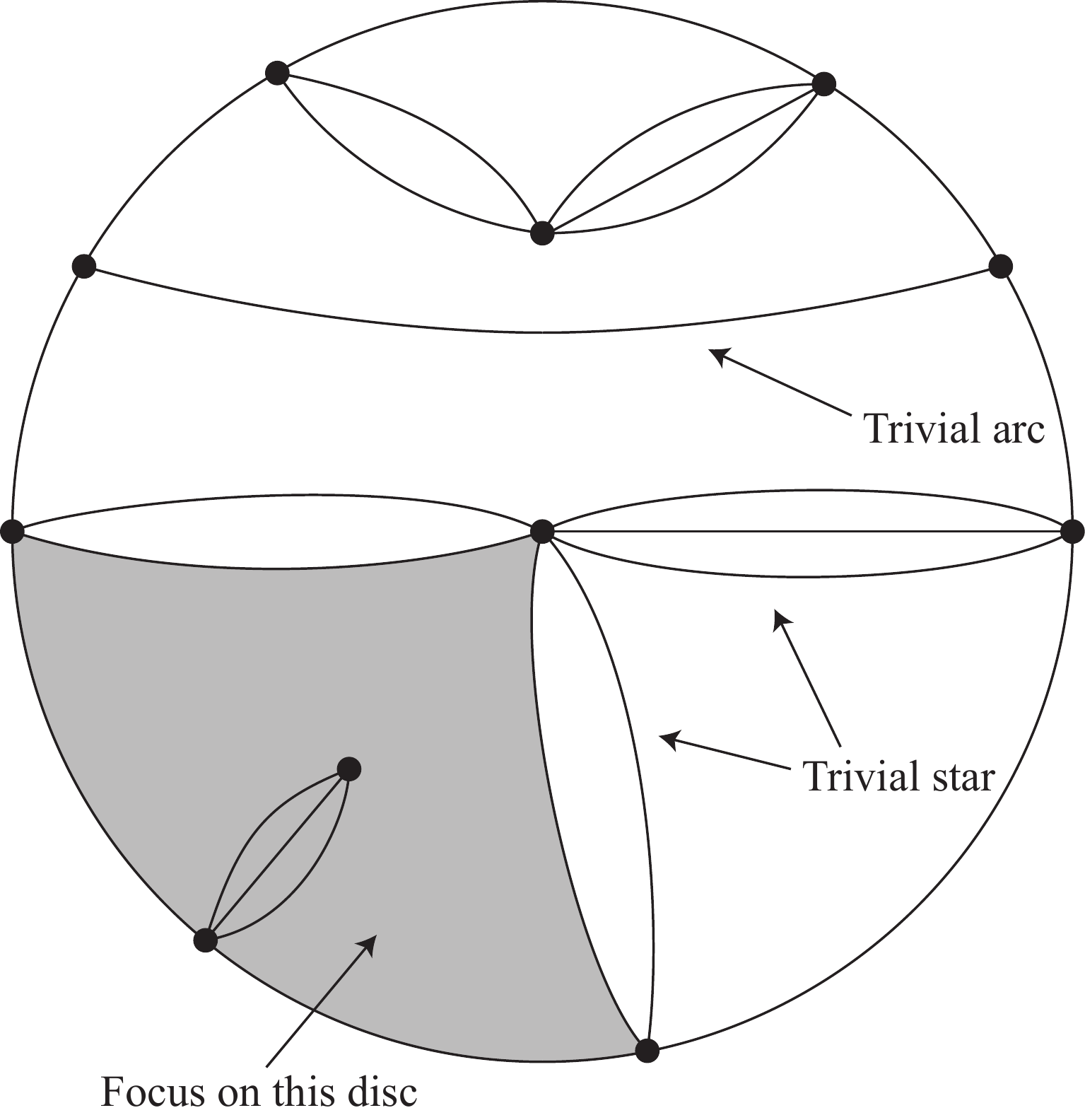}
\caption{Trivial arcs and trivial stars in $\Gamma_B$}
\label{fig:trivialbigonsfig}
\end{figure}

By (4) and (6), the trivial stars and trivial edges with both endpoints on $\partial D$ separate $D$ into subdisks, as in Figure~\ref{fig:trivialbigonsfig}. As well as properties (1), (3), (4) and (6) above, these subdisks have the following properties. 
\begin{enumerate}
\item[(7)] In each subdisk, each interior vertex meets at most one trivial bigon family. Moreover, all remaining trivial bigon families have one boundary vertex, and one (distinct) interior vertex.
\end{enumerate}

Pick a subdisk $D'$ that is outermost in $\partial D$. This satisfies (2), because otherwise the disk is a triangle or bigon and its intersection with $\partial D$ was a trivial edge (see Lemma~\ref{all-but-one-edge-trivial}), which contradicts our assumption. When $D'$ is a subdisk separated off by a trivial star, we term the three vertices that lay in the trivial star exceptional. When $D'$ is a subdisk separated off by a trivial arc, we term the two vertices at the endpoint of this arc exceptional.

We note that, in all cases, $D'$ satisfies (1) - (7), as well as the following:
\begin{enumerate}
\item[(8)] The vertices of $\Gamma_B'$ in $D'$ have valence as follows.
  \begin{itemize}
  \item Interior vertices have valence $2n_j \geq R_\tw$, where $2n_j$ is the number of crossings removed from the twist region at the relevant crossing circle.
  \item Vertices on the boundary have valence $n_j+1 \geq R_\tw/2+1$, except possibly the exceptional ones. 
  \end{itemize}
\end{enumerate}

\vspace{0.1in}
\underline{Step 2: Remove remaining trivial bigon families.} By property (7) above, the only remaining trivial bigon families have one vertex on the boundary, and the other on an interior vertex that meets no other trivial bigon families. 

We now form a graph $\Gamma$ in $D'$ as follows. Consider each unexceptional vertex $v$ on $\partial D'$ in turn. If $v$ is incident to no trivial bigons, then we leave it untouched. If $v$ is incident to $k \geq 1$ trivial bigon families, then the other endpoints of these bigon families are distinct vertices in the interior of $D'$. Replace $v$ with $k$ vertices on $\partial D'$. We view these $k$ vertices as the vertices at the endpoints of the bigon families. Remove the edges meeting $v$ that were part of a trivial bigon family. The edges of $\Gamma_B'$ that used to end at one of these bigon family vertices now end at one of the new vertices. There may have been some edges of $\Gamma_B'$ that ended on $v$ but that were not part of a trivial bigon family. A new endpoint has to be found for such edges. There are two possible choices: we make one arbitrarily. See Figure \ref{fig:trivialbigonsfig2}. Finally, if $\Gamma_B'$ had three exceptional vertices, collapse two of these to a single vertex.

\begin{figure}
\input{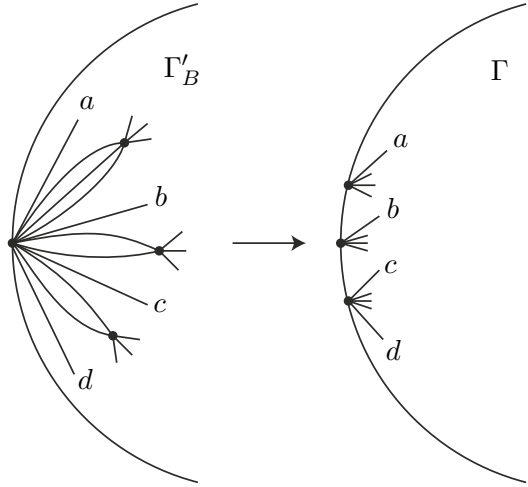}
\caption{Construction of $\Gamma$}
\label{fig:trivialbigonsfig2}
\end{figure}

The resulting graph $\Gamma$ has the following properties.

\begin{enumerate}
\item[(a)] It has no monogons. This is because any monogon region of $\Gamma$ must have come from a monogon of $\Gamma_B'$, or an edge with endpoints on two vertices that were collapsed to one, hence coming from a bigon region in $\Gamma_B'$. There are no monogons in $\Gamma_B'$ (by properties (2) and (4)). An edge forming a bigon with an edge collapsed to one vertex must have been a trivial edge, by Lemma~\ref{all-but-one-edge-trivial}, since in all cases the collapsed edge was trivial. But then the other edge would have been part of the corresponding bigon family, by maximality of families. So there are no monogons. 

\item[(b)] Bigons in $\Gamma$ come from those in $\Gamma_B$, with the following exceptions. If $\Gamma_B$ has a triangular region with one of its edges on a trivial bigon family that is collapsed to form $\Gamma$, then that triangular region becomes a new bigon in $\Gamma$. Similarly, if $\Gamma_B$ has a square region with two of its edges being part of
trivial bigon families, then this may collapse to form a bigon.

We deduce that any collection of adjacent bigons in $\Gamma$ came from collections of adjacent non-trivial bigons of $\Gamma_B$, plus possibly triangular or square regions. We claim that there can be no more than two triangular regions giving rise to a set of adjacent non-trivial bigons of $\Gamma_B$, and no more than one square region, and thus adjacent bigons in $\Gamma$ come from no more than three collections of non-trivial bigons of $\Gamma_B$, as follows. Distinct bigons in $\Gamma_B$ become adjacent in $\Gamma$ only if their vertices are separated by a trivial bigon family that is collapsed. There are two vertices at the endpoints of the bigons in $\Gamma$. These can come from collapsing at most two trivial bigon families, by property (7) and our choice of collapsing just one edge of a trivial bigon star. Thus one square, or one or two triangles, are possible, but this will group together at most three non-trivial bigon collections in $\Gamma_B$. 

\item[(c)] Valences of vertices of $\Gamma$ are as follows, where again $2n_j$ denotes the number of crossings removed from the relevant crossing circle in each case. 
  \begin{itemize}
  \item Any interior vertex has valence $2n_j\geq R_\tw$: all remaining interior vertices of $\Gamma$ came from interior vertices of $\Gamma_B$ that met no trivial bigons.

  \item A non-exceptional boundary vertex that came from an original boundary vertex of $\Gamma_B$ has valence $n_j+1 \geq R_\tw/2+1$, since it was not affected by the modification to $\Gamma$.

  \item A non-exceptional boundary vertex that came from a trivial bigon family has valence at least $n_j+1 \geq R_\tw/2+1$. This is because one vertex of the trivial bigon family lies on $\partial D$ in $\Gamma_B$, and so at most $n_j-1$ edges of $\Gamma_B$ lie in that trivial bigon family. At the other endpoint of the bigons is a vertex of $\Gamma_B$ in the interior of $D$. It therefore has valence $2n_j$, for the same $n_j$, using the fact that trivial edges have endpoints on vertices corresponding to the same crossing circle. So, the corresponding vertex of $\Gamma$ has valence at least $n_j+1$.

  \item Exceptional vertices may have lower valence. However, by construction, $\Gamma$ has at most two of these.
  \end{itemize}
\end{enumerate}

Thus, the hypotheses of Lemma~\ref{lemma:planar-graph2} apply to $\Gamma$. We therefore deduce that it has a collection of more than $(R_\tw/6)-1$ adjacent bigons. By property~(b) above, all but at most two of these came from a non-trivial bigon of $\Gamma_B$. These are divided into at most three collections of adjacent non-trivial bigons of $\Gamma_B$. So we deduce that $\Gamma_B$ has more than $(R_\tw/18)-1$ adjacent non-trivial bigons.
\end{proof}

\begin{lemma}\label{lemma:adj-bigons-2}
Let $\Gamma_B$ be the graph in $D$ provided by Lemma~\ref{lemma:graph-bdyincompr}. Suppose that $\Gamma_B$ has no monogons. Assume also that there are no trivial edges of $\Gamma_B$ in $\partial D$. Then $\Gamma_B$ must have more than $(R_\tw/18)-1$ adjacent non-trivial bigons or there are more than $(R_\tw/18)-1$ adjacent triangles, where one edge of each triangle lies in $\phi^{-1}(\partial N(K))$.
\end{lemma}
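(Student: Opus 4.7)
My plan is to reduce Lemma~\ref{lemma:adj-bigons-2} to Lemma~\ref{lemma:adj-bigons} via a doubling construction. Write $\partial D = \alpha \cup \beta$, where $\alpha$ is the arc factoring through an essential arc in $S_{B,i}$ and $\beta$ is the arc mapped by $\phi$ into $\partial N(K)$. I will form $D^+$ by doubling $D$ along $\beta$; its boundary is a double of $\alpha$. I correspondingly double $\Gamma_B$, and then smooth the resulting interior vertices of valence~$2$ that came from the valence-$1$ vertices on $\beta$, combining the two edges meeting each such vertex into a single edge. Call the result $\Gamma_B^+$. Reflecting $\phi$ across $\phi(\beta) \subset \partial N(K)$ and concatenating with $\phi$ yields a map $\phi^+\co D^+ \to S^3\setminus K$ for which $\Gamma_B^+$ is exactly of the form produced by Lemma~\ref{lemma:graph-bluevalence}.

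Next I verify the hypotheses of Lemma~\ref{lemma:adj-bigons} for $\Gamma_B^+$ in $D^+$. Interior vertices of $\Gamma_B^+$ all come from interior vertices of $\Gamma_B$, so have valence $2n_j$; boundary vertices come from vertices of $\Gamma_B$ on $\alpha$ with valence $n_j+1$. No monogon can appear in $\Gamma_B^+$, since no $\Gamma_B$-edge can have both endpoints on the same valence-$1$ vertex of $\beta$, and no edge of $\Gamma_B^+$ lies in $\partial D^+$, because no edge of $\Gamma_B$ lay in $\alpha$. The notion of trivial edge transfers to $\Gamma_B^+$ in the natural way: an edge is trivial either because it descends from a trivial edge of $\Gamma_B$ not meeting $\beta$, or because it is a smoothed amalgam of two edges of $\Gamma_B$ that were trivial and incident to a common $\beta$-vertex.

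The heart of the argument is the translation back. Since each vertex of $\Gamma_B$ on $\beta$ has valence~$1$, no bigon of $\Gamma_B$ can meet $\beta$, so each bigon of $\Gamma_B$ gives rise to two disjoint bigons in $\Gamma_B^+$, one in each copy of $D$, whose edges do not cross $\beta$. Conversely, a triangular region of $\Gamma_B$ with one edge on $\beta$ has exactly two vertices on $\beta$ and one apex vertex off $\beta$; after doubling and smoothing, its two copies amalgamate into a single bigon of $\Gamma_B^+$, both of whose edges cross $\beta$. Because every edge of $\Gamma_B^+$ either crosses $\beta$ or does not, these two kinds of bigon are edge-disjoint, and in particular no bigon of one kind can be adjacent to a bigon of the other kind. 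Hence any connected collection of adjacent bigons in $\Gamma_B^+$ descends to a collection of the same size, consisting either of adjacent bigons of $\Gamma_B$ (lying in a single copy of $D$) or of adjacent triangles of $\Gamma_B$, each with one edge on $\beta$.

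Applying Lemma~\ref{lemma:adj-bigons} to $\Gamma_B^+$ in $D^+$ yields more than $(R_\tw/18)-1$ adjacent non-trivial bigons, which by the previous paragraph furnishes one of the two alternatives in the statement. The main bookkeeping obstacle will be to verify that non-triviality of a chain of bigons in $\Gamma_B^+$ corresponds to non-triviality in $\Gamma_B$, so that a non-trivial chain of the first kind pulls back to a non-trivial chain in $\Gamma_B$; this should follow from the maximality of trivial bigon families and the fact, recalled just before Lemma~\ref{lemma:adj-bigons}, that endpoints of a trivial edge correspond to the same crossing circle.
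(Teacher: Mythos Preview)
Your doubling strategy is exactly the paper's. The only difference is the order of operations: the paper re-runs the \emph{argument} of Lemma~\ref{lemma:adj-bigons} (arranging that any subdisk passed to in its Step~1 avoids the arc $\phi^{-1}(\partial N(K))$, and doubling along that arc only in the residual case), whereas you double immediately and then invoke Lemma~\ref{lemma:adj-bigons} as a black box on $D^+$; both routes yield the same translation of a chain of non-trivial bigons in $\Gamma_B^+$ back to adjacent non-trivial bigons or adjacent $\beta$-triangles in $\Gamma_B$.
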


\begin{proof} 
We argue as in the proof of Lemma~\ref{lemma:adj-bigons}. Whenever we pass to a subdisk we make sure that it does not contain the arc $\phi^{-1}(\partial N(K))$, and the argument then proceeds exactly as in Lemma \ref{lemma:adj-bigons}. So, suppose that we do not pass to a subdisk. Then we double $D^2$ along the arc $\phi^{-1}(\partial N(K))$. The two copies of $\Gamma_B$ become a single graph, and the vertices that lay on $\phi^{-1}(\partial N(K))$ become the midpoints of edges.
In the resulting graph, every vertex on the boundary of the disk has valence $n_j+1$, where $2n_j$ is the number of crossings removed from the twist region at the relevant crossing circle. We can therefore apply the argument of Lemma \ref{lemma:adj-bigons}. In all cases, the conclusion is that there are more than
$(R_\tw/18)-1$ adjacent non-trivial bigons. In the original disk $D$, these give more than $(R_\tw/18)-1$ adjacent non-trivial bigons, or more than $(R_\tw/18)-1$ adjacent triangles, where one edge of each triangle lies in $\phi^{-1}(\partial N(K))$.
\end{proof}

By analyzing properties of alternating diagrams, we will show below that the graph on $D$ coming from $S_{B,0}$ cannot contain three adjacent non-trivial bigons.  The graph from $S_{B,2}$ cannot contain five adjacent non-trivial bigons.  Assuming these results, we give the proof of Theorem~\ref{thm:Bincompressible}.

\begin{proof}[Proof of Theorem \ref{thm:Bincompressible}]
If $f\co S_{B,i} \to S^3\setminus K$ is not $\pi_1$--injective, then Lemma~\ref{lemma:graph-bluevalence} implies there is a map of a disk $\phi\co D \to S^3\setminus K$ with $\phi|_{\bdy D} = f \circ \ell$ for some essential loop $\ell$ in $S_{B,i}$, such that $\Gamma_B=\phi^{-1}(f(S_{B,i}))$ is a collection of embedded closed curves and an embedded graph in $D$.  Each vertex in the interior of $D$ has valence a non-zero multiple of $2n_j$, where $2n_j$ is the number of crossings removed from the twist region at the relevant crossing circle. Each vertex on $\bdy D$ has  valence $n_j+1$. In Section~\ref{sec:surfaces}, we will define a measure of complexity for such maps $\phi$, and we choose $\phi$ to have minimal complexity.

Lemma~\ref{lemma:scc} implies that there are no simple closed curves in $\Gamma_B$.  Lemma~\ref{lemma:no-monogons} implies that there are no monogons.
Lemma~\ref{lemma:no-boundary-trivial-arcs} states that there are no edges in $\partial D$ that are trivial. So, by Lemma~\ref{lemma:adj-bigons}, $\Gamma_B$ has more than $(R_\tw/18)-1$ adjacent non-trivial bigons. On the other hand, Proposition~\ref{prop:L0-no-3adjbigons} implies there cannot be three adjacent non-trivial bigons when $i=0$.  If $N_\tw \geq 54$, then by equation \eqref{eqn:Rtw}, $R_\tw \geq 54$, and there are more than two adjacent bigons.  So we deduce that the surface $S_{B,0}$ is $\pi_1$--injective in this case.

For the case $i=2$, if $N_\tw$ is at least $91$, equation \eqref{eqn:Rtw} implies that $R_\tw$ is at least $90$, and Lemma~\ref{lemma:adj-bigons} implies that the graph on $D$ contains more than four adjacent non-trivial bigons.
But now, Proposition~\ref{prop:L2-no-5adjbigons} implies that there cannot be five adjacent non-trivial bigons on $D$ when $i=2$.
\end{proof}

Similarly, assuming the above results as well as Lemma~\ref{lemma:no-3BBKtriangles}, we may prove Theorem~\ref{thm:bdryincompressible}.

\begin{proof}[Proof of Theorem \ref{thm:bdryincompressible}]
Suppose $S_{B,i}$ is not boundary--$\pi_1$--injective.  Then Lemma~\ref{lemma:graph-bdyincompr} gives a graph on a disk $D$. This satisfies the hypotheses of Lemma~\ref{lemma:adj-bigons-2}. So, $\Gamma_B$ must have at least $(R_\tw/18)-1$ adjacent non-trivial bigons or there at least 
that many adjacent triangles, where one edge of each triangle lies in $\phi^{-1}(\partial N(K))$. The former case is ruled out by Proposition~\ref{prop:L0-no-3adjbigons} or Proposition~\ref{prop:L2-no-5adjbigons}. The latter case is ruled out by Lemma~\ref{lemma:no-3BBKtriangles}.
\end{proof}

\section{Diagrams and properties}
Our goal is to complete the proofs of the Lemmas referenced in the proofs of Theorems \ref{thm:Bincompressible} and \ref{thm:bdryincompressible}.  The arguments are combinatorial, relying on properties of the diagrams of the links defined in Definition \ref{def:L0-L2} above.  In this section we discuss these diagram properties.

To simplify the argument, we will consider a modification of the diagrams of $L$, $L_i$, and $K_i$.  In particular, recall that we obtained $L$ from $K$ by adding a crossing circle to each twist region of $K$ that had more than $N_\tw$ crossings.  These crossing circles either meet the blue or the red checkerboard surfaces of $K$.  Form a new augmented alternating link $L_B$ from $K$ by only adding those crossing circles that meet the blue checkerboard surface of $K$. Obtain $L_{B,i}$ by removing pairs of crossings encircled by each crossing circle of $L_B$, leaving either one or $i$ crossings, where $i=0$ or $i=2$.  Just as in Definition~\ref{def:L0-L2}, we place the crossing circle in the diagram so that any associated crossing forms a triangle with the crossing circle.  In particular, if $i=2$ and a crossing circle is associated with two crossings, the crossing circle runs through the bigon formed by those two crossings.

Finally, obtain $K_{B,i}$ by removing crossing circles of $L_{B,i}$.  Note $K_{B,i}$ differs from $K$ in that red bigon regions have been removed.

Now, notice that the surface $B_i$ embedded in $S^3\setminus L_i$ can also be embedded in $S^3\setminus L_{B,i}$.  However, now the red surface $R_i$ in $S^3\setminus L_{B,i}$ is homeomorphic to the red checkerboard surface of $K$, and to the red checkerboard surface of $K_{B,i}$.  

The following lemma discusses the primality of the diagrams of $K_{B,0}$ and $K_{B,2}$.

\begin{lemma}\label{lemma:K2-0-prime}
The diagrams of $K_{B,2}$ and $K_2$ are prime.

The diagram of $K_{B,0}$ may not be prime.  However, if $\gamma$ is a simple closed curve giving a counterexample to primality of $K_{B,0}$, then $\gamma$ consists of two arcs, one $\gamma_R$ in the red surface, and one $\gamma_B$ in the blue.  The arc $\gamma_R$ can be homotoped (rel endpoints) to run transversely through some crossing disk in the diagram of $L_{B,0}$, intersecting it exactly once.  The arc $\gamma_B$ can be homotoped (rel endpoints) to be disjoint from crossing disks.

Finally, if the diagram of $K_0$ is not prime, then a simple closed curve giving a counterexample to primality of $K_0$ must link a crossing circle of $L_0$.
\end{lemma}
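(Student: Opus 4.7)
My plan is to argue by contradiction, converting a failure of primality of the modified diagram into a failure of primality of $K$, which is assumed prime. The projection planes of $K$, $K_{B,i}$, and $K_i$ agree outside small rectangular neighborhoods $T_j$ of the augmented twist regions; inside each $T_j$ the diagram of $K$ has $n_j \geq N_\tw$ crossings while the modified diagram retains only $i$ or $i+1$ crossings. Given a counterexample $\gamma$ to primality of $K_{B,i}$ or $K_i$, I will rebuild $\gamma$ to obtain a curve $\gamma'$ in the projection plane of $K$ with the same two transverse intersection points with the diagram, and then determine when $\gamma'$ still has crossings on both sides.

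First I put $\gamma$ in general position with respect to each $\partial T_j$ and minimize $|\gamma \cap \partial T_j|$ over homotopies rel the two transverse intersections with the diagram. An innermost-bigon argument ensures that every arc of $\gamma \cap T_j$ is essential as an arc in the disk $T_j$ marked with the four endpoints of the two strands on $\partial T_j$. Such essential arcs are of two types: a \emph{corner arc}, which cuts off one marked point from the other three and crosses no strand inside $T_j$; and a \emph{bridging arc}, which separates two marked points from the other two. Bridging arcs that separate the two endpoints of one strand from those of the other cross no strand inside $T_j$, while bridging arcs that separate strand endpoints ``across'' the twist cross each strand at least once. In every case, the sub-arc of $\gamma \cap T_j$ can be replaced by an arc of the same combinatorial type inside the $K$-version of $T_j$, producing a simple closed curve $\gamma'$ in the projection plane of $K$ meeting $K$ transversely at the same two points.

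For $K_{B,2}$ and $K_2$, each $T_j$ in the modified diagram still has at least one bigon, and a short case analysis on the arc type occurring at each $T_j$ shows that for every configuration the $n_j$ crossings of $K$ distribute between the two sides of $\gamma'$ so that at least one crossing lies on each side on which $\gamma$ had a crossing of the modified diagram. Outside the $T_j$'s the crossings of $K$ and of the modified diagram coincide, so $\gamma'$ retains crossings on both sides, contradicting primality of $K$. For $K_{B,0}$ and $K_0$, a twist region may retain no crossings, and then a bridging arc of $\gamma$ that crosses both strands of $T_j$ exhausts both transverse intersections of $\gamma$ with the diagram without separating any crossings of $K_{B,0}$; this is precisely the situation in which $\gamma$ links the crossing circle of $T_j$ once. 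The two arcs of $\gamma$ between its transverse intersections lie in adjacent checkerboard faces, one red and one blue. The arc linking the crossing circle at $T_j$ is forced to be the one whose checkerboard color matches the surface met by that crossing circle. Because only blue-meeting crossing circles are present in $L_{B,0}$, this linking arc must be $\gamma_R$; a further innermost-disk argument then pushes $\gamma_B$ off every crossing disk of $L_{B,0}$. For $K_0$, both types of crossing circles occur, so we conclude only that $\gamma$ must link some crossing circle of $L_0$.

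I expect the main obstacle to be the careful innermost-arc classification inside each $T_j$ together with the precise bookkeeping of how the $n_j$ crossings of $K$ distribute across the two sides of $\gamma'$. For $i = 2$, the central combinatorial point is to rule out a bridging arc of $\gamma$ that, while essential in the reduced $T_j$, leaves all of $K$'s crossings in $T_j$ on a single side of $\gamma'$; this uses both the retained pair of crossings of $K_{B,2}$ in $T_j$ and twist-reducedness of $K$. For $K_{B,0}$, the subtler step is to use twist-reducedness again to ensure that after identifying one crossing disk pierced once by $\gamma_R$, the remaining arc $\gamma_B$ can be simultaneously homotoped off every other crossing disk.
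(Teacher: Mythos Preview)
Your rectangular–neighborhood strategy is a legitimate alternative to the paper's argument, and for $K_{B,2}$ and $K_2$ it can in principle be made to work. But as written there are real gaps. The phrase ``meeting $K$ transversely at the same two points'' is not meaningful once one or both transverse intersections lie inside some $T_j$: the strands in the $K$–version of $T_j$ sit in different positions than in the $K_{B,2}$–version, so there are no ``same points'' to preserve. What you actually need is a replacement arc with the same endpoints on $\partial T_j$ that crosses each strand the same number of times and is chosen so that the split of the $T_j$–crossings (all on one side, or at least one on each side) persists when $2$ is replaced by $n_j$. That case analysis is genuinely short, but you have not carried it out, and nothing in it uses twist–reducedness of $K$, contrary to what you suggest.

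More seriously, your colour reasoning for $K_{B,0}$ is internally inconsistent. You write that the linking arc is the one whose colour \emph{matches} the surface met by the crossing circle, then conclude from ``only blue–meeting crossing circles are present'' that this arc is $\gamma_R$. Those two sentences contradict each other. The correct picture is the opposite: a crossing circle of $L_{B,0}$ punctures two blue regions, so its crossing disk meets each adjacent blue region in an arc with a free end, and $\gamma_B$ can always be slid off that end. By contrast, when the associated twist region retains no crossings, the crossing disk cuts straight across the adjacent red region, and $\gamma_R$ cannot be pushed off. So it is the arc of the \emph{opposite} colour that is forced through the crossing disk. You land on the right conclusion, but the sentence that is supposed to justify it says the reverse.

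For comparison, the paper avoids the $T_j$ bookkeeping entirely by working directly with crossing disks. The key move, which your outline never invokes, is the specific diagrammatic placement of each crossing circle in Definition~\ref{def:L0-L2}: the triangular region between a crossing circle and its associated crossing lets one slide $\gamma$ along an edge of $L_{B,2}$ toward the crossing disk and then off its free end in the adjacent blue region. In this way $\gamma$ is isotoped to miss every crossing disk, so reinserting the removed crossings does not touch $\gamma$ and one immediately obtains a curve in $K$ meeting the diagram twice with crossings on both sides, contradicting primality of $K$. This bypasses the arc–type enumeration at the cost of using the particular placement of the crossing circles; your approach trades that input for a longer combinatorial check inside each $T_j$.
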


Here, a \emph{crossing disk} denotes the twice punctured disk with boundary on the crossing circle, embedded transverse to the plane of projection of the diagram of $L_i$ (or $L$).  There is one crossing disk for each crossing circle.  The collection of all disks is embedded.

\begin{proof}
Suppose $\gamma$ is a closed curve in the projection plane meeting the diagram of $K_{B,2}$ twice.  Then we claim $\gamma$ can be isotoped to be disjoint from any crossing disk of $L_{B,2}$.  For if $\gamma$ meets such a disk in the blue surface, since crossing disks intersect blue regions in simple arcs running from the boundary to a point in the interior of the region, the curve $\gamma$ can simply be pulled off the end of the crossing disk.  If $\gamma$ meets a crossing disk $D$ in the red surface, then it can either be pulled off the disk, or it meets an edge of $L_{B,2}$ running between a crossing associated with $D$ and the crossing disk.  Now slide $\gamma$ along this edge towards $D$, in a neighborhood of the edge, until $\gamma$ no longer meets $D$ in the red surface.  Then slide $\gamma$ off of $D$ in the blue surface, as above.  Thus in all cases, we may isotope $\gamma$ to be disjoint from $D$.  Now put back all the crossings to obtain the diagram of $K$.  Because $\gamma$ misses all crossing disks, it misses all these crossings, hence gives a curve in $K$ meeting the diagram twice.  Because the diagram of $K$ is prime, there are no crossings on one side of the curve.  Then the same is true for $K_{B,2}$: there are no crossings in the diagram on one side of $\gamma$, and $K_{B,2}$ is prime.

Obtain $K_2$ from $K_{B,2}$ by removing blue bigons in the diagram of $K_{B,2}$.  Since $K_{B,2}$ is prime, the same argument as above with $K_{B,2}$ replacing $K$, and $K_2$ replacing $K_{B,2}$, and the red surface replacing blue implies that the diagram of $K_2$ is prime.

Now consider $K_{B,0}$.  Let $\gamma$ be a curve meeting the diagram twice with crossings on either side.  Because $\gamma$ meets the diagram twice in interiors of edges of the diagrams, and because each edge of the diagram meets the red and blue surfaces on either side, we obtain the claim that $\gamma$ consists of two arcs, $\gamma_R$ in the red surface and $\gamma_B$ in the blue.  Note that $\gamma_R$ lies in a single (red) region of the diagram graph, and such a region is a disk.  Similarly for $\gamma_B$.  As above, we may homotope $\gamma_B$ (rel endpoints) to avoid all crossing disks.

Now, the diagrams of $K_{B,0}$ differs from that of $K$ only in that an even number of crossings in select twist regions have been removed. Homotope $\gamma_R$ (rel endpoints) in the red region to meet as few crossing disks as possible.  If $\gamma_R$ does not meet a crossing disk, then neither does $\gamma$, and we may put back crossings to obtain $K$ without increasing the number of intersections of $\gamma$ with the diagram.  But then $\gamma$ gives an embedded closed curve in the diagram of $K$ meeting the diagram twice with crossings on either side, contradicting primality of $K$.

So $\gamma_R$ must meet a crossing disk.  Because $\gamma_R$ lies in a single red region (which is a disk), $\gamma_R$ can be homotoped (rel endpoints) to meet the crossing disk exactly once.  Because $\gamma_B$ does not meet the crossing disk, $\gamma$ must link the corresponding crossing circle exactly once.

Finally, consider $K_0$.  If $\gamma$ is a simple closed curve meeting the diagram of $K_0$ twice with crossings on either side, modify the diagram by putting back crossings bounding blue bigons to obtain $K_{B,0}$.  If after this modification $\gamma$ still meets the diagram twice with crossings on either side, then the previous paragraph implies that $\gamma$ links a crossing circle of $L_0$.  If not, then $\gamma$ in $K_{B,0}$ must run through a sequence of blue bigons, consisting of crossings in a twist region.  As before, $\gamma$ must link the associated crossing circle.
\end{proof}

Lemma \ref{lemma:K2-0-prime} has the following important consequence.

\begin{lemma}\label{lemma:blue-prime}
For $i=1, 2$, label the regions of the complement of the diagram of $K_{B,i}$ blue or red depending on whether they meet the blue or red surface.  Note each crossing circle of $L_{B,i}$ intersects two blue regions.
\begin{enumerate}
\item\label{lmitm:distinct-regions-cross} The blue regions on opposite sides of a crossing of $L_{B,i}$ cannot agree, for $i=0, 2$.
\item\label{lmitm:distinct-red} The red regions on opposite sides of a crossing of $L_{B,2}$ cannot agree.  (Note this is not necessarily true for $L_{B,0}$.)
\item\label{lmitm:distinct-regions} The two blue regions that meet a single crossing circle of $L_{B,i}$ cannot agree.  That is, each crossing circle of $L_{B,i}$ meets two distinct blue regions, for $i=0, 2$.
\item\label{lmitm:assoc-cross} Suppose the distinct blue regions meeting a single crossing circle meet at the same crossing of the diagram of $K_{B,i}$.  Then that crossing is associated with the crossing circle, as in Definition \ref{def:crossing-assoc}, for $i=0, 2$.
\end{enumerate}
\end{lemma}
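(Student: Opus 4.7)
The plan is to prove all four parts of Lemma~\ref{lemma:blue-prime} by primality arguments, invoking Lemma~\ref{lemma:K2-0-prime} to handle the controlled failure of primality for $K_{B,0}$ and $K_0$. In every case I will assume the conclusion fails and construct a simple closed curve $\gamma$ in the projection plane meeting the diagram of $K_{B,i}$ transversely in exactly two points, with crossings on both sides; since $K$ is a hyperbolic alternating link (hence not a $(2,q)$--torus link) and the diagram is prime and twist-reduced, there will always be enough crossings in the complement of any chosen twist region for both sides of $\gamma$ to carry at least one crossing. For $i=2$, primality of $K_{B,2}$ then immediately yields a contradiction. For $i=0$, I will verify case-by-case that the red sub-arc $\gamma_R$ of the constructed $\gamma$ cannot be homotoped (rel endpoints) to run transversely through a crossing disk; by Lemma~\ref{lemma:K2-0-prime} this forces primality to hold for $\gamma$, producing the same contradiction.

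For parts (1) and (2), the curve $\gamma$ is localized near a single crossing $c$. In (1), I first dispose of the easy cases: if $c$ is a crossing at which some crossing circle $C$ meets the link, the two blue quadrants at $c$ sit on opposite sides of $C$ in the plane and so are automatically different regions; if $c$ is associated with a crossing circle $C$, the two opposite blue quadrants at $c$ are two of the four ``inside triangles'' at $c$, separated from each other by the strands in the interior of the crossing disk of $C$, hence distinct. In the remaining case $c$ is a crossing of $K_{B,i}$ not touching any crossing circle, and assuming the two opposite blue quadrants coincide in a region $B$, I take $\gamma$ to be a loop consisting of a long arc in $B$ joining the two blue quadrants together with a short arc near $c$ crossing two edges inside a single red quadrant. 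The resulting red sub-arc $\gamma_R$ is contained entirely in one red quadrant and is therefore local, so it cannot link any crossing circle and cannot be homotoped through any crossing disk. Part (2) is proved by the identical argument with blue and red swapped: it works for $L_{B,2}$ because $K_{B,2}$ is prime, but correctly fails for $L_{B,0}$, since the analogous $\gamma_R$ is then no longer local and may legitimately thread a crossing disk, which is precisely the non-primality Lemma~\ref{lemma:K2-0-prime} permits.

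For parts (3) and (4), $\gamma$ is built around the twist region encircled by the crossing circle $C$ in question. In (3), assuming that the two blue regions meeting $C$ coincide in a single region $B$, I take one arc of $\gamma$ in $B$ joining the two sides of $C$ where blue meets it, and close $\gamma$ on the opposite side of $C$ by an arc that runs through a red region and crosses the two strands of the twist region (giving exactly two intersections with the diagram of $K_{B,i}$). The resulting $\gamma$ separates the twist region of $C$ from the rest of the diagram, both of which carry crossings. In (4) I assume the two distinct blue regions at $C$ share a corner at a crossing $c$ that is not associated with $C$; the curve $\gamma$ is then built by combining the construction of (3) around $C$ with a short local detour through a red quadrant at $c$, using the hypothesis that $c$ is unassociated with $C$ to place $c$ and the twist region of $C$ on the same side of $\gamma$ while crossings remain on the other side. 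The main obstacle in parts (3) and (4) will be verifying, when $i=0$, that the red sub-arc $\gamma_R$ does not link $C$ or any other crossing circle, and hence does not run transversely through a crossing disk; this requires careful use of the ``square plus triangle'' placement of the crossing circle given by Definition~\ref{def:L0-L2} and depends on the preceding items (1) and (2), already established, to control the topology of $\gamma_R$ away from $C$.
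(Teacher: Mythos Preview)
Your approach to parts (1) and (2) is essentially the paper's: construct a simple closed curve with a long blue arc and a short red arc near the crossing, observe that the short red arc can be kept off all crossing disks, and invoke Lemma~\ref{lemma:K2-0-prime}. (The extra case analysis for crossings that touch a crossing circle is unnecessary; the same short-red-arc construction works uniformly.)

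There is a genuine gap in part (3). Your closing arc through the red region is essentially the intersection of the crossing disk with the projection plane, which is also what the paper uses. But you then assert that the resulting $\gamma$ has crossings on both sides. This can fail: when $i=0$ and the twist region encircled by $C$ had an even number of crossings in $K$, that twist region contributes \emph{zero} crossings to $K_{B,0}$, so the side of $\gamma$ containing the twist region is crossing-free and Lemma~\ref{lemma:K2-0-prime} yields no contradiction. The paper closes this gap with an extra step: if one side of $\gamma$ has no crossings, the crossing disk can be isotoped off $K_{B,i}$, which contradicts primality of the original diagram of $K$.

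Part (4) is where your route diverges most, and I do not see how your construction works. The natural curve here --- two blue arcs from the opposite blue quadrants at $c$ to the two punctures of $C$, closed by the crossing-disk arc --- passes through the \emph{vertex} $c$ and transversely through two strands at the twist region; it is not a curve meeting the diagram transversely in two points. Perturbing it to be transverse near $c$ costs two extra intersections, so Lemma~\ref{lemma:K2-0-prime} no longer applies, and your ``short local detour through a red quadrant at $c$'' does not avoid this. The paper instead keeps the curve passing through the vertex $c$, reinserts the removed crossings to recover $K$, and observes that after twisting the crossing-disk arc can be taken through a single \emph{vertex} of the twist region. The resulting curve meets the diagram of $K$ in exactly two vertices, at $c$ and at a crossing in the twist region of $C$, running between opposite (blue) regions at each. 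The \emph{twist-reduced} hypothesis on $K$ --- not primality --- then forces these two crossings into the same twist region, so $c$ is associated with $C$. Twist-reducedness is the right tool here; a pure primality argument does not seem to close.
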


\begin{proof}
For \eqref{lmitm:distinct-regions-cross}, if the blue regions on opposite sides of the same crossing do agree, then draw an arc in this region from one side of the crossing to the other.  Close this into a simple closed curve by drawing a short arc in a red region meeting that crossing, close enough to the crossing that it does not meet any crossing disks.  This simple closed curve contradicts Lemma \ref{lemma:K2-0-prime}.  Similarly for item \eqref{lmitm:distinct-red}, if red regions on opposite sides of a crossing of $L_{B,2}$ agree, then we may connect them into a closed curve with crossings on either side, contradicting Lemma~\ref{lemma:K2-0-prime}.

For item \eqref{lmitm:distinct-regions}, if the blue regions meeting one crossing circle agree, we may draw an embedded arc through that region with endpoints on the two intersections of the crossing circle. Attach to this the arc of intersection of the crossing disk $D$ with the projection plane.  This gives a simple closed curve $\gamma$ in the diagram of $K_{B,i}$ meeting the diagram twice. Push slightly off $D$ to ensure that the arc of $\gamma$ in the red surface meets no crossing disks.  By Lemma \ref{lemma:K2-0-prime}, $\gamma$ must have no crossings on one side.  But then we may isotope $D$ away from $K_{B,i}$.  This contradicts the fact that the diagram of $K$ is prime.

For \eqref{lmitm:assoc-cross}, draw arcs from either side of the crossing to the intersections of the crossing circle with the blue regions.  Connect these by the arc of intersection of the crossing disk with the projection plane.  Now twist, yielding $K$.  We have a closed curve of the diagram meeting a crossing coming from the twist region of the crossing circle, and also meeting a single crossing of the diagram.  Because $K$ is twist reduced, these crossings belong to the same twist region.  Hence the original crossing is associated with that crossing circle.
\end{proof}

The diagrams $K_{B,i}$, for $i=0, 2$ may not be twist reduced. However, we will need the fact that they are blue twist reduced, as defined below. 

\begin{define}\label{def:blue-twist-reduced}
A diagram of a link is \emph{blue twist reduced} if every simple closed curve in the projection plane meeting the diagram in exactly two crossings, with sides on the blue checkerboard surface, bounds a string of red bigons.
\end{define}

\begin{lemma}\label{lemma:blue-twist-reduced}
For $K$ a knot with prime, twist reduced, alternating diagram, the diagram of $K_{B,i}$, for $i=0, 2$, is blue twist reduced.
\end{lemma}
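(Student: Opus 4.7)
The plan is to lift $\gamma$ from $K_{B,i}$ to $K$, apply the twist-reducedness of $K$, rule out a potential blue-bigon configuration via the primality of $K$, and transfer the conclusion back to $K_{B,i}$. Suppose $\gamma$ is a simple closed curve meeting the diagram of $K_{B,i}$ at exactly two crossings $c_1, c_2$ with sides on blue. I would first construct a curve $\tilde{\gamma}$ meeting $K$'s diagram at the same two crossings with sides still on blue: the two diagrams agree outside small neighborhoods of the twist regions shortened in passing from $K$ to $K_{B,i}$ (namely the red-bigon twist regions corresponding to crossing circles in $L_B$), and inside each such neighborhood the blue surface meets the twist region along its continuous side regions, which persist as (slightly thinner) side regions of the longer twist of $K$. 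I isotope $\gamma$ within these blue side regions to avoid the additional crossings of $K$, producing $\tilde{\gamma}$.

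By twist-reducedness of $K$, the curve $\tilde{\gamma}$ encloses a string of bigons between $c_1$ and $c_2$ on one side, inside some twist region $T$ of $K$. I claim these bigons must be red. At each of $c_1, c_2$ the curve $\tilde{\gamma}$ separates the four adjacent regions in one of two ways: either between the two opposite side regions of $T$ (which are blue, forcing the bigons of $T$ to be red), or between a bigon of $T$ and its opposite outer region (which forces those regions, and hence the bigons, to be blue). In the blue-bigon alternative, since $\tilde{\gamma}$ meets no interior crossings of $T$, the crossings $c_1, c_2$ must be adjacent and the outer regions opposite to the bigons at $c_1$ and at $c_2$ must coincide in a single blue face $F$. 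This forces $T$ to consist of just the two crossings $c_1, c_2$ with common top/bottom face $F$, and one can then find a Jordan curve crossing the diagram at only two edges (one on each strand of $T$) that separates $T$ from the rest of the diagram. Since $K$ is hyperbolic and hence has crossings outside $T$, this violates primality of $K$, a contradiction.

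Hence the string bounded by $\tilde{\gamma}$ consists of red bigons. Translating back: if $T$ has fewer than $N_\tw$ crossings it is not shortened, and the same string of red bigons is bounded by $\gamma$ in $K_{B,i}$. If $T$ is shortened, then since both $c_1, c_2$ remain in $K_{B,i}$ the shortened twist region must retain at least two crossings, forcing $i = 2$ with $T$ of even length; in that case the shortened twist region consists of $c_1, c_2$ with a single red bigon between them, bounded by $\gamma$. The remaining sub-cases (a shortened twist region retaining $0$ or $1$ crossings) cannot contain both $c_1$ and $c_2$, so they cannot arise.

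The main obstacles will be rigorously constructing the lifting isotopy in the first step and carefully establishing the primality-based ruling-out of the top-equals-bottom configuration; the translation back is routine once those are in hand. For the $i=0$ case, some extra care will be needed because Lemma \ref{lemma:K2-0-prime} only guarantees a restricted form of primality for $K_{B,0}$, but the primality of $K$ itself, used after lifting, suffices for our argument.
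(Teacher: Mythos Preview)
Your approach is essentially the paper's: isotope $\gamma$ within the blue regions off the crossing disks to obtain a curve in the diagram of $K$, apply twist-reducedness of $K$, and transfer the resulting string of bigons back to $K_{B,i}$. One simplification: your blue-bigon alternative cannot occur and needs no separate treatment, because at each crossing $\gamma$ passes through both opposite blue regions, so the bigon lying on either side of $\gamma$ (the first region in the string guaranteed by twist-reducedness) is automatically red --- this is why the paper simply asserts ``they must be red bigons'' without further argument.
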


\begin{proof}
Suppose there exists a closed curve $\gamma$ in the diagram of $K_{B,i}$ meeting the diagram in blue regions, and meeting exactly two crossings.  By adding crossings to the diagram, giving red bigons, we obtain the diagram of $K$.  Since $\gamma$ is disjoint from the red surface, we may isotope it to be disjoint from crossing disks whose boundary crossing circle meets the blue surface.  Then when we add back in the crossings and red bigons to obtain the diagram of $K$, the curve $\gamma$ remains disjoint from the red surface, and meets the diagram of $K$ in exactly two vertices.  

Because $K$ is twist reduced, $\gamma$ bounds a string of bigons on one side in $K$, and they must be red bigons.  The crossings forming the bigons are all in the same twist region.  Either they remain in the diagram of $K_{B,i}$ when we remove crossings from $K$, or some of them are removed to obtain the diagram of $K_{B,i}$.  In the latter case, $i=2$ and $\gamma$ bounds a single bigon between the two crossings left in such a twist region in $K_{B,2}$.  In either case the lemma is proved.
\end{proof}

The following lemma will lead to contradictions in particular cases.

\begin{lemma}\label{lemma:22torus}
For $K$ a hyperbolic knot with a prime, twist reduced, alternating diagram, suppose $K_{B,i}$ is a $(2,2)$--torus link.  Then $L_{B,i}$ cannot have a crossing circle encircling the two crossings of $K_{B,i}$, for $i=0, 2$.
\end{lemma}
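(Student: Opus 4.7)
The plan is to argue by contradiction, supposing that $L_{B,i}$ has a crossing circle $C$ whose planar disk contains the two crossings of $K_{B,i}$.

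First, I would dispose of the case $i=0$ immediately. By Definition~\ref{def:L0-L2}, each crossing circle of $L_{B,0}$ has at most one associated crossing (only $0$ or $1$ crossings are left in each encircled twist region of $K$), and by construction the planar disk bounded by a crossing circle contains only the crossings of the twist region of $K$ that it originally encircled. Hence $C$ encircles at most one crossing of $K_{B,0}$, contradicting the supposition.

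For $i=2$, I would first conclude that both crossings of $K_{B,2}$ must lie in the single twist region of $K_{B,2}$ associated with $C$, since by construction the planar disk bounded by $C$ contains no other crossings. The original twist region in $K$ must therefore have had at least $N_\tw$ crossings. Next, I would invoke Lemma~\ref{lemma:K2-0-prime} to observe that the diagram of $K_{B,2}$ is prime and alternating; since primality forbids nugatory crossings once there is more than one crossing, this diagram is reduced, and by Kauffman--Murasugi--Thistlethwaite it realizes the crossing number of $K_{B,2}$. Because the $(2,2)$--torus link has crossing number $2$, the diagram of $K_{B,2}$ must consist of exactly the two crossings encircled by $C$; in particular, $C$ is the only crossing circle of $L_{B,2}$.

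Recovering $K$ from $K_{B,2}$ then amounts to restoring, via the twist at $C$, its full twist region of $\geq N_\tw$ crossings. Since no other crossings or crossing circles are present, $K$ is then a $(2,q)$--torus link for some $q\geq N_\tw$, contradicting the hypothesis that $K$ is hyperbolic. The main obstacle is confirming that no stray crossings exist in $K_{B,2}$ outside the twist region at $C$, which is handled by combining Lemma~\ref{lemma:K2-0-prime} with the classical fact that a prime reduced alternating diagram realizes the crossing number.
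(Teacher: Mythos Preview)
Your proof is correct and follows essentially the same line as the paper's: for $i=0$ you use that at most one crossing is associated with any crossing circle of $L_{B,0}$, and for $i=2$ you argue that $C$ must be the unique crossing circle, so that restoring the crossings exhibits $K$ as a $(2,q)$--torus link, contradicting hyperbolicity.

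One remark: the detour through Kauffman--Murasugi--Thistlethwaite is unnecessary. In this paper $K_{B,i}$ is by definition a diagram (see the paragraph preceding Lemma~\ref{lemma:K2-0-prime}), so the hypothesis already says the diagram has exactly two crossings; there is nothing to confirm. The paper simply observes that since every crossing circle of $L_{B,2}$ has at least one associated crossing (each encircled twist region retains one or two crossings by Definition~\ref{def:L0-L2}), and both crossings of $K_{B,2}$ are associated with $C$, there is no room for a second crossing circle. You reach the same conclusion, just with an extra step that buys nothing here.
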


\begin{proof}
If $L_{B,i}$ has a crossing circle encircling the two crossings of a $(2,2)$--torus link, then both crossings are associated with that crossing circle.  When $i=0$, this contradicts the definition of $L_{B,0}$: it has at most one crossing associated with any crossing circle.

When $i=2$, the diagram of $L_{B,2}$ has at least one crossing associated with any crossing circle.  Since the diagram of $K_{B,2}$ has just two crossings, and both are associated with the given crossing circle, there can be no other crossing circles in $L_{B,2}$.  Hence when we put in the crossings to go from the diagram of $L_{B,2}$ to that of $K$, we only add crossings to the given twist region.  Thus $K$ is a $(2,2q)$--torus link.  This contradicts the assumption that $K$ is a hyperbolic alternating link.  
\end{proof}

\section{Surface properties}\label{sec:surfaces}
In this section, as well as the next two, we will give restrictions on the graphs $\Gamma_B$ coming from Lemmas~\ref{lemma:graph-bluevalence} and~\ref{lemma:graph-bdyincompr}, as well as a similar graph obtained in Lemma~\ref{lemma:htpc-graph}.  To analyze these graphs, we will actually be considering three surfaces in $S^3 \setminus L_{B,i}$.  The first surface is the blue surface $B_i$, which becomes $S_{B,i}$ in $S^3\setminus K$ by attaching annuli or M\"obius bands.  The second surface is the red surface $R_i$, which we have noted is a checkerboard surface for $K_{B,i}$, and is embedded in $S^3\setminus K$, $S^3\setminus L_{B,i}$, and $S^3\setminus K_{B,i}$.  The third surface we color \emph{green}.  It consists of all crossing disks bounded by the crossing circles of $L_{B,i}$. The green surface is embedded in $S^3\setminus L_{B,i}$, since each crossing disk is embedded and disjoint from the others.

\subsection{Graphs on a disk}\label{sec:graphs-on-a-disk}

Lemmas~\ref{lemma:graph-bluevalence} and~\ref{lemma:graph-bdyincompr} give us a graph $\Gamma_B$ on a disk $D$.
In fact, the results in this section, as well as in Sections~\ref{sec:trisquares} and~\ref{sec:boundary}, apply to any graph $\Gamma_B$ on a disk $D$ coming from the pull back of $S_{B,i}$ under a map $\phi\co D\to S^3\setminus {\rm{int}}(N(K))$, as in Lemmas~\ref{lemma:graph-bluevalence} and~\ref{lemma:graph-bdyincompr}, and in Lemma~\ref{lemma:htpc-graph} in Section~\ref{sec:homotopic}.

For all these graphs $\Gamma_B$, the subscript $B$ stands for \emph{blue} because  $\Gamma_B$ is the set of points in $D$ that map to the blue surface $S_{B,i}$.  Note that $\Gamma_B$ has a finite collection of isolated vertices on $D$ and $\bdy D$.  Hence removing these points, we obtain an embedding of a punctured disk $\phi'\co D' \to S^3\setminus L_{B,i}$.  The map $\phi'$ is transverse to the blue, red, and green surfaces in $L_{B,i}$.  Pulling back the intersections of these surfaces to $D'$, we obtain a graph $\Gamma_{BRG}$ with blue, red, and green edges on a punctured disk.  So, $\Gamma_{B}$ is a subgraph of $\Gamma_{BRG}$. There is also an intermediate subgraph $\Gamma_{BR}$ consisting of only the blue and red edges.

Each edge of these graphs maps to an arc in the diagram $L_{B,i}$. The arrangement of these arcs in this diagram will play a central role in this paper. Note that we are focusing on the diagram $L_{B,i}$ and hence every crossing circle punctures only blue regions.

We record now how corresponding edges may meet.  The proof follows immediately from the definitions, but we put the information into a lemma for future reference.

\begin{lemma}\label{lemma:edge-intersections}
Suppose $\phi\co D\to S^3\setminus K$ is a map of a disk meeting the blue, red, and green surfaces transversely, such that the pull back of these surfaces under $\phi$ gives an embedded planar graph $\Gamma_{BRG}$ with blue, red, and green edges, and the restriction $\phi'\co D'\to S^3\setminus L_{B,i}$ is a map of a punctured disk, with punctures mapping to crossing circles.
\begin{enumerate}
\item\label{lmitm:blue-blue} Blue edges meet blue edges only at vertices (punctures of $D'$).  In the diagram of $L_{B,i}$, these map to arcs that lie in blue regions of the complement of $K_{B,i}$ with endpoints on crossing circles.  Adjacent blue edges at a vertex map to arcs that meet the corresponding crossing circle in the two distinct regions of Lemma~\ref{lemma:blue-prime}\eqref{lmitm:distinct-regions}.  See Figure~\ref{fig:blue-edges}, (a) and (b). Note that these are vertices of $\Gamma_B$ and so the number of blue edges meeting at a vertex is provided by Lemma \ref{lemma:graph-bluevalence}. 
\item\label{lmitm:red-red-green-green} Because red and green surfaces are embedded, red edges do not meet red edges, and green edges do not meet green.
\item\label{lmitm:green-red} Green edges meet red on a crossing disk, along the arc in the plane of projection that runs between the two punctures of the twice--punctured disk.
\item\label{lmitm:green-blue} Green edges meet blue either in the interior of blue edges, which correspond to intersections in the interior of crossing disks, or at a vertex of $\Gamma_B$ (puncture of $D'$) where blue edges come together. Such a vertex corresponds to a crossing circle, and a green edge meeting this vertex corresponds to the green edge meeting the boundary of the crossing disk.
\item\label{lmitm:blue-red} Blue edges meet red edges at a crossing of the diagram.  If the blue and red edges together bound a region of $D'$ that is mapped to lie above the plane of projection, then the blue and red edges are mapped to meet the crossing as in Figure~\ref{fig:blue-red-crossing}(c).  If they bound a region mapped below the plane of projection, then they are mapped as in Figure~\ref{fig:blue-red-crossing}(d).  These are the only possibilities.
\item\label{lmitm:above-below}
The projection plane of the diagram of $K_{B,i}$ is made up of red and blue surfaces; we also refer to this as the projection plane of $L_{B,i}$.  Thus each region of $D'$ is mapped above or below the plane of projection of $L_{B,i}$, with regions switching from above to below or vice-versa across red or blue edges.
\end{enumerate}
\end{lemma}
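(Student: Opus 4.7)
The plan is to verify each of the six items by unpacking the definitions of the blue, red, and green surfaces together with the transversality hypothesis on $\phi$; the statement is really a structural description of how the arcs of $\Gamma_{BRG}$ sit in the diagram of $L_{B,i}$ relative to the three surfaces, organized by color pair.

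For item (1), I would first observe that $B_i$ is an embedded subsurface of $S^3 \setminus L_{B,i}$, obtained from the blue checkerboard surface of $K_{B,i}$ by puncturing it along the crossing circles of $L_{B,i}$. Consequently, two blue edges of $\Gamma_{BRG}$ can only come together at punctures of $D'$, which are precisely the vertices of $\Gamma_B$, and each blue edge maps into the interior of a blue region of the diagram of $K_{B,i}$. At a vertex mapping to a crossing circle $C$, the local picture of $B_i$ is two meridian disks of $C$, as in Figure~\ref{fig:solidtorus}, so the two adjacent blue edges must correspond to the two distinct blue regions meeting $C$, which are indeed distinct by Lemma~\ref{lemma:blue-prime}\eqref{lmitm:distinct-regions}; the valence at such a vertex is then the one provided by Lemma~\ref{lemma:graph-bluevalence}. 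Items (2) and (3) are immediate from embeddedness: the red checkerboard surface is embedded in $S^3$, and the green surface is a disjoint union of embedded crossing disks, so no two red edges or two green edges can cross in $\Gamma_{BRG}$; and a crossing disk meets the red surface in exactly the arc in the projection plane between its two punctures, since the remainder of the crossing disk lies off of the projection plane.

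Item (4) follows similarly by looking at how the blue surface meets a crossing disk $D_C$: either transversely in the interior of $D_C$, which corresponds to a blue region of $K_{B,i}$ crossing above or below the projection plane, or along the boundary $\partial D_C = C$, where the blue surface is punctured by $C$ and the intersection contributes a vertex of $\Gamma_B$. For item (5), a point of intersection of a red edge with a blue edge maps to a point where the red and blue checkerboard surfaces of $K_{B,i}$ meet, and this intersection locus is exactly the set of crossings of the diagram. Near such a crossing the strands cut a small ball into four regions, two above and two below the projection plane, and which of the two local pictures occurs is determined by which side of the projection plane the region of $D$ is mapped to. Item (6) is just the observation that the projection plane of $L_{B,i}$ is the union of the red and blue surfaces, so each region of $D'$ is sent to one side of the projection plane and changes sides exactly across a red or blue edge.

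The only real point requiring care is the ``adjacency'' assertion in item (1), where Lemma~\ref{lemma:blue-prime}\eqref{lmitm:distinct-regions} is needed to rule out the possibility that two consecutive blue edges at a vertex lie in the same blue region of $K_{B,i}$. The remaining items are bookkeeping once one has written down the local pictures of the blue, red, and green surfaces at crossings and crossing circles of $L_{B,i}$.
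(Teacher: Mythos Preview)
Your proposal is correct and takes essentially the same approach as the paper: the paper itself gives no explicit proof, only the remark ``The proof follows immediately from the definitions, but we put the information into a lemma for future reference,'' followed by a \qed. Your write-up is in fact more detailed than what the paper provides, and your identification of the one nontrivial point---the adjacency claim in item~(1), which rests on Lemma~\ref{lemma:blue-prime}\eqref{lmitm:distinct-regions} together with the local structure of $B_i$ near a crossing circle---is exactly right.
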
\qed

\begin{figure}
\begin{tabular}{lclclcl}
\input{blue-edges-graph-arxiv.tex} & \hspace{.2in} &
\input{blue-edges-diagram-arxiv.tex} & \hspace{.2in} &
  \includegraphics{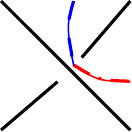} & \hspace{.2in} &
  \includegraphics{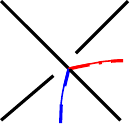} \\
  (a) && (b) && (c) && (d)
\end{tabular} 
\caption{Adjacent blue edges of $\Gamma_{BRG}$ meeting at a vertex as in (a) will lie in distinct regions as in (b).  In (c), blue and red edges meeting above the projection plane.  In (d), meeting below.}
\label{fig:blue-edges}
\label{fig:blue-red-crossing}
\end{figure}

Note that it is, in principle, possible for the inverse image of one of the red, green or blue surfaces to contain a simple closed curve component that is disjoint from the surfaces with other colours. However, any such component bounds a disk in one of the red, blue or green surfaces that is disjoint from the remaining surfaces, and so can easily be removed by a homotopy. We therefore assume that no such closed curves arise. Hence, as explained in the above lemma, the inverse image of the red, green and blue surfaces forms a graph $\Gamma_{BRG}$ in $D$.

We want to arrange this graph to be as simple as possible, in a certain suitable sense, so that various trivial arrangements can be ruled out.

\begin{define}\label{def:complexity}
Define the \emph{complexity} of $\phi'\co D'\to S^3\setminus L_{B,i}$ to be the ordered set
\[ C(\phi') = (\# \mbox{vertices}(\Gamma_B), \# \mbox{vertices}(\Gamma_{BR}), \# \mbox{vertices}(\Gamma_{BRG}), \# \mbox{edges}(\Gamma_{BRG}))
\]
\end{define}
Order complexity lexicographically.  We will assume that $\phi'$ has been chosen so that the complexity is as small as possible. 

\subsection{Monogons, bigons, and triangles}

In this subsection and the next, we give technical results with combinatorial proofs to show that certain configurations of the graph $\Gamma_{BRG}$ cannot hold.  Together, these results will give the Lemmas used in the proof of Theorems~\ref{thm:Bincompressible} and \ref{thm:bdryincompressible}.  

We will be considering complementary regions of the graph $\Gamma_{BRG}$ of red, blue, and green edges on $D'$.  The complementary regions in $D'$ need not be embedded in $S^3 \setminus L_{B,i}$, but they are disjoint from the red, blue, and green surfaces in $S^3\setminus L_{B,i}$.
We refer to those complementary regions that are disks by the number of edges they have, namely monogons meet one edge, bigons meet two, and triangles three. Sometimes, we will also consider subgraphs of $\Gamma_{BRG}$, for example, the subgraph $\Gamma_{BR}$ consisting only of the red and blue edges. We will also refer to complementary regions  of these subgraphs as monogons, bigons, triangles, and so on.

\begin{lemma}\label{lemma:red-green-bigons}
The graph $\Gamma_{BRG}$ has no bigons with one red side and one green, disjoint from blue.  More generally, there are no green edges disjoint from blue that have both endpoints on red.
\end{lemma}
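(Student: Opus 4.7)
The plan is to derive a contradiction with the minimality of the complexity $C(\phi')$ (Definition~\ref{def:complexity}) via an innermost--disk argument carried out on the relevant crossing disk. Suppose for contradiction that there is a green edge $e$ of $\Gamma_{BRG}$ disjoint from all blue edges, with both endpoints at vertices of $\Gamma_{BRG}$ lying on red edges. By Lemma~\ref{lemma:edge-intersections}\eqref{lmitm:green-red}, these endpoints map to points on the red arc $r = G \cap (\text{red surface})$ inside the crossing disk $G$ for the relevant crossing circle. Let $\alpha = \phi'(e) \subset G$ be the image of $e$: an embedded arc with endpoints $p, q \in r$, disjoint from the blue arcs $G \cap (\text{blue surface})$.

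First I would identify the disk $\Delta \subset G$ bounded by $\alpha$ together with the sub--arc $r' \subset r$ from $p$ to $q$, choosing $\Delta$ to be the component of $G \setminus (\alpha \cup r')$ disjoint from the two punctures of $G$ (the endpoints of $r$). Among all green edges satisfying the hypothesis, I would then choose $e$ so that $\Delta$ is innermost in $G$: its interior meets no other arc of $\phi'(D') \cap G$, and no portion of $G \cap (\text{blue surface})$. The blue condition is the delicate point, since blue arcs on $G$ have endpoints on $\partial G$ (disjoint from $\Delta$) but can cross $r'$ at diagram crossings; a blue sub--arc entering $\Delta$ through a crossing on $r'$ must exit $\Delta$, and since the exit cannot be through $\alpha$ (which is blue--disjoint) it must be through $r'$ at another crossing. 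This nested configuration can be used to pick a strictly smaller innermost disk on $G$, or to find a strictly smaller green candidate, so in either case we reduce to the situation where no such blue sub--arc exists.

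With $\Delta$ innermost, its interior lies in $G$ and is disjoint from the red, blue and green surfaces of $L_{B,i}$ except on $\partial \Delta$, and it contains no punctures. I would then use $\Delta$ to define a local homotopy of $\phi'$ supported in a regular neighborhood $N(\Delta) \subset S^3 \setminus L_{B,i}$, pushing the strip $\phi'^{-1}(N(\Delta))$ across $\Delta$ so as to eliminate the arc $\alpha$ from $\phi'(D') \cap G$. Because $N(\Delta)$ meets no blue surface and meets the red surface only in a collar of $r'$, the homotopy removes the edge $e$ and its two endpoints from $\Gamma_{BRG}$ without creating any new vertices of $\Gamma_B$ or $\Gamma_{BR}$; the two red edges incident to the former endpoints of $e$ are at worst merged into a single red edge. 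The third coordinate of the complexity therefore strictly decreases while the first two are unchanged, contradicting minimality of $C(\phi')$. The main obstacle is the innermost step handling blue sub--arcs in $\Delta$, which relies on the structural fact (from Lemma~\ref{lemma:edge-intersections}\eqref{lmitm:blue-red}) that blue--red intersections occur only at diagram crossings; the final push across $\Delta$ is a standard local modification.
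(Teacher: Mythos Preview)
Your approach is essentially the same as the paper's: locate the disk $\Delta \subset G$ bounded by $\alpha \cup r'$ and use it to homotope $\phi'$ so as to remove the two green--red vertices, leaving the first two coordinates of the complexity unchanged.

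The one place you overcomplicate things is the ``delicate point'' about blue sub--arcs in $\Delta$. This worry is unfounded. The intersection $G \cap (\text{projection plane})$ is a single arc, broken into three consecutive pieces: blue (from $\partial G$ to one puncture), red (the arc $r$ between the punctures), and blue (from the other puncture to $\partial G$). These blue pieces meet $r$ only at its endpoints, the two punctures; they do not cross $r$ at diagram crossings or anywhere else. Since $\alpha$ is disjoint from the blue arcs by hypothesis and has both endpoints in the interior of $r$, the arc $\alpha$ lies entirely in one of the two half--disks of $G \setminus (\text{projection plane})$. Consequently $\alpha \cup r'$ bounds a disk $\Delta$ in that half--disk, and $\Delta$ is automatically disjoint from the blue surface and from the punctures. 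The paper records this in one line (``Because it is disjoint from blue, the arc of intersection of green and red, along with the green edge, bounds a disk on the green surface''), without any innermost reduction for blue. Your innermost argument for other arcs of $\phi'(D')\cap G$ inside $\Delta$ is harmless and arguably tidier than the paper, but the blue discussion can simply be dropped.
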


\begin{proof}
The first statement follows immediately from the second, and so we prove the second.  The red surface runs through the interior of a crossing disk in $S^3\setminus L_{B,i}$.  Hence if a green edge on $D'$ has both endpoints on red, it has endpoints meeting the same embedded interval on the green 2--punctured disk.  Because it is disjoint from blue, the arc of intersection of green and red, along with the green edge, bounds a disk on the green surface.  Use this disk to homotope away the intersections with the red surface.  That is, use the disk to push the green arc of intersection to the other side of the projection plane.  In a neighborhood of the crossing disk, this will remove two red points of intersection with that crossing disk.  It will not affect any intersections of $D$ with crossing circles (vertices of $\Gamma_B$) or intersections of blue and red (vertices of $\Gamma_{BR}$). 
Hence, the graph $\Gamma_{BRG}$ in $D$ has been simplified: its number of vertices has been reduced.  This contradicts our minimality assumption on complexity.
\end{proof}

\begin{lemma}\label{lemma:blue-green-bigons}
The graph $\Gamma_{BRG}$ has no bigons with one green side and one blue, disjoint from red, and with at least one endpoint not being a vertex of $\Gamma_B$.  More generally, $\Gamma_{BRG}$ has no green edge disjoint from red with both endpoints on the same blue edge of $\Gamma_{BR}$ and  with at least one endpoint not being a vertex of $\Gamma_B$.
\end{lemma}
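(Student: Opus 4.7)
The first statement is the special case of the second in which the green and blue edges form the two sides of a bigon; thus it suffices to prove the second. I argue by contradiction in the style of Lemma~\ref{lemma:red-green-bigons}: assuming such a green edge $e$ exists, I construct a homotopy of $\phi'$ that strictly decreases the complexity $C(\phi')$, contradicting minimality.

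Let $e$ have endpoints $p$, $q$ on a blue edge $b$ of $\Gamma_{BR}$, with $p$ not a vertex of $\Gamma_B$, and let $G$ be the crossing disk with $\phi(e)\subset G$. Then $\phi(p)$ lies on $\mathrm{blue}\cap G$ in the interior of $G$, not on the crossing-circle boundary $\partial G$. Since $e$ is disjoint from red, $\phi(e)$ is disjoint from the projection-plane arc $\mathrm{red}\cap G$ and hence lies in one of the two once-punctured sub-disks $G_+\subset G\setminus(\mathrm{red}\cap G)$. The key step will be to exhibit an innermost sub-disk $\Delta_G\subset G_+$ whose boundary is $\phi(e)\cup\beta_G$, where $\beta_G$ is an arc from $\phi(p)$ to $\phi(q)$ running along $\mathrm{blue}\cap G_+$ and possibly parts of $\partial G_+$, such that $\Delta_G$ misses the puncture of $G_+$ and has interior disjoint from $\mathrm{blue}\cap G$.

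Once such $\Delta_G$ is in hand, let $\Delta\subset D'$ be the sub-disk of $D'$ bounded by $e$ and the sub-arc $b'$ of $b$ from $p$ to $q$. I then homotope $\phi'|_{\Delta}$ through a collar of $\Delta_G$, pushing $\phi'(\Delta)$ from one side of $G$ to the other. This homotopy is supported near $G$, so it avoids all crossing circles and all crossings of the diagram, preserving both $\#\mathrm{vertices}(\Gamma_B)$ and $\#\mathrm{vertices}(\Gamma_{BR})$. Because $\phi(p)$ lies in the interior of $G$, the vertex of $\Gamma_{BRG}$ at $p$ is genuinely removed (and possibly other interior vertices on $\beta_G$ as well), so $\#\mathrm{vertices}(\Gamma_{BRG})$ strictly decreases. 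This contradicts minimality of $C(\phi')$. The hypothesis that $p$ is not a vertex of $\Gamma_B$ is essential precisely here: were both endpoints on $\partial G$, the homotopy would only slide $\phi(e)$ along $\partial G$ and would not reduce the count of genuine interior blue-green intersections.

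The main obstacle is the construction of $\beta_G$ and $\Delta_G$. The points $\phi(p)$ and $\phi(q)$ may lie on distinct components of $\mathrm{blue}\cap G_+$, so they cannot in general be joined by an arc entirely inside $\mathrm{blue}\cap G_+$; one must be prepared to splice in sub-arcs of the crossing-circle boundary or of $\mathrm{red}\cap G$. The hypothesis that $\phi(p)$ and $\phi(q)$ lie on the common blue edge $\phi'(b)$ of $\Gamma_{BR}$, together with the embeddedness of $\phi'(b)$ on the blue surface and Lemma~\ref{lemma:edge-intersections}(\ref{lmitm:green-blue}), should constrain the components of $\mathrm{blue}\cap G_+$ enough to produce $\beta_G$, and an innermost choice on $G_+$ then avoids the puncture. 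This case analysis is the heart of the proof.
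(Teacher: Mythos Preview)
Your overall strategy is the paper's: find a disk $E$ on the crossing disk $G$ with $\partial E = \phi(e)\cup(\text{an arc on }\mathrm{blue}\cap G)$, and homotope $\phi'$ across $E$ to delete the interior blue--green intersection at $p$ while leaving $\Gamma_B$ and $\Gamma_{BR}$ untouched. But you have not carried out the step you yourself flag as the heart of the proof, and the workaround you sketch would actually break the argument.

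The point you are missing is that your ``main obstacle'' never occurs. The blue edge $b$ is an edge of $\Gamma_{BR}$ and hence meets no red; therefore $\phi'(b)$ lies in a single blue region of the diagram of $K_{B,i}$. By Lemma~\ref{lemma:blue-prime}\eqref{lmitm:distinct-regions} the two blue regions meeting the crossing circle $\partial G$ are distinct, so $\phi'(b)$ can meet only one of the two components of $\mathrm{blue}\cap G$. Hence $\phi(p)$ and $\phi(q)$ already lie on the \emph{same} blue arc $\alpha\subset G$, and $\phi(e)$ together with the sub-arc of $\alpha$ between them bounds a disk $E\subset G_+$. Since the puncture of $G_+$ sits at the far endpoint of $\alpha$ (on $K$), the sub-arc avoids it and so does $E$. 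No case analysis, no innermost choice, and no splicing is needed; nor is any embeddedness of $\phi'(b)$, which in any case is not available.

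Conversely, your proposal to allow $\beta_G$ to run along $\partial G$ or along $\mathrm{red}\cap G$ is fatal to the complexity argument. If $\partial\Delta_G$ meets $\partial G$ in more than an endpoint, the homotopy across $\Delta_G$ pushes $\phi(D)$ across the crossing circle and can change the number of $\Gamma_B$--vertices; if it meets $\mathrm{red}\cap G$, it can change the number of $\Gamma_{BR}$--vertices. In either case the first or second entry of $C(\phi')$ is no longer fixed and you lose the lexicographic decrease.
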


\begin{proof}
Such a green edge must lie on a single crossing disk.  The blue edge will either have both endpoints in the same region adjacent to the corresponding crossing circle, or it will have endpoints in the two distinct regions meeting that crossing circle.  In the latter case, a single blue edge that meets no red must belong to a single region, contradicting Lemma~\ref{lemma:blue-prime}\eqref{lmitm:distinct-regions}.  So the former must happen, that is, the blue edge has both of its endpoints in the same region adjacent to the crossing circle.
Then the green arc must bound a disk $E$ in the portion of the crossing disk on one side of the projection plane.  We may use this disk to homotope the disk $D$. A homotopy of $D$ along $E$ removes the intersection of the green and blue in the interior of the blue edge, without affecting the number of vertices of $\Gamma_B$ or $\Gamma_{BR}$, yet decreasing the number of vertices of $\Gamma_{BRG}$.  This contradicts the assumption that the complexity is minimal. 
\end{proof}

There is another option for a bigon with green and blue sides, namely when the green edge has its endpoints on two high valence vertices of $\Gamma_B$. In this case, the blue edge must be trivial (as in Definition \ref{def:trivial-edge} below) and we will deal with trivial blue edges and trivial bigons separately (as in Section \ref{sec:graphs}).

\begin{lemma}\label{lemma:red-blue-bigons}
The graph $\Gamma_{BR}$ has no bigons with one blue side and one red, whether or not the bigon meets the green surface.
\end{lemma}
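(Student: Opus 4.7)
The plan is to derive a contradiction with the minimality of the complexity $C(\phi)$ from Definition~\ref{def:complexity}. Suppose $\Gamma_{BR}$ contains a bigon bounded by a red edge $\rho$ and a blue edge $\beta$ meeting at vertices $v_1, v_2 \in D$. By Lemma~\ref{lemma:edge-intersections}(5), $\phi(v_1) = c_1$ and $\phi(v_2) = c_2$ are crossings of the diagram of $K_{B,i}$. By Lemma~\ref{lemma:edge-intersections}(1)--(2), together with the fact that red and blue edges in $\Gamma_{BR}$ do not meet other crossings between their endpoints, $\phi(\rho)$ lies in a single red region $R$ with endpoints at $c_1, c_2$, and $\phi(\beta)$ lies in a single blue region $B$ with endpoints at $c_1, c_2$. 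Thus $R$ and $B$ are adjacent at both crossings, sharing diagram edges $e_1$ at $c_1$ and $e_2$ at $c_2$.

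First I would form a simple closed curve $\delta$ in the projection plane by concatenating arcs in $R$ and $B$ parallel to $\phi(\rho)$ and $\phi(\beta)$, so that $\delta$ meets the diagram transversely at exactly two points, one each in the interiors of $e_1$ and $e_2$. Invoking Lemma~\ref{lemma:K2-0-prime} (primality of the diagram for $i=2$; for $i=0$, any potential failure forces $\delta$ to link a crossing circle, a case ruled out by adjusting the arcs within $R$ and $B$ to be disjoint from the offending crossing disks, or else leading to a separate complexity-reducing move), one side of $\delta$ contains no crossings of the diagram. The portion of the diagram inside that side is then a single arc joining the two intersection points, which, having no crossings, must be a sub-arc of a single edge of the diagram. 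It follows that $e_1 = e_2 =: e$, so $c_1$ and $c_2$ are the two endpoints of a single edge $e$, and $R, B$ are the regions adjacent along $e$.

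The main step is to use this configuration to construct a homotopy of $\phi$ that strictly reduces $C(\phi)$. Let $D_\gamma$ be the planar disk bounded by $\gamma := \phi(\partial\,\mathrm{bigon})$ on the side containing $e$; it is the union of a half-moon in $R$ glued along $e$ to a half-moon in $B$. Taking the bigon innermost in $D$, the $2$-sphere $\phi(\mathrm{bigon}) \cup D_\gamma$ bounds a $3$-ball $B_\Sigma \subset S^3$ whose interior lies entirely on one side of the projection plane, contains no diagram crossings (since $D_\gamma$ has none), and contains no crossing circles of $L_{B,i}$ (the crossing circles encircle twist regions disjoint from $D_\gamma$). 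Using $B_\Sigma$, one homotopes $\phi$ on a neighbourhood of the bigon, pushing its image across $D_\gamma$ and detaching it from the crossing arcs at $c_1$ and $c_2$. This removes $v_1$ and $v_2$ from $\Gamma_{BR}$ while leaving $\#\,\mathrm{vertices}(\Gamma_B)$ unchanged, so the lexicographic complexity $C(\phi)$ strictly decreases, contradicting minimality.

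The principal obstacle will be verifying this final homotopy in detail: one must check that $B_\Sigma$ really is disjoint from other crossings and crossing circles, that the link strand through $e$ (which is boundary-parallel in $B_\Sigma$) can be circumvented, and that any green edges in the interior of the bigon are carried along without increasing the higher-priority entries of $C(\phi)$. The degenerate case $c_1 = c_2$, in which $\phi(\rho)$ and $\phi(\beta)$ are loops based at a common crossing, and the $i=0$ case where $K_{B,0}$ may fail to be prime, will both require separate but analogous treatments based on the structural control provided by Lemma~\ref{lemma:K2-0-prime}.
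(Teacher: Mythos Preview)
Your proposal diverges from the paper's argument and contains a genuine gap: you miss the key observation that makes the homotopy step entirely unnecessary.

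The paper's proof is much more direct. Since the bigon lies (say) above the projection plane, Lemma~\ref{lemma:edge-intersections}\eqref{lmitm:blue-red} forces the blue and red arcs to meet each crossing $c_i$ in the specific configuration of Figure~\ref{fig:blue-red-crossing}(c). In that configuration, the edge $e_i$ separating the relevant blue and red quadrants is a segment of the \emph{overstrand} at $c_i$, for both $i=1,2$. But in an alternating diagram every edge is an overstrand at exactly one of its endpoints, so $e_1\neq e_2$. Equivalently, the closed curve $\gamma$ has crossings on \emph{both} sides, which immediately contradicts Lemma~\ref{lemma:K2-0-prime}. You correctly deduce that primality would force $e_1=e_2$, but you fail to notice this is already the contradiction; everything after that point---the $3$-ball $B_\Sigma$, the homotopy, the degenerate case $c_1=c_2$---is superfluous. (Your homotopy also has the unresolved issue that $B_\Sigma$ contains the link strand along $e$ and so does not lie in $S^3\setminus K$.)

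Your treatment of the case $i=0$ is also inadequate. Here $K_{B,0}$ need not be prime, and Lemma~\ref{lemma:K2-0-prime} says precisely that the red arc of $\gamma$ may be forced through a crossing disk; one cannot simply ``adjust the arcs within $R$ and $B$'' to avoid it. The paper handles this by observing (via Lemmas~\ref{lemma:red-green-bigons} and~\ref{lemma:blue-green-bigons}) that every green edge inside the bigon must run from the blue side to the red side. Cutting $\gamma$ along the first such green disk produces two curves, each meeting the diagram twice, one of which still has crossings on both sides but strictly fewer intersections with green; an induction on the number of green intersections then yields the contradiction. Your parenthetical does not capture this mechanism.
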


\begin{proof}
Suppose there is a bigon with one red side and one blue.  Then it is mapped either completely above or completely below the projection plane.  Without loss of generality, say it is mapped above.  Then by Lemma~\ref{lemma:edge-intersections}\eqref{lmitm:blue-red}, the edges of the bigon meet the diagram of $K_{B,i}$ as shown in Figure~\ref{fig:red-blue-bigon}, left.  But then the union of these two edges forms a simple closed curve $\gamma$ meeting the diagram of $K_{B,i}$ twice with crossings on either side.  If $i=2$, or if $i=0$ and the red edge meets no green disks, this contradicts Lemma~\ref{lemma:K2-0-prime}.

\begin{figure}
  \includegraphics{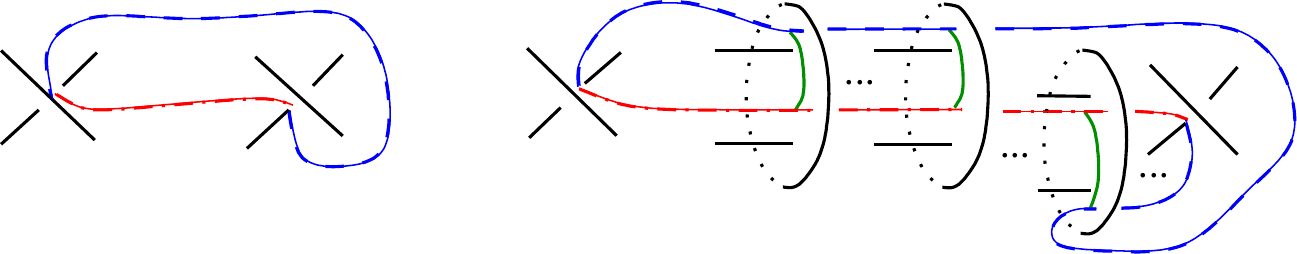}
  \caption{Left: A red--blue bigon that does not meet green. Right: case that it meets green}
  \label{fig:red-blue-bigon}
\end{figure}

So suppose $i=0$ and the red edge of $\gamma$ meets green disks.  Then Lemmas~\ref{lemma:red-green-bigons} and~\ref{lemma:blue-green-bigons} imply that green edges run from the blue edge to the red.   Thus the green disks meet the diagram in one of the two ways shown in Figure~\ref{fig:red-blue-bigon}, right, i.e.\ either with blue above or blue below.  

Follow $\gamma$ along the red edge.  Consider the first green disk $G$ that the red edge intersects.  Replace $\gamma$ with two new closed curves $\gamma_1$ and $\gamma_2$ by drawing an arc along $G$ from a red edge of $\gamma$ to a blue edge, and then splitting $\gamma$ along this arc.  Both closed curves $\gamma_1$ and $\gamma_2$ meet the diagram exactly twice.  One of them, say $\gamma_1$, bounds crossings on either side.  But note that $\gamma_1$ has one fewer points of intersection with the green disk than $\gamma$.  Thus, by induction on the number of intersections of the red edge with green, we obtain a contradiction to Lemma~\ref{lemma:K2-0-prime}. 
\end{proof}

\begin{lemma}\label{lemma:scc}
Consider the blue graph $\Gamma_B$ alone.  There are no simple closed curves of intersection of the blue surface.  That is, $\phi(D)$ does not meet the blue surface in any component disjoint from crossing circles.
\end{lemma}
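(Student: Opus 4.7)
The plan is to derive a contradiction with the assumed minimality of $C(\phi')$ by showing that any simple closed curve component of $\Gamma_B$ can be removed by a small homotopy of $\phi$.

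Suppose for contradiction that $\Gamma_B$ contains a simple closed curve component $\gamma$ disjoint from all vertices of $\Gamma_B$. I would choose $\gamma$ to be innermost in $D$, bounding a subdisk $D_0 \subset D$; by iterating the construction if necessary, I may assume that $\mathrm{int}(D_0)$ contains no further simple closed curve component of $\Gamma_B$ disjoint from vertices. Since $\gamma$ is disjoint from the vertices of $\Gamma_B$, the image $\phi(\gamma)$ lies in the embedded portion of the blue surface, $B_i \subset S^3 \setminus L_{B,i}$. I would then perform a small homotopy of $\phi$, supported in a collar neighborhood $N(\gamma) \subset D$ of $\gamma$, pushing $\phi(\gamma)$ off $B_i$ in the direction of $\phi(D_0)$. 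Since $\phi$ is transverse to $B_i$ along $\gamma$ and $\phi(D_0)$ lies locally on one side of $B_i$, this push-off is well-defined in a sufficiently small neighborhood, and can be arranged so as not to create any new intersections with the red, blue, or green surfaces, nor with the crossing circles.

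The key step is to verify that this homotopy strictly reduces $C(\phi')$. The modification leaves $\#V(\Gamma_B)$ unchanged, since $\gamma$ carries no vertices of $\Gamma_B$ and the support of the homotopy is disjoint from the crossing circles. If $\gamma$ meets the red surface in at least one point, then those blue--red intersections vanish and $\#V(\Gamma_{BR})$ drops; otherwise, if $\gamma$ meets the green surface, then $\#V(\Gamma_{BRG})$ drops; and if $\gamma$ is disjoint from both red and green, then $\gamma$ is an isolated closed curve component of $\Gamma_{BRG}$, whose removal reduces $\#E(\Gamma_{BRG})$. In every case, $C(\phi')$ strictly decreases in one of its coordinates while the earlier coordinates are non-increasing, contradicting the minimality assumption on $\phi$.

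The main obstacle is to justify the push-off rigorously when $B_i$ is non-orientable along $\gamma$: one must argue that the side of $B_i$ occupied by $\phi(D_0)$ is coherently defined around $\gamma$, so that the push-off direction is globally well-defined along the collar. This follows from the observation that $\phi(D_0)$, being path-connected and having interior disjoint from $B_i$ near $\gamma$, cannot occupy both sides of $B_i$; hence the normal bundle of $\gamma$ in $B_i$ is effectively trivialized by $\phi(D_0)$, and the collar homotopy proceeds as described.
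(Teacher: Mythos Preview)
The central step fails: a homotopy of $\phi$ supported only in a collar $N(\gamma)$ cannot remove the transverse intersection curve $\gamma$. Since the homotopy is fixed on $\partial N(\gamma)$, take a short arc $\alpha$ in $N(\gamma)$ joining the two boundary circles of the collar and crossing $\gamma$ once. Its endpoints $\phi(\partial\alpha)$ lie on opposite sides of $B_i$ (that is exactly what transversality of $\phi$ to $B_i$ along $\gamma$ means), and they are fixed by the homotopy. Hence the new $\phi(\alpha)$ still passes from one side of $B_i$ to the other and must meet $B_i$; the blue preimage in $N(\gamma)$ cannot become empty. There is no coherent ``direction of $\phi(D_0)$'' to push into, because near $\gamma$ the map $\phi$ already sends the $D_0$-side and the exterior side of the collar to opposite sides of $B_i$. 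Your final paragraph about non-orientability is addressing a difficulty that never arises, since the push-off itself is not well-posed.

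To genuinely remove $\gamma$ one must modify $\phi$ on all of $D_0$, replacing $\phi|_{D_0}$ by a disk in $B_i$ bounded by $\phi(\gamma)$ and then pushed slightly off. But that requires $\phi(\gamma)$ to bound such a disk in $B_i$, and it destroys whatever vertices and edges of $\Gamma_B$, $\Gamma_{BR}$, $\Gamma_{BRG}$ already sit in $\mathrm{int}(D_0)$, so the complexity bookkeeping you wrote breaks down. The paper avoids all of this: it first uses Lemmas~\ref{lemma:blue-green-bigons} and~\ref{lemma:red-blue-bigons} to show that a blue simple closed curve can meet neither the red nor the green surface (an outermost arc of either colour inside the disk it bounds would produce a forbidden bigon), and then invokes the standing assumption, recorded just before Definition~\ref{def:complexity}, that closed curves of one colour disjoint from the other two have already been eliminated.
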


\begin{proof}
Suppose there is a simple closed curve of intersection of the blue surface.  By Lemmas~\ref{lemma:blue-green-bigons} and \ref{lemma:red-blue-bigons}, it cannot meet either the green or red surface, for an outermost arc of intersection would give an illegal bigon. So, it is a blue simple closed curve disjoint from the red and green surfaces.  But as explained above, we have arranged that $D$ contains no such curves.
\end{proof}

\begin{lemma}\label{lemma:no-monogons}
In the blue graph $\Gamma_B$, there are no monogons.  That is, no edges of intersection run from one vertex (corresponding to a crossing circle) back to that same vertex forming a monogon.
\end{lemma}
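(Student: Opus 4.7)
The plan is to argue by contradiction using an innermost monogon together with the preceding bigon lemmas, and to conclude by exhibiting a boundary compression disk of the blue checkerboard surface $B_i$ in $S^3\setminus L_{B,i}$---contradicting its boundary-$\pi_1$-injectivity---or equivalently by lowering the complexity of $\phi$ from Definition~\ref{def:complexity}.

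Suppose $\Gamma_B$ contains a monogon, and pick an innermost one $D_0\subset D$ whose boundary is a single blue edge $e$ both of whose endpoints lie at a vertex $v$ mapping to a crossing circle $C$. By Lemma~\ref{lemma:edge-intersections}\eqref{lmitm:blue-blue}, the two prongs of $e$ at $v$ approach $C$ from the two distinct blue regions meeting $C$, so $e$ realizes an arc on $B_i$ whose endpoints lie on the two distinct boundary components of $B_i$ on $\partial N(C)$. By the innermost choice, the interior of $D_0$ contains no vertex of $\Gamma_B$ and no blue edge, so $\phi$ maps the interior of $D_0$ into the complement of the blue surface in $S^3\setminus L_{B,i}$.

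Removing a small disk $U$ around $v$ from $D_0$, the disk $D_0\setminus U$ maps under $\phi$ into $S^3\setminus L_{B,i}$ with boundary consisting of the arc $e\setminus U$ on $B_i$ together with a small arc on $\partial N(C)$. This is a boundary compression disk for $B_i$; since the endpoints of the arc on $B_i$ lie on two distinct, disjoint meridian components of $\partial B_i$ on $\partial N(C)$, the arc is essential in $(B_i,\partial B_i)$ (no arc on $\partial B_i$ joins distinct components). Boundary-$\pi_1$-injectivity of $B_i$ in the prime augmented alternating link complement $S^3\setminus L_{B,i}$ then yields the desired contradiction.

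An alternative route, which avoids directly appealing to the essentialness of $B_i$, is to clean up the interior of $D_0$ using Lemmas~\ref{lemma:red-green-bigons},~\ref{lemma:blue-green-bigons}, and~\ref{lemma:red-blue-bigons}, mirroring the outermost-arc argument in the proof of Lemma~\ref{lemma:scc}: a red-blue sub-bigon with one side on $e$ contradicts Lemma~\ref{lemma:red-blue-bigons}; a green-blue sub-bigon has endpoints in the interior of $e$, hence not at a vertex of $\Gamma_B$, contradicting Lemma~\ref{lemma:blue-green-bigons} after first reducing to the case disjoint from red via Lemma~\ref{lemma:red-green-bigons}; and any innermost red-green sub-bigon contradicts Lemma~\ref{lemma:red-green-bigons}. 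After this cleanup, the monogon disk supports a local homotopy of $\phi$ that cancels $v$ and $e$ from $\Gamma_B$, strictly lowering the number of vertices of $\Gamma_B$ in $C(\phi')$, again contradicting minimality. The main obstacle I expect is the cleanup step: interior regions of $D_0$ that are not innermost bigons, such as alternating red-green squares, require careful outermost-arc reductions to expose one of the bigons ruled out by the preceding lemmas.
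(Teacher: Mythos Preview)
Your second route is essentially the paper's, and your worry about the cleanup is misplaced---it is straightforward. Any red arc in the innermost monogon $D_0$ has both endpoints on the interior of $e$, giving a red--blue bigon in $\Gamma_{BR}$ and contradicting Lemma~\ref{lemma:red-blue-bigons} (which holds regardless of green); once red is gone, any green arc meeting the interior of $e$ gives a blue--green bigon with at least one endpoint off $v$, contradicting Lemma~\ref{lemma:blue-green-bigons}. So $e$ lies in a single blue region of the diagram. Where you diverge is the conclusion: you propose to ``cancel $v$ and $e$ by homotopy,'' which is unclear and unnecessary. The paper instead finishes in one line: the two prongs of $e$ at $v$ are adjacent blue prongs (no other blue edge enters $D_0$), so by Lemma~\ref{lemma:edge-intersections}\eqref{lmitm:blue-blue} they land on opposite sides of $C$, forcing a single blue region to meet both punctures of $C$---contradicting Lemma~\ref{lemma:blue-prime}\eqref{lmitm:distinct-regions}. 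You already invoked this adjacency fact in your first route; you only needed to combine it with the cleanup.

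Your first route has a genuine gap: it relies on boundary-$\pi_1$-injectivity of $B_i$ in $S^3\setminus L_{B,i}$, which is nowhere established. Menasco--Thistlethwaite covers checkerboard surfaces of prime alternating diagrams, but $B_i$ is a \emph{punctured} checkerboard surface of $K_{B,i}$ sitting in an augmented link complement, and $K_{B,0}$ need not even be prime (Lemma~\ref{lemma:K2-0-prime}). Proving this boundary incompressibility would require a separate argument, which the paper deliberately avoids by appealing directly to the diagrammatic Lemma~\ref{lemma:blue-prime}.
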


\begin{proof}
Lemmas~\ref{lemma:blue-green-bigons} and \ref{lemma:red-blue-bigons} imply that any blue edge forming a monogon in $\Gamma_B$ cannot meet red or green edges of $\Gamma_{BRG}$ in its interior.  Hence the edge of a monogon runs from one side of a crossing circle through a region of the projection plane to the other side.  This implies that there is a single complementary region of the diagram on both sides of a crossing circle, contradicting Lemma~\ref{lemma:blue-prime}\eqref{lmitm:distinct-regions}.
\end{proof}

The above two lemmas imply there are no simple closed curves or monogons in $\Gamma_B$.  There will be bigons, and these can be trivial or non-trivial, as mentioned in Section~\ref{sec:graphs}.  We are now ready to define trivial edges and trivial bigons.  Since Lemma~\ref{lemma:adj-bigons} implies not all bigons can be trivial, we will then move to studying non-trivial bigons. 

\begin{define}\label{def:trivial-edge}
Define an edge of $\Gamma_{BRG}$ to be \emph{trivial} if it is a blue arc that is disjoint from the red edges, and its endpoints are distinct vertices of $\Gamma_B$ in $D$, but correspond to the same crossing circle in $L_{B,i}$.
\end{define}

Let $\alpha$ be a trivial edge. Since it is a blue edge disjoint from the red edges of $\Gamma_{BRG}$, it corresponds to an arc in a blue region of the diagram $L_{B,i}$. By assumption, this arc has both its endpoints on the same crossing circle, and the two blue regions at the punctures of this crossing circle are distinct. Hence, $\alpha$ must have both its endpoints on the same puncture. It therefore forms a closed loop in $L_{B,i}$, bounding a disk $E$ in the diagram with interior which is disjoint from the red, blue and green surfaces and which can only meet green in single edges with both endpoints on high valence blue vertices. See Figure~\ref{fig:trivialbigons3}.

\begin{figure}
 \includegraphics[width=2in]{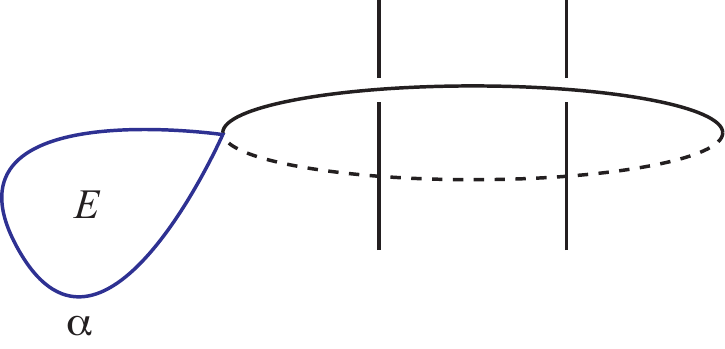}
\caption{A blue arc starting and ending on the same crossing circle}
\label{fig:trivialbigons3}
\end{figure}

There are four types of trivial arcs: those with neither endpoint on $\partial D$, those with exactly one endpoint on $\partial D$, those with both endpoints on $\partial D$, and those lying entirely in $\partial D$. We will deal with these types in different ways.

\begin{lemma}\label{no-interior-trivial-arcs}
In the graph $\Gamma_{BRG}$, there are no trivial arcs with both endpoints in the interior of $D$.
\end{lemma}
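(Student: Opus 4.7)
The plan is to derive a contradiction with the minimality of the complexity $C(\phi')$ by producing a map $\tilde\phi\co D\to S^3\setminus K$ homotopic to $\phi$ rel $\partial D$ whose complexity is strictly smaller. Suppose $\alpha$ is a trivial edge with both endpoints $v_1,v_2$ in the interior of $D$. By the discussion preceding the lemma, $\phi(v_1)=\phi(v_2)=p$ at a single puncture $p$ of a crossing circle $C$ in a blue region, and $\phi(\alpha)$ is a simple loop at $p$ bounding a disk $E$ in that blue region.

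I would take a small closed disk neighborhood $N\subset D$ of $\alpha$ containing $v_1$ and $v_2$ in its interior; the image $\phi(N)$ is then an immersed disk meeting $C$ transversely precisely at the two points in $\phi^{-1}(p)\cap N$. Using $E$, I would construct a replacement: push $E$ slightly above and below the projection plane to obtain parallel disks $E^+$ and $E^-$, which together with a small collar along $\phi(\alpha)$ form a $2$-sphere bounding an embedded $3$-ball $B\subset S^3$ disjoint from $K$, since the interior of $E$ lies strictly inside a blue $2$-cell of $L_{B,i}$. I would then define $\tilde\phi$ to agree with $\phi$ outside $N$, and on $N$ to map onto a disk in $\partial B$ with boundary $\phi(\partial N)$, routed so as to avoid the short arc of $C$ lying inside $B$. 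The homotopy from $\phi|_N$ to $\tilde\phi|_N$ is supported inside $B\subset S^3\setminus K$, so $\tilde\phi$ is homotopic to $\phi$ rel $\partial D$.

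Because $\tilde\phi(N)$ is disjoint from $C$, the vertices $v_1$ and $v_2$ of $\Gamma_B$ are eliminated for $\tilde\phi$. The main obstacle is that $\partial B=E^+\cup E^-$ may intersect crossing circles of $L_{B,i}$ other than $C$: each puncture of the interior of $E$ by a crossing circle $C'\neq C$ would yield two new intersections of $\partial B$ with $C'$, and hence two potential new vertices of $\Gamma_B$ for $\tilde\phi$. To handle this, I would make an innermost choice: among all trivial edges with both endpoints in the interior of $D$, pick $\alpha$ so that the interior of $E$ contains the minimum number of crossing-circle punctures. Any such extra puncture in the interior of $E$, together with the structure of the blue region described in Lemma~\ref{lemma:blue-prime}, would yield a proper sub-disk of $E$ bounded by a loop in the blue surface and, via $\phi$, a shorter trivial arc with strictly smaller enclosed disk, contradicting the innermost choice. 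Thus $E$ contains no punctures from crossing circles other than $C$, $\partial B$ is disjoint from every crossing circle, no new vertices of $\Gamma_B$ arise, and $\tilde\phi$ has strictly smaller complexity than $\phi$, producing the desired contradiction.
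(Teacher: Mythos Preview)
Your construction has a genuine gap. You claim that $\tilde\phi|_N$ can be chosen to map onto a disk in $\partial B$ avoiding the arc of $C$ inside $B$, so that both vertices $v_1,v_2$ disappear. But $\partial B\setminus C$ is a twice-punctured sphere, i.e.\ an annulus, and a loop on an annulus bounds a disk there only when it is null-homotopic. The class of $\phi(\partial N)$ in that annulus is governed by the sum of the local intersection signs of $\phi(D)$ with $C$ at $v_1$ and $v_2$. Nothing forces these signs to be opposite: when they agree, $\phi(\partial N)$ represents $\pm 2$ times the core of the annulus and bounds no disk in $\partial B\setminus C$, so your replacement cannot be made to miss $C$. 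The paper's proof confronts exactly this dichotomy. After sliding $\alpha$ along $E$ into $\partial N(C)$, one looks at the sub-disk $D'\subset D$ consisting of the two small meridian disks together with a neighbourhood of $\alpha$; its boundary is a null-homotopic curve on the torus $\partial N(C)$, hence some multiple $k$ of the meridian. If $k=0$ one pushes $D'$ onto $\partial N(C)$ and removes both vertices; if $k\ne 0$ one instead homotopes $D'$ to a $|k|$-fold branched cover of a meridian disk, trading the two vertices for a single branched vertex of valence $|k|\cdot 2n_j$. Either way the vertex count drops, and this is precisely why Lemmas~\ref{lemma:graph-bluevalence} and~\ref{lemma:graph-bdyincompr} allow interior vertices of valence a nonzero \emph{multiple} of $2n_j$.

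Your innermost argument in the final paragraph is also not correct as stated. Punctures of $E$ by another crossing circle $C'$ are features of the diagram of $L_{B,i}$, not of the graph $\Gamma_{BRG}$ on $D$; there is no mechanism by which such a puncture yields ``via $\phi$, a shorter trivial arc'' in $D$. Fortunately this concern evaporates once one argues as the paper does: the homotopy takes place in $S^3\setminus K$, and the end map lands in $N(C)$, away from every other crossing circle, so no new vertices of $\Gamma_B$ are created regardless of what $E$ encloses.
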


\begin{proof}
As explained above, such an arc $\alpha$ bounds a disk $E$ in the diagram $L_{B,i}$ with interior that is disjoint from the red, blue and green surfaces.  We may perform a homotopy to $D$, sliding $\alpha$ along $E$,  so that it ends up in $\partial N(C)$, where $C$ is the crossing circle at the endpoints of $\alpha$.  (See Figure \ref{fig:crossingcirclehomotopy}.)  The two vertices at the endpoints of $\alpha$ lie in meridian disks of $N(C)$ that are subsets of $D$.  The union of these two meridian disks with a regular neighbourhood of $\alpha$ is a disk $D'$ in the interior of $D$, that maps to $N(C)$.  Now $\partial D'$ is a curve on the boundary of the solid torus $N(C)$ that is homotopically trivial.  Hence, we may homotope $\partial D'$ so that it is a multiple of a meridian curve for $N(C)$.  If this multiple is zero, then we may homotope $D'$ to lie in $\partial N(C)$.  It thereby misses $C$, and so this reduces the number of vertices of $\Gamma_B$.  This contradicts our assumption that complexity is minimal.
On the other hand, if $\partial D'$ represents a non-zero multiple of a meridian, then we may homotope $D'$ so that it maps to the crossing circle at a single point, where it forms a branch point. In this way, we end up with a single vertex of $\Gamma_B$, and again we have reduced its complexity. Again we reach a contradiction.
\end{proof}

Note that the procedure described in the above proof may create vertices with valence that is a non-zero multiple of $n_j$. It was for this reason that vertices of this form are permitted in Lemmas~\ref{lemma:graph-bluevalence} and~\ref{lemma:graph-bdyincompr}.

\begin{figure}
 \includegraphics{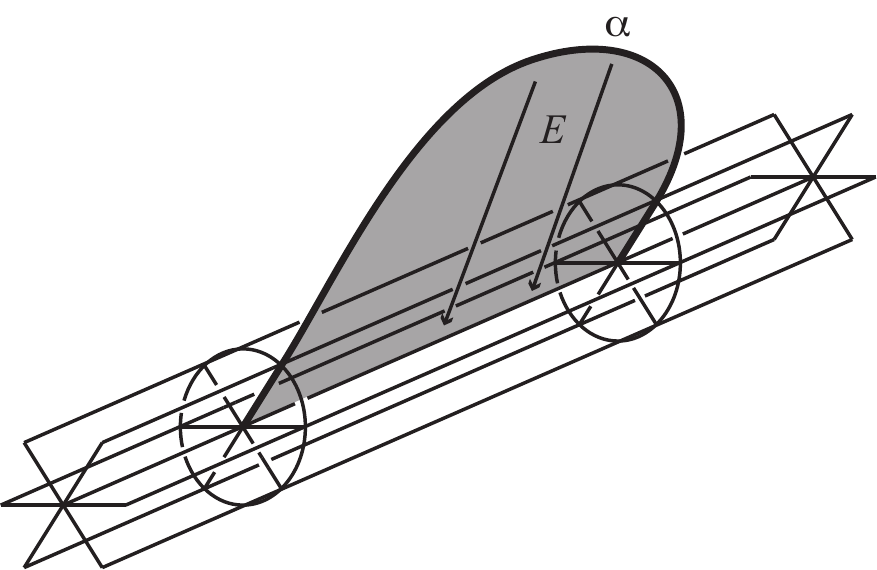}
\caption{Modifying $D$ by a homotopy}
\label{fig:crossingcirclehomotopy}
\end{figure}

\begin{lemma}\label{lemma:no-boundary-trivial-arcs}
In the graph $\Gamma_{BRG}$, there are no trivial arcs that are subsets of $\partial D$.
\end{lemma}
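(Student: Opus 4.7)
Suppose for contradiction that $\alpha \subset \partial D$ is a trivial arc. By the discussion following Definition~\ref{def:trivial-edge}, $\phi(\alpha)$ is then a loop based at some puncture $p$ of a crossing circle $C$ in the diagram $L_{B,i}$, bounding a disk $E$ in a blue region whose interior is disjoint from the red, blue, and green surfaces. The plan is to use $E$ to strictly reduce $\#\mathrm{vertices}(\Gamma_B)$, contradicting the minimality of the complexity $C(\phi')$.

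The strategy parallels the interior argument of Lemma~\ref{no-interior-trivial-arcs}. First, using $E$, I would slide $\alpha$ together with a small collar $U_\alpha \subset D$ so that afterwards $\phi(\alpha) \subset \partial N(C)$. Near each endpoint $x_j$ of $\alpha$, $D$ contains a half-meridian disk $H_j$ of $N(C)$, and the union $D_* := H_1 \cup U_\alpha \cup H_2$ is a subdisk of $D$ with $\phi(D_*) \subset N(C)$ whose boundary consists of $\alpha$ together with an arc $\tau$ in the interior of $D$ mapping to $\partial N(C)$. Since $\phi(D_*)$ lies in the solid torus $N(C)$, the closed curve $\phi(\partial D_*) \subset \partial N(C)$ is homotopic on $\partial N(C)$ to some multiple of a meridian. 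If that multiple is zero, I would push $D_*$ off $C$ entirely, eliminating both $x_1$ and $x_2$; otherwise, I would homotope $D_*$ so that its image meets $C$ transversely at a single branch point, reducing two vertices to one. In either case $\#\mathrm{vertices}(\Gamma_B)$ strictly drops.

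The hard part will be verifying that, unlike in Lemma~\ref{no-interior-trivial-arcs}, this homotopy can be performed so that $\phi|_{\partial D}$ continues to factor as $f \circ \ell'$ for some essential loop $\ell'$ on $S_{B,i}$, as required by Lemma~\ref{lemma:graph-bluevalence}. Since $E$ lies in a single blue region adjacent to $p$ and the attached $I$-bundle at $C$ meets this region at $p$, the modified $\phi|_{\partial D}$ can be arranged to land in $f(S_{B,i})$ near $\alpha$, so yielding a candidate $\ell'$. Essentiality of $\ell'$ I would handle by case analysis on the endpoints $\ell(x_1), \ell(x_2) \in f^{-1}(p)$ on the core circle of this $I$-bundle: if they coincide, then $\beta := \ell|_\alpha$ is a loop on $S_{B,i}$ which is either inessential (so excising $\beta$ from $\ell$ preserves its homotopy class) or essential (in which case $\beta$ itself is an essential loop whose $f$-image bounds $E$, directly giving a disk of strictly smaller complexity); if they differ, the sliding of $\alpha$ across $E$ is matched by a corresponding modification of $\beta$ along the core circle within the $I$-bundle, producing a valid essential $\ell'$.
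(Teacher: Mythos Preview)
Your approach is the same as the paper's in outline: imitate the homotopy of Lemma~\ref{no-interior-trivial-arcs} and use the location of $E$ to keep the boundary constraint intact. The paper, however, extracts one clean fact and is done: since the blue region of the diagram is part of $B_i \subset S_{B,i}$, the disk $E$ is literally a subdisk of $S_{B,i}$. Hence the entire homotopy of $\partial D$ (sliding $\ell|_\alpha$ across $E$, and the subsequent manipulation near $C$, which takes place in the attached $I$--bundle, also a subset of $S_{B,i}$) is a homotopy of $\ell$ \emph{inside} $S_{B,i}$. So $\ell'$ is homotopic to $\ell$ in $S_{B,i}$ and therefore essential. No case analysis is needed.

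You actually record the key fact (``$E$ lies in a single blue region'') but do not exploit it, and instead split into cases. That case analysis is both unnecessary and somewhat shaky:
\begin{itemize}
\item Your sub-case 1b is vacuous: once $E \subset S_{B,i}$ and $\ell(x_1)=\ell(x_2)$, the loop $\beta$ bounds the lift of $E$ and is automatically inessential. Your proposed remedy there (``use $E$ as a new disk of smaller complexity'') is also not well-posed as written, since $E$ lies in $f(S_{B,i})$ and would not produce a graph $\Gamma_B$ of the required form without further work.
\item In Case~2 you assert that $\ell'$ is essential without justification. The justification you want is precisely the paper's observation: the modification is a homotopy within $S_{B,i}$, so $[\ell']=[\ell]$ there.
\end{itemize}
So your argument can be made correct, but the missing step in Case~2 is exactly the paper's one-line insight, and once you supply it the rest of your case analysis becomes redundant.
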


\begin{proof}
We may perform a similar homotopy to the one in Lemma~\ref{no-interior-trivial-arcs}, but we may take the disk $E$ to be a subset of $S_{B,i}$. In this way, $\partial D$ remains in $S_{B,i}$, but the complexity of $D$ is reduced. Again, this is a contradiction.
\end{proof}

Finally, suppose that there is a trivial arc in $\Gamma_{BRG}$ properly embedded in $D$ with one or two endpoints on the boundary of $D$. There is no obvious way of eliminating such an arc, and so we deal with them another way in the proof of Lemma~\ref{lemma:adj-bigons}.

\begin{lemma}\label{all-but-one-edge-trivial}
If all but one of the edges of a region in $\Gamma_B$ are trivial, then the remaining edge is also trivial.
\end{lemma}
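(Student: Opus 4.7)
The plan is to prove the statement in two stages: first identify the crossing circle of the endpoints of the remaining edge, then rule out intersections with red and green edges via bigon arguments.

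Let $\Delta$ denote the region, with cyclically ordered edges $e_1,\ldots,e_n$ joining vertices $v_0,\ldots,v_{n-1}$, where $e_i$ runs from $v_{i-1}$ to $v_i$ (indices mod $n$) and $e_1,\ldots,e_{n-1}$ are trivial. By Definition~\ref{def:trivial-edge}, the endpoints of each trivial $e_i$ correspond to a single crossing circle $C_i$. Since adjacent trivial edges share a vertex, the $C_i$ all agree, and chaining yields $C_1=\cdots=C_{n-1}=C$. Hence every $v_j$ corresponds to $C$, and in particular so do the two endpoints of $e_n$. They are distinct by Lemma~\ref{lemma:no-monogons}, which rules out monogons.

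For the second stage, suppose for contradiction that $e_n$ meets a red edge of $\Gamma_{BRG}$. Because each of the other edges of $\partial\Delta$ is trivial and therefore disjoint from red, and because the interior of $\Delta$ contains no blue edges (it is a region of $\Gamma_B$), any red edge of $\Gamma_{BR}$ inside $\Delta$ must have both of its endpoints on $e_n$ (up to the issue of $\partial D$; see below). Taking an innermost such red arc cuts off a sub-disk whose interior contains no blue or red edges of $\Gamma_{BR}$, yielding a bigon of $\Gamma_{BR}$ with one blue and one red side. This contradicts Lemma~\ref{lemma:red-blue-bigons}. The analogous argument, now using Lemma~\ref{lemma:blue-green-bigons}, rules out green intersections on $e_n$: an innermost green edge in $\Delta$ joins two blue--green intersection points on $e_n$, neither of which is a vertex of $\Gamma_B$, so the hypothesis of that lemma is satisfied. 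Combined with the first stage, $e_n$ is a blue arc disjoint from red and green with endpoints at distinct vertices corresponding to the same crossing circle, hence trivial.

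The step I expect to be the main obstacle is the careful handling of the boundary $\partial D$. By Lemmas~\ref{no-interior-trivial-arcs} and~\ref{lemma:no-boundary-trivial-arcs}, trivial arcs must have at least one endpoint on $\partial D$ without lying entirely in $\partial D$, so $\Delta$ necessarily meets $\partial D$ nontrivially, and red or green arcs in $\Delta$ may terminate on the $\partial D$ portion of $\partial\Delta$. This is ultimately manageable because $\partial D \subset S_{B,i}$, so those arcs of $\partial D$ are themselves pieces of blue edges of $\Gamma_{BR}$; an innermost sub-disk bounded by a red arc and a blue sub-arc (which may span both $e_n$ and a piece of $\partial D$) is still a genuine bigon of $\Gamma_{BR}$ with one red and one blue side, to which Lemma~\ref{lemma:red-blue-bigons} applies. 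The same care applied to the green case closes out the argument.
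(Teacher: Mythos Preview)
Your proof is correct and follows essentially the same approach as the paper: chain the crossing-circle labels along the trivial edges, and rule out red and green intersections with the remaining edge via innermost-bigon arguments using Lemmas~\ref{lemma:red-blue-bigons} and~\ref{lemma:blue-green-bigons}. The paper does these two steps in the opposite order and omits your $\partial D$ discussion; under the standing minimality assumption, Lemma~\ref{lemma:no-boundary-trivial-arcs} already prevents any arc of $\partial D$ in $\partial\Delta$ from being among the trivial edges, so that extra case analysis is not really needed.
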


\begin{proof}
Consider such a region in $\Gamma_B$. The trivial edges in the boundary of the region are disjoint from the red and green surfaces. So, if any red arc enters the region, it must intersect the remaining blue side twice. Hence, an outermost such arc gives a bigon in $\Gamma_{BR}$ with one red side and one blue side, contradicting Lemma \ref{lemma:red-blue-bigons}. So, there are no red arcs in the region. If there are any green arcs intersecting the remaining blue edge in its interior, then we similarly deduce that there is a bigon in $\Gamma_{BRG}$ with one blue side and one green side, and which is disjoint from the red edges, contradicting Lemma \ref{lemma:blue-green-bigons}. We therefore deduce that all the edges  of the region are disjoint from the red and green surfaces. The trivial edges start and end at the same crossing circle. Hence, the remaining edge does also. It is therefore trivial. 
\end{proof}

\begin{lemma}\label{lemma:no-blue-bigons}
In the graph $\Gamma_{BRG}$, there are no non-trivial bigons with two blue sides, disjoint from red and green edges.
\end{lemma}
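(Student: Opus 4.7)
Suppose for contradiction that such a bigon $E$ exists. Its two blue edges $e_1, e_2$ share two endpoints at vertices of $\Gamma_B$ corresponding to crossing circles $C_1, C_2$. Since neither edge is trivial, we must have $C_1 \neq C_2$. By Lemma~\ref{lemma:edge-intersections}\eqref{lmitm:blue-blue}, the two edges are adjacent at each vertex, so by Lemma~\ref{lemma:blue-prime}\eqref{lmitm:distinct-regions} they meet each $C_j$ in its two distinct blue regions. Since each blue edge lies in a single blue region of the diagram of $K_{B,i}$, we conclude $e_1 \subset A$ and $e_2 \subset B$ for distinct blue regions $A, B$ that are simultaneously the two blue regions at both $C_1$ and $C_2$.

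Next, apply Lemma~\ref{lemma:blue-prime}\eqref{lmitm:assoc-cross} to both $C_1$ and $C_2$: any crossing of $K_{B,i}$ at which $A$ and $B$ meet must be associated with both crossing circles, placing it simultaneously in both their twist regions and so forcing $C_1 = C_2$, a contradiction. Hence $A$ and $B$ share no crossing of $K_{B,i}$. On the other hand, when $C_j$'s twist region in $L_{B,i}$ contains exactly one crossing (i.e.\ $c_j = 1$), the two diagonally opposite blue corners of that crossing lie precisely in the end blue regions $A$ and $B$, contradicting what we just established. This rules out the subcases with $c_1 = 1$ or $c_2 = 1$, reducing us to the two remaining subcases: $i = 0$ with $c_1 = c_2 = 0$, and $i = 2$ with $c_1 = c_2 = 2$.

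For each remaining subcase, I would form a simple closed curve $\gamma$ in the projection plane by concatenating $e_1$ in $A$, an arc $\beta_2$ of $C_2$'s crossing disk (chosen to meet the diagram of $K_{B,i}$ in only two edges---through the two parallel strands if $c_2 = 0$, or through the middle blue bigon if $c_2 = 2$), then $e_2$ reversed in $B$, and a similar arc $\beta_1$ of $C_1$'s crossing disk. The resulting $\gamma$ meets the diagram in exactly four edges, with both of its sides (the arcs between the four intersection points) lying on the blue checkerboard surface $B_i$. The contradiction would then come from analyzing the two disks that $\gamma$ bounds in $S^2$ and applying Lemma~\ref{lemma:K2-0-prime} (in its refined form for $K_{B,0}$), Lemma~\ref{lemma:blue-twist-reduced}, and where relevant Lemma~\ref{lemma:22torus}, to rule out two distinct crossing circles sharing the same pair of end blue regions without any shared crossing. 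The main obstacle is precisely this final step: since $\gamma$ meets the diagram in four edges rather than two, primality does not apply directly, and one must exploit the specific structure of $A$, $B$, $C_1$, and $C_2$ to extract a combinatorial contradiction with the properties of the diagrams established in Section~\ref{sec:graphs}.
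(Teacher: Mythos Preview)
Your argument is incomplete, and you say so yourself: once you reduce to the subcases $c_1=c_2=0$ or $c_1=c_2=2$, you build a curve meeting the diagram of $K_{B,i}$ four times and then cannot extract a contradiction from primality or blue twist-reducedness alone. That obstacle is real; working entirely inside $K_{B,i}$ or $L_{B,i}$ is harder than necessary because those diagrams are only \emph{blue} twist reduced, not twist reduced.

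The idea you are missing is to pass back to the original diagram $K$. Since the two blue edges of the bigon are disjoint from the green surface, they are disjoint from every crossing disk. Hence when you reinsert all the removed crossings to recover $K$, the blue edges remain embedded in blue regions of the projection plane for $K$. At each vertex the two edges terminate on the same crossing circle; connecting them across that (former) crossing disk costs exactly one crossing of $K$. The result is a simple closed curve in the projection plane meeting the diagram of $K$ in exactly two crossings, with blue on both sides. Because $K$ is twist reduced, those two crossings lie in the same twist region of $K$, hence correspond to the same crossing circle in $L_{B,i}$, contradicting $C_1\neq C_2$.

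Note that this argument needs no case analysis on $c_1,c_2$ and never invokes Lemma~\ref{lemma:blue-prime}\eqref{lmitm:assoc-cross} or Lemma~\ref{lemma:blue-twist-reduced}; the twist-reducedness of $K$ itself does all the work. Your preliminary steps (that $C_1\neq C_2$, that the two edges lie in distinct blue regions) are correct but, with the above approach, largely unnecessary.
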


\begin{proof}
For such a bigon, the vertices would correspond to crossing circles of $L_{B,i}$.  Since the bigon is non-trivial, the vertices correspond to distinct crossing circles.  Put in the crossings at the two crossing circles, and elsewhere, to form the diagram of $K$.  The blue edges are disjoint from crossing disks, and so they remain on the projection plane in the complement of the diagram of $K$.  They connect across (former) crossing disks to give a simple closed curve in the diagram of $K$ that meets the diagram at exactly two crossings.  Because $K$ is twist reduced, these crossings correspond to the same twist region.  But then they would have corresponded to the same crossing circle in $L_{B,i}$.  This is a contradiction.
\end{proof}


\begin{lemma}\label{lemma:RGB-adjacent-green}
In $\Gamma_{BRG}$, there are no pairs of red--green--blue triangles (RGB triangles) adjacent across a green edge.
\end{lemma}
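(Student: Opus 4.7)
Suppose for contradiction that $\Gamma_{BRG}$ contains two RGB triangles $T_1, T_2$ sharing a green edge $G$. Label the remaining sides of $T_j$ by a red edge $R_j$, a blue edge $B_j$, and the opposite vertex $BR_j$ (a crossing of $K_{B,i}$). Write $RG$ and $GB$ for the two endpoints of $G$, a red-green vertex and a green-blue vertex respectively. The union $Q := T_1 \cup T_2$ is a topological disk in $D$ whose boundary, in cyclic order, is $R_1, B_1, B_2, R_2$, with $G$ as an interior diagonal. The argument splits according to the nature of $GB$.

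\textbf{Case 1: $GB$ is an interior green-blue intersection.} Then neither $GB$ nor $RG$ is a vertex of $\Gamma_{BR}$, since both arise only from crossings of blue or red edges with green edges. Consequently, in $\Gamma_{BR}$, the edges $B_1$ and $B_2$ combine into a single blue edge and $R_1, R_2$ combine into a single red edge, and the region $Q$ appears as a bigon with one blue side and one red side. This is ruled out by Lemma~\ref{lemma:red-blue-bigons}.

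\textbf{Case 2: $GB$ is a crossing-circle vertex} of $\Gamma_B$ corresponding to some crossing circle $C$. Then $GB$ remains a vertex of $\Gamma_{BR}$ while $RG$ does not, so $B_1, B_2$ persist as distinct blue edges at $GB$ (adjacent there), and $R_1, R_2$ combine into a single red edge $R$ from $BR_1$ to $BR_2$. In $\Gamma_{BR}$, $Q$ is thus a triangle with vertices $BR_1, GB, BR_2$ and edges $B_1, B_2, R$. By Lemma~\ref{lemma:edge-intersections}\eqref{lmitm:blue-blue} together with Lemma~\ref{lemma:blue-prime}\eqref{lmitm:distinct-regions}, the arcs $B_1, B_2$ lie in two distinct blue regions $\beta_1, \beta_2$ of $K_{B,i}$ meeting $C$, while $R$ lies in a single red region and crosses the crossing disk of $C$ exactly once, at $RG$. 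I form a closed curve $\gamma$ in the projection plane of $L_{B,i}$ by concatenating $B_1$, a short arc on $C$ joining the endpoints of $B_1, B_2$ on $C$, then $B_2$, then $R$. The single transverse self-intersection of $\gamma$, coming from $R$ crossing the trace of the crossing disk in the plane, resolves $\gamma$ into simple closed curves $\gamma_k$ each meeting the diagram of $K_{B,i}$ in a small number of crossings. Applying the primality statements of Lemma~\ref{lemma:K2-0-prime} to each $\gamma_k$, combined with Lemma~\ref{lemma:blue-prime}\eqref{lmitm:assoc-cross} (a crossing where the two blue regions at $C$ meet must be associated with $C$), forces both $BR_1$ and $BR_2$ to be crossings associated with $C$. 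Since only one or two crossings are associated with any one crossing circle in $L_{B,i}$, this configuration contradicts Lemma~\ref{lemma:22torus} or the twist-reducedness of the original diagram $K$.

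\textbf{Main obstacle.} The crux is Case 2. Unlike Case 1, it is not settled by any single bigon or monogon lemma, and the contradiction must be extracted by a careful analysis of the loop $\gamma$ in the diagram, combining primality, the association lemma for crossing circles, and twist-reducedness of $K$. The delicate point is ensuring that the simple closed curves $\gamma_k$ genuinely fall under the hypotheses of Lemma~\ref{lemma:K2-0-prime} so that the association conclusion can be applied to both $BR_1$ and $BR_2$.
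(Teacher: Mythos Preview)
Your Case~1 is exactly the paper's proof: erasing the shared green edge leaves a red--blue bigon in $\Gamma_{BR}$, which Lemma~\ref{lemma:red-blue-bigons} forbids. That one line is the entire argument in the paper.

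Case~2 is where your proposal breaks down. The configuration you describe there---a green edge on the crossing disk of $C$ running from the $\Gamma_B$--vertex $v$ to a point on a red edge, with blue edges $B_1,B_2$ on either side so that a blue--blue--red triangle of $\Gamma_{BR}$ is split into two RGB pieces---is precisely the situation isolated in the final clause of Lemma~\ref{lemma:no-green-meets-bcj}. That configuration is \emph{permitted}, and in the proofs of Lemmas~\ref{lemma:distinct-crossings} and~\ref{lemma:adjacent-BBR-triples} it is eliminated only by using the additional adjacency hypotheses of those lemmas. So no contradiction is obtainable from the Case~2 picture by itself, and your sketch cannot be completed. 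Concretely, your key claim that both $BR_1$ and $BR_2$ must be associated to $C$ is unjustified: the red region carrying $R$ contains the red arc of the crossing disk of $C$, but it may border many other crossings of $K_{B,i}$; nothing forces the blue region opposite $\beta_j$ at $BR_j$ to be the second blue region at $C$, so Lemma~\ref{lemma:blue-prime}\eqref{lmitm:assoc-cross} does not apply. Your curve $\gamma$ is also not in the projection plane (the ``short arc on $C$'' leaves it), and the resolution into simple closed curves is never actually carried out.

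The paper's one--line proof is aimed at your Case~1, i.e.\ at green edges that cross the blue side of a red--blue bigon transversely in its interior; this is how RGB triangles arise in every later application. Since Lemma~\ref{lemma:RGB-adjacent-green} is not invoked elsewhere in the paper, the Case~2 reading is moot. You should simply drop Case~2 rather than attempt to repair it.
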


\begin{proof}
Such a pair of triangles would give a red--blue bigon in $\Gamma_{BR}$, contradicting Lemma~\ref{lemma:red-blue-bigons}.
\end{proof}

\begin{lemma}\label{lemma:RGB-adjacent-blue}
In $\Gamma_{BRG}$, there are no pairs of RGB triangles adjacent across a blue edge.  More generally, no pair of green edges can be added to the graph $\Gamma_{BR}$ in such a way that the result is a pair of triangles adjacent across a blue edge.  
\end{lemma}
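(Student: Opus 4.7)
The plan is to mimic the strategy of Lemma~\ref{lemma:red-blue-bigons}: extract from the adjacent triangles a simple closed curve in the projection plane, and derive a contradiction to primality. Suppose, for contradiction, the configuration exists: two RGB triangles $T_1, T_2$ meeting across a blue edge $e$, with shared red-blue vertex $P$ (a crossing of $K_{B,i}$), shared green-blue vertex $Q$, red edges $r_1, r_2$, green edges $g_1, g_2$, and red-green vertices $w_1, w_2$.

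Since $g_1$ and $g_2$ share $Q$, both lie on a common crossing disk $\mathcal{D}$. The red-green vertices $w_1, w_2$ lie on the arc $A$ of intersection of $\mathcal{D}$ with the projection plane; let $A'$ be the sub-arc of $A$ from $w_1$ to $w_2$. On $\mathcal{D}$, the arcs $g_1 \cup g_2$ and $A'$ bound a subdisk $E$; pushing $\phi(T_1 \cup T_2)$ across $E$ replaces $g_1 \cup g_2$ in its boundary by $A'$, producing a closed curve $\gamma = r_1 \cup A' \cup r_2$ in the projection plane, passing through $P$. Pushing $\gamma$ slightly off $P$ yields a simple closed curve $\gamma'$ meeting the diagram of $K_{B,i}$ transversely only at the two edges $\gamma$ crosses near $P$---since $r_1, r_2$ lie in the red surface and $A'$ is interior to the crossing circle bounding $\mathcal{D}$---with one complementary region containing $P$ and the other containing $A'$, hence at least one of the crossings of $K_{B,i}$ associated with $\mathcal{D}$.

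Now apply primality. For $i=2$, Lemma~\ref{lemma:K2-0-prime} says $K_{B,2}$ is prime, giving a contradiction immediately. For $i=0$, Lemma~\ref{lemma:K2-0-prime} allows non-primality but forces the red arc of any primality-violating curve to be homotopic (rel endpoints, in the red surface) to one meeting some crossing disk transversely in exactly one point. Perturbing $A'$ slightly off the red-green intersection locus on $\mathcal{D}$ turns the inclusion ``red arc contains $A'$'' into two transverse crossings at $w_1, w_2$ of cancelling sign, so the red arc has algebraic intersection zero with $\mathcal{D}$; and it has zero algebraic intersection with every other crossing disk because $r_1, r_2$ are edges of $\Gamma_{BRG}$, whose interiors are disjoint from the green surface. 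Since algebraic intersection is a homotopy invariant, the red arc cannot be homotoped to hit any crossing disk an odd number of times, contradicting Lemma~\ref{lemma:K2-0-prime}.

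The step I expect to require the most care is verifying that $\gamma'$ is genuinely simple with crossings on both sides in the edge cases---most notably when $P$ is itself a crossing associated with $\mathcal{D}$, or when one of the red arcs $r_1, r_2$ passes through bowties at additional crossings of $K_{B,i}$, so that $\gamma'$ ends up meeting the diagram at more than two edges. I expect each such complication to be resolved by a judicious choice of push-off direction at $P$ and, if needed, an inductive splitting of $\gamma'$ along arcs on intersected crossing disks, in the spirit of the last part of the proof of Lemma~\ref{lemma:red-blue-bigons}.
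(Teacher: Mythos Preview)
Your approach differs from the paper's and contains a genuine gap. The central problem is the claim that $\gamma'$ has crossings on both sides. Your curve $\gamma = r_1 \cup A' \cup r_2$ lies entirely in a single red region $R$ of the diagram: each red edge stays in one region, both end on the arc $A$ (which sits in one red region), and $A'\subset A$. So $\gamma$ is a closed curve in the disk $R$ touching $\partial R$ only at the crossing $P$, and it therefore bounds a proper subdisk $R'\subset R$ with $R'\cap\partial R=\{P\}$. After pushing off $P$, one side of $\gamma'$ is $R'$ together with at most a small wedge of an adjacent blue quadrant; neither piece contains a crossing of $K_{B,i}$. Thus $\gamma'$ has crossings on only one side, and no contradiction to Lemma~\ref{lemma:K2-0-prime} arises. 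The assertion that the side containing $A'$ must contain a crossing associated with $\mathcal{D}$ is unfounded: any such crossing is a vertex of $\partial R$, not a point of the interior subdisk $R'$.

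The paper avoids this by building a different curve: it uses only one of the red edges together with the blue edge $e$ and a portion of the crossing disk running from $Q$ across a puncture to the red side. That curve has genuine red and blue portions and meets the diagram once near $P$ and once at the puncture, and Figure~\ref{fig:RGB-adjacent-blue} (right) shows that it does separate crossings. For $i=0$, rather than an algebraic--intersection count, the paper uses an innermost argument: if Lemma~\ref{lemma:K2-0-prime} forces the red edge to meet a further crossing disk, the corresponding new green edge must cross the shared blue side and produce a strictly smaller pair of RGB triangles adjacent across blue, contradicting the innermost choice. Two further points you would need to address even if the main gap were fixed: (i) on $\mathcal{D}$ the loop $g_1\cup g_2\cup A'$ need not bound a subdisk, since it may encircle a puncture---this is exactly the ``opposite sides'' case the paper rules out via linking; and (ii) for the general statement the edges $r_1,r_2$ are $\Gamma_{BR}$ edges and may meet other green disks, so your ``interiors disjoint from green'' step in the algebraic--intersection argument is not available as stated.
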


\begin{proof}
If a pair of green edges exists as in the statement of the second part of the lemma, then there exists an innermost such pair.  That is, red, blue, and green edges of $\Gamma_{BRG}$ bound two triangular regions between them with red, green, and blue sides, identified along the blue side.  In addition, there may be other green edges in $\Gamma_{BRG}$ that intersect these triangular regions, but the fact that it is innermost implies none of the green edges cut off a pair of such triangular regions adjacent across the blue side.

Because there are no additional red or blue edges inside these triangular regions, one of the triangular regions is mapped above the plane of projection, one below.  Because there is only one vertex where red meets blue in the two triangles, there would be just one crossing in the diagram where the triangles meet, and the edges must run from the crossing as shown in Figure \ref{fig:RGB-adjacent-blue}, middle.  The green arc lies on a single crossing circle, although one edge lies above the plane of projection and one below.  Hence the two red edges running from the crossing in the figure must run to the same crossing disk, and in fact to the same arc on the projection plane between the two punctures of the crossing disk.

\begin{figure}
  \input{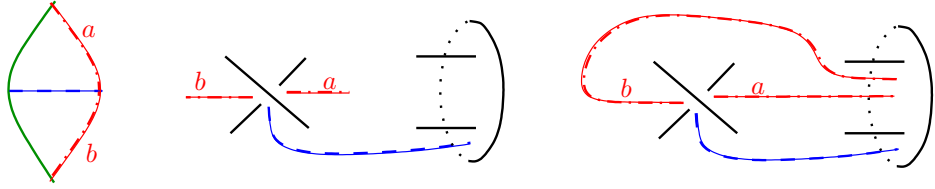}
  \caption{Two RGB triangles adjacent across a blue edge lead to the
    configurations shown.}
  \label{fig:RGB-adjacent-blue}
\end{figure}

If the red edges have endpoints on opposite sides of the crossing disk, then one of the triangles links the crossing circle nontrivially, contradicting the fact that it bounds a disk disjoint from the crossing disk.  Hence the red edges have endpoints on the same side of the crossing disk.  But then we can form a simple closed curve meeting the diagram of $K_{B,i}$ twice with crossings on either side, by taking the union of the red edge labeled $b$, the blue edge, and a portion of the crossing disk in Figure~\ref{fig:RGB-adjacent-blue}, right.  

By Lemma~\ref{lemma:K2-0-prime}, we must have $i=0$ and this red edge must meet another crossing disk, i.e.\ another component of the green surface.  This gives a new green edge in the graph, which runs from a red edge in an RGB triangle.  It can't run to the green side of the triangle, since green edges don't meet green.  Thus it runs to blue.  But then the green arc continues into the other RGB triangle, and must exit through red, and we have another pair of RGB triangles adjacent across the blue, contained in the original pair.  This contradicts the fact that the pair was innermost.
\end{proof}

\begin{lemma}\label{lemma:NoGreenLoop} 
No green edge of $\Gamma_{BRG}$ can have both its endpoints on the same vertex of $\Gamma_B$.  
\end{lemma}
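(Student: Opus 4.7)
The plan is to suppose for contradiction that there is a green edge $e$ of $\Gamma_{BRG}$ with both endpoints at a single vertex $v$ of $\Gamma_B$; choose $e$ innermost, and let $\Delta \subset D$ be the subdisk bounded by the loop $e \cup \{v\}$. By Lemma~\ref{lemma:edge-intersections}\eqref{lmitm:green-blue}, the image $\phi(e)$ is an arc in the crossing disk $G$ associated to $v$ with both endpoints at the single point $\phi(v) \in \partial G$. Setting $\gamma := \phi(e) \cup \{\phi(v)\}$, we obtain a loop on $G$ that bounds the map $\phi|_\Delta$ of a disk in $S^3 \setminus K$; hence $[\gamma] = 0$ in $\pi_1(S^3 \setminus K)$.

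After perturbing $\gamma$ slightly off $\phi(v)$ into the interior of $G$, we obtain a simple closed curve $\gamma'$ in $G \setminus \partial G$, still null-homotopic in $S^3 \setminus K$. By the Jordan curve theorem applied in the open disk $\mathrm{int}(G)$, the curve $\gamma'$ bounds a subdisk $E \subset G$ containing exactly $k$ of the two punctures of $K \cap G$, for some $k \in \{0,1,2\}$. I plan to rule out each value. For $k = 1$, the loop $\gamma'$ is freely homotopic in $G \setminus K$ to a small meridian around the enclosed strand of $K$, which is nontrivial in $\pi_1(S^3 \setminus K)$ since $K$ is a hyperbolic (hence nonsplit) link; this contradicts $[\gamma'] = 0$. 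For $k = 0$, the subdisk $E$ is a disk in $G \setminus K$ bounded by $\gamma'$, and gluing $E$ to $\phi|_\Delta$ along their common boundary produces a map of a $2$-sphere into the aspherical space $S^3 \setminus K$, which therefore bounds a map of a $3$-ball. Using this ball, we homotope $\phi|_\Delta$ across $G$ along $E$, thereby eliminating the green edge $e$. This strictly decreases $\#\,\mathrm{edges}(\Gamma_{BRG})$ while not raising the earlier coordinates of complexity $C(\phi')$, contradicting the minimality of $\phi'$.

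The hardest subcase is $k = 2$: here both punctures lie in $E$ while the $0$-puncture side of $\gamma'$ on $G$ contains all of $\partial G = C$, so a naive disk-swap would drag $\phi(D)$ across the crossing circle $C$ itself, and one must check that complexity genuinely decreases rather than merely being rearranged near $v$. The expected resolution is a $\pi_1$-argument paralleling $k=1$: the class $[\gamma']$ agrees in $\pi_1(G \setminus K)$ with the product of the two meridians of $K$ at the strands through $G$, and via the Wirtinger relations at the crossing this product maps to a nontrivial element of $\pi_1(S^3 \setminus K)$, again contradicting $[\gamma'] = 0$. Together these three cases exhaust the possibilities and so no such green edge exists.
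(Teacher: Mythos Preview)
Your approach is far more elaborate than the paper's, which dispatches the lemma in one line: the green loop bounds a disk $E$ on the crossing disk, and one homotopes $D$ across $E$ to delete that green edge, dropping the last coordinate of $C(\phi')$ and contradicting minimality. No case split on puncture count appears.

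Your argument has a genuine gap at the Jordan curve step. You assert that the perturbed loop $\gamma'$ is a \emph{simple} closed curve in $\operatorname{int}(G)$, but nothing guarantees this: transversality of $\phi'$ to $G$ only says that $(\phi')^{-1}(G)$ is an embedded $1$--submanifold of $D'$, not that $\phi'$ is injective on it. The image $\phi(e)\subset G$ can certainly have self-intersections, since $\phi$ is merely a map of a disk. Without simplicity there is no well-defined bounded subdisk $E$, no puncture count $k$, and the trichotomy $k\in\{0,1,2\}$ dissolves. Worse, you cannot retreat to a pure $\pi_1$ argument: the inclusion $\pi_1(G)\cong F(m_1,m_2)\to\pi_1(S^3\setminus K)$ is typically \emph{not} injective (for a two-bridge knot the whole group is already generated by two meridians), so $[\gamma]=1$ in $\pi_1(S^3\setminus K)$ does not force $[\gamma]=1$ in $\pi_1(G)$. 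The simplicity of $\gamma'$ is doing essential work---it is what restricts $[\gamma']$ to one of the four peripheral conjugacy classes in $F_2$---and it needs justification you have not supplied.

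Two smaller points, assuming simplicity could be arranged. In your $k=0$ case the sphere/asphericity detour is unnecessary: once $E\subset G$ is an honest subdisk missing $K$, a local bicollar homotopy of $\phi$ in a neighbourhood of $e$ already pushes $\phi(D)$ across $E$ and erases the green edge, without disturbing the rest of $\Delta$ or the earlier coordinates of $C(\phi')$. In your $k=2$ case the Wirtinger appeal is vague; the clean statement is that a simple $\gamma'$ enclosing both punctures is isotopic on $G$ to $\partial G=C$, and $[C]\ne 1$ in $\pi_1(S^3\setminus K)$ because $S^3\setminus L$ is hyperbolic, so $C$ cannot bound a disk disjoint from $K$.
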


\begin{proof}  Such a green edge bounds a disk $E$ on a crossing disk.  Use this disk to homotope the disk $D$ past the crossing disk, removing a green edge of $\Gamma_{BRG}$, contradicting the fact that the graph had minimal complexity.
\end{proof}

\begin{lemma}\label{lemma:noBBG}
The graph $\Gamma_{BRG}$ has no blue--blue--green triangles.  More generally, no green edge can be added to $\Gamma_{BR}$ to cut off a triangular region with two blue sides and one green.  
\end{lemma}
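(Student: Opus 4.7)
Plan: Suppose for contradiction that such a BBG triangle exists, and choose an innermost one (in the spirit of the innermost choice in Lemma~\ref{lemma:RGB-adjacent-blue}). I would then derive a contradiction following the strategy of Lemma~\ref{lemma:no-blue-bigons}, using blue twist-reducedness of $K_{B,i}$ (Lemma~\ref{lemma:blue-twist-reduced}). Let $v$ be the vertex of $\Gamma_B$ where the two blue sides meet, corresponding to crossing circle $C_v$. By Lemma~\ref{lemma:edge-intersections}\eqref{lmitm:blue-blue}, the two blue arcs $\alpha_a, \alpha_b$ lie in the two distinct blue regions $B_a, B_b$ of $K_{B,i}$ adjacent to $C_v$. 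Let the green side lie on the crossing disk of a crossing circle $C_g$, with endpoints $p_2$ on $\alpha_a$ and $p_3$ on $\alpha_b$.

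First I would handle $C_g = C_v$. In this case Lemma~\ref{lemma:NoGreenLoop} rules out both endpoints of the green arc being at $v$, so at least one of $p_2, p_3$ lies at an interior intersection of a blue arc with the crossing disk of $C_v$. The green arc together with a sub-arc of the corresponding blue arc then bounds a disk on the crossing disk of $C_v$, and this disk yields a homotopy of $\phi$ that removes at least one blue--green intersection, reducing complexity (Definition~\ref{def:complexity}) and contradicting the minimal choice, in the spirit of Lemma~\ref{lemma:blue-green-bigons}.

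The main case is $C_g \ne C_v$. Here I would construct a simple closed curve $\gamma$ in the projection plane of $K_{B,i}$'s diagram by concatenating $\alpha_a$, a shortcut across the twist region of $C_v$ through a single crossing of $K_{B,i}$ (joining the two endpoints of $\alpha_a, \alpha_b$ on $C_v$), $\alpha_b$, and a shortcut across the twist region of $C_g$ through a single crossing (with appropriate treatment when $p_2, p_3$ are at interior intersections rather than on $C_g$). The two sides of $\gamma$ lie entirely on the blue checkerboard surface since $\alpha_a \subset B_a$ and $\alpha_b \subset B_b$, and $\gamma$ meets $K_{B,i}$'s diagram transversely at exactly two crossings, one from each of the two distinct twist regions. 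Blue twist-reducedness (Lemma~\ref{lemma:blue-twist-reduced}) then forces these two crossings to lie in a common twist region, hence $C_v = C_g$, contradicting $C_g \neq C_v$.

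The main obstacle is the detailed construction of $\gamma$: verifying that the shortcut arcs can each be chosen to cross exactly one crossing of $K_{B,i}$, handling the sub-case $i = 2$ (where each twist region contains two crossings with a bigon between them, and the shortcut must be routed through one of the crossings rather than through the bigon), and treating the different possible locations of the endpoints $p_2, p_3$ (on $C_g$ versus at interior intersections of the blue surface with the crossing disk of $C_g$), possibly requiring separate shortcut constructions in each sub-case.
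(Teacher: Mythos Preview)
Your two-case split ($C_g = C_v$ versus $C_g \neq C_v$) matches the paper's, and your innermost reduction is right. But both cases are executed differently in the paper, and in one of them your sketch has a real gap.

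\textbf{Case $C_g = C_v$.} Your claimed disk ``on the crossing disk of $C_v$'' is not well specified. The green arc runs from $p_2$ on $\alpha_a$ to $p_3$ on $\alpha_b$, and these lie in the two \emph{distinct} blue regions $B_a$, $B_b$ adjacent to $C_v$ (Lemma~\ref{lemma:edge-intersections}\eqref{lmitm:blue-blue}). There is no single ``corresponding blue arc'' whose sub-arc, together with the green arc, bounds a disk on the crossing disk; the blue arcs are not on the crossing disk at all, and the template of Lemma~\ref{lemma:blue-green-bigons} (which relies on both green endpoints lying in the same blue region) does not transfer. The paper instead works in the projection plane: each blue side $\alpha_a$, $\alpha_b$ runs from the puncture of $C_v$ in its blue region to a point on the intersection of that region with the crossing disk of $C_v$, so closing up inside the (simply connected) blue region gives a disk in the projection plane. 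That disk guides a homotopy of $\phi$ sliding the green endpoint along the crossing disk to the vertex $v$, killing a vertex of $\Gamma_{BRG}$ without touching $\Gamma_B$ or $\Gamma_{BR}$.

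\textbf{Case $C_g \neq C_v$.} Your approach via blue twist-reducedness of $K_{B,i}$ (Lemma~\ref{lemma:blue-twist-reduced}) is a valid alternative, but the ``main obstacle'' you flag is self-inflicted. The paper avoids all the shortcut-arc bookkeeping by following Lemma~\ref{lemma:no-blue-bigons} verbatim: the two blue sides are disjoint from crossing disks, so after restoring all crossings to recover $K$, they connect across the former crossing disks of $C_v$ and $C_g$ to give a simple closed curve meeting the diagram of $K$ at exactly two crossings, one in each of two distinct twist regions. This directly contradicts that $K$ is twist reduced, with no case analysis on endpoint locations or on $i=0$ versus $i=2$.
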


\begin{proof}
Suppose some green edge can be added to $\Gamma_{BR}$ to cut off such a triangular region.  Then there must be an innermost such green edge, so that by Lemmas \ref{lemma:NoGreenLoop} and \ref{lemma:blue-green-bigons}, the triangular region it bounds is disjoint from all other green edges.  Hence it suffices to prove the first claim.  

The two blue edges of such a triangle meet at a vertex corresponding to a crossing circle, bounding a crossing disk.  The green edge lies on a crossing disk.  If these crossing disks agree, then each of the blue edges runs from a crossing circle back to the same crossing disk.  We may form two closed curves in the projection plane, one on either side of the crossing disk, by connecting the endpoints of the blue edge together in the projection plane, and we may do so in such a way that the closed curves bound disks in the projection plane disjoint from the diagram of $L_{B,i}$.  We can use one of these two disks to homotope the triangle through the crossing circle.  In the graph $\Gamma_{BRG}$, this has the effect of sliding the endpoint of the green edge so that it ends at the blue vertex.
This reduces the number of vertices of the graph $\Gamma_{BRG}$ without affecting the number of vertices of $\Gamma_B$ and $\Gamma_{BR}$, which contradicts our minimality assumption.

So suppose the crossing disk of the green edge has boundary on a different crossing circle than that corresponding to the vertex.  Then as we argued in Lemma~\ref{lemma:no-blue-bigons}, the blue edges connected to the crossing circles can be adjusted to give a closed curve in the diagram of $K$ meeting the diagram at exactly two crossings, each in a distinct twist region of $K$.  This gives a contradiction to the fact that the diagram of $K$ is twist reduced.
\end{proof}

\subsection{Adjacent blue--blue--red triangles}

In the following lemmas, we will consider adjacent blue--blue bigons meeting red edges, which will lead to considering blue--blue--red triangles by Lemma~\ref{lemma:red-blue-bigons}.  In this subsection, we will give restrictions on adjacent blue--blue--red triangles.

\begin{lemma}\label{lemma:no-green-meets-bcj}
Suppose the graph $\Gamma_{BR}$ contains three adjacent blue--blue--red triangles, adjacent at the vertex meeting blue edges, with edges labelled as in Figure \ref{fig:BBR-labels}.  Then when we include again the green edges, no green edge meets edge $b$ or $c$.  Moreover, if a green edge meets the edge $j$, then it must run from the common vertex of the triangles to $j$.  In particular, it comes from an intersection of the disk $D$ with the crossing disk associated to the vertex.
\end{lemma}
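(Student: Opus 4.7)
The plan is a case analysis using the trivial-region lemmas of this section, combined with the observation that any crossing disk crosses the blue surface transversely in $L_{B,i}$, so a green edge ending on the interior of a blue edge is always accompanied by a second green edge on the opposite side of that blue edge, coming from the same crossing disk.

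To show no green edge meets $b$ (and by symmetry $c$), I would suppose for contradiction that a green edge $\gamma$ has an endpoint $p$ in the interior of $b$, lying in one of the two adjacent faces of $\Gamma_{BR}$, say $T_1 = (a,b,i)$. I would then rule out most possibilities for the other endpoint $q \in \partial T_1$: if $q$ lies on the interior of $a$, then sub-arcs of $a$ and $b$ together with $\gamma$ form a BBG triangle meeting at $v$, contradicting Lemma~\ref{lemma:noBBG}; if $q$ lies on the interior of $b$ or if $q = v$, then $\gamma$ and a sub-arc of $b$ form a green-blue bigon with both endpoints on $b$ and with $p$ not a vertex of $\Gamma_B$, contradicting Lemma~\ref{lemma:blue-green-bigons}; and general position excludes $q$ at the blue-red vertices of $\partial T_1$. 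The only surviving possibility is that $q$ lies on the interior of $i$, in which case $\gamma$ cuts off an RGB triangle in $T_1$ whose blue side is the sub-arc of $b$ from $p$ to the common endpoint $w$ of $b$, $i$, $j$.

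By transversality, the crossing disk $G$ carrying $\gamma$ continues through $p$ into $T_2 = (b,c,j)$, producing a second green edge $\gamma'$ in $T_2$ that also ends at $p$. The symmetric case analysis applied to $\gamma'$ in $T_2$ forces $\gamma'$ to end on the interior of $j$, cutting off an RGB triangle with blue side the same sub-arc of $b$ from $p$ to $w$. The two RGB triangles are then adjacent across this blue sub-arc, contradicting Lemma~\ref{lemma:RGB-adjacent-blue}. This transversality-plus-symmetry step is the crux of the argument, and I expect it to be the main obstacle, since it is what actually leverages the three-triangle hypothesis: a single surviving RGB configuration on one side of $b$ is forced to have a counterpart on the other. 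Finally, for the claim about $j$, a green edge $\gamma$ with endpoint $p$ on the interior of $j$ and lying in $T_2$ cannot end on the interior of $b$ or $c$ (just proved), nor on the interior of $j$ (this would give a GR bigon disjoint from blue, excluded by Lemma~\ref{lemma:red-green-bigons}), nor at a blue-red vertex (general position). The only remaining option is $q = v$, so $\gamma$ runs from $v$ to $p$; since one endpoint of $\gamma$ is the vertex $v$ of $\Gamma_B$ corresponding to a particular crossing circle, $\gamma$ must lie on the crossing disk associated to $v$.
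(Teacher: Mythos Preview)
Your argument is correct and follows essentially the same route as the paper's proof: both rule out the endpoints of a green edge meeting $b$ by invoking Lemma~\ref{lemma:noBBG} (to exclude $a$ or $c$) and Lemma~\ref{lemma:blue-green-bigons} (to exclude $b$ again or the vertex $v$), then use the transversality/continuation of the green arc across $b$ into the neighbouring triangle to produce a pair of RGB triangles adjacent along the blue sub-arc of $b$, contradicting the general form of Lemma~\ref{lemma:RGB-adjacent-blue}; the claim about $j$ then follows by the same elimination plus Lemma~\ref{lemma:red-green-bigons}. Your write-up is slightly more explicit about the transversality step than the paper's, but the logic is identical.
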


\begin{figure}
  \input{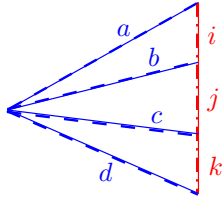}
  \caption{Labels on three adjacent blue--blue--red triangles, adjacent at the blue vertex.}
  \label{fig:BBR-labels}
\end{figure}

\begin{proof}
If green meets $b$, then a green edge runs through the triangles $abi$ and $bcj$ in Figure~\ref{fig:BBR-labels}.  By Lemma~\ref{lemma:noBBG} (no blue--blue--green triangles), a green edge meeting $b$ cannot meet either $a$ or $c$.  By Lemma~\ref{lemma:blue-green-bigons} (no blue--green bigons), it cannot run from $b$ back to $b$, or even to the vertex meeting the blue edges.  Hence it must run to $i$ and to $j$.  But this contradicts Lemma~\ref{lemma:RGB-adjacent-blue} (no RGB triangles adjacent along blue).  So green cannot meet $b$.  The argument for $c$ is symmetric.

If green meets $j$, then a green edge runs through the triangle with edges $b$, $c$, and $j$.  Since green cannot meet $b$ or $c$, and cannot form a green--red bigon, it must run directly to the vertex.
\end{proof}

Note that in a single blue--blue--red triangle, there are two points where blue edges meet red, each of which corresponds to a crossing of the diagram, by Lemma~\ref{lemma:edge-intersections}\eqref{lmitm:blue-red}.

\begin{lemma}\label{lemma:distinct-crossings-2}
Restrict to the graph $\Gamma_{BR}$. In the case $i=2$, a single blue--blue--red triangle meets two distinct crossings.
\end{lemma}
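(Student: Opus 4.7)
My plan is to argue by contradiction, supposing a blue--blue--red triangle $T$ in $\Gamma_{BR}$ has both blue--red vertices $v_1,v_2$ corresponding to the same crossing $c$ of $K_{B,2}$. Let $r$ be the red edge of $T$. Since the interior of a red edge cannot traverse the band at any crossing without producing a blue intersection, $\phi(r)$ maps into a single red region $R$ of $K_{B,2}$ with both endpoints at $c$.

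The crucial use of the hypothesis $i=2$ will come through Lemma~\ref{lemma:blue-prime}(\ref{lmitm:distinct-red}): the two red corners at $c$ lie in distinct red regions. Consequently only one red corner of $c$ is inside $R$, and so both ends of $\phi(r)$ must approach $c$ from this same corner, making $\phi(r)$ a loop at $c$ lying entirely in $\overline{R}$. Combined with the primality of $K_{B,2}$ (Lemma~\ref{lemma:K2-0-prime}), which ensures $R$ is a topological disk, $\phi(r)$ will then bound a subdisk $E \subset R$ whose interior is disjoint from the diagram of $K_{B,2}$.

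Next I will extend $E$ to a small disk $\tilde\Delta$ on the red surface in $S^3 \setminus L_{B,2}$, bounded by $\phi(r)$ together with a short arc $\alpha$ on the blue--red intersection line at $c$ joining $\phi(v_1)$ to $\phi(v_2)$. Using $\tilde\Delta$ as a compressing disk, I will perform a local homotopy of $\phi$ supported in a thin neighbourhood of $r$ in $D$, pushing the sheet of $\phi(D)$ across $\tilde\Delta$ onto $\alpha$ and then slightly off the red surface in the appropriate normal direction. This will eliminate the red edge $r$ and the two vertices $v_1, v_2$ from $\Gamma_{BR}$ (with the other red edges incident to $v_1$ and $v_2$ merging into a single red edge, and similarly for blue), strictly decreasing $\#\mathrm{vertices}(\Gamma_{BR})$ and hence the complexity $C(\phi)$. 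This will contradict minimality.

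The hard part, analogous in spirit to the homotopies in the proofs of Lemmas~\ref{lemma:red-green-bigons} and~\ref{lemma:blue-green-bigons}, will be to verify that the push--off can be carried out without creating any new intersections with the red, blue, or green surfaces outside the support of the homotopy, and without adversely affecting the earlier complexity components (in particular, the number of vertices of $\Gamma_B$). Once that local combinatorial check is in place, the contradiction follows and the two crossings must be distinct.
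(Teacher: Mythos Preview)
Your setup is correct up through the point where you conclude, via Lemma~\ref{lemma:blue-prime}\eqref{lmitm:distinct-red}, that both endpoints of $\phi(r)$ lie in the \emph{same} red corner of $c$. But from there you turn in the wrong direction. No homotopy is needed: the paper obtains a two--line contradiction, and the same contradiction is available to you.

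Here is what you are missing. The two blue edges of the triangle meet at a crossing--circle vertex, so by Lemma~\ref{lemma:edge-intersections}\eqref{lmitm:blue-blue} they lie in \emph{distinct} blue regions, hence they approach $c$ from the two opposite blue corners. Since the triangle is mapped entirely above (or entirely below) the projection plane, Lemma~\ref{lemma:edge-intersections}\eqref{lmitm:blue-red} pins down, at each of $v_1$ and $v_2$, which red corner the red edge enters given which blue corner the blue edge enters. Because the blue corners are opposite, the red corners are forced to be opposite as well. That directly contradicts Lemma~\ref{lemma:blue-prime}\eqref{lmitm:distinct-red} (equivalently, it contradicts your own conclusion that both red endpoints lie in the same red corner). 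This is the paper's argument; you had all the ingredients but did not combine Lemma~\ref{lemma:edge-intersections}\eqref{lmitm:blue-blue} and~\eqref{lmitm:blue-red}.

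Your proposed homotopy, by contrast, is not only unnecessary but genuinely delicate in a way that your sketch does not address. The disk $\tilde\Delta$ you build sits in the red surface, and its boundary arc $\alpha$ lies on the crossing arc at $c$, which is precisely the locus where the blue and red surfaces intersect. Pushing $\phi(D)$ across $\tilde\Delta$ flips the above/below status of the region near $r$, which alters the blue intersections along $b_1$ and $b_2$; in particular it does not simply ``merge'' the incident blue edges as you claim, because those blue edges record crossings of the blue surface whose pattern changes when you cross the projection plane. You would need to track all of this carefully, and there is no guarantee that $\#\mathrm{vertices}(\Gamma_B)$ is preserved. Since the direct contradiction is available, you should use it instead.
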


\begin{proof}
If not, then the red edge of the triangle runs from one crossing back to the same crossing.  By Lemma~\ref{lemma:edge-intersections}\eqref{lmitm:blue-blue}, the blue edges run to opposite sides of that crossing.  Then since the triangle is either mapped entirely above or entirely below the plane of projection, Lemma~\ref{lemma:edge-intersections}\eqref{lmitm:blue-red} implies that the endpoints of the red edge are on opposite sides of the crossing as well.  But this contradicts Lemma~\ref{lemma:blue-prime}\eqref{lmitm:distinct-red}, for the diagram $L_{B,2}$.
\end{proof}

In the case $i=0$, the two crossings at endpoints of the red edge in a red--blue--blue triangle may actually not be distinct.  However, when we have three adjacent triangles, we are able to rule out too much overlap of crossings.

\begin{lemma}\label{lemma:distinct-crossings}
Restrict to the graph $\Gamma_{BR}$.  If three blue--blue--red triangles are adjacent, then each triangle must meet two distinct crossings.
\end{lemma}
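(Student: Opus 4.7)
The case $i=2$ follows immediately from Lemma~\ref{lemma:distinct-crossings-2}, so the plan focuses on $i=0$. Suppose for contradiction that one of the three adjacent BBR triangles, labeled as in Figure~\ref{fig:BBR-labels}, has its red edge starting and ending at a single crossing $X$ of $K_{B,0}$. The first step is to pin down $X$: the two blue sides of this bad triangle must both end at $X$, at its two blue corners. By Lemma~\ref{lemma:edge-intersections}\eqref{lmitm:blue-blue}, the blue sides at the central vertex $V$ lie in the two distinct blue regions $R_1, R_2$ meeting $V$, so both $R_1$ and $R_2$ contain a blue corner of $X$. By Lemma~\ref{lemma:blue-prime}\eqref{lmitm:assoc-cross}, $X$ must then be associated to $V$; since $i=0$, there is a unique such associated crossing. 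Moreover, since the bad red edge is disjoint from blue and red in its interior, both of its endpoints lie on a single red region of $L_{B,0}$, forcing the two red corners of $X$ to lie in the same red region---a configuration precluded for $L_{B,2}$ by Lemma~\ref{lemma:blue-prime}\eqref{lmitm:distinct-red}, but allowed for $L_{B,0}$.

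The contradiction will come by forming a simple closed curve $\gamma$ in the projection plane of $L_{B,0}$ using the two blue sides of the bad triangle together with a short arc across the crossing circle $V$. The combined curve decomposes as a red arc $\gamma_R$ (the bad red edge) and a blue arc $\gamma_B$ (built from the two blue edges and the short arc), and after passing to the diagram of $K_{B,0}$ it becomes a simple closed curve meeting the diagram in exactly two points (both near $X$) with crossings on both sides. By Lemma~\ref{lemma:K2-0-prime} on primality of $K_{B,0}$, such a $\gamma$ cannot exist unless $\gamma_R$ meets a crossing disk.

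The three-adjacent-triangles hypothesis enters critically at this point: by Lemma~\ref{lemma:no-green-meets-bcj}, any green edge meeting the middle red edge $j$ must emanate from $V$, and no green edge meets the middle blue edges $b$ or $c$ at all. Using this control, I would induct on the number of green intersections along the bad red edge, splitting $\gamma$ at each intersection into smaller closed curves as in the argument of Lemma~\ref{lemma:red-blue-bigons}, each of which reduces to the primality contradiction above. The main obstacle is ensuring that at every step of this induction the split curves continue to meet the diagram of $K_{B,0}$ in exactly two points with crossings on both sides, while the constraints on green edges from Lemma~\ref{lemma:no-green-meets-bcj} prevent pathological configurations. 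When the bad triangle is an outer one rather than the middle, an analogous argument applies with minor relabeling, and in either case the three-triangle hypothesis is essential precisely because Lemma~\ref{lemma:no-green-meets-bcj} requires the full configuration of three adjacent BBR triangles.
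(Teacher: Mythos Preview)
Your proposal has a genuine gap in the outer-triangle case, and the curve construction is muddled.

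First, the curve: you build $\gamma$ from the two blue sides of the bad triangle plus a short arc at $V$, but then say $\gamma_R$ is ``the bad red edge.'' These do not fit together. The blue sides terminate at the two \emph{blue} corners of $X$, while the red edge terminates at the two \emph{red} corners of $X$; you cannot simply concatenate them into a simple closed curve meeting the diagram twice. The paper's construction for the middle triangle $bcj$ is different: after reinserting the crossings of $K$, it uses $j\cup b$ as a closed curve in the diagram of $K$ (not $K_{B,0}$), invoking primality of $K$ directly rather than Lemma~\ref{lemma:K2-0-prime}.

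Second, and more seriously, your claim that the outer-triangle case follows by ``minor relabeling'' is false. Lemma~\ref{lemma:no-green-meets-bcj} controls green intersections with $b$, $c$, and $j$ only; it says nothing about $a$, $d$, $i$, or $k$. For the middle triangle this is enough: any green meeting $j$ must come from the crossing disk at $V$, so the curve built from $j$ is clean. But for an outer triangle, say $abi$, the edge $i$ may meet \emph{other} crossing disks $G'\neq G$, and your induction on green intersections has no mechanism to handle these. The paper's actual argument for the outer case is substantially more involved: it locates such a $G'$, shows the resulting green edge must run to $a$, takes an innermost such configuration, and then crucially uses the \emph{middle} triangle $bcj$ to build a second closed curve $\gamma$ (via an auxiliary arc $\ell$ together with portions of $j$ and $c$) whose red part lies on $j$. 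Since $j$ can only meet $G$, this contradicts Lemma~\ref{lemma:K2-0-prime}. The three-triangle hypothesis is essential not merely to quote Lemma~\ref{lemma:no-green-meets-bcj}, but because the outer case genuinely borrows the middle triangle to finish.
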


\begin{proof}
In the case $i=2$, this is immediate from Lemma~\ref{lemma:distinct-crossings-2}.  So we will only consider the case $i=0$, and the diagram $L_{B,0}$.
  
Label the triangles as in Figure \ref{fig:BBR-labels} again.  Without loss of generality, we may assume that triangles $abi$ and $cdk$ map above the plane of projection, and triangle $bcj$ maps below.

First, we prove that triangle $bcj$ cannot meet exactly one crossing.  Suppose by way of contradiction that it does.  Then edges $b$ and $c$ of the triangle must meet opposite sides of this crossing, and endpoints of $j$ must also meet opposite sides of the crossing as shown in Figure~\ref{fig:distinct-crossings-1}, left.

\begin{figure}
  \input{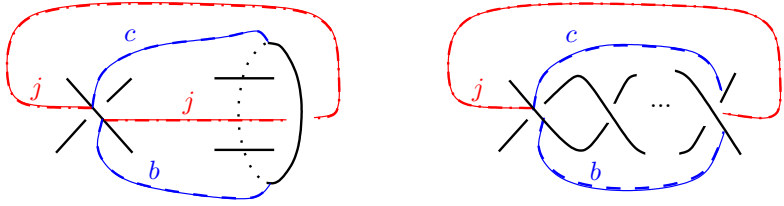}
  \caption{If triangle $bcj$ meets just one crossing, edges $b$, $c$, and $j$ meet the crossing as shown left. Right: this gives a simple closed curve in the diagram of $K$ meeting the diagram twice.}
  \label{fig:distinct-crossings-1}
\end{figure}

It follows that $j$ runs through the crossing disk $G$ corresponding to the crossing circle of the vertex, as in Figure~\ref{fig:distinct-crossings-1}.  By Lemma~\ref{lemma:no-green-meets-bcj}, no other green edge can meet $j$.  But now put the crossings of $K$ back into the diagram, to obtain the diagram as in Figure~\ref{fig:distinct-crossings-1}, right.  Note that the union of $j$ and $b$ form a closed curve meeting the diagram of $K$ twice.  This contradicts the fact that the diagram of $K$ is prime and hyperbolic.

So if one of the triangles meets exactly one crossing, it cannot be the middle one.  It must be triangle $abi$ or triangle $cdk$.  The two cases are symmetric, and so we show that triangle $abi$ meets two crossings.  If not, then as in the previous argument, edges $a$ and $b$ meet opposite sides of the same crossing, call it $x$, and edge $i$ must run through the crossing disk $G$ to meet $x$ on either side.  Again if $i$ meets no other green crossing disks, then by putting in twists corresponding to the twist region shown, we would obtain a contradiction to the fact that the diagram of $K$ is prime and hyperbolic, just as in the above paragraph.  Thus we must conclude that $i$ meets some other green disk, a possibility which was ruled out for $j$ by Lemma~\ref{lemma:no-green-meets-bcj}, but which is not impossible for $i$.

Thus there must be some green edge $g'$, corresponding to a green disk $G'$ distinct from the crossing disk $G$ at the vertex of $a$, $b$, and $c$, such that $g'$ lies in the triangle $abi$ and meets $i$.  Note that $g'$ cannot have its other endpoint at the vertex where $a$ and $b$ meet, since $G'$ is distinct from $G$. By Lemmas~\ref{lemma:no-green-meets-bcj} and \ref{lemma:red-green-bigons}, the other endpoint of $g'$ meets $a$, forming a triangular region of $\Gamma_{BRG}$.

Consider such a triangle nearest the vertex where $a$ and $i$ meet.  That is, take $g'$ such that the triangle cut off by $g'$ and portions of $a$ and $i$ in $\Gamma_{BRG}$ contains no other green edges disjoint from $G$. Notice that with this choice of $g'$, the triangular region either contains no other green edges, or it contains a green edge $g$ coming from the crossing disk $G$, cutting off an interior triangle.

If $g'$ cuts off a triangle in $\Gamma_{BRG}$ that contains no other green edges, then the boundary of that triangle maps to give a curve in the projection plane meeting the diagram of $K_{B,0}$ twice, disjoint from all crossing disks.  If instead $g'$ cuts off a triangle meeting a green edge $g$, then the boundary of the quadrilateral with sides $g'$, $g$, and portions of $a$ and $i$, maps to give a closed curve in $K_{B,0}$ meeting the diagram exactly twice.  In either case, by Lemma~\ref{lemma:K2-0-prime}, the curve bounds no crossings.  This is shown on the left of Figure~\ref{fig:distinct-crossings-2} for the triangle case.  Draw an arc in the diagram running over the strand of the knot between $a$ and $i$, and label this arc $\ell$.

\begin{figure}
  \input{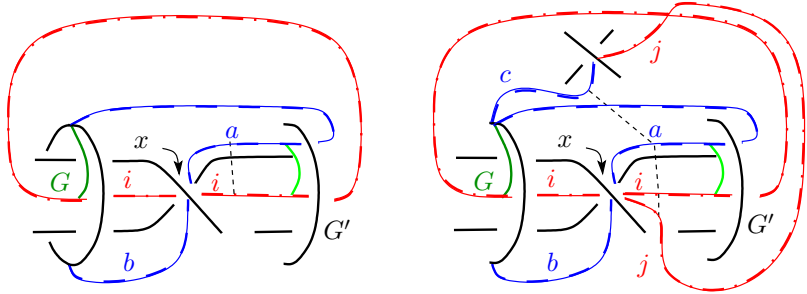}
  \caption{Possible configuration of diagram when triangle $abi$ meets just one crossing. The thin dashed line is the arc $\ell$ in the proof.}
  \label{fig:distinct-crossings-2}
\end{figure}

Now put triangle $bcj$ into the figure.  One endpoint of $j$ meets the crossing $x$ on the side of $b$ determined by Lemma~\ref{lemma:edge-intersections}\eqref{lmitm:blue-red}.  By Lemma~\ref{lemma:no-green-meets-bcj}, $j$ does not meet the crossing disk $G'$.  This is shown on the right of Figure~\ref{fig:distinct-crossings-2}.  Because $i$ and $j$ are in the same region, we may connect an endpoint of the arc $\ell$ to the edge $j$ without meeting the diagram of $K_{B,0}$.  Similarly, because $a$ and $c$ are in the same region, we may connect the other endpoint of the arc $\ell$ to the edge $c$ without meeting the diagram.  The arc $\ell$ is shown by the thin dashed line in Figure~\ref{fig:distinct-crossings-2}.

Then the union of the arc $\ell$, the portion of $j$ running from $\ell$ to the crossing where it meets $c$, and the portion of $c$ running back to $\ell$, forms a closed curve $\gamma$ in the diagram meeting the diagram of $K_{B,0}$ exactly twice.  Observe that $\gamma$ is disjoint from the crossing disk $G$.  Lemma~\ref{lemma:K2-0-prime} implies that the portion of $\gamma$ in the red face must run through a crossing disk, which implies $j$ must run through a crossing disk, distinct from $G$.  But Lemma~\ref{lemma:no-green-meets-bcj} implies that $j$ can only run through $G$.  Together, these give a contradiction.
\end{proof}

\begin{lemma}\label{lemma:adjacent-BBR-triples}
The graph $\Gamma_{BR}$ cannot contain two pairs of three adjacent blue--blue--red triangles adjacent across the red edges, as in Figure~\ref{fig:adjacent-BBR-triples}.
\end{lemma}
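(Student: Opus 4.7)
The plan is to argue by contradiction, using blue twist-reducedness of $K_{B,i}$ (Lemma~\ref{lemma:blue-twist-reduced}) together with the fact that the construction of $L_{B,i}$ attaches at most one crossing circle to each twist region of $K$. Assume the two triples exist; label the first triple at vertex $v_1$ as in Figure~\ref{fig:BBR-labels}, with blue edges $a, b, c, d$ ending at crossings $x_1, x_2, x_3, x_4$ and red edges $i, j, k$ joining consecutive crossings. Label the second triple at $v_2$ analogously with $a', b', c', d'$ sharing the same three red edges, and hence the same four endpoint crossings. By Lemma~\ref{lemma:distinct-crossings} these four crossings are distinct. By Lemma~\ref{lemma:edge-intersections}\eqref{lmitm:blue-blue} combined with Lemma~\ref{lemma:blue-prime}\eqref{lmitm:distinct-regions}, adjacent blue edges at $v_1$ meet the crossing circle in its two distinct blue regions $R_1, R_1'$ of $L_{B,i}$, so $a, c \in R_1$ and $b, d \in R_1'$; define $R_2, R_2'$ analogously at $v_2$.

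The heart of the proof is to form the closed curve $\gamma$ in the projection plane by concatenating the blue arcs $a$, $(a')^{-1}$, $c'$, $c^{-1}$, so that $\gamma$ traces the loop $v_1 \to x_1 \to v_2 \to x_3 \to v_1$ through the blue regions $R_1$ and $R_2$. Because $v_1, v_2$ are crossing circles of $L_{B,i}$ and not crossings of $K_{B,i}$, the curve $\gamma$ meets the diagram of $K_{B,i}$ in exactly the two crossings $x_1, x_3$, passing between opposite blue corners at each. By Lemma~\ref{lemma:blue-twist-reduced}, $\gamma$ bounds a string of red bigons in $K_{B,i}$ between $x_1$ and $x_3$, placing them in a common twist region $T$ of $K_{B,i}$ with $R_1$ on one side and $R_2$ on the other. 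An analogous curve formed from $b$, $(b')^{-1}$, $d'$, $d^{-1}$ shows that $x_2$ and $x_4$ also lie in a common twist region of $K_{B,i}$.

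The concluding step is to show that both crossing circles $v_1$ and $v_2$ encircle $T$. Since the loop at $v_1$ separates the distinct blue regions $R_1$ and $R_1'$ of $L_{B,i}$, and both of these regions sit on the $R_1$-side of $T$, the crossing circle at $v_1$ must loop around the two strands of $T$ in order to properly separate them; that is, $v_1$ encircles $T$. The symmetric argument via the $R_2$-side shows that $v_2$ encircles $T$ as well. But the construction of $L_{B,i}$ attaches at most one crossing circle to each twist region of $K$, so having two distinct crossing circles $v_1 \neq v_2$ both encircle $T$ is a contradiction, valid for both $i=0$ and $i=2$.

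The main obstacle lies in the last step: rigorously verifying that each of $v_1, v_2$ indeed encircles $T$ rather than, say, being a trivial loop inside a single blue region or encircling some unrelated twist region. This hinges on a careful analysis of how the four blue regions $R_1, R_1', R_2, R_2'$ sit around the red bigon string and how they merge when the crossing circles are filled to pass from $L_{B,i}$ to $K_{B,i}$; the parallel curve through $x_2, x_4$ supplies additional geometric constraints that help nail this down.
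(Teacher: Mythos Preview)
Your proposal has genuine gaps that prevent it from closing.

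First, two preliminary points. The claim that the four crossings $x_1,x_2,x_3,x_4$ are distinct does not follow from Lemma~\ref{lemma:distinct-crossings}; that lemma only gives $x_1\neq x_2$, $x_2\neq x_3$, $x_3\neq x_4$. In particular $x_1=x_3$ is not excluded, and the paper has to argue this separately. Also, $v_1$ and $v_2$ are distinct vertices of $\Gamma_B$ in $D$, but a priori they may map to the \emph{same} crossing circle in $L_{B,i}$; the paper spends its first substantive paragraph ruling this out.

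The essential gap, however, is the one you flag yourself. Suppose blue twist--reducedness does place $x_1$ and $x_3$ in a common twist region $T$ of $K_{B,i}$. There is no mechanism in your argument forcing the crossing circle at $v_1$ to be the one encircling $T$. All you know is that the blue region $R_1$ touches both $x_1,x_3$ and the puncture of $v_1$; but a single blue region can border many crossings in many twist regions and be punctured by crossing circles encircling entirely unrelated twist regions. Your suggested justification (``both $R_1$ and $R_1'$ sit on the $R_1$-side of $T$, so $v_1$ must loop around the two strands of $T$'') is not correct as stated, and the parallel curve through $x_2,x_4$ does not supply the missing constraint either.

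The paper avoids this difficulty by a different route. Instead of trying to identify which twist region the crossing circles encircle, it uses Lemma~\ref{lemma:blue-prime}\eqref{lmitm:assoc-cross} to pin crossings to a \emph{specific} crossing circle. After establishing that the two vertices correspond to distinct crossing circles and controlling how the crossing disks can meet the red edges $i,j$ (via Lemma~\ref{lemma:no-green-meets-bcj}), the paper shows that $e$ and $g$ are forced into the blue region of $b$, whence the crossings at the ends of $a$ and of $c$ are both associated with the crossing circle at the $a,b,c,d$ vertex. The contradiction then comes from Definition~\ref{def:L0-L2}: at most one (for $i=0$) or two (for $i=2$) crossings may be associated to a given crossing circle, with an additional argument for the $i=2$ case. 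This ``association'' mechanism is what your approach is missing.
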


\begin{figure}
  \input{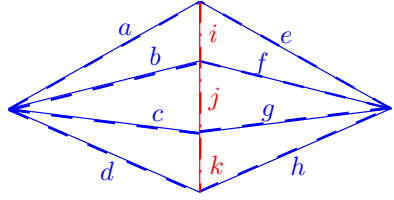}
  \caption{Labeling for adjacent blue--blue--red triples}
  \label{fig:adjacent-BBR-triples}
\end{figure}

\begin{proof}
Suppose not.

We first argue that the two crossing circles corresponding to the two vertices where blue edges meet cannot agree.  For if they do, then $a$, $c$, and $f$ are in the same region, and $b$, $e$, and $g$ are in the same region, and so the crossings at endpoints of $a$ and $e$, at endpoints of $b$ and $f$, and at endpoints of $c$ and $g$ are all associated with the crossing circle (Lemma~\ref{lemma:blue-prime}\eqref{lmitm:assoc-cross}). By Lemma~\ref{lemma:distinct-crossings}, at least two of those crossings are distinct.   This immediately gives a contradiction for the diagram of $L_{B,0}$, by Definition~\ref{def:L0-L2}.   However, if two of those crossings agree, then we do not immediately have a contradiction in the case of $L_{B,2}$.  However, in $L_{B,2}$, the following unions of edges will give simple closed curves in the diagram of $K_{B,2}$ meeting the diagram twice: $c\cup i\cup f$, $e \cup i \cup b$, $f\cup a\cup j$, $b\cup j\cup g$.  Because the diagram of $K_{B,2}$ is prime (Lemma~\ref{lemma:K2-0-prime}), these curves encircle portions of the diagram meeting no crossings, and the diagram of $K_{B,2}$ is that of a $(2,2)$--torus link, with a crossing circle of $L_{B,2}$ encircling the two crossings.  This contradicts Lemma~\ref{lemma:22torus}.  Thus the two vertices where blue edges meet correspond to two distinct crossing circles.

It follows that neither of the crossing disks corresponding to the vertices can intersect the edge $j$, for by Lemma~\ref{lemma:no-green-meets-bcj}, if the green meets $j$, then a green edge must run from $j$ to the vertex where $b$ and $c$ meet, and a green edge must run from $j$ to the vertex where $g$ and $f$ meet.  This would imply that those two vertices correspond to the same crossing disk, which we showed cannot happen.

Additionally, we claim that we cannot have both crossing disks meeting the edge $i$.  For if the crossing disk corresponding to the vertex of $e$, $f$, and $g$ meets $i$, then there is a corresponding green edge in the triangle $abi$ with an endpoint on $i$.  By Lemma~\ref{lemma:no-green-meets-bcj}, its other edge meets $a$.  Then $a$ is in the same region as $f$.  Similarly, if the crossing disk corresponding to the vertex of $a$, $b$, and $c$ meets $i$, then $e$ is in the same region as $b$.  Lemma~\ref{lemma:blue-prime}\eqref{lmitm:assoc-cross} implies that the crossing at the endpoints of $a$ and $e$, as well as the crossing at the endpoints of $b$ and $f$, must both be associated to both crossing circles.  But any single crossing is associated to at most one crossing circle.  This contradiction implies that we cannot have both crossing disks meeting $i$.
Without loss of generality, assume that the crossing disk corresponding to the vertex at $a$, $b$, and $c$ does not meet $i$.

Now we claim that the crossing $x$ at the endpoints of $a$ and $e$ and the crossing $y$ at the endpoints of $c$ and $g$ must be distinct.  For if not, then $i \cup j$ maps to a closed curve in the diagram, and the two crossing circles must link $i \cup j$.  But then the crossing disk corresponding to the vertex where $a$, $b$, and $c$ meet would have to intersect either $i$ or $j$, and this contradicts the above arguments.

Thus we have established:  (1)  The crossings $x$ and $y$ are distinct.  (2)  The crossing disk corresponding to the vertex meeting $a$, $b$, and $c$ does not intersect $i$ or $j$.  By Lemma~\ref{lemma:distinct-crossings}, the crossing at the endpoints of $b$ and $f$ is also distinct from $x$ and $y$.  Hence we may sketch images of the triangles $abi$ and $bcj$, and endpoints of edges $e$, $f$, and $g$ as in Figure~\ref{fig:BBR-triples-1}, left.

\begin{figure}
  \input{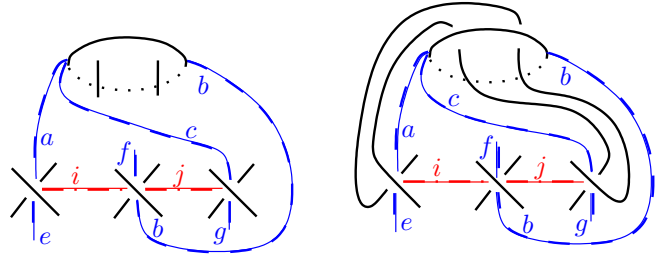}
  \caption{Left: configuration of three adjacent blue--blue--red
    triangles when the triangles meet three distinct crossings. Right:
    diagram of $L_{B,2}$ in this case.}
  \label{fig:BBR-triples-1}
\end{figure}

Note that $g$ starts within the region of the diagram bounded by $b\cup j\cup c\cup (\mbox{crossing disk})$ and $e$ starts outside it.  Now $e$ and $g$ share a vertex that is mapped to the same side of the same crossing circle. Hence, $e$ and $g$ lie in the same blue region of the diagram.  Neither $e$ nor $g$ can meet the region of $c$, by Lemma~\ref{lemma:blue-prime}\eqref{lmitm:distinct-regions-cross}. They cannot meet $j$ because they are the wrong color.  Hence one must meet $b$ or the crossing disk in the region of $b$, and so both are in the same region as $b$.  Then Lemma~\ref{lemma:blue-prime}\eqref{lmitm:assoc-cross} implies that the crossing $x$ at the endpoints of $a$ and $e$ and the crossing $y$ at the endpoints of $c$ and $g$ both are associated with the crossing circle corresponding to the vertex of $a$, $b$, and $c$.  This contradicts Definition~\ref{def:L0-L2} in the case of $L_{B,0}$: each crossing circle is associated with at most one crossing.

So we continue for the diagram $L_{B,2}$ only.  In Figure~\ref{fig:BBR-triples-1} right, the diagram of $L_{B,2}$ is shown, with crossings associated with the crossing circle attached to that crossing circle, and with the crossing circle cutting through the bigon formed by the two crossings, as required in Definition~\ref{def:L0-L2}.  Now note that the edge $f$ is enclosed by a region bounded by strands of the diagram, as well as red edges $i$ and $j$.  The edge $f$ cannot cross any of these.  Similarly, $e$ and $g$ cannot cross these.  Thus the crossing circle corresponding to the vertex of $e$, $f$, and $g$, which is distinct from the crossing circle corresponding to the vertex of $a$, $b$, and $c$, must straddle either $i$ or $j$.  It cannot straddle $j$, by the above work, so it straddles $i$.  Then a green edge with endpoint on $i$ meets triangle $abi$.  By Lemma~\ref{lemma:no-green-meets-bcj}, its other endpoint is on $a$.  Then $f$ is in the region of $a$ and $c$.  By Lemma~\ref{lemma:blue-prime}\eqref{lmitm:assoc-cross}, all three crossings shown are associated with the crossing circle of $a$, $b$, and $c$.  This contradicts the definition of $L_{B,2}$, Definition~\ref{def:L0-L2}.
\end{proof}

\section{Triangles and squares}\label{sec:trisquares}

The main result of this section is that the graph $\Gamma_B$ of Lemma~\ref{lemma:graph-bluevalence} cannot contain three and five adjacent bigons, for $L_{B,0}$ and $L_{B,2}$, respectively.  To prove this, we examine adjacent triangles and squares.

Suppose a blue--blue--red triangle in $\Gamma_{BR}$ is adjacent to a blue and red square, adjacent across the red edge, as illustrated on the left of Figure~\ref{fig:tri-squares-labels}.  We will call this a \emph{triangle--square pair}.  In this section, we examine adjacent triangle--square pairs, adjacent at the vertex of the triangle.  We show that in $L_{B,0}$ there cannot be three adjacent triangle--square pairs, and in $L_{B,2}$ there cannot be five adjacent triangle--square pairs.

\begin{figure}
  \includegraphics{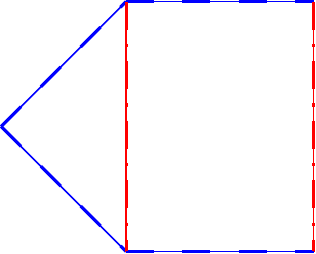} \hspace{.5in}
 \input{trisquare-labels-arxiv.tex}
  \caption{Left: A triangle--square pair.  Right: Three adjacent
    triangle--square pairs will be labeled as shown.}
  \label{fig:tri-squares-labels}
\end{figure}

Label three adjacent triangles and squares as in Figure~\ref{fig:tri-squares-labels}.  By Lemma~\ref{lemma:distinct-crossings}, each triangle shown meets distinct crossings.  Thus the diagram may have two, three, or four crossings coming from the intersections of blue and red edges of triangles.  Up to symmetry, there are exactly four possibilities, shown in Figure~\ref{fig:trisquare-options}.

\begin{figure}
  \input{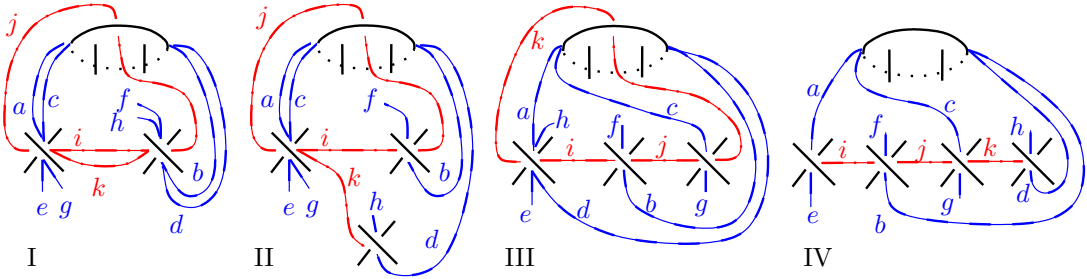}
  \caption{Possibilities for three adjacent triangles.}
  \label{fig:trisquare-options}
\end{figure}

Note the diagrams shown are only sketches.  In options I and II, the curve $i \cup j$ must run through the crossing disk, and in option III, the curve $i\cup j\cup k$ must run through the crossing disk.  We have chosen to show $j$ running through the disk in I and II, and $k$ in III.  Additionally, we have chosen to show none of $i$, $j$, or $k$ running through the crossing disk in option IV.  However, a priori, these are arbitrary choices.  Hence we must be careful to ensure that our arguments below do not depend on these choices.

For all four possibilities, we have the following lemma.

\begin{lemma}\label{lemma:fg-endpoints}
Suppose the edge labeled $f$ in Figure \ref{fig:tri-squares-labels} is not mapped into the region of the diagram of $K_{B,i}$ containing images of edges labeled $a$ and $c$.  Then $f$ cannot have both of its endpoints mapped to the same crossing of $K_{B,i}$.  Similarly, if $g$ is not mapped into the same region as edges labeled $b$ and $d$, then $g$ cannot have both of its endpoints mapped to the same crossing.
\end{lemma}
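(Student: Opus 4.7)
Suppose for contradiction that $f$ has both endpoints mapping to the same crossing $y$ of $K_{B,i}$, while $f$ is not in the blue region of $K_{B,i}$ containing $a$ and $c$. The two endpoints of $f$ in $D$ are distinct vertices of $\Gamma_{BR}$, both projecting to $y$. Since $f$ is a blue arc lying in a single blue region $R$, and since Lemma~\ref{lemma:blue-prime}\eqref{lmitm:distinct-regions-cross} asserts that the two blue quadrants at any crossing of $L_{B,i}$ lie in distinct blue regions, both ends of $f$ must approach $y$ from the same blue quadrant. Thus the image of $f$ is a loop at $y$ lying in the blue region $R$, where $R$ differs from the region of $a, c$ by hypothesis and differs from the region of $b$ (which occupies the other blue quadrant at the endpoint of $b$ at $y$).

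The plan is to use this loop to construct a simple closed curve in the projection plane that meets the diagram in a problematic way. The loop $f \cup \{y\}$ bounds a subdisk $E$ of $\overline{R}$. I would form a simple closed curve $\gamma$ by perturbing $f \cup \{y\}$ near $y$: replace a short neighborhood of $y$ on the loop by a small detour that crosses one of the two strands of $y$ twice, entering the adjacent red region and returning. The resulting $\gamma$ meets the diagram of $K_{B,i}$ transversely at exactly two points on a single edge of the diagram, and bounds two disks in $S^2$: a small disk consisting of $E$ together with a thin strip of the adjacent red region, and a large disk containing the rest of the projection plane.

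I would then apply primality of $K_{B,i}$ (Lemma~\ref{lemma:K2-0-prime}) to force one side of $\gamma$ to contain no crossings. The hypothesis that $R$ is not the region of $a, c$ is essential: it guarantees that the central crossing circle of the triangle-square configuration lies outside $\overline{R}$, hence on the large side of $\gamma$ (together with its associated twist-region crossings that survive in $K_{B,i}$), while $y$ itself must lie on the boundary adjacent to the small side. A careful choice of the perturbation, combined with the observation that $R \ne R_1$ forces the small side of $\gamma$ to contain at least one crossing beyond those inside $E$, would then violate primality.

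The main obstacle I anticipate is handling the case $i = 0$, for which $K_{B,0}$ may fail to be prime. Here I would invoke the precise characterization of non-primality in Lemma~\ref{lemma:K2-0-prime}, which requires any counterexample to involve an arc running transversely through a crossing disk. The hypothesis $R \ne R_1$ is designed to rule out this special configuration, because such counterexamples can only occur when the simple closed curve encloses a crossing disk associated with the central crossing circle, which lies on the $R_1$ side. Finally, the statement for $g$ follows by the symmetric argument, interchanging the roles of $a,c$ and $b,d$, and noting that the analogous blue region for $g$ is the one opposite the region of $c$ at the crossing $x_4$, which is the region of $b$ and $d$ after reflecting the configuration through the central vertex.
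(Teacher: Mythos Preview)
Your argument has a genuine gap at the step where you claim the small side of $\gamma$ must contain a crossing. The loop $f \cup \{y\}$ lies entirely in the single blue region $R$, which is a face of the diagram of $K_{B,i}$ and therefore contains no crossings in its interior. Your curve $\gamma$ is obtained from this loop by adding a thin detour into an adjacent red region, so the small disk bounded by $\gamma$ is $E$ together with a sliver of that red region, and this disk contains no crossings whatsoever. Primality then tells you nothing: the large side contains all the crossings, the small side contains none, and this is perfectly consistent with Lemma~\ref{lemma:K2-0-prime}. The hypothesis $R \neq R_1$ does not change this; it only controls where the central crossing circle sits, not whether crossings appear inside $E$.

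The paper's proof proceeds quite differently and crucially uses the green surface. The starting observation is the same as yours, that $f$ forms a loop in a blue region, but the paper then notes that since blue regions are disks, minimality of complexity would allow one to homotope this loop away \emph{unless} $f$ passes through a crossing disk $G$. So $f$ must meet some $G$, and moreover $G$ is distinct from the crossing disk at the central vertex because $f$ avoids both blue regions meeting that vertex. The bulk of the argument is then a careful chase of the resulting green edges through the squares $eif\ell$ and $fjgm$ and the triangle $abi$, repeatedly invoking Lemmas~\ref{lemma:red-green-bigons}, \ref{lemma:RGB-adjacent-blue}, \ref{lemma:no-green-meets-bcj}, and~\ref{lemma:blue-prime}\eqref{lmitm:distinct-regions-cross} to pin down where each green edge can terminate, until all possibilities are exhausted. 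This is where the full triangle--square structure and the hypothesis on the region of $f$ are actually used. Your primality shortcut bypasses the green surface entirely, and that is precisely why it cannot succeed: the obstruction to simplifying $f$ is its interaction with crossing disks, not with the diagram itself.
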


\begin{proof}
We will prove the lemma for $f$.  The argument for $g$ is symmetric.  Again we will refer to an edge in Figure~\ref{fig:tri-squares-labels} and its image in the diagram of $L_{B,i}$ by the same name.

Suppose $f$ is not in the region of $a$ and $c$, but $f$ has both endpoints on the same crossing.  Then $f$ forms a loop in a blue region.
By our assumption that complexity is minimal, $f$ cannot be homotoped away from this blue region.  Thus it meets a crossing disk $G$.  Note that since $f$ does not meet the region of $a$ and $c$ by assumption, and $f$ cannot meet the region of $b$ by Lemma~\ref{lemma:blue-prime}\eqref{lmitm:distinct-regions-cross}, $G$ is disjoint from the crossing disk corresponding to the vertex at $a$, $b$, $c$ and $d$.

First, we claim that $G$ meets the edge $i$.  Because $f$ intersects $G$, there are green edges $\gamma_1$ and $\gamma_2$, each with one endpoint on $f$, running through squares $eif\ell$ and $fjgm$ of Figure~\ref{fig:tri-squares-labels}, respectively, corresponding to intersections with $G$.  If $\gamma_1$ meets $i$, then the claim is proved.  So suppose $\gamma_1$ meets $e$ or $\ell$.  By Lemma~\ref{lemma:no-green-meets-bcj}, $\gamma_2$ cannot meet $j$, so $\gamma_2$ meets either $m$ or $g$.  Finally, Lemma~\ref{lemma:RGB-adjacent-blue} implies that we cannot simultaneously have $\gamma_1$ meeting $\ell$ and $\gamma_2$ meeting $m$.  Thus either $\gamma_1$ meets $e$ or $\gamma_2$ meets $g$.  After mapping to the diagram, endpoints of $e$ and $g$ are separated from those of $f$ by the closed curve $a \cup c \cup i \cup j$.  Note that Lemma~\ref{lemma:blue-prime}\eqref{lmitm:distinct-regions-cross} implies $e$ cannot meet the region of $a$ (which is also the region of $c$), and $g$ cannot meet the region of $c$ (also the region of $a$).  Hence the only way for $\gamma_1$ to meet $e$ or $\gamma_2$ to meet $g$ is if $G$ runs over $i$ or $j$.  It does not meet $j$ by Lemma~\ref{lemma:no-green-meets-bcj}.  Hence $G$ meets $i$, as claimed.

Now, $G$ intersects $i$, so a green edge lies in the triangle $abi$. By Lemma~\ref{lemma:no-green-meets-bcj}, it does not run to $b$.  Thus it runs from $i$ to $a$.  Since $f$ and $a$ are in distinct regions, by assumption, $a$ and $f$ must meet this green disk on opposite sides, as in Figure~\ref{fig:af-oppsides}.

\begin{figure}
  \input{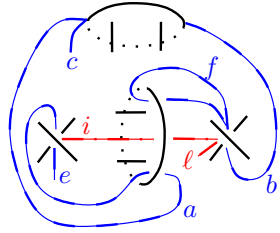}
  \caption{Edges $a$, $f$, and $i$ intersect some green crossing disk}
  \label{fig:af-oppsides}
\end{figure}

Now, edges $e$ and $\ell$ meet at a crossing.  The edge $e$ cannot be in the region of $a$ by Lemma~\ref{lemma:blue-prime}\eqref{lmitm:distinct-regions-cross}.
The edge $e$ cannot lie in the region containing $f$ because these are separated by $a \cup i\cup j \cup c$.  So $e$ cannot meet $G$.
Therefore, the only way for $\ell$ to meet the endpoint of $e$ is for $\ell$ to run through $G$.  Hence a green edge runs through the square $eif\ell$ with one endpoint on $\ell$.  That edge cannot meet $i$, by Lemma~\ref{lemma:red-green-bigons}.  It cannot meet $e$ because $e$ is disjoint from $G$.  So it meets $f$ and continues into a green edge in the square $fmgj$.  This green edge cannot meet $j$, by Lemma~\ref{lemma:no-green-meets-bcj}.  It cannot meet $m$ by Lemma~\ref{lemma:RGB-adjacent-blue}.  It cannot meet $g$, or $a$ and $g$,
therefore $c$ and $g$, are in the same region, contradicting Lemma~\ref{lemma:blue-prime}\eqref{lmitm:distinct-regions-cross}. This gives a
contradiction.
\end{proof}

\begin{lemma}\label{lemma:fg-regions}
If the edge labeled $f$ in Figure~\ref{fig:tri-squares-labels} is mapped to the same region as edges labeled $a$ and $c$, then the crossing at the endpoints of images of $b$ and $f$ is associated to the crossing circle corresponding to the common vertex of the triangles of Figure~\ref{fig:tri-squares-labels}, and in addition, at least one of the crossings at the endpoints of images of $h$ and $d$ or of $g$ and $c$ is associated to the same crossing circle.
\end{lemma}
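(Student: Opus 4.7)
The proof has two parts, corresponding to the two statements in the conclusion. For the first, I begin by identifying the two distinct blue regions $R_1, R_2$ adjacent to the crossing circle $C$ that corresponds to the central vertex of the three triangles (Lemma \ref{lemma:blue-prime}(3)). By Lemma \ref{lemma:edge-intersections}(1), the blue edges $a, b, c, d$ emanating from this vertex lie alternately in $R_1$ and $R_2$; without loss of generality, $a, c \in R_1$ and $b, d \in R_2$. From the structure of three adjacent triangle--square pairs, the blue edges $b$ and $f$ both terminate at the crossing that serves as the bottom-right corner of triangle $abi$ (equivalently, the top-left corner of triangle $bcj$, which is the shared corner of squares 1 and 2). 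By Lemma \ref{lemma:blue-prime}(1), $b$ and $f$ occupy opposite blue quadrants there, so the two distinct regions $R_2$ (containing $b$) and $R_1$ (containing $f$, by hypothesis) both meet at this crossing. Lemma \ref{lemma:blue-prime}(4) then immediately yields that this crossing is associated with $C$.

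For the second conclusion, I argue by contradiction: suppose neither the crossing at the endpoints of $c, g$ nor the crossing at the endpoints of $d, h$ is associated with $C$. At the $c$--$g$ crossing, Lemma \ref{lemma:blue-prime}(1) forces $g$ to lie in a blue region distinct from $R_1$; if that region were $R_2$, Lemma \ref{lemma:blue-prime}(4) would make the crossing associated with $C$, contradicting our assumption. So $g$ lies in some blue region $R_g \notin \{R_1, R_2\}$. By the same reasoning applied to $h$ and $d$, the edge $h$ lies in some $R_h \notin \{R_1, R_2\}$.

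The main obstacle is to turn these regional constraints into a contradiction. The plan is to mirror the strategy of Lemmas \ref{lemma:fg-endpoints} and \ref{lemma:distinct-crossings}: first, since $g$ is not in the region of $b, d$, Lemma \ref{lemma:fg-endpoints} forces the two endpoints of $g$ to lie at distinct crossings. Then I would examine the squares containing $g$ and $h$ in detail, controlling the intersections with green crossing disks via Lemmas \ref{lemma:no-green-meets-bcj}, \ref{lemma:RGB-adjacent-blue}, and \ref{lemma:red-green-bigons}, and ultimately construct a simple closed curve in the diagram of $K_{B,i}$ that meets the diagram in exactly two points with crossings on both sides, contradicting primality (Lemma \ref{lemma:K2-0-prime}) or the twist--reducedness of $K$. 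The delicate casework needed to combine the constraints coming from $R_g$ on one end and $R_h$ on the other, and to rule out all the ways that green edges could meet the squares, is where I expect the proof to require the most care.
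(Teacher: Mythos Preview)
Your first paragraph is correct and matches the paper's argument: since $f$ lies in the region of $a,c$ (which is one of the two regions adjacent to $C$) and $b$ lies in the other, Lemma~\ref{lemma:blue-prime}(\ref{lmitm:assoc-cross}) applies directly at the crossing where $b$ and $f$ meet.

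For the second part, your setup is right and your general strategy (use Lemma~\ref{lemma:fg-endpoints} on $g$, then chase green edges to a primality contradiction) is exactly what the paper does. But you have not identified the key combinatorial lever, and without it the ``delicate casework'' does not have a clear starting point. The missing idea is the red edge $n$ of the third square $gkhn$. The paper first uses the assumption that $h$ is not in the region of $a,c$ to eliminate options~I and~III of Figure~\ref{fig:trisquare-options} outright (in those options $h$ automatically lies in the region of $f$ or of $a,c$). With $g$ landing at a new crossing inside the region bounded by $b\cup j\cup k\cup d$ (from your use of Lemma~\ref{lemma:fg-endpoints}), the two endpoints of $n$ are separated by that closed curve: one is at the new crossing of $g$, the other is at the crossing of $h$, which lies outside. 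Since $h$ is not in the region of $b$ or $d$, the red edge $n$ is forced to cross either $j$ or $k$. If $n$ crosses $j$, a short argument shows $n$ must then also cross $i$, which would wall $f$ off from the region of $a,c$ by red edges --- contradicting the hypothesis of the lemma. So $n$ crosses $k$, and then $g$ together with sub-arcs of $n$ and $k$ gives the simple closed curve meeting the diagram twice that you were looking for. Only at this point does the green-edge analysis (via Lemmas~\ref{lemma:no-green-meets-bcj}, \ref{lemma:red-green-bigons}, \ref{lemma:K2-0-prime}) enter, to handle the $i=0$ case where that curve might meet a crossing disk.

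In short: your outline is on the right track, but the argument pivots on the edge $n$, not directly on the squares containing $g$ and $h$, and the dichotomy ``$n$ meets $j$'' versus ``$n$ meets $k$'' is what organizes the endgame.
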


Notice that Lemma~\ref{lemma:fg-regions} implies something symmetric for $g$ by flipping the adjacent triangle--square pairs upside down and relabeling: If $g$ is mapped to the region of images of $b$ and $d$, then the crossing at the endpoints of images of $c$ and $g$ is associated to the crossing circle of the vertex, in addition to at least one of the crossings at the endpoints of images of $e$ and $a$ or $b$ and $f$.

\begin{proof}
Again we refer to edges and their images by the same name.  Suppose that $f$ is in the region of $a$ and $c$.  Then Lemma~\ref{lemma:blue-prime}\eqref{lmitm:assoc-cross} implies that the crossing meeting endpoints of $f$ and $b$ is associated with the crossing circle $C$ corresponding to the vertex of the graph.

Suppose that the crossing at the endpoints of $d$ and $h$ is not associated to $C$.  This implies that $h$ cannot be in the region of $a$ and $c$, by Lemma~\ref{lemma:blue-prime}\eqref{lmitm:assoc-cross}.  In cases I and III of Figure~\ref{fig:trisquare-options}, $h$ is automatically either in the region of $f$ or in the region of $a$ and $c$, so these cases are impossible.  Thus we are either in case II or IV.

Notice that there are two versions of case II, where the crossings at the ends of $a$ and $c$ are the same, and where the crossings at the ends of $b$ and $d$ are the same.  In the latter case, we deduce that the crossing at the endpoints of $d$ and $h$ must be associated with $C$.  So we may assume in case II that the crossings at the ends of $a$ and $c$ are the same, as shown in Figure~\ref{fig:trisquare-options}.  

Now suppose that the crossing at the endpoints of $c$ and $g$ is not associated to the crossing circle.  Then $g$ cannot meet the region of $b$ and $d$ by Lemma~\ref{lemma:blue-prime}\eqref{lmitm:assoc-cross}.  By Lemma~\ref{lemma:fg-endpoints} it cannot have both endpoints on the same crossing.  But the endpoint of $g$ is contained in the region bounded by $b \cup j \cup k \cup d$.  Thus the other endpoint of $g$ must also be contained in that region, and must lie on a new crossing disjoint from the existing crossings in the diagram, as in Figure~\ref{fig:g-newcrossing}.

\begin{figure}
  \input{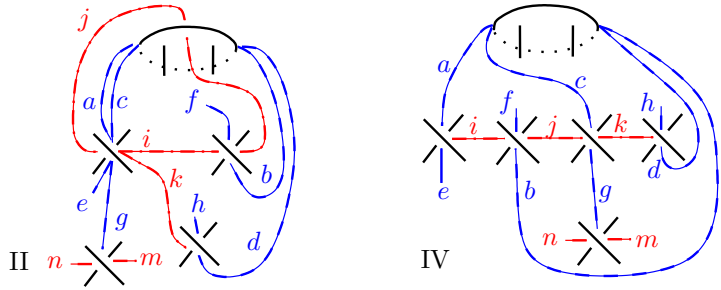}
  \caption{Edge $g$ meets a new crossing within the region bounded by $b\cup j\cup k
    \cup d$}
  \label{fig:g-newcrossing}
\end{figure}

Edges $n$ and $h$ have endpoints on the same crossing, but their other endpoints are bounded away from each other by $b\cup j\cup k\cup d$.  Because $h$ does not meet the region of $b$ or $d$, the edge $n$ must either meet $k$ or $j$.

Suppose first that $n$ meets $j$.  Then $n$ crosses into a region bounded away from the endpoint of $h$ by $a \cup c \cup i \cup j$.
We may assume, by applying a small homotopy which does not change the graphs $\Gamma_B$, $\Gamma_{BR}$, and $\Gamma_{BRG}$, that $n$ intersects $j$ at most once.  
Since $h$ does not meet the region of $a$ or $c$, the edge $n$ must then run through $i$.  But then $f$ is separated from the region of $a$ and $c$ by red edges, contradicting assumption.

So we conclude that $n$ meets the edge $k$.  Now, the edge $g$ along with a portion of $n$ and a portion of $k$ form a closed curve $\gamma$ meeting the diagram of $K_{B,i}$ twice with crossings on either side. Lemma~\ref{lemma:K2-0-prime} implies that we are in the diagram of $K_{B,0}$, and one of the red arcs, either $n$ or $k$, must meet a green disk.  Moreover, the closed curve $\gamma$ intersects this green disk transversely, exactly once.

Suppose first that $k$ meets a green disk $G$.  Then there is a green edge inside the triangle $cdk$, which must run from $k$ to $d$ by Lemma~\ref{lemma:no-green-meets-bcj}, hence $d$ also meets $G$.
Since the portions of $k \cup n$ making up $\gamma$ intersect $G$ only once, the diagram must be as in Figure~\ref{fig:n-meets-k-1}, left. (Note that $d$ cannot cross $g$ since they are in distinct regions.) Now consider the portion of $d$ shown in that figure, along with a portion of $k$ and an arc on $G$.  This forms a new closed curve $\gamma'$ meeting the diagram exactly twice, with crossings on either side.  If the portion of $\gamma'$ on the red meets another green disk, again the disk must meet $d$, so we may replace $\gamma'$ by a smaller closed curve consisting of an arc on the new green disk, and remnants of $\gamma'$, as in the proof of Lemma~\ref{lemma:red-blue-bigons}.  As in the proof of that lemma, by induction, we obtain a simple closed curve meeting the diagram of $K_{B,0}$ twice without meeting any crossing disks, and with crossings on both sides, contradicting Lemma~\ref{lemma:K2-0-prime}.

\begin{figure}
  \input{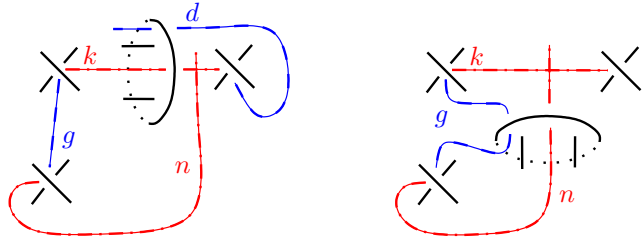}
  \caption{Left:  Crossing disk meeting $k$ and $d$.  Right:  Crossing
  disk meeting $n$ and $g$.}
  \label{fig:n-meets-k-1}
\end{figure}

Hence $n$ must meet a crossing disk $G$ between the endpoint of $n$ at $g$ and the point where $n$ meets $k$.  Thus there is a green edge with endpoint on $n$ in the square $gnhk$.  This edge cannot have its other endpoint on $k$, by Lemma~\ref{lemma:red-green-bigons}.  We claim it cannot have an endpoint on $h$.  This is because $h$ has both endpoints in the region bounded by $b\cup j\cup k\cup d$.  If the green edge happened to end on $h$, then the green disk must cross $b \cup j \cup k \cup d$.  It cannot cross $j$, by Lemma~\ref{lemma:no-green-meets-bcj}.  If it crosses $b$ or $d$, then either $g$ or $h$ must be in the region of $b$ and $d$, which is impossible by assumption, and by Lemma~\ref{lemma:blue-prime}\eqref{lmitm:distinct-regions-cross}.  If $G$ meets $k$, then there is a green arc in the triangle $cdk$, which again implies $G$ meets $d$, and so $g$ or $h$ is in the region of $b$ and $d$, which is impossible.  

Thus the green edge in square $gnhk$ with one endpoint on $n$ has its other endpoint on $g$.  Because the portion of $n \cup k$ making up $\gamma$ meets the crossing disk exactly once, the disk must be as shown in Figure~\ref{fig:n-meets-k-1}, right.  Again portions of $n$, $g$, and the crossing disk form a simple closed curve $\gamma'$ meeting the diagram of $K_{B,0}$ exactly twice.  Again, as in the proof of Lemma~\ref{lemma:red-blue-bigons}, replacing this closed curve if necessary, we obtain a contradiction to Lemma~\ref{lemma:K2-0-prime}.
\end{proof}

\begin{lemma}\label{lemma:tri-squares-assoc}
If there are three adjacent triangle--square pairs, as in Figure
\ref{fig:tri-squares-labels}, then for one of the following pairs of
crossings, both crossings are associated to the crossing circle
corresponding to the vertex:
\begin{enumerate}
\item crossings at the endpoints of $f$ and $b$ and of $d$ and $h$, or
\item crossings at the endpoints of $f$ and $b$ and of $g$ and $c$, or 
\item crossings at the endpoints of $g$ and $c$ and of $a$ and $e$.
\end{enumerate}
\end{lemma}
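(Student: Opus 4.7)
The plan is to reduce the conclusion to Lemma~\ref{lemma:fg-regions} and its symmetric analogue, after which the only real work is ruling out a single bad case. The key claim to establish is that in any configuration of three adjacent triangle--square pairs, either the blue edge $f$ is mapped into the same blue region of $L_{B,i}$ as the images of $a$ and $c$, or else $g$ is mapped into the same blue region as $b$ and $d$.

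First I would record the setup around the central vertex. The four blue edges $a,b,c,d$ all meet a single crossing circle $C$. Combining Lemma~\ref{lemma:edge-intersections}\eqref{lmitm:blue-blue} with Lemma~\ref{lemma:blue-prime}\eqref{lmitm:distinct-regions}, these four edges must alternate between the two blue regions adjacent to $C$, so $a$ and $c$ lie in a common blue region $R_{ac}$ and $b$ and $d$ lie in the other blue region $R_{bd}$.

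Second, assuming the key claim, I would finish as follows. If $f$ maps into $R_{ac}$, then Lemma~\ref{lemma:fg-regions} immediately yields that the crossing at the endpoints of $f$ and $b$ is associated with $C$, together with at least one of the crossings at the endpoints of $d,h$ or of $g,c$; these give conclusions (1) or (2) of the present lemma. Symmetrically, if $g$ maps into $R_{bd}$, the symmetric form of Lemma~\ref{lemma:fg-regions} yields that the crossing at the endpoints of $c$ and $g$ is associated with $C$, together with at least one of the crossings at the endpoints of $e,a$ or $b,f$, giving conclusions (3) or (2).

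Third, the heart of the proof is to rule out the case $f\notin R_{ac}$ and $g\notin R_{bd}$. Under this assumption, Lemma~\ref{lemma:fg-endpoints} forces each of $f$ and $g$ to have its two endpoints on distinct crossings of the diagram. I would then run through the four configurations I--IV of Figure~\ref{fig:trisquare-options}, which enumerate how the crossings at the ends of $a,b,c,d$ may coincide. In each configuration the blue region on the side of $p_b$ opposite to $b$ (where $f$ enters near $p_b$) must meet the arrangement of the three triangle--square pairs, and the combinatorial constraints from Lemma~\ref{lemma:blue-prime}\eqref{lmitm:distinct-regions-cross}, Lemma~\ref{lemma:K2-0-prime}, and the absence of a $(2,2)$--torus sublink (Lemma~\ref{lemma:22torus}) force this region to equal $R_{ac}$. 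Any failure would produce a simple closed curve in the projection plane meeting the diagram of $K_{B,i}$ in exactly two crossings with crossings on either side, which -- after inductively peeling off intersections with green crossing disks as in the proof of Lemma~\ref{lemma:red-blue-bigons} -- contradicts Lemma~\ref{lemma:K2-0-prime}. The symmetric argument applies to $g$ and $R_{bd}$.

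The main obstacle is the last step, especially in the case $i=0$, where green edges may cause the red sides of the squares $eif\ell$ and $fjgm$ to cross crossing disks, blocking the direct primality argument. One must exhibit an innermost configuration and apply an inductive descent on the number of green intersections, paralleling the techniques already used in the proof of Lemma~\ref{lemma:fg-regions}. Handling the four configurations I--IV uniformly, while keeping track of the symmetry coming from flipping the triangle--square pairs upside down, is the most delicate bookkeeping.
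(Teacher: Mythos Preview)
Your overall strategy is the same as the paper's: the key claim is exactly that either $f$ lies in the region of $a$ and $c$ or $g$ lies in the region of $b$ and $d$, and then Lemma~\ref{lemma:fg-regions} (and its flip) finishes. The gap is in your proposed proof of that key claim. You plan to run through configurations I--IV separately and squeeze out a primality contradiction, but you have not identified the specific geometric lever that makes this work, and the paper's argument is in fact uniform rather than case-by-case.

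What the paper does is track the red edge $m$ of the middle square $fjgm$. Under the assumption $f\notin R_{ac}$ and $g\notin R_{bd}$, it first observes that the crossing disk at the central vertex cannot meet $j$ (otherwise the resulting green edge in $fjgm$ would force $f\in R_{ac}$ or $g\in R_{bd}$ immediately). With $f$ and $g$ ending at new crossings inside the regions bounded by $a\cup i\cup j\cup c$ and $b\cup j\cup k\cup d$ respectively, the two endpoints of $m$ are separated by the closed curve $b\cup j\cup c\cup(\text{crossing disk})$. Hence $m$ must cross either $j$ or the crossing disk. If $m$ crosses $j$, one builds a closed curve from $f$, part of $j$, and part of $m$, and derives the primality contradiction (with the inductive peel-off of green disks you allude to). If $m$ crosses the crossing disk, the corresponding green edge in $fjgm$ lands on $f$ or $g$, again forcing $f\in R_{ac}$ or $g\in R_{bd}$. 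This $m$-separation argument is the missing idea in your proposal; without it, the plan to ``force this region to equal $R_{ac}$'' in each of I--IV is not substantiated, and in fact no such case split is needed.
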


\begin{proof}
We prove the lemma by contradiction.  By Lemmas \ref{lemma:fg-regions} and \ref{lemma:fg-endpoints}, if there are three adjacent triangles and squares but the result does not hold, then edges $f$ and $g$ cannot be in the regions of edges $a$ and $c$, and $b$ and $d$, respectively, and neither $f$ nor $g$ can have endpoints on the same crossing.  Thus $f$ ends in a new crossing in the region $a \cup i \cup j \cup c$, and $g$ ends in a new crossing in the region $b \cup d \cup j \cup k$.

We may add these new crossings to Figure \ref{fig:trisquare-options}. However, first note that in options I and II from that figure, we have sketched the diagram so that $j$ runs through the crossing disk corresponding to the vertex of the graph.  In fact, if $j$ runs through that crossing disk, then a green edge corresponding to this crossing disk runs through the square $fmgj$, with one endpoint on $j$.  By Lemma \ref{lemma:red-green-bigons}, the other endpoint must be either on $f$ or $g$.  This implies that either $f$ meets the region of $a$ and $c$, or $g$ meets the region of $b$ and $d$, both of which we have ruled out.  Hence we need to adjust the diagrams of I and II so that $i$ meets the crossing disk corresponding to the vertex instead of $j$.  In III and IV, $i$ or $k$ might meet that crossing disk as well, but the same argument shows that $j$ cannot.  Updated sketches of the four possibilities are shown in Figure \ref{fig:fg-terminate}.

\begin{figure}
  \input{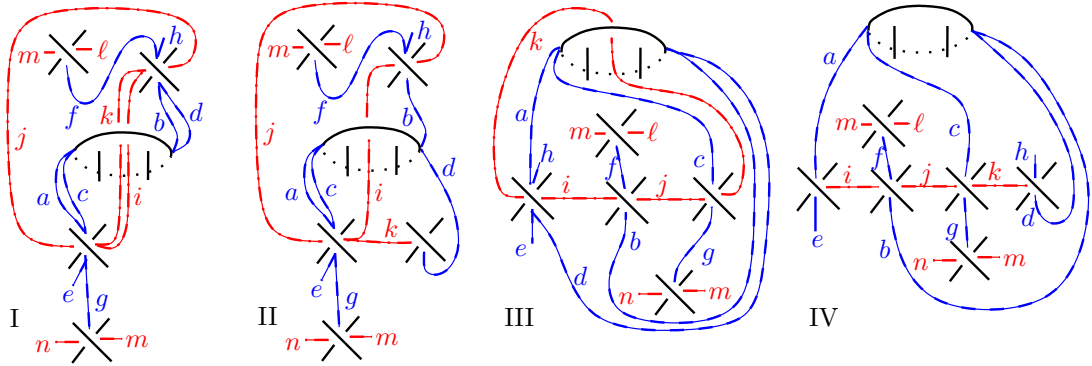}
  \caption{Four possibilities for endpoints of $f$ and $g$}
  \label{fig:fg-terminate}
\end{figure}

Note that the two endpoints of $m$ are shown in each figure.  In order for these endpoints to join, either $m$ must intersect $j$, or the crossing disk corresponding to the vertex.  This is because the endpoints of $m$ are separated by the closed curve $b \cup j \cup c \cup (\mbox{crossing disk})$.

Suppose first that $m$ intersects $j$.  Then $f$, along with a portion of $j$ and a portion of $m$, form a closed curve meeting the diagram of $K_{B,i}$ twice with crossings on either side.  Lemma~\ref{lemma:K2-0-prime} immediately implies that the diagram is that of $K_{B,0}$, not $K_{B,2}$.  Lemmas~\ref{lemma:K2-0-prime} and~\ref{lemma:no-green-meets-bcj} imply that the portion of $m$ meets some green crossing disk.  Then there is a green edge in the square $fmgj$ meeting either the edge $f$ or $g$. If $f$, then we may choose such a crossing disk closest to the crossing shared by $m$ and $f$, and argue as in the proof of the previous lemma that we obtain a contradiction to Lemma~\ref{lemma:K2-0-prime}.  So the green edge runs from $m$ to $g$.  But this green crossing disk must be contained in the region bounded by $i \cup j \cup a \cup c$, or have endpoints in the region of $a$ and $c$.  Since $g$ cannot meet the region of $a$ and $c$, we obtain a contradiction.

The only remaining possibility is that $m$ intersects the crossing disk corresponding to the vertex.  Then there is a corresponding green edge running through the square $fmgj$, with one endpoint on $m$.  Lemma~\ref{lemma:red-green-bigons} implies its other endpoint is not on $j$.  Hence its endpoint is on $f$ or $g$.  But that is possible only if $f$ is in the region of $a$ and $c$, or $g$ is in the region of $b$ and $d$.
\end{proof}

\begin{lemma}\label{lemma:No-fg-assoc}
If there are three adjacent triangle--square pairs as in Figure \ref{fig:tri-squares-labels}, then it cannot be the case that the crossings at the endpoints of images of $b$ and $f$ and of $c$ and $g$ are both simultaneously associated to the crossing circle corresponding to the vertex.
\end{lemma}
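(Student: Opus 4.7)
The plan is proof by contradiction. Suppose both the $b$--$f$ crossing (call it $X_b$) and the $c$--$g$ crossing (call it $X_c$) are associated with the crossing circle $C$ at the common vertex $v$. I would begin with a region analysis. By Lemma~\ref{lemma:edge-intersections}\eqref{lmitm:blue-blue}, the four blue edges $a,b,c,d$ at $v$ alternate between the two distinct blue regions $R_1, R_2$ meeting $C$ (supplied by Lemma~\ref{lemma:blue-prime}\eqref{lmitm:distinct-regions}), so without loss of generality $a,c \in R_1$ and $b,d \in R_2$. Since $X_b$ is associated with $C$, it lies in the twist region of $C$, so the two blue regions at $X_b$ are precisely $R_1$ and $R_2$; because $b \in R_2$, this forces $f \in R_1$, and symmetrically $g \in R_2$.

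For $i = 0$ the argument terminates immediately: Definition~\ref{def:L0-L2} permits at most one associated crossing per crossing circle in $L_{B,0}$, yet Lemma~\ref{lemma:distinct-crossings} guarantees $X_b \neq X_c$, giving the contradiction.

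For $i = 2$ we are forced into the situation in which $X_b, X_c$ are precisely the two associated crossings of $C$, and by Definition~\ref{def:L0-L2} they share a blue bigon $\beta$ (equal to $R_1$ or $R_2$) through which $C$ passes. To conclude, I would mirror the $L_{B,2}$ scheme in the proof of Lemma~\ref{lemma:adjacent-BBR-triples}: build candidate simple closed curves in the diagram of $K_{B,2}$ from unions of the labelled edges (such as $a \cup i \cup b$, $c \cup j \cup b$, and analogous curves involving $f, g$), closed up via short arcs along the crossing disk of $C$ when needed. Each such curve meets the diagram of $K_{B,2}$ in exactly two crossings, so primality of $K_{B,2}$ (Lemma~\ref{lemma:K2-0-prime}) forces each to bound no crossings on one side. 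Combined with the region constraints on $a, c, f, g$ derived above, this would pin down the diagram of $K_{B,2}$ as a $(2,2)$--torus link with $C$ encircling its two crossings, contradicting Lemma~\ref{lemma:22torus}.

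The main obstacle is the case analysis when $i = 2$. Because the configuration consists of triangle--square pairs rather than triangles alone, additional outer blue edges $e, h$ and red edges $\ell, m, n$ must be located before primality can be applied. I would combine Lemma~\ref{lemma:fg-regions} (with its symmetric form for $g$) with the fact that $L_{B,2}$ admits only two associated crossings per crossing circle to rule out the associatedness of the $a$--$e$ and $d$--$h$ crossings, and thereby constrain where $e$ and $h$ can lie. The delicate final step is then to verify that each surviving configuration produces one of the simple closed curves above with crossings on both sides, so that Lemma~\ref{lemma:K2-0-prime} (or Lemma~\ref{lemma:22torus}) delivers the required contradiction in every case.
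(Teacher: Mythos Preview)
Your setup and the $i=0$ case are correct and match the paper exactly. The gap is in the $i=2$ case, where your outlined strategy is too narrow to cover all the configurations that actually arise.

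The paper organises the $i=2$ argument around the four options I--IV of Figure~\ref{fig:trisquare-options}, which classify how many distinct crossings the three adjacent blue--blue--red triangles meet. Your proposed method---building curves such as $a\cup i\cup b$ and invoking primality to force a $(2,2)$--torus link---is essentially the argument for Option~I, and a variant of it handles Option~II (there one gets a direct primality contradiction from a curve $\alpha\cup i$ without ever reaching the torus-link step). But Options~III and~IV are not disposed of this way. In Option~III the contradiction comes from \emph{counting}: the crossing where $a$ and $d$ meet is automatically associated to $C$ by Lemma~\ref{lemma:blue-prime}\eqref{lmitm:assoc-cross}, giving three associated crossings, which violates Definition~\ref{def:L0-L2}. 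Your plan to ``rule out the associatedness of the $a$--$e$ and $d$--$h$ crossings'' using Lemma~\ref{lemma:fg-regions} does not address this, since that lemma concerns the square side of the triangle--square pairs, not the internal triangle crossings.

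More seriously, Option~IV requires a green-edge chase that your proposal does not anticipate. There the crossing disk of $C$ must cut the edge $j$ (by the diagram convention of Definition~\ref{def:L0-L2}), producing a green edge from the vertex into the square $fjgm$. The paper then tracks where this green edge can terminate (using Lemmas~\ref{lemma:red-green-bigons} and~\ref{lemma:RGB-adjacent-blue}), follows it across into the square $eif\ell$, and finally derives a contradiction by locating where $\ell$ and $e$ must land. No simple-closed-curve primality argument appears in this case; the work is entirely in controlling green intersections. Since your proposal never mentions green edges, this case would remain open under your plan.
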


\begin{proof}
By Lemma~\ref{lemma:distinct-crossings}, the crossing at the endpoints of $f$ and $b$ and the crossing at the endpoints of $g$ and $c$ must be distinct.  Suppose both are associated to the crossing circle corresponding to the vertex, call it $C$. Then there are two distinct crossings associated to $C$, so the diagram cannot be that of $L_{B,0}$ by definition.  Hence we may assume that the diagram is that of $L_{B,2}$ and no additional crossings are associated to $C$.

Consider each of the options from Figure~\ref{fig:trisquare-options}.
We will show none of these can occur.

First consider option I.  The diagram has two crossings shown, and by assumption both of those crossings are associated to the crossing circle shown.  This means the region of $a$ and $c$ is the region of $f$ and $h$, and the region of $b$ and $d$ is the region of $e$ and $g$.  Then we can draw arcs in these regions from one crossing to another.  These arcs, along with the edges $i$ and $j$, will give closed curves in the diagram of $K_{B,2}$ meeting the diagram twice. Because the diagram of $K_{B,2}$ is prime, each bounds a strand of the knot and no crossings.  Thus the diagram of $K_{B,2}$ is that of a $(2,2)$--torus link, with a crossing circle of $L_{B,2}$ encircling the two crossings.  This contradicts Lemma~\ref{lemma:22torus}.

Now consider option II.  By assumption, the edge $f$ is in the region of $a$ and $c$, the edge $g$ is in the region of $b$ and $d$.  Draw an arc $\alpha$ inside the region of $b$ and $d$ from the crossing at the endpoint of $e$ (and $g$) to the crossing at the endpoint of $b$.  Now $\alpha \cup i$ meets the diagram of $K_{B,2}$ twice, with crossings on either side.  This contradicts Lemma \ref{lemma:K2-0-prime}.

For option III, note the crossing at endpoints of $a$ and $d$ must be associated to the crossing circle shown, by Lemma~\ref{lemma:blue-prime}\eqref{lmitm:assoc-cross}.  By assumption, the other two crossings are associated to that crossing circle as well. Thus three distinct crossings are associated to the crossing circle, contradicting the definition of $L_{B,2}$.

Finally, consider option IV.  Since the crossings at endpoints of $b$ and $f$ and of $c$ and $g$ are associated with the crossing circle shown, the crossing disk it bounds intersects the edge labeled $j$, by our convention on the diagram of Definition~\ref{def:L0-L2}.  Now consider the green edges of intersection in the graph.  One green edge runs from the vertex to $j$.  From there, a green edge must run to $f$ or $g$, but not $m$ by Lemma~\ref{lemma:red-green-bigons}.  The cases $f$ and $g$ are symmetric, so suppose it runs to $f$.  Now there must be a green edge in the square $eif\ell$.  It cannot run from $f$ to $i$, by Lemma \ref{lemma:RGB-adjacent-blue}.  It cannot run from $f$ to $e$, or $e$ would be in the region of $b$ and $d$, and Lemma~\ref{lemma:blue-prime}\eqref{lmitm:assoc-cross} would imply that a third crossing is associated to the crossing circle shown, which is a contradiction.  Thus the green edge runs from $f$ to $\ell$, and $\ell$ must be parallel to $j$ in the red bigon region containing $j$.  But then $e$ meets one of the crossings at the endpoints of $j$, either at the endpoint of $b$ or at the endpoint of $c$.  If $e$ meets $b$, then $e$ belongs to the region of $b$, and again we have too many crossings associated to the crossing circle.  If $e$ meets $c$, we get a contradiction to Lemma~\ref{lemma:blue-prime}\eqref{lmitm:distinct-regions}.
\end{proof}

\begin{lemma}\label{lemma:ae-assoc}
In three adjacent triangle--square pairs as in Figure~\ref{fig:tri-squares-labels}, if the crossings at the endpoints of $f$ and $b$ and of $h$ and $d$ are both associated to the crossing circle of the vertex, then so is the crossing at the endpoint of $a$ and $e$.
\end{lemma}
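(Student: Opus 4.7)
The plan is to argue by contradiction, supposing the crossing $Y$ at endpoints of $a$ and $e$ is not associated to $C$. By hypothesis, the $b$--$f$ crossing and the $d$--$h$ crossing are both associated to $C$; since $L_{B,0}$ permits at most one associated crossing per crossing circle (Definition~\ref{def:L0-L2}), the principal case is $L_{B,2}$, where $\{X, X'\}$ is the set of crossings associated to $C$, separated by a red bigon $\beta$ through which two arcs of $C$ pass. Label the outer vertices of the three triangles $p_1, p_2, p_3, p_4$, so that $\phi(p_1) = Y$ and $\phi(p_2), \phi(p_4) \in \{X, X'\}$; we wish to show $Y \in \{X, X'\}$.

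I would next examine the two red edges $i$ (from triangle $abi$) and $j$ (from triangle $bcj$) incident to the vertex $p_2$. At the crossing $\phi(p_2)$, the two local red regions are $\beta$ and an exterior red region $T$, and they are globally distinct by Lemma~\ref{lemma:blue-prime}\eqref{lmitm:distinct-red}; since $p_2$ has valence four in $\Gamma_{BR}$ under our minimality assumption, the edges $i$ and $j$ lie one in each. If $i$ lies in $\beta$, then $Y \in \partial \beta = \{X, X'\}$ and the conclusion follows immediately. Otherwise $i$ lies in $T$, which forces $j$ into $\beta$, placing $\phi(p_3) \in \partial\beta = \{X, X'\}$. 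By Lemma~\ref{lemma:distinct-crossings-2} applied to triangle $bcj$, $\phi(p_3) \neq \phi(p_2)$, so $\phi(p_3)$ is the element of $\{X, X'\}$ distinct from $\phi(p_2)$.

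The argument then branches on whether $\phi(p_2) \neq \phi(p_4)$ or $\phi(p_2) = \phi(p_4)$. If $\phi(p_2) \neq \phi(p_4)$, then $\phi(p_3) = \phi(p_4)$, contradicting Lemma~\ref{lemma:distinct-crossings-2} applied to triangle $cdk$. Otherwise, repeating the analysis at $p_3$ forces the red edge $k$ of triangle $cdk$ to lie in the exterior red region $T'$ at $\phi(p_3)$ and to terminate at $\phi(p_4) = \phi(p_2)$. Concatenating an arc across $T'$ with an arc across $\beta$ then produces a simple closed curve in the projection plane meeting the diagram of $K_{B,2}$ in exactly two crossings, $\phi(p_2)$ and $\phi(p_3)$. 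By the primeness of $K_{B,2}$ (Lemma~\ref{lemma:K2-0-prime}), one side of this curve contains no crossings, so the diagram of $K_{B,2}$ reduces to a $(2,2)$--torus link with $C$ encircling both of its crossings; this directly contradicts Lemma~\ref{lemma:22torus}.

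The main obstacle I anticipate is this final closed-curve argument, where one must verify that the chosen arcs in $T'$ and $\beta$ can be concatenated into a simple closed curve hitting the diagram exactly twice (relying on these red regions being disks, which comes from primeness). The analogous $L_{B,0}$ case --- in which both hypothesized associated crossings must coincide at the single associated crossing of $C$ --- is handled by the same strategy, using the triangle formed between $C$ and its unique associated crossing (Definition~\ref{def:L0-L2}) in place of $\beta$, together with the primeness statement of Lemma~\ref{lemma:K2-0-prime} for $K_{B,0}$.
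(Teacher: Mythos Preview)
Your argument is correct and essentially matches the paper's when $\phi(p_2)\neq\phi(p_4)$: both you and the authors observe that $i$ and $j$ land in opposite red regions at the crossing $\phi(p_2)$, so exactly one of them lies in the bigon $\beta$, and then either $\phi(p_1)\in\{X,X'\}$ (done) or $\phi(p_3)\in\{X,X'\}$ (contradiction). The paper invokes Lemma~\ref{lemma:No-fg-assoc} for the latter sub-case, while you use Lemma~\ref{lemma:distinct-crossings-2} for the triangle $cdk$; both work.

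The genuine gap is the case $\phi(p_2)=\phi(p_4)$. First, your standing assumption that $C$ has two associated crossings $X,X'$ is unjustified here: in $L_{B,2}$ when the twist region has an odd number of crossings (and always in $L_{B,0}$) there is only one associated crossing, so there is no bigon $\beta$ to argue with. Your final paragraph proposes to substitute ``the triangle between $C$ and its unique associated crossing'', but that triangle is a region of the diagram of $L_{B,i}$, not of $K_{B,i}$; once $C$ is removed it is absorbed into a larger red region, and you also appeal to primeness of $K_{B,0}$, which Lemma~\ref{lemma:K2-0-prime} explicitly does not assert. Second, even when a bigon $\beta$ does exist, your closed-curve step fails: the curve you build from arcs in $\beta$ and $T'$ passes through two \emph{crossings}, not transversely through two edge-interiors, so primeness (Lemma~\ref{lemma:K2-0-prime}) does not apply; and the conclusion that $K_{B,2}$ is a $(2,2)$--torus link would in any case be false, since $Y=\phi(p_1)\notin\{X,X'\}$ supplies a third crossing. (As an aside, once you have shown $\phi(p_3)\in\{X,X'\}$ you could simply cite Lemma~\ref{lemma:No-fg-assoc} for an immediate contradiction --- but this still leaves the single-associated-crossing case untouched.)

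The paper handles the case $\phi(p_2)=\phi(p_4)$ by a completely different mechanism: it traces the green edges arising from the crossing disk of $C$ through the squares $fjgm$ and $eif\ell$, using Lemmas~\ref{lemma:red-green-bigons}, \ref{lemma:RGB-adjacent-blue}, and~\ref{lemma:blue-prime} to force a contradiction from the assumption that neither $e$ nor $g$ meets a region adjacent to $C$. That green-edge chase is precisely the missing ingredient in your proposal.
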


\begin{proof}
Suppose first that the crossings at the endpoints of $f$ and $b$ and at the endpoints of $h$ and $d$ are actually the same crossing.  If that crossing is associated to the crossing circle of the vertex, call it $C$, then Lemma~\ref{lemma:No-fg-assoc} implies that the crossing at the endpoints of $g$ and $c$ is not associated to $C$.  Assume by way of contradiction that the crossing at the endpoints of $a$ and $e$ is also not associated $C$.  Then Lemma~\ref{lemma:blue-prime}\eqref{lmitm:assoc-cross} implies that neither $g$ nor $e$ can meet a region meeting $C$, hence both $g$ and $e$ are disjoint from the crossing disk bounded by $C$.

Since $C$ is associated to the crossing at endpoints of $f$ and $b$, we know that the corresponding crossing disk intersects either $i$ or $j$, by our convention on diagrams (Definition~\ref{def:L0-L2}).  The arguments for both are nearly identical (in fact, symmetric in the squares $fjgm$ and $ief\ell$). We will walk through the argument for $j$ and leave the case of $i$ to the reader.  So suppose there is a green edge with one endpoint on $j$ running through the square $fjgm$.  The other endpoint cannot lie on $m$, by Lemma~\ref{lemma:red-green-bigons}.  It cannot lie on $g$ by our above observation that $g$ is disjoint from this crossing disk.  Hence the green edge runs from $j$ to $f$.  See Figure~\ref{fig:bd-sameassoc}.

\begin{figure}
  \input{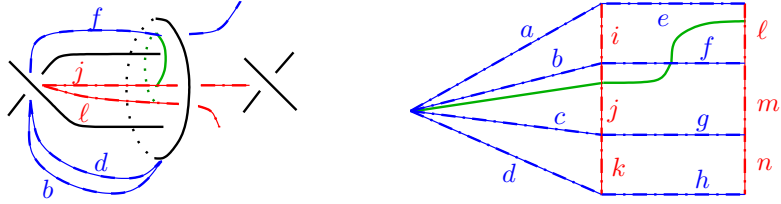}
  \caption{Left: Form of diagram if crossing at endpoints of $b$ and $d$ agree, and is associated to the crossing circle of the vertex.}
  \label{fig:bd-sameassoc}
\end{figure}

But then there is a green edge in the square $eif\ell$ with one endpoint on $f$.  It cannot have its other endpoint on $i$ by Lemma~\ref{lemma:RGB-adjacent-blue}.  It cannot have its other endpoint on $e$ since $e$ is disjoint from the region of $C$.  Thus its final endpoint is on $\ell$, and so $\ell$ runs through the crossing disk. But then $\ell$ has an endpoint on the crossing at endpoints of $b$, $j$, and $f$.  Because the square $eif\ell$ and the triangle $bcj$ are both mapped either above or below the projection plane (in Figure~\ref{fig:bd-sameassoc}, they are shown mapped below), Lemma~\ref{lemma:edge-intersections}\eqref{lmitm:blue-red} implies that $j$ and $b$ meet in the same regions of the diagram as $\ell$ and a blue edge meeting $\ell$ in a vertex.  So either $e$ or $f$ is the same region as $b$.  The edge $f$ cannot be in the region of $b$ by Lemma~\ref{lemma:blue-prime}\eqref{lmitm:distinct-regions-cross}.  But the edge $e$ cannot be in the region of $b$ either, because $e$ is not in a region meeting $C$.

This contradiction implies that if both the crossing at the endpoints of $f$ and $b$ and the crossing at the endpoints of $h$ and $d$ are associated with $C$, then they must be distinct crossings. So, the diagram cannot be that of $L_{B,0}$.

But now notice that if two crossings are associated to the same crossing circle, then they come from the same twist region, so there is a (red) bigon between them.  Then either $j$ lies in that bigon region or $i$ does.  If $j$ lies in the bigon, then its two endpoints must lie on the two crossings of the bigon.  Hence $c$ has an endpoint on one of those two crossings, and it is associated with the crossing circle of the vertex.  This contradicts Lemma \ref{lemma:No-fg-assoc}.  Thus $i$ lies in the bigon region between the two crossings.  It follows that $a$ has its endpoint on the crossing meeting $d$.  Thus the crossing at the endpoints of $a$ and $e$ is the same as the crossing at the endpoint of $d$ and $h$, and so it must be associated to the crossing circle corresponding to the vertex.  
\end{proof}

\begin{lemma}\label{lemma:L0-no-3trisquares}
There cannot be three adjacent triangle--square pairs for the diagram of $L_{B,0}$.  
\end{lemma}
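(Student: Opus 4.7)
The plan is to argue by contradiction: assume three adjacent triangle--square pairs exist in $\Gamma_{BR}$ for $L_{B,0}$, labeled as in Figure~\ref{fig:tri-squares-labels}, with common vertex corresponding to a crossing circle $C$. First I would apply Lemma~\ref{lemma:tri-squares-assoc}, which reduces the problem to one of three cases in which a specified pair of diagram crossings is associated to $C$. The middle case, in which the crossings at the endpoints of $f,b$ and of $g,c$ are both associated to $C$, is ruled out immediately by Lemma~\ref{lemma:No-fg-assoc}.

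For case (1)---the crossings at $f,b$ and $d,h$ both associated to $C$---I would invoke Lemma~\ref{lemma:ae-assoc} to conclude that the crossing at the endpoints of $a,e$ is also associated to $C$. Since the crossings at $a,e$ and at $f,b$ are the two crossings of the triangle $abi$, Lemma~\ref{lemma:distinct-crossings} forces them to be distinct. Thus $C$ would have two distinct associated crossings, contradicting Definition~\ref{def:L0-L2}, which permits at most one associated crossing per crossing circle in $L_{B,0}$.

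Case (3), in which the crossings at $g,c$ and $a,e$ are both associated to $C$, I would dispatch by the symmetric analogue of Lemma~\ref{lemma:ae-assoc}, obtained by flipping Figure~\ref{fig:tri-squares-labels} upside down (interchanging $a\leftrightarrow d$, $b\leftrightarrow c$, $e\leftrightarrow h$, $f\leftrightarrow g$, $i\leftrightarrow k$, $\ell\leftrightarrow n$). The proof of Lemma~\ref{lemma:ae-assoc} applies verbatim in the flipped picture and yields that the crossing at $d,h$ is also associated to $C$. Then Lemma~\ref{lemma:distinct-crossings} applied to the triangle $cdk$ gives that the crossings at $g,c$ and $d,h$ are distinct, producing once more two distinct crossings associated to $C$ and contradicting Definition~\ref{def:L0-L2}.

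The only subtlety is to verify that the extra associated crossing produced by Lemma~\ref{lemma:ae-assoc} (or its symmetric form) is genuinely distinct from the previously associated one, which is exactly what Lemma~\ref{lemma:distinct-crossings} provides; so no further diagrammatic combinatorics is needed beyond invoking the structural results already established.
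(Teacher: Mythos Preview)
Your proof is correct and follows essentially the same approach as the paper's: both invoke Lemma~\ref{lemma:tri-squares-assoc} to reduce to three cases, dispose of the middle case (you via Lemma~\ref{lemma:No-fg-assoc}, the paper via Lemma~\ref{lemma:distinct-crossings}), and handle the two outer cases by Lemma~\ref{lemma:ae-assoc} (and its symmetric analogue) together with Lemma~\ref{lemma:distinct-crossings} to exhibit two distinct crossings associated to $C$, contradicting Definition~\ref{def:L0-L2}. The only cosmetic difference is that the paper first observes the two crossings from Lemma~\ref{lemma:tri-squares-assoc} must coincide in $L_{B,0}$ before deriving the contradiction, whereas you argue directly; the substance is the same.
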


\begin{proof}
Lemma \ref{lemma:tri-squares-assoc} implies that images of edges from the triangle--square pairs must meet at two crossings associated to the crossing circle $C$ corresponding to the vertex.  In $L_{B,0}$, at most one crossing can be associated to any crossing circle.  So the two crossings from Lemma \ref{lemma:tri-squares-assoc} must actually be the same crossing of the diagram.  Lemma \ref{lemma:distinct-crossings} implies that endpoints of $b$ and of $c$ map to distinct crossings, so those two cannot map to crossings associated to $C$.  Lemma \ref{lemma:ae-assoc} implies that if endpoints of $b$ and $d$ map to a crossing associated to $C$, then so does $a$, and again Lemma \ref{lemma:distinct-crossings} implies we have two crossings associated to $C$, contradicting the definition of $L_{B,0}$.  The only remaining possibility is that endpoints of $c$ and of $a$ map to a crossing associated to $C$.  But then by relabeling, we may again apply Lemma \ref{lemma:ae-assoc} to obtain a distinct crossing associated to $C$.  In all cases, we have a contradiction.
\end{proof}

\begin{lemma}\label{lemma:L2-no-5trisquares}
There cannot be five adjacent triangle--square pairs for the diagram of $L_{B,2}$.  
\end{lemma}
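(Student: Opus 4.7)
The plan is to mirror the proof of Lemma~\ref{lemma:L0-no-3trisquares}, but exploit the three overlapping sub--triples of three consecutive triangle--square pairs that are available when there are five pairs. Suppose for contradiction that $\Gamma_{BR}$ contains five adjacent triangle--square pairs $TS_1, \dots, TS_5$ sharing a common vertex $v_C$ corresponding to a crossing circle $C$ of $L_{B,2}$. Label the remaining inner vertices of the five triangles by $v_1, \dots, v_6$, so that the triangle of $TS_k$ has inner vertices $v_k$ and $v_{k+1}$.

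The first step will be a preliminary claim: in \emph{any} three adjacent triangle--square pairs, both extreme inner vertices $y_a$ and $y_d$ (in the notation of Figure~\ref{fig:tri-squares-labels}) must be associated to the crossing circle corresponding to the common vertex. To prove it, I would observe that Lemma~\ref{lemma:tri-squares-assoc} offers three cases, case (2) is ruled out by Lemma~\ref{lemma:No-fg-assoc}, so it suffices to handle cases (1) and (3). In case (1) the crossings $y_b$ and $y_d$ are associated, and Lemma~\ref{lemma:ae-assoc} forces $y_a$ to be associated as well. In case (3) the crossings $y_a$ and $y_c$ are associated, and applying Lemma~\ref{lemma:ae-assoc} after relabelling the sub--triple upside down (exactly as in the proof of Lemma~\ref{lemma:L0-no-3trisquares}) forces $y_d$ to be associated. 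Thus in either surviving case, both $y_a$ and $y_d$ are associated.

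Next, applying this claim to the sub--triple $\{TS_1, TS_2, TS_3\}$, in which $y_d$ equals $v_4$, shows that $v_4$ is associated to $C$. Applying it also to the sub--triple $\{TS_3, TS_4, TS_5\}$, in which $y_a$ equals $v_3$, shows that $v_3$ is associated to $C$. Thus both $v_3$ and $v_4$ are associated to $C$. But in the sub--triple $\{TS_2, TS_3, TS_4\}$, these same inner vertices $v_3$ and $v_4$ play the roles of $y_b$ and $y_c$, and Lemma~\ref{lemma:No-fg-assoc} applied to this sub--triple asserts precisely that $y_b$ and $y_c$ cannot both be associated to the crossing circle at the vertex. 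This is the desired contradiction.

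The main obstacle is simply being careful with the upside--down relabelling used in case (3) of the preliminary claim, since Lemma~\ref{lemma:ae-assoc} is only stated in one orientation. However, the labelling in Figure~\ref{fig:tri-squares-labels} is symmetric under a vertical flip, and the proof of Lemma~\ref{lemma:ae-assoc} uses no asymmetric ingredients; the paper already employs this same trick in the proof of Lemma~\ref{lemma:L0-no-3trisquares}. Once this is made explicit, the remainder of the argument is a direct pigeonhole consequence of the three overlapping sub--triples together with the ``at most two associated crossings'' constraint of $L_{B,2}$, which is exactly what Lemma~\ref{lemma:No-fg-assoc} encodes.
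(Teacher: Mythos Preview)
Your proof is correct and uses exactly the same ingredients as the paper's argument (Lemmas~\ref{lemma:tri-squares-assoc}, \ref{lemma:No-fg-assoc}, and \ref{lemma:ae-assoc}, together with the flip symmetry). The organization differs slightly: you isolate the sub-claim that in any three adjacent triangle--square pairs both extreme crossings $y_a$ and $y_d$ are associated, then apply it to the two outer sub-triples $\{TS_1,TS_2,TS_3\}$ and $\{TS_3,TS_4,TS_5\}$ to force $v_3$ and $v_4$ both associated, contradicting Lemma~\ref{lemma:No-fg-assoc} in the middle sub-triple; the paper instead argues pictorially by starting with three pairs, showing a fourth can only be added on one side, and that a fifth fits on neither side (Figure~\ref{fig:5trisquare}). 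Your version is arguably cleaner, since it avoids tracking which of the two patterns (top-two-plus-bottom-one versus bottom-two-plus-top-one) occurs and goes straight to the contradiction, but the two arguments are logically equivalent.
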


\begin{proof}
In three adjacent triangle--square pairs, there are four points where red edges meet blue edges to form triangles.  Lemmas \ref{lemma:tri-squares-assoc}, \ref{lemma:No-fg-assoc}, and \ref{lemma:ae-assoc} imply that three of these four points must map to crossings associated to the crossing circle corresponding to the blue vertex, and moreover, two of those four are adjacent to the top (or bottom by symmetry), and the last is adjacent to the bottom (resp.\ top).  Start with three such triangle--square pairs, and without loss of generality assume that the top two points are associated to the crossing circle.  This is shown on the left of Figure \ref{fig:5trisquare}.

\begin{figure}
  \includegraphics{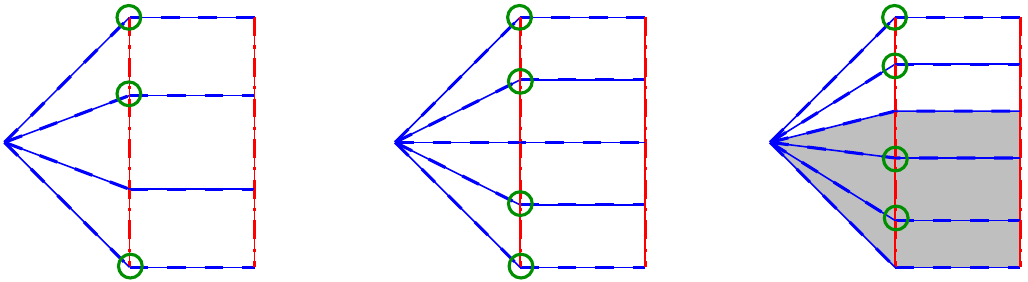}
  \caption{Green circles correspond to points which map to crossings     associated with the crossing circle of the vertex.  Shown are     three, four, and five adjacent triangle--square pairs.  Note that     in the case of five, the squares shown in gray contradict Lemma~\ref{lemma:No-fg-assoc}.  }
  \label{fig:5trisquare}
\end{figure}

Now attempt to add a fourth adjacent triangle--square pair.  This new pair will form another set of three adjacent triangle--square pairs. It cannot be added to the top of the three, else the new set of three adjacent triangle--square pairs on the top will contradict Lemma~\ref{lemma:No-fg-assoc}.  Hence it must be added to the bottom, and the new point on the new triangle where blue meets red must be associated to the crossing circle of the blue vertex.  This is shown in the center of Figure~\ref{fig:5trisquare}.

Finally, attempt to add a fifth adjacent triangle--square pair.  This cannot be added to top or bottom, or we contradict Lemma \ref{lemma:No-fg-assoc}.  Thus there cannot be five adjacent triangle--square pairs.  
\end{proof}

\begin{prop}\label{prop:L0-no-3adjbigons}
When $i=0$, the graph $\Gamma_B$ cannot contain three or more adjacent non-trivial bigons.
\end{prop}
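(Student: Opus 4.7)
The plan is to argue by contradiction: suppose $\Gamma_B$ contains three adjacent non-trivial bigons $B_1, B_2, B_3$, sharing a common top vertex $v_T$ and a common bottom vertex $v_B$, bounded by four parallel blue edges $b_1, b_2, b_3, b_4$. (If we had more than three adjacent non-trivial bigons, we could always restrict to three consecutive ones.)

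First I would show that each non-trivial bigon $B_i$ must contain at least one red arc in its interior. By Lemma~\ref{lemma:no-blue-bigons}, $B_i$ cannot be disjoint from both the red and green surfaces. But a green arc inside $B_i$ disjoint from red cannot have both endpoints on the same blue edge (by Lemma~\ref{lemma:blue-green-bigons}); it cannot join the two corner vertices of $B_i$, because in a non-trivial bigon these correspond to distinct crossing circles while any green edge between vertices of $\Gamma_B$ must correspond to a single crossing disk; and it cannot cross from one blue side to the other without producing a BBG triangle, ruled out by Lemma~\ref{lemma:noBBG}. So red arcs must be present in each $B_i$. By Lemma~\ref{lemma:red-blue-bigons}, each red arc in $B_i$ runs from one blue side of $B_i$ to the other, so in $\Gamma_{BR}$ the bigon $B_i$ decomposes into a BBR triangle $T_i$ with apex at $v_T$, a string of $r_i-1$ BRBR squares, and a BBR triangle $T_i'$ with apex at $v_B$, where $r_i\ge 1$ is the number of red arcs in $B_i$.

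Next I would case split on the values of $r_1, r_2, r_3$. In Case 1 (all $r_i=1$), the three triangles $T_1,T_2,T_3$ at $v_T$ and $T_1',T_2',T_3'$ at $v_B$ are pairwise adjacent across the single red arc of each bigon, producing exactly the configuration of Figure~\ref{fig:adjacent-BBR-triples}, forbidden by Lemma~\ref{lemma:adjacent-BBR-triples}. In Case 2 (all $r_i\ge 2$), each top triangle $T_i$ is adjacent across its red edge to a BRBR square inside $B_i$, giving three adjacent triangle--square pairs at $v_T$, ruled out by Lemma~\ref{lemma:L0-no-3trisquares}. In both cases we obtain the required contradiction.

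The hard part will be the mixed case, where some $r_i=1$ and others $r_i\ge 2$: here the naive top regions at $v_T$ may fail to be strict BBR triangles in $\Gamma_{BR}$, because red-arc endpoints from adjacent bigons subdivide the shared blue edges $b_2$ and $b_3$, so neither Lemma~\ref{lemma:L0-no-3trisquares} nor Lemma~\ref{lemma:adjacent-BBR-triples} applies verbatim. To handle it, my plan is to reuse the associated-crossing machinery underpinning those two lemmas, applied directly to the configuration at $v_T$ in the diagram of $L_{B,0}$. Namely, mirroring the arguments of Lemmas~\ref{lemma:tri-squares-assoc} and~\ref{lemma:ae-assoc} and using Lemma~\ref{lemma:blue-prime}\eqref{lmitm:assoc-cross}, a mixed triangle/triangle--square arrangement around $v_T$ still forces at least two distinct crossings of the underlying diagram to be associated with the crossing circle corresponding to $v_T$; this contradicts the defining property of $L_{B,0}$ from Definition~\ref{def:L0-L2} that each crossing circle is associated with at most one crossing. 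The delicate book-keeping of which crossings get forced to be associated in each sub-case of the mixed configuration is the principal technical difficulty of the proof.
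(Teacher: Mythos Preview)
Your overall strategy matches the paper's proof, and your Cases~1 and~2 are exactly the two cases that arise.  The gap is that your ``hard part'' --- the mixed case where the $r_i$ differ --- is a phantom: it never occurs.

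You already observed that Lemma~\ref{lemma:red-blue-bigons} forces every red arc inside a single bigon $B_j$ to run from one blue side to the other.  But red and blue intersect transversely (Lemma~\ref{lemma:edge-intersections}\eqref{lmitm:blue-red}), so a red arc hitting the shared edge $b_2$ from inside $B_1$ does not terminate there; it crosses into $B_2$.  Inside $B_2$ the same no-red-blue-bigon argument forces it to continue to $b_3$, and then through $B_3$ and out $b_4$.  In other words, every red arc that enters the union $B_1\cup B_2\cup B_3$ runs straight through all three bigons, and hence $r_1=r_2=r_3$ automatically.  (This is exactly what the paper means by ``any intersection with the red surface must run straight through all three adjacent bigons.'')  With that observation, the outermost red arc near $v_T$ cuts off three adjacent BBR triangles sharing $v_T$; if there is only one red arc you are in your Case~1 and Lemma~\ref{lemma:adjacent-BBR-triples} applies, while if there are at least two the outermost pair gives three adjacent triangle--square pairs at $v_T$ and Lemma~\ref{lemma:L0-no-3trisquares} applies.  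No further diagram-level analysis is needed.

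One minor point in your first step: the case of a green edge running from an interior point of a blue side to one of the corner vertices is covered by Lemma~\ref{lemma:blue-green-bigons} (both endpoints lie on the same blue edge of $\Gamma_{BR}$, and one is not a vertex of $\Gamma_B$), and a green loop at a corner vertex is ruled out by Lemma~\ref{lemma:NoGreenLoop}; you may want to mention these explicitly for completeness.
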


\begin{proof}
Suppose the graph $\Gamma_B$ contains three adjacent non-trivial bigons.  By Lemma~\ref{lemma:no-blue-bigons}, these bigons cannot be disjoint from the red and green surfaces. The bigons cannot be disjoint from the red surface, for the following reason. The bigons would have to contain at least one green edge, by Lemma \ref{lemma:no-blue-bigons} (no non-trivial blue bigons). If there is a green edge which intersects the interior of a blue edge, this contradicts Lemma \ref{lemma:blue-green-bigons} (no blue-green bigons) or \ref{lemma:noBBG} (no blue-blue-green triangles). On the other hand, if both endpoints of the green edge are the same vertex of $\Gamma_B$, this contradicts Lemma \ref{lemma:NoGreenLoop} (no green monogons). If the endpoints of the green edge are distinct vertices of $\Gamma_B$, this implies that the edges of the blue bigons are trivial, which is contrary to hypothesis. So, the blue bigons intersect the red surface. Lemma~\ref{lemma:red-blue-bigons} (no red-blue bigons) implies any intersection with the red surface must run straight through all three adjacent bigons.  An outermost such intersection cuts off three adjacent blue--blue--red triangles.

By Lemma~\ref{lemma:adjacent-BBR-triples}, there cannot be a single such intersection of red, cutting off two pairs of three adjacent blue--blue--red triangles adjacent across the red edges.  Hence there are at least two such intersections of red, and the outermost two cut off three adjacent red triangles and three adjacent red squares.  But now Lemma~\ref{lemma:L0-no-3trisquares} implies that this is impossible when $i=0$.  
\end{proof}

\begin{prop}\label{prop:L2-no-5adjbigons}
When $i=2$, the graph $\Gamma_B$ cannot contain five or more adjacent non-trivial bigons.
\end{prop}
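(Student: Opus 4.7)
The plan is to mimic the proof of Proposition \ref{prop:L0-no-3adjbigons}, replacing the input ``three adjacent non-trivial bigons'' with ``five adjacent non-trivial bigons'' and the final contradiction by Lemma~\ref{lemma:L2-no-5trisquares} in place of Lemma~\ref{lemma:L0-no-3trisquares}. So suppose for contradiction that $\Gamma_B$ contains five or more adjacent non-trivial bigons when $i=2$.

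First I argue that these bigons must meet the red surface. By Lemma~\ref{lemma:no-blue-bigons} they cannot be disjoint from both the red and the green surfaces. If they were disjoint from red but met green, the green edges in the bigons would violate one of Lemma~\ref{lemma:blue-green-bigons} (green edge with both endpoints on the interior of a single blue edge), Lemma~\ref{lemma:noBBG} (blue-blue-green triangles), or Lemma~\ref{lemma:NoGreenLoop} (green loops at a single vertex of $\Gamma_B$), unless the green edge ran between two distinct vertices of $\Gamma_B$ corresponding to the same crossing circle; but that last case would force the edges of the bigons to be trivial, contrary to hypothesis. Thus the red surface must cross through the bigons. By Lemma~\ref{lemma:red-blue-bigons} any such red arc cannot cut off a red-blue bigon with the boundary blue edges of the bigon chain, so each red arc runs straight through all five adjacent bigons. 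An outermost such red arc cuts off five adjacent blue-blue-red triangles at either the top or the bottom vertex of the bigon chain.

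Next I observe that a single red arc cannot suffice. Indeed, if there were only one red arc cutting off the five triangles, then on one side of it we would have five adjacent blue-blue-red triangles sharing a common vertex, and on the other side we would have (at least) another three adjacent blue-blue-red triangles sharing a common vertex, adjacent across the red edges. In particular, the configuration would contain two pairs of three adjacent blue-blue-red triples adjacent across the red edges, which is forbidden by Lemma~\ref{lemma:adjacent-BBR-triples}. Hence there must be at least two red arcs running through the five bigons. Taking the two outermost such arcs produces five adjacent blue-blue-red triangles on one side and five adjacent blue-red-blue-red squares between the two red arcs, all meeting the common vertex of the triangles. This is precisely the configuration of five adjacent triangle-square pairs. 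By Lemma~\ref{lemma:L2-no-5trisquares}, this cannot occur in the diagram of $L_{B,2}$, which is the required contradiction.

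The main obstacle is essentially bookkeeping: one has to verify that ``five adjacent non-trivial bigons plus at least two transverse red arcs'' really yields five adjacent triangle-square pairs in the sense of Section~\ref{sec:trisquares}, and that the outermost blue-blue-red triangles are not accidentally degenerate (e.g.\ with two of their sides coinciding). These checks are routine given Lemmas~\ref{lemma:scc}, \ref{lemma:no-monogons}, \ref{lemma:no-blue-bigons} and \ref{lemma:red-blue-bigons}, which rule out the degenerate possibilities, so no new combinatorial input beyond what has already been developed is required.
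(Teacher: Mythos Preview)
Your proof is correct and follows essentially the same approach as the paper's own argument: reduce to the $i=0$ template, use Lemmas~\ref{lemma:no-blue-bigons}, \ref{lemma:blue-green-bigons}, \ref{lemma:NoGreenLoop}, \ref{lemma:noBBG} to force the bigons to meet red, use Lemma~\ref{lemma:red-blue-bigons} to make red arcs run straight through, invoke Lemma~\ref{lemma:adjacent-BBR-triples} to rule out a single red arc, and then contradict Lemma~\ref{lemma:L2-no-5trisquares}. Your write-up is slightly more detailed than the paper's, but the logical structure is identical.
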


\begin{proof}
Suppose the graph contains five adjacent non-trivial bigons.  As in the proof of Proposition~\ref{prop:L0-no-3adjbigons}, we argue that the bigons cannot be disjoint green and red, by Lemma~\ref{lemma:no-blue-bigons}, cannot be disjoint red, by Lemmas~\ref{lemma:blue-green-bigons}, \ref{lemma:NoGreenLoop} and~\ref{lemma:noBBG}, and red intersections run through each bigon, splitting off triangles and squares by Lemma~\ref{lemma:red-blue-bigons}.  Again Lemma~\ref{lemma:adjacent-BBR-triples} implies there is more than one intersection of red, and the two outermost intersections cut off five adjacent triangles adjacent five adjacent squares across red edges. This contradicts Lemma~\ref{lemma:L2-no-5trisquares}.
\end{proof}

This completes the proof of Theorem \ref{thm:Bincompressible}.

\section{Boundary--$\pi_1$--injectivity}\label{sec:boundary}
In this section, we finish proofs of lemmas needed to show twisted surfaces are boundary--$\pi_1$--injective.  To do so, we analyze further the graph $\Gamma_B$ from Lemma~\ref{lemma:graph-bdyincompr}.  In the graph from that lemma, blue edges may have at least one endpoint on the part of $\partial D$ which maps to $\partial N(K)$.  Where two blue edges leave the same high valence vertex and both end on $\partial N(K)$, we obtain a triangle with two blue sides and one side on $\partial N(K)$.  We show in this section that we cannot have many adjacent triangles of this form, by restricting the graph $\Gamma_B$.

\begin{lemma}\label{lemma:adjtri-nored}
In $L_{B,0}$, there cannot be two adjacent triangles of $\Gamma_{B}$ each with two blue sides and one side on $\partial N(K)$, unless the triangles meet the red surface.

Similarly, in $L_{B,2}$, there cannot be three adjacent triangles of $\Gamma_{B}$ with two blue sides and one on $\partial N(K)$, unless the triangles meet the red surface.
\end{lemma}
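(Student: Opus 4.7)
The plan is a counting argument: if two (resp.\ three) adjacent triangles exist without meeting the red surface, too many crossings of $K_{B,i}$ must be associated with the shared crossing circle, contradicting Definition~\ref{def:L0-L2}. In the $L_{B,0}$ case, suppose the triangles are $T_1, T_2$ sharing a blue edge $e_1$ and common interior vertex $v$ corresponding to a crossing circle $C$. The three blue edges $e_1, e_2, e_3$ meet at $v$ and end at boundary vertices $p_1, p_2, p_3$ on $\partial_N D$, joined by arcs $\alpha_1, \alpha_2 \subset \partial_N D$ from $p_1$ to $p_2$ and from $p_1$ to $p_3$. By Lemma~\ref{lemma:edge-intersections}(1), the blue edges at $v$ alternate between the two blue regions at $C$: $e_1$ lies in one region $R'$ and $e_2, e_3$ both lie in the other $R$ (distinct by Lemma~\ref{lemma:blue-prime}(3)). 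Hence $p_1$ projects to a strand of $K_{B,i}$ bounding $R'$, while $p_2, p_3$ project to strands bounding $R$.

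First I would argue that one may assume the triangle interiors are disjoint from the green surface as well. Green edges with both endpoints on blue sides are ruled out by Lemmas~\ref{lemma:NoGreenLoop}, \ref{lemma:blue-green-bigons}, and~\ref{lemma:noBBG}. Green edges with one or more endpoints on $\partial_N D$ can be eliminated by a boundary homotopy in a collar of $\partial D$, analogous to the proof of Lemma~\ref{lemma:no-boundary-trivial-arcs}, using minimality of the complexity of $\phi$ (Definition~\ref{def:complexity}).

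The heart of the argument identifies, for each arc $\alpha_j \subset \partial_N D$, a distinct crossing of $K_{B,i}$ associated with $C$. The curves $\partial B_i \cap \partial N(K)$ partition the torus $\partial N(K)$ into regions in natural bijection with the blue regions of $K_{B,i}$; the endpoints of $\alpha_j$ lie in the regions corresponding to $R'$ and $R$, so $\phi(\alpha_j)$ must cross $\partial B_i$. A collar-homotopy argument reducing vertices of $\Gamma_{BR}$ on $\partial_N D$ then allows one to assume each $\alpha_j$ crosses $\partial B_i$ exactly once, at a point corresponding to a crossing of $K_{B,i}$ whose two blue regions are precisely $R$ and $R'$. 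By Lemma~\ref{lemma:blue-prime}(4), such a crossing is associated with $C$. Since $\phi|_{\partial_N D}$ may be taken embedded, distinct arcs produce distinct associated crossings. The two-triangle case yields two such crossings, violating the cap of one in Definition~\ref{def:L0-L2} for $L_{B,0}$; the three-triangle case in $L_{B,2}$, with three arcs $\alpha_1, \alpha_2, \alpha_3$ joining four boundary vertices, yields three associated crossings, violating the cap of two.

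The main obstacle is the collar-homotopy argument: one must track how vertices of $\Gamma_{BR}$ and $\Gamma_{BRG}$ on $\partial_N D$ behave under homotopies of $\phi|_{\partial D}$, and verify that any detour of $\phi(\alpha_j)$ through a blue region other than $R$ or $R'$ admits a simplification reducing complexity. Once this minimization is established, the counting argument closes the proof in both cases.
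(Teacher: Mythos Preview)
Your overall strategy---count crossings associated to the crossing circle $C$ at the common vertex and contradict Definition~\ref{def:L0-L2}---is exactly the paper's approach. The direct route is cleaner than your collar-homotopy plan: since each triangle is disjoint from red, its interior maps entirely above or entirely below the projection plane, and adjacent triangles alternate. Hence each boundary arc $\alpha_j$ lies on a single over- or under-strand and passes over exactly one crossing (the endpoints are in distinct blue regions by Lemma~\ref{lemma:blue-prime}\eqref{lmitm:distinct-regions}). No minimization of crossings along $\alpha_j$ is needed, and green edges are irrelevant to this argument.

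There is, however, a genuine gap in your $L_{B,2}$ case. You assert that ``distinct arcs produce distinct associated crossings'' because $\phi|_{\partial_N D}$ may be taken embedded. First, nothing in the setup guarantees that the boundary arc maps to $\partial N(K)$ by an embedding. More importantly, even if it did, two disjoint arcs on $\partial N(K)$ can pass over the \emph{same} crossing of the diagram at different points of the torus. In the two-triangle case the crossings are automatically distinct (one is an overcrossing of the strand at $p_1$, the other an undercrossing, and these are different crossings in an alternating diagram). But with three adjacent triangles the first and third boundary arcs both run over crossings of the same type (say both overcrossings), and these \emph{can} coincide. The paper handles exactly this: if the third crossing agrees with one of the first two, repeated primality forces $K_{B,2}$ to be a $(2,2)$--torus link with a crossing circle encircling both crossings, contradicting Lemma~\ref{lemma:22torus}. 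Your argument omits this case, so as written it does not establish the $L_{B,2}$ statement.
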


\begin{proof}
Suppose two adjacent triangles in $D$ do not meet the red surface. Label the three blue edges of the triangle as in Figure~\ref{fig:triangle-label}, left.  Without loss of generality, we may assume that the triangle with edges labeled $a$ and $b$ is mapped above the plane of projection, and that the one with edges labeled $b$ and $c$ is mapped below. Since edges $a$ and $b$ start on opposite sides of the same crossing circle, they must end in different regions of the diagram, by Lemma~\ref{lemma:blue-prime}\eqref{lmitm:distinct-regions}.  Because the third side of the triangle they form lies on the link, that third side must run over a single over--crossing of the diagram.  This is shown in Figure~\ref{fig:triangle-label}, second from left.  Similarly, the triangle with sides $b$ and $c$ must have third side running over a single under--crossing of the diagram.  Putting these together, the result must be as in Figure~\ref{fig:triangle-label}, third from left.  Note each of these crossings is associated to the crossing circle corresponding to the blue vertex of the triangles, by Lemma~\ref{lemma:blue-prime}\eqref{lmitm:assoc-cross}.  Hence the crossing circle has at least two crossings which are associated to it.  This contradicts our construction of $L_{B,0}$: at most one crossing belongs to any crossing circle.

\begin{figure}
  \input{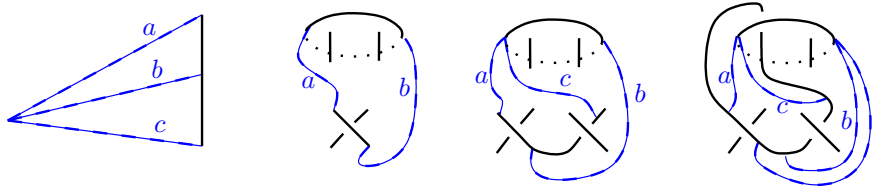}
  \caption{Left to right: Labels on edges of three adjacent triangles. A triangle above the plane of projection. Two adjacent triangles. Three adjacent triangles meeting only two crossings.}
  \label{fig:triangle-label}
\end{figure}

If a third triangle is adjacent, then either a third crossing will be associated to the crossing circle, which is impossible for $L_{B,2}$, or the crossing straddled by endpoints of edges in the third triangle will agree with one of the existing crossings, as on the right of Figure~\ref{fig:triangle-label}.  However, repeated applications of primality, Lemma~\ref{lemma:K2-0-prime}, implies that in this case the diagram is that of a $(2,2)$--torus link encircled by a crossing circle.  This contradicts Lemma~\ref{lemma:22torus}.  
\end{proof}

\begin{lemma}\label{lemma:adjtri-onered}
In the graph $\Gamma_B$, three adjacent triangles, each with two blue sides and one side on $\partial N(K)$, must meet the red surface more than once.
\end{lemma}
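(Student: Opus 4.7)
The plan is to suppose, for contradiction, that the three adjacent triangles $T_1, T_2, T_3$ in $\Gamma_B$, each with two blue sides and one side on $\partial N(K)$, meet the red surface exactly once via a single red edge $r$, and then to derive a contradiction via the associated-crossing mechanism of Lemma~\ref{lemma:adjtri-nored}. Let $v$ denote the common apex vertex of the three triangles, corresponding to crossing circle $C$, and label the blue edges emanating from $v$ as $e_1, e_2, e_3, e_4$ so that $T_k$ has blue sides $e_k, e_{k+1}$ and $\partial N(K)$ side $\ell_k$.

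First, since $r$ is a single edge of $\Gamma_{BRG}$, its interior meets no other edge, and in particular no blue edge; otherwise any such intersection would be a vertex of $\Gamma_{BR}$ subdividing $r$ into several edges. So $r$ lies entirely within a single triangle $T_k$. Next, I would rule out endpoints of $r$ lying on $\ell_k \subset \partial N(K)$. In that case the sub-region of $T_k$ cut off by $r$ together with a sub-arc of $\ell_k$ (and possibly a short segment of one blue edge) would furnish a disk whose boundary lies on the red surface and $\partial N(K)$, with interior disjoint from the red, blue, and green surfaces. Mimicking the homotopy arguments of Lemmas~\ref{lemma:red-green-bigons} and~\ref{lemma:blue-green-bigons}, one can use this disk to homotope $\phi$, removing $r$ from $\Gamma_{BRG}$ and strictly decreasing the complexity in Definition~\ref{def:complexity}, contradicting the minimality of complexity. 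Hence both endpoints of $r$ lie on the blue edges $e_k, e_{k+1}$, and $\ell_k$ is intact.

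With $\ell_k$ intact, the argument of Lemma~\ref{lemma:adjtri-nored} applies verbatim to $T_k$, just as it does to the two red-free triangles: the blue edges $e_k, e_{k+1}$ end in distinct blue regions of the diagram by Lemma~\ref{lemma:blue-prime}\eqref{lmitm:distinct-regions}, and the arc $\ell_k$ along $\partial N(K)$ must run over a single crossing of the diagram, which is associated with $C$ by Lemma~\ref{lemma:blue-prime}\eqref{lmitm:assoc-cross}. Hence each of $T_1, T_2, T_3$ contributes a crossing associated with $C$.

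I then conclude exactly as in Lemma~\ref{lemma:adjtri-nored}. If the three associated crossings are all distinct, they exceed the at-most-one limit for $L_{B,0}$ and the at-most-two limit for $L_{B,2}$ specified in Definition~\ref{def:L0-L2}. If two or more coincide, then repeated applications of primality (Lemma~\ref{lemma:K2-0-prime}) force the diagram of $K_{B,i}$ to be that of a $(2,2)$-torus link with a crossing circle encircling the coinciding crossings, contradicting Lemma~\ref{lemma:22torus}. The main obstacle is the second step: carefully producing the red-surface disk required for the homotopy when $r$ has an endpoint on $\partial N(K)$, and verifying that the homotopy indeed decreases the complexity of Definition~\ref{def:complexity} without introducing new intersections elsewhere that would offset the gain.
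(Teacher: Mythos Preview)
Your argument rests on a misreading of the hypothesis. ``Meeting the red surface once'' means that $\phi^{-1}(R_i)$ restricted to the three-triangle region has a single connected component (an arc), not that it contributes a single edge to $\Gamma_{BRG}$. This red arc can, and in fact must, cross the interior blue edges $e_2, e_3$: if it stayed inside one $T_k$, an outermost sub-arc would form a red--blue bigon with a blue side, contradicting Lemma~\ref{lemma:red-blue-bigons}. The paper draws exactly the opposite conclusion from yours: the single red arc runs straight across all three triangles, hitting each $T_k$ in both of its blue edges. This cuts off three adjacent blue--blue--red triangles at the apex $v$ (labelled as in Figure~\ref{fig:adjtri-onered}), and the proof then proceeds by the case analysis of Figure~\ref{fig:trisquare-options}, examining how the remaining blue stubs $e,f,g,h$ straddle crossings along $\partial N(K)$.

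Even under your reading, Step~4 fails. The argument of Lemma~\ref{lemma:adjtri-nored} that the crossing under $\ell_k$ is associated to $C$ uses that the blue sides $e_k,e_{k+1}$ are disjoint from red, so each lies entirely in a single blue region of the diagram, one of the two regions meeting $C$. Once $r$ crosses $e_k$ and $e_{k+1}$, the portions of these blue edges near $\ell_k$ lie on the far side of a diagram crossing from the portions near $v$ (Lemma~\ref{lemma:edge-intersections}\eqref{lmitm:blue-red}), hence in different blue regions. There is then no reason the crossing under $\ell_k$ meets the two blue regions adjacent to $C$, so Lemma~\ref{lemma:blue-prime}\eqref{lmitm:assoc-cross} does not apply and you cannot conclude this crossing is associated with $C$. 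Your Step~2 is also not justified: the sub-disk you describe only yields a complexity-reducing homotopy if the red arc together with the arc on $\partial N(K)$ bounds a disk in $R_i$, which need not hold since $R_i$ is boundary-incompressible but contains many essential arcs.
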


\begin{proof}
Suppose not.  By Lemma~\ref{lemma:adjtri-nored}, any three adjacent triangles must meet the red surface at least once.  Suppose three adjacent triangles meet the red surface only once.  Because there are no red--blue bigons (Lemma~\ref{lemma:red-blue-bigons}), the red must run across the three triangles, meeting each triangle in both of its blue edges.  Label the edges of adjacent triangles meeting the red once as in Figure~\ref{fig:adjtri-onered}.  By Lemma~\ref{lemma:distinct-crossings}, each of the blue--blue--red triangles meets two distinct crossings.  As before, the three triangles can meet two, three, or four crossings, and the four possibilities are sketched in Figure~\ref{fig:trisquare-options}.  However, now $e$ and $f$ must straddle an undercrossing, $f$ and $g$ straddle an overcrossing, and $g$ and $h$ straddle an undercrossing.

\begin{figure}
  \input{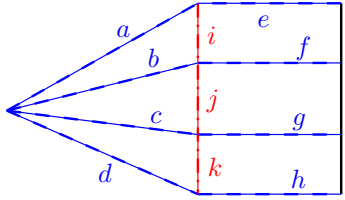}
  \caption{Labels on triangles.}
  \label{fig:adjtri-onered}
\end{figure}

We step through the cases in Figure~\ref{fig:trisquare-options} one by one, ruling out each case.

Case I.  The three blue--blue--red triangles meet just two crossings. In this case, $i\cup j$ separates endpoints of $e$ and $f$.  Since $e$ and $f$ straddle a crossing, but cannot meet $i$ or $j$, they must straddle one of the crossings between $i$ and $j$.  Similarly, $f$ and $g$ straddle the other crossing between $i$ and $j$.  Then both crossings shown in case I of Figure~\ref{fig:trisquare-options} belong to the crossing circle shown, and a primeness argument shows that the diagram is a $(2,2)$--torus link encircled by a crossing circle.  This violates Lemma~\ref{lemma:22torus}.

Case II.  Again $i\cup j$ separates endpoints of $e$ and $f$ and of $f$ and $g$, so again $e$ and $f$ straddle one of the crossings between $i$ and $j$, and $f$ and $g$ straddle the other, and both crossings are associated to the crossing circle shown.   So the diagram is that of $L_{B,2}$. Then one of $e$ or $g$ runs from the crossing at the endpoint of $a$ to the crossing at the endpoint of $b$.  Taking this edge, along with the edge $i$, gives a closed curve meeting the diagram of of $K_{B,2}$ twice with crossings on either side.  This contradicts Lemma~\ref{lemma:K2-0-prime}.

Case III.  This time, $i\cup j\cup k$ separates endpoints of $e$ and $f$, and of $f$ and $g$, and of $g$ and $h$.  Hence these pairs of edges straddle crossings between $i$, $j$, and $k$.  There are three such pairs and three such crossings.  In all cases, it can be shown that the pairs of edges straddle distinct crossings, and all three crossings are associated to the crossing circle shown.  This contradicts the definition of $L_{B,0}$ and $L_{B,2}$:  at most $1$ or $2$ crossings, respectively, can be associated to a given crossing circle.

Case IV.  In this case, $f$ and $e$ are not required to straddle one of the four crossings shown.  However, if they do not, then since endpoints of $f$ and $e$ are separated by $a\cup i\cup j\cup c$, and since $e$ cannot meet any of those curves, $f$ must intersect $a$ or $c$.  Thus the crossing straddled by $f$ and $e$ meets the same blue regions on either side as the crossing at the endpoint of $a$.  By the fact that the diagram of $K_{B,i}$ is blue twist reduced, Lemma~\ref{lemma:blue-twist-reduced}, these two crossings must bound a sequence of (at least one) bigon regions of the diagram between them. Then $f$ and $g$ straddle the next crossing in the bigon sequence, hence $e$ and $g$ are in the same region of the diagram.  But endpoints of $e$ and $g$ are bounded away from each other by $b\cup j\cup k\cup d$, hence $e$ and $g$ are both in the region of $b$ and $d$.  That implies that crossings at the endpoints of $a$ and $c$ belong to the crossing circle shown.  Since $f$ is in the region of $a$, the crossing at the endpoint of $b$ also belongs to that crossing circle.  This contradicts the definition of $L_{B,i}$.

Finally, it remains to show that $e$ and $f$ cannot straddle any of the crossings shown in the diagram on the left of Figure~\ref{fig:trisquare-options}.  Arguments similar to those above imply that for each of these crossings, if $e$ and $f$ straddle the crossing then three of the four crossings shown must be associated to the crossing circle shown.  We leave the details to the reader.

\end{proof}

\begin{lemma}\label{lemma:no-3BBKtriangles}
In the graph $\Gamma_B$, when $i=0$, there cannot be three adjacent triangles each with one side on $\partial N(K)$.  When $i=2$, there cannot be five such triangles.
\end{lemma}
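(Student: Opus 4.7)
The plan is to reduce this to the triangle--square pair analysis of Section~\ref{sec:trisquares} and invoke Lemma~\ref{lemma:L0-no-3trisquares} or Lemma~\ref{lemma:L2-no-5trisquares}. Suppose, for contradiction, that $\Gamma_B$ contains $k$ adjacent triangles each with one side on $\partial N(K)$, where $k=3$ (when $i=0$) or $k=5$ (when $i=2$). The two blue sides of each triangle meet at a common vertex $v$ of $\Gamma_B$, so there are blue edges $e_0, e_1, \dots, e_k$ emanating from $v$, with $\partial N(K)$--arcs cutting off the $k$ triangles between consecutive $e_j$'s. By Lemma~\ref{lemma:adjtri-nored}, these triangles must meet the red surface, and by Lemma~\ref{lemma:adjtri-onered} they meet it more than once.

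The first step is to identify a spanning red arc. I would argue that, using Lemma~\ref{lemma:red-blue-bigons} (no red--blue bigons), Lemma~\ref{lemma:no-monogons}, and the fact that red arcs cannot terminate at vertices of $\Gamma_B$ (since crossing circles puncture only blue regions in $L_{B,i}$), the red edges of $\Gamma_{BR}$ nearest to $v$ form a path in $\Gamma_{BR}$ that runs across all $k$ triangles from $e_0$ to $e_k$, meeting each intermediate $e_j$ in exactly one vertex. This innermost path cuts off $k$ adjacent blue--blue--red triangles in the vicinity of $v$, one sitting inside each of the original triangles. Any putative red arc that terminated on $\partial N(K)$ between $v$ and this innermost path, or that exited and re-entered the same blue edge, would produce a forbidden bigon, monogon, or primality violation, so such configurations should be ruled out by arguments parallel to those already used in the proofs of Lemmas~\ref{lemma:adjtri-nored} and~\ref{lemma:adjtri-onered}.

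The second step is to produce a second such path. Since Lemma~\ref{lemma:adjtri-onered} supplies strictly more than one red intersection across any three adjacent triangles (and hence across any three consecutive triangles of our $k$-chain), there is a second collection of red edges sitting below the innermost path. Repeating the outermost-arc analysis inside the region bounded by the innermost path and $\partial N(K)$, I would show that the next innermost red edges also form a spanning path, cutting off $k$ adjacent blue--red--blue--red squares sandwiched between the two red paths. Combined with the $k$ blue--blue--red triangles from the first step, this yields $k$ adjacent triangle--square pairs exactly of the shape described in Figure~\ref{fig:tri-squares-labels}.

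The final step is to invoke the existing obstructions: when $k=3$ and $i=0$, the configuration contradicts Lemma~\ref{lemma:L0-no-3trisquares}, and when $k=5$ and $i=2$, it contradicts Lemma~\ref{lemma:L2-no-5trisquares}. The main obstacle I anticipate is the bookkeeping in the first two steps, namely showing that the nearest red edges genuinely assemble into spanning paths rather than terminating on $\partial N(K)$ inside one of the $k$ triangles or connecting non-adjacent blue edges $e_j, e_{j+2}$. Handling these exceptional configurations will likely require re-running primality and twist--reduced arguments (via Lemmas~\ref{lemma:K2-0-prime}, \ref{lemma:blue-prime}, \ref{lemma:blue-twist-reduced}, and~\ref{lemma:22torus}) of the same flavour as in the earlier boundary--free analysis, adapted to the fact that the outermost side of each triangle now lies on $\partial N(K)$ rather than on a further blue edge.
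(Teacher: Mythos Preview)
Your proposal is correct and follows essentially the same route as the paper: invoke Lemmas~\ref{lemma:adjtri-nored} and~\ref{lemma:adjtri-onered} to force at least two red intersections, observe that the two red arcs nearest the common vertex cut off $k$ adjacent triangle--square pairs, and finish with Lemma~\ref{lemma:L0-no-3trisquares} or~\ref{lemma:L2-no-5trisquares}. The bookkeeping obstacle you anticipate---that the innermost red arcs might fail to span all $k$ triangles---is lighter than you suggest: the proof of Lemma~\ref{lemma:adjtri-onered} already contains the one-line justification (``because there are no red--blue bigons, the red must run across the three triangles, meeting each triangle in both of its blue edges''), and the paper's own proof of this lemma simply reuses that observation without further case analysis.
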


\begin{proof}
Lemma~\ref{lemma:adjtri-nored} implies the triangles with two blue edges and one edge on $\partial N(K)$ must meet the red surface.  Lemma~\ref{lemma:adjtri-onered} implies that the triangles must meet the red surface at least twice.  But then the two intersections closest to the vertex of the triangles cut the triangles into blue--blue--red triangles adjacent to red and blue squares.  Lemma~\ref{lemma:L0-no-3trisquares} gives a contradiction in case $i=0$:  no disk can be mapped into $L_{B,0}$ such that three adjacent blue--blue--red triangles meet adjacent red and blue squares.  In the case $i=2$, Lemma~\ref{lemma:L2-no-5trisquares} gives a contradiction.
\end{proof}

This completes the proof of Theorem \ref{thm:bdryincompressible}.

\section{Properties of twisted surfaces}\label{sec:homotopic}
In this section, we investigate homotopy classes of arcs in twisted checkerboard surfaces.  This requires machinery of the previous sections, and has applications in \cite{lp:acv}.

Consider two arcs that are distinct and essential in the surface $S_{B,i}$, for $i=0, 2$, but homotopic when mapped into $S^3\setminus K$.  We determine when this can happen.

First, we introduce terminology.  A small regular neighborhood of a twist region in $S^3$ is a 3--ball which intersects both checkerboard surfaces of $K$.  We say that the intersection of the ball and a checkerboard surface is the subsurface \emph{associated} with the twist region.  If the twist region has $c$ crossings, then the intersection with one checkerboard surface is a disk, and the other has Euler characteristic $2-c$.  See Figure~\ref{fig:twist-assocsfce}.

\begin{figure}
  \includegraphics[width=2in]{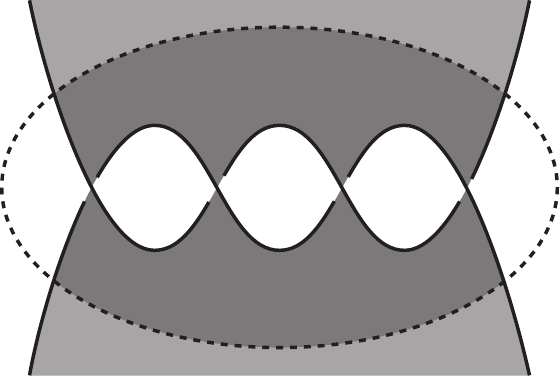}
  \caption{The subsurface of a checkerboard surface associated with a
    twist region.}
  \label{fig:twist-assocsfce}
\end{figure}

In the case of twisted checkerboard surfaces, we can make a similar definition. Consider a twist region of $K_i$. The checkerboard surfaces for $K_i$ have subsurfaces associated with this twist region. If the twist region is encircled by a crossing circle of $L_i$, we choose the subsurface in $R_i$ or $B_i$ so that it is punctured twice by this crossing circle. However, we arrange that the subsurfaces are disjoint from all other crossing circles. Since there are inclusions $R_i \subset S_{R,i}$ and $B_i \subset S_{B,i}$, we obtain subsurfaces of $S_{R,i}$ and $S_{B,i}$ which are \emph{the subsurfaces associated with the twist region of $K_i$}.

\begin{theorem}\label{thm:htpcarcs}
Suppose $K$ is a link with prime, twist reduced, alternating diagram (which we also call $K$).  Let $K_i$, $i=0, 2$, be the diagram obtained from that of $K$ by removing pairs of crossings from each twist region of $K$ with at least $N_\tw$ crossings, where $N_{\tw}\geq 72$ if $i=0$, and $N_{\tw}\geq 121$ if $i=2$, so that the diagram of $K_i$ has one or $i$ remaining crossings in any such twist region. Finally, suppose that two distinct essential arcs in the surface $S_{B,i}$ have homotopic images in $S^3\setminus{K}$, but are not homotopic in $S_{B,i}$. Then the two arcs are homotopic in $S_{B,i}$ into the same subsurface associated with some twist region of $K_i$.
\end{theorem}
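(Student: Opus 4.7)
The plan is to mirror the strategy of the proofs of Theorems~\ref{thm:Bincompressible} and~\ref{thm:bdryincompressible}. From the hypothesis, the homotopy between $f\circ\alpha_1$ and $f\circ\alpha_2$ rel endpoints in $S^3\setminus K$ yields a map $\phi\co D \to S^3\setminus K$ whose boundary $\partial D$ decomposes as a concatenation of two arcs, each factoring through an essential arc of $S_{B,i}$. I would first establish the promised Lemma~\ref{lemma:htpc-graph}, following the construction of Lemmas~\ref{lemma:graph-bluevalence} and~\ref{lemma:graph-bdyincompr}: homotope $\phi$ first in a collar of $\partial D$ using the transverse orientations on the orientable double covers of $\alpha_1$ and $\alpha_2$, then extend over $D$ and make $\phi$ transverse to $f(S_{B,i})$ and the crossing circles. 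The result is a graph $\Gamma_B=\phi^{-1}(f(S_{B,i}))$ on $D$ with interior vertices of valence a non-zero multiple of $2n_j$, and boundary vertices of valence $n_j+1$. Choose $\phi$, together with representatives of $\alpha_1,\alpha_2$ in their homotopy classes in $S_{B,i}$, so as to minimize the complexity of Definition~\ref{def:complexity}.

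Next, I would run $\Gamma_B$ through the same combinatorial gauntlet developed in Sections~\ref{sec:surfaces} and~\ref{sec:trisquares}. Lemmas~\ref{lemma:scc} and~\ref{lemma:no-monogons} eliminate simple closed curves and monogons, and Lemma~\ref{lemma:no-boundary-trivial-arcs} eliminates trivial edges in $\partial D$ (using the freedom to homotope $\alpha_1$ and $\alpha_2$ in $S_{B,i}$ to absorb such arcs without changing their homotopy classes, which would otherwise contradict the minimality assumption). Lemma~\ref{lemma:adj-bigons} then forces $\Gamma_B$ to contain more than $(R_\tw/18)-1$ adjacent non-trivial bigons. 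Under the hypotheses $N_\tw\geq 72$ for $i=0$ and $N_\tw\geq 121$ for $i=2$, inequality~\eqref{eqn:Rtw} gives $R_\tw\geq 72$ and $R_\tw\geq 120$ respectively, forcing more than three (resp.\ more than five) adjacent non-trivial bigons.

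The increased thresholds compared with Theorems~\ref{thm:Bincompressible} and~\ref{thm:bdryincompressible} provide enough extra adjacent bigons that I expect to extract geometric information beyond what Propositions~\ref{prop:L0-no-3adjbigons} and~\ref{prop:L2-no-5adjbigons} forbid. With these extra bigons in hand, the plan is to argue that all but a bounded number of the adjacent non-trivial bigons must share a single interior vertex of $\Gamma_B$, which corresponds to a single crossing circle $C$ of $L_i$. This forces the two boundary arcs of $\phi$ to run parallel along the same crossing circle, so that $\alpha_1$ and $\alpha_2$ can each be homotoped in $S_{B,i}$ across the annulus or M\"obius bands attached at $C$ into the subsurface of $S_{B,i}$ associated with the twist region of $K_i$ encircled by $C$. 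This is the desired conclusion.

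The main obstacle is precisely this final step: translating the purely combinatorial information that the adjacent non-trivial bigons concentrate near a single crossing circle into an explicit homotopy of $\alpha_1$ and $\alpha_2$ in $S_{B,i}$ landing both arcs in a common twist-region subsurface. This will require a case analysis of how $\phi$ meets the attaching I-bundles of $S_{B,i}$, and a careful treatment of the two corner points of $\partial D$ (analogous in spirit to the boundary-vertex analysis of Lemma~\ref{lemma:adj-bigons-2}), to ensure that the homotopies on the two arcs can be organized simultaneously and end in the same subsurface.
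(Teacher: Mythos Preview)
Your setup misreads the topology of the disk. The arcs $\alpha_1,\alpha_2$ are \emph{essential arcs} in $S_{B,i}$, so they are properly embedded with endpoints on $\partial S_{B,i}\subset\partial N(K)$. The homotopy between their images therefore produces a disk $D$ whose boundary decomposes into \emph{four} arcs: east and west on $S_{B,i}$, and north and south on $\partial N(K)$. The paper's Lemma~\ref{lemma:htpc-graph} records exactly this. With only two boundary arcs as you describe, there is nowhere for the $\partial N(K)$ portion of the homotopy to go, and Lemma~\ref{lemma:adj-bigons} does not apply because the valence-one vertices along the $\partial N(K)$ arcs violate its hypotheses. The paper instead proves a new Euler-characteristic lemma on $I\times I$ (doubling along the $\partial N(K)$ arcs to a torus) to obtain an analogue with constant $R_\tw/24$; this is why the thresholds $72$ and $121$ appear rather than $54$ and $91$.

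More seriously, your final step has the logic inverted. The purpose of the bigon count is not to locate a crossing circle where bigons concentrate; it is to show that \emph{no} blue vertex can exist at all. If $\Gamma_B$ had a vertex, the bigon/triangle lemma would force more than two (resp.\ four) adjacent non-trivial bigons or boundary triangles, contradicting Propositions~\ref{prop:L0-no-3adjbigons}, \ref{prop:L2-no-5adjbigons}, or Lemma~\ref{lemma:no-3BBKtriangles}. Hence $\phi(D)$ misses all crossing circles (Lemma~\ref{lemma:htpc-novertices}). Once that is established, $\Gamma_{BR}$ consists only of arcs running between opposite sides of the square, cutting $D$ into rectangles; the paper then analyzes these as essential product disks for the checkerboard surface of $K_{B,i}$ and uses blue twist-reducedness to push both $e_1$ and $e_2$ into the subsurface of a single twist region. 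Your proposed endgame of homotoping across the attached $I$-bundles at a crossing circle never arises, and the actual argument requires the essential-product-disk machinery you do not mention.
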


We will prove Theorem~\ref{thm:htpcarcs} in a sequence of lemmas.  The first is an analogue of Lemmas~\ref{lemma:graph-bluevalence} and~\ref{lemma:graph-bdyincompr}.

\begin{lemma}\label{lemma:htpc-graph}
Suppose homotopically distinct essential arcs $a_1$ and $a_2$ in $S_{B,i}$ map by $f\co S_{B,i} \to S^3\setminus K$ to homotopic arcs $e_1$ and $e_2$ in $S^3\setminus{K}$.  Then there is a map of a disk $\phi\co D \to S^3\setminus {\rm int}(N(K))$ with $\partial D$ expressed as four arcs, with opposite arcs mapping by $\phi$ to $e_1$ and $e_2$, and the other two arcs mapping to $\partial N(K)$.

Moreover, $\Gamma_B=\phi^{-1}(f(S_{B,i}))$ is a collection of embedded closed curves and arcs and an embedded graph on $D$ whose edges have endpoints either at vertices where $\phi(D)$ meets a crossing circle, or on $\phi^{-1}(\partial N(K))$ on $\partial D$.  Each vertex in the interior of $D$ has valence a non-zero multiple of $2n_j$, where $2n_j$ is the number of crossings removed from the twist region at the relevant crossing circle.  Each vertex in the interior of an arc in $\partial D$ that maps to $S_{B,i}$ has valence $n_j+1$.  Each vertex on an arc in $\partial D$ that maps to $\partial N(K)$ has valence one.
\end{lemma}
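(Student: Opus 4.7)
The plan is to combine the arguments of Lemmas~\ref{lemma:graph-bluevalence} and~\ref{lemma:graph-bdyincompr}. Since $e_1 = f\circ a_1$ and $e_2 = f\circ a_2$ are homotopic as arcs with endpoints free to move on $\partial N(K)$, such a homotopy may be reparameterised as a map $\phi\co D\to S^3\setminus\mathrm{int}(N(K))$ from a square $D=I\times I$, sending two opposite (vertical) sides onto $e_1$ and $e_2$ and the two horizontal sides into $\partial N(K)$. As in the two cited lemmas, I would construct $\phi$ in stages: first fix the behaviour on a collar neighbourhood of $\partial D$, and only then extend across the interior.

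On each vertical side, which factors through an essential arc $a_k$ in $S_{B,i}$, I would argue exactly as in Lemma~\ref{lemma:graph-bluevalence}. Each $a_k$ lifts to the orientable double cover of $S_{B,i}$, which gives a consistent transverse orientation along it. Using this, $\phi$ can be homotoped on a collar of the vertical side so that it lies strictly on one side of $f(S_{B,i})$, except at the finitely many points where $e_k$ meets a crossing circle; at each such point a neighbourhood is arranged to lie on one side of the appropriate sheet of the star, producing a boundary vertex of $\Gamma_B$ of valence $n_j+1$. On each horizontal side, mapping into the transversely orientable $\partial N(K)$, the same sort of collar adjustment as in Lemma~\ref{lemma:graph-bdyincompr} produces boundary vertices of $\Gamma_B$ of valence~$1$, one at each transverse intersection of the arc in $\partial N(K)$ with $\partial S_{B,i}$.

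Once $\phi$ has been defined on a collar of $\partial D$ with the correct local picture, I would extend it across the rest of $D$ by the given homotopy and then perturb by a small homotopy supported away from a neighbourhood of $\partial D$ to make $\phi$ transverse to every crossing circle and to $f(S_{B,i})$. The preimage $\Gamma_B = \phi^{-1}(f(S_{B,i}))$ then consists of embedded closed curves, embedded arcs, and an embedded graph, all of whose edge endpoints land either at intersections of $\phi(D)$ with crossing circles or on $\phi^{-1}(\partial N(K))\subset\partial D$. The valence of an interior vertex is counted exactly as in Lemma~\ref{lemma:graph-bluevalence}: near a transverse intersection of $\phi(D)$ with a crossing circle, $\phi(D)$ meets the boundary of a small solid-torus neighbourhood in a meridian, which crosses the two slope-$\pm1/n_j$ blue curves $2n_j$ times in total.

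The one new technical point beyond the cited lemmas is at the four corners of $D$, where a vertical side meets a horizontal side. Each corner maps to a point on $\partial S_{B,i}\subset\partial N(K)$. Near such a point, the transverse direction to $S_{B,i}$ inside $S^3\setminus\mathrm{int}(N(K))$ and the transverse direction to $\partial S_{B,i}$ inside $\partial N(K)$ sit in compatible planes, and the boundary of a small half-disc corner neighbourhood in $D$ may be mapped so that the two collar homotopies glue along a short common arc. I expect this corner-matching to be the main technical obstacle; after that, the proof is a direct translation of the arguments already carried out.
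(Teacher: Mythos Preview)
Your proposal is correct and follows essentially the same approach as the paper: construct $\phi$ in two stages by first pushing off $S_{B,i}$ along the vertical sides using the transverse orientation from the orientable double cover, pushing off $\partial N(K)$ along the horizontal sides, then extending and making transverse. The paper does not single out the corner-matching as a separate technical point, but your explicit treatment of it is harmless and arguably more careful.
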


\begin{proof}
The homotopy between $e_1$ and $e_2$ gives a map of a disk $\phi\co D \to S^3\setminus {\rm int}(N(K))$, with $\partial D$ mapped to the four arcs required by the lemma.  As in Lemmas~\ref{lemma:graph-bluevalence} and~\ref{lemma:graph-bdyincompr}, we may modify $\phi$ in two stages, first near $\partial D$ and then in the interior, to obtain the map with the desired properties.  

The arcs $e_1$ and $e_2$ both lift to the orientable double cover of $S_{B,i}$, which is transversely orientable, and by pushing $e_1$ and $e_2$ in this transverse direction, we obtain the map $\phi$ in a neighborhood of these arcs.  Similarly, using the fact that $\partial N(K)$ is transversely orientable, we can extend the definition of $\phi$ over a collar neighborhood of $\partial D$.  Now extend $\phi$ over all of $D$, and make it transverse to all crossing circles, and transverse to $f(S_{B,i})$.  

Let $\Gamma_B= \phi^{-1}(f(S_{B,i}))$ on $D$.  Because $S_{B,i}$ is embedded in $S^3\setminus K$ except at crossing circles, $\Gamma_B$ consists of embedded closed curves, embedded arcs (edges) with endpoints corresponding to points of intersection of crossing circles (vertices), or with endpoints on $\partial N(K)$.

As in the proof of Lemmas~\ref{lemma:graph-bluevalence} and~\ref{lemma:graph-bdyincompr}, a vertex in the interior of $D$ corresponds to a transverse intersection of $\phi(D)$ with a crossing circle in $S^3\setminus K$.  Hence the vertex has valence $2n_j$.  For a vertex on an arc in $\bdy D$ that maps to $S_{B,i}$, a neighborhood of the vertex maps to half a meridian disk for a crossing circle, and so the vertex has valence $n_j+1$. At a vertex on an arc in $\bdy D$ that maps to $\partial N(K)$, the arc in $\bdy D$ is transverse to $S_{B,i}$, and so this vertex of $\Gamma_B$ has valence one.
\end{proof}

We view the disk $D$ of the previous lemma as a square with west side mapping to $e_1$, east side mapping to $e_2$, and north and south sides mapping to $\bdy{N(K)}$.  As before, we also have graphs $\Gamma_{BR}$ and $\Gamma_{BRG}$, and complexity as in Definition~\ref{def:complexity}, ordered lexicographically.
We will take our graph to make the complexity as small as possible.  All the results of Sections~\ref{sec:surfaces}, \ref{sec:trisquares}, and~\ref{sec:boundary} will apply to these graphs.

The following is an analogue of Lemma \ref{lemma:planar-graph}.

\begin{lemma} \label{lemma:small-valence-I-times-I}
Let $\Gamma$ be a graph in the disk $I \times I$. Suppose that $\Gamma$ includes $\partial I \times I$. 
Suppose also that $\Gamma$ contains no bigons and no monogons, and that its intersection with $I \times \partial I$
is a collection of valence one vertices. Suppose also that there are no triangular regions, with one edge on
$I \times \partial I$. Finally, suppose that $\Gamma$ contains at least one vertex that does not lie in $I \times \partial I$.
Then either there is some vertex in the interior of $I \times I$ with valence at most $6$,
or there is a vertex on $\partial I \times (I-\partial I)$ with valence at most $4$.
\end{lemma}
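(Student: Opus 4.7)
My plan is to double $D = I \times I$ twice to produce a torus, and then run a standard Euler characteristic count on the resulting graph. First I double $D$ along $\partial I \times I$ (the east and west sides, which lie in $\Gamma$) to form an annulus $A = S^1 \times I$. Each interior vertex of $\Gamma$ gives two copies in $\Gamma^+ \subset A$ of the same valence; each vertex $v$ on $\partial I \times (I - \partial I)$ becomes a single vertex of $\Gamma^+$ in the interior of $A$ (on the ``fold'') with valence $2d(v) - 2$; and the valence-one vertices of $\Gamma$ on $I \times \partial I$ (including the four corners, which the hypothesis forces to have valence one in $\Gamma$) now sit on $\partial A$. I then double $A$ along $\partial A$ to form a torus $T$, and smooth out all valence-two vertices of the resulting graph $\Gamma^{++}$. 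The vertices on $\partial A$, which each pick up a second adjacent edge from the mirror copy of $A$, are precisely those smoothed away, so $\Gamma^{++}$ has $4 V_{\mathrm{int}} + 2 V_{EW}$ vertices: four copies of each interior vertex of $\Gamma$ with unchanged valence $d(v)$, and two copies of each $\partial I \times (I - \partial I)$ vertex of valence $2d(v) - 2$.

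The main combinatorial step is to show that every \emph{disk} face of $\Gamma^{++}$ in $T$ has at least three sides. A face $f$ of $\Gamma$ in $D$ with $k$ sides, of which $k_{NS}$ lie on $I \times \partial I$, corresponds in $T$ to a surface obtained by gluing two copies of $f$ along $k_{NS}$ arcs, which has Euler characteristic $2 - k_{NS}$; hence it is a disk precisely when $k_{NS} \in \{0,1\}$ (an annulus when $k_{NS} = 2$, more complicated otherwise). For $k_{NS} = 0$, $f$ contributes four disk copies to $T$, each with $k \geq 3$ sides by the no-bigons/monogons hypothesis and with no smoothing on its boundary. For $k_{NS} = 1$, the two endpoints of the unique $I \times \partial I$ arc of $f$ are smoothed valence-two vertices, each combining two adjacent sides of the merged face into one, so the merged face has $2(k-1) - 2 = 2k - 4$ sides; the no-triangle hypothesis forces $k \geq 4$ here, giving at least four sides. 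Careful tracking of how smoothing combines sides in the $k_{NS} = 1$ case is the main obstacle, and it is exactly where the no-triangle hypothesis plays its role.

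To conclude, let $V, E$ denote the vertex and edge counts of $\Gamma^{++}$ and $F_D$ the number of its disk faces. Since $\chi(T) = 0 = V - E + \sum_f \chi(f)$ and $\chi(f) \leq 0$ for non-disk $f$, we have $F_D \geq E - V$; and since each disk face has at least three sides while each edge borders two faces, $3 F_D \leq 2E$. Combining gives $E \leq 3V$. If every interior vertex of $\Gamma$ had valence at least $7$ and every vertex on $\partial I \times (I - \partial I)$ had valence at least $5$, then every vertex of $\Gamma^{++}$ would have valence at least $\min(7,\, 2 \cdot 5 - 2) = 7$, forcing $2E = \sum_v d(v) \geq 7V$, which together with $2E \leq 6V$ gives $V \leq 0$. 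This contradicts the hypothesis that $\Gamma$ has a vertex off $I \times \partial I$, so there must be either an interior vertex of valence at most $6$ or a vertex of $\partial I \times (I - \partial I)$ of valence at most $4$.
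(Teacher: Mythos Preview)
Your proof is correct and follows essentially the same approach as the paper's: double $I\times I$ twice to a torus, observe that the resulting graph has no monogons or bigons, and run the Euler-characteristic count to find a vertex of valence at most $6$, which then pulls back to the required low-valence vertex of $\Gamma$. The only cosmetic difference is the order of the two doublings (you fold along $\partial I\times I$ first, the paper along $I\times\partial I$ first), and you spell out the face-by-face analysis and the inequality $2E\le 6V$ explicitly where the paper simply cites the ``well-known'' torus fact.
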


\begin{proof}
Double the disk $I \times I$ along the two arcs $I \times \partial I$ to obtain an annulus. Then double the annulus to obtain a torus. At both stages, double the graph, and thereby a obtain a graph $\Gamma^+$ in the torus. 
The 1-valent vertices of $\Gamma$ on $I \times \partial I$ become midpoints of edges of $\Gamma^+$.
This graph $\Gamma^+$ has no monogons and no bigons, by our assumptions about $\Gamma$. It is well known that a graph in the torus with no monogons and no bigons contains a vertex with valence at most $6$. The proof is analogous to the Euler characteristic calculation in Lemma \ref{lemma:graph-in-2-sphere}. This vertex restricts to the required vertex in $\Gamma$.
\end{proof}

\begin{lemma}\label{lemma:small-valence-IxI2}
Let $\Gamma$ be a connected graph in $I\times I$ that includes $\partial I\times I$, has no monogons, and that has intersection with $I\times \partial I$ consisting of a collection of valence one vertices. Suppose also that each interior vertex of $\Gamma$ has valence at least $R_\tw$ and each vertex on $\partial I\times(I-\partial I)$ has valence at least $(R_\tw/2)+1$. Then $\Gamma$ must have more than $(R_\tw/8)-1$ adjacent bigons, or more than $(R_\tw/8)-1$ adjacent triangles with one edge on $I\times(\partial I)$. 
\end{lemma}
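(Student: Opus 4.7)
The plan is to follow the template of Lemma~\ref{lemma:planar-graph2}, but to first double $I \times I$ along $I \times \partial I$ so that triangles of $\Gamma$ with one edge on $I \times \partial I$ are converted into bigons on the resulting annulus. This allows the bigon collapsing argument to handle the bigon families and the triangle families uniformly, after which I run an Euler characteristic count on the annulus directly, rather than invoking Lemma~\ref{lemma:small-valence-I-times-I} as a black box.

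Suppose for contradiction that every family of adjacent bigons in $\Gamma$ has at most $(R_\tw/8)-1$ members, and likewise for every family of adjacent triangles in $\Gamma$ with one edge on $I \times \partial I$. A preliminary observation is that any such triangle family must be a fan at a common apex vertex $v$: each such triangle has two vertices on $I \times \partial I$ of valence one, and since each valence-one vertex has only one edge of $\Gamma$ incident to it, consecutive triangles of the family are forced to share one of the edges incident to a common apex. After doubling $I \times I$ along $I \times \partial I$ to form an annulus $A$, the doubled graph $\Gamma_A$ has each valence-one vertex replaced by a 2-valent midpoint of an edge, and each fan of $k$ triangles becomes a chain of $k$ adjacent bigons in $\Gamma_A$ between $v$ and its mirror $v'$.

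The key structural observation is that adjacent bigon families in $\Gamma_A$ are of two pure types: those whose edges are ordinary edges of $\Gamma$ (coming from bigons of $\Gamma$) and those whose edges pass through a midpoint (coming from triangle fans). Since edges of different types can never coincide, bigons of different types cannot be adjacent, so the families in $\Gamma_A$ do not mix; both kinds of family therefore inherit the hypothesized bound of $(R_\tw/8)-1$ members. I then collapse each bigon family to a single edge and smooth out the 2-valent midpoints, producing a graph $\overline{\Gamma_A}$ on $A$ with no bigons and no monogons. A pigeonhole estimate analogous to the one in Lemma~\ref{lemma:planar-graph2} shows that the new valence $\bar d(v)$ of any non-midpoint vertex satisfies $\bar d(v) \geq 8 d(v)/R_\tw$, giving $\bar d \geq 8$ at interior vertices and $\bar d \geq 5$ at non-corner boundary vertices of $\overline{\Gamma_A}$.

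Finally, I apply Euler characteristic to $\overline{\Gamma_A}$ on the annulus: from $V - E + F = 0$ together with $3F \leq 2E - B$, where $B = V_b$ is both the number of boundary edges and the number of boundary vertices, I obtain $E \leq 3V_i + 2V_b$, while summing valences gives $2E \geq 8V_i + 5V_b$. Combining these forces $V_i + V_b/2 \leq 0$, hence $V_i = V_b = 0$, which contradicts the fact that $\Gamma$ contains $\partial I \times I$ and so $\overline{\Gamma_A}$ carries at least one boundary vertex. The step I expect to be the main obstacle is verifying the purity of bigon families in $\Gamma_A$ — especially handling vertices where the apex of a triangle fan might coincide with an endpoint of an ordinary bigon family, and dealing with the corner vertices of $I \times I$ correctly in the valence bookkeeping (they turn into 2-valent vertices of $\Gamma_A$ that need to be smoothed along with the midpoints).
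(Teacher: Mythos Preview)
Your proof is correct and follows essentially the same strategy as the paper's: the paper collapses bigon and triangle families in $\Gamma$ first and then invokes Lemma~\ref{lemma:small-valence-I-times-I} (which internally doubles to a torus), whereas you double to the annulus first, collapse there, and run the Euler characteristic count directly on the annulus. The purity claim and the corner bookkeeping work out as you anticipated, since every smoothed edge through a midpoint has its endpoints in opposite copies of $I\times I$, so no bigon of $\Gamma_A$ can mix the two types.
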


\begin{proof}
Suppose not. Then every collection of adjacent bigons or triangles has at most $(R_\tw/8)$ edges. Collapse each family of adjacent bigons and triangles to a single edge, forming a graph $\overline{\Gamma}$. By Lemma~\ref{lemma:small-valence-I-times-I}, $\overline{\Gamma}$ contains a vertex in the interior of the disk with valence at most $6$, or one on $\partial I\times(I-\partial I)$ with valence at most $4$. In the former case, the vertex came from a vertex of $\Gamma$ with valence at most $6(R_\tw/8)$, which is less than $R_\tw$. In the latter case, the vertex came from a vertex of $\Gamma$ with valence at most $4(R_\tw/8)$, which is less than $(R_\tw/2)+1$. In both cases, we get a contradiction. 
\end{proof}

The next lemma is analogous to Lemmas~\ref{lemma:adj-bigons} and~\ref{lemma:adj-bigons-2}.

\begin{lemma}\label{lemma:adj-bigons-3}
Let $\Gamma_B$ be the graph in $D$ provided by Lemma~\ref{lemma:htpc-graph}.  Suppose that $\Gamma_B$ has no monogons, and there are no trivial edges of $\Gamma_B$ in $\partial D$.  Finally, suppose that $\Gamma_B$ contains at least one blue vertex, i.e.\ a vertex mapping to a crossing circle in $S^3\setminus K$.  Then either $\Gamma_B$ has more than $(R_\tw/24)-1$ adjacent non-trivial bigons, or there are more than $(R_\tw/24)-1$ adjacent triangles, each with one arc on $\phi^{-1}(\partial N(K))$.
\end{lemma}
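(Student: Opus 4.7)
The plan is to proceed in parallel to the proofs of Lemmas~\ref{lemma:adj-bigons} and~\ref{lemma:adj-bigons-2}, with Lemma~\ref{lemma:small-valence-IxI2} playing the role that Lemma~\ref{lemma:planar-graph2} plays in those arguments. The factor of $24$ in the conclusion will arise from the factor of $8$ in Lemma~\ref{lemma:small-valence-IxI2} together with a factor of $3$ lost in the trivial bigon family collapse, exactly as the $18$ in Lemma~\ref{lemma:adj-bigons} arises from the $6$ in Lemma~\ref{lemma:planar-graph2}. I view the disk $D$ as a square whose west and east sides map to essential arcs in $S_{B,i}$ (and hence lie in $\Gamma_B$), and whose north and south sides map to $\partial N(K)$ (and hence meet $\Gamma_B$ only in valence-one vertices). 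This matches the setup of Lemma~\ref{lemma:small-valence-IxI2}.

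First I carry out Step 1 of the proof of Lemma~\ref{lemma:adj-bigons}, passing to a subdisk $D' \subseteq D$ when $\Gamma_B$ is disconnected, has an edge loop, or has two trivial bigon families sharing an interior vertex but with distinct boundary vertices. In each of these cases the subdisk lies in the interior of $D$ and can be chosen to avoid both arcs of $\phi^{-1}(\partial N(K))$, so Lemma~\ref{lemma:adj-bigons} applies to $D'$ and yields the stronger bound $R_\tw/18 - 1$. The remaining case concerns trivial stars and trivial arcs with both endpoints in the blue boundary arcs, which separate $D$ into subdisks. For an outermost such subdisk: if it avoids both $\partial N(K)$ arcs one applies Lemma~\ref{lemma:adj-bigons}; if it contains exactly one, one applies Lemma~\ref{lemma:adj-bigons-2}; if it contains both, the outermost subdisk then has no remaining trivial stars or trivial arcs with both endpoints on $\partial D$ and still satisfies the setup of Lemma~\ref{lemma:small-valence-IxI2}, so the argument continues on it.

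Having reduced to a disk with no trivial stars and no trivial arcs with both endpoints on $\partial D$, I perform Step 2 from Lemma~\ref{lemma:adj-bigons}, collapsing each remaining trivial bigon family (each with one endpoint on a blue boundary arc and one interior endpoint) to obtain a graph $\Gamma$. Properties (a)--(c) of Step 2 transfer verbatim: $\Gamma$ has no monogons; every collection of adjacent bigons (or of adjacent triangles in $\Gamma$ with one edge on a $\partial N(K)$ arc) pulls back to at most three collections of non-trivial bigons (respectively, non-trivial triangles with one edge on $\phi^{-1}(\partial N(K))$) of $\Gamma_B$, possibly via two collapsed triangular regions or one collapsed square region; and the valence bounds $\geq R_\tw$ in the interior and $\geq R_\tw/2 + 1$ on unexceptional blue-boundary vertices hold. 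The hypothesis that $\Gamma_B$ has at least one blue vertex provides at least one interior vertex of $\Gamma$, matching the nontriviality hypothesis of Lemma~\ref{lemma:small-valence-I-times-I} that underlies Lemma~\ref{lemma:small-valence-IxI2}. Applying Lemma~\ref{lemma:small-valence-IxI2} yields more than $(R_\tw/8)-1$ adjacent bigons or triangles of the required type in $\Gamma$, and pulling back through the collapse gives the stated $R_\tw/24 - 1$ bound in $\Gamma_B$.

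The main obstacle will be the sub-case analysis in Step 1 when the outermost subdisk cut off by a trivial star or by a trivial arc contains one or both of the $\partial N(K)$ arcs, and in particular verifying that when both are contained, the resulting subdisk directly feeds into Step 2 and Lemma~\ref{lemma:small-valence-IxI2}. A secondary technical point is verifying that triangles in $\Gamma$ with one edge on a $\partial N(K)$ arc correspond correctly under the trivial bigon collapse to non-trivial triangles of $\Gamma_B$ with one edge on $\phi^{-1}(\partial N(K))$, including the possibility that such a triangle in $\Gamma$ arose from a quadrilateral of $\Gamma_B$ one of whose non-$\partial N(K)$ sides was a trivial bigon family.
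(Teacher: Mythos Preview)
Your overall strategy matches the paper's, and the no-subdisk branch (Step~2 followed by Lemma~\ref{lemma:small-valence-IxI2}, losing a factor of~$3$) is exactly what the paper does there. The genuine gaps are in the subdisk branches, precisely where you flag them.

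When an outermost subdisk meets exactly one arc of $\phi^{-1}(\partial N(K))$, you cannot literally invoke Lemma~\ref{lemma:adj-bigons-2}: that lemma is stated for the disk of Lemma~\ref{lemma:graph-bdyincompr}, which has no exceptional boundary vertices, whereas your subdisk already carries up to three exceptional vertices coming from the trivial star or trivial arc that cut it off. Doubling along the $\partial N(K)$ arc produces two copies of these, i.e.\ up to six exceptional vertices sitting in two groups. The paper collapses each group to a single vertex before applying Lemma~\ref{lemma:planar-graph2}; this extra collapsing means a run of adjacent bigons in the collapsed graph can pull back to as many as four (not three) families of non-trivial bigons in the doubled graph, so the bound in this branch is $(R_\tw/6)/4 = R_\tw/24$, not $R_\tw/18$. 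This is still enough for the conclusion, but your stated constant is too optimistic.

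The case where an outermost subdisk contains both $\partial N(K)$ arcs cannot be handled as you propose: such a subdisk has exceptional vertices on its boundary, but Lemma~\ref{lemma:small-valence-IxI2} (unlike Lemma~\ref{lemma:planar-graph2}) makes no allowance for exceptional vertices. The paper sidesteps this case entirely by observing that one can always choose an outermost subdisk meeting at most one of the two $\partial N(K)$ arcs. Indeed, every separating trivial arc or trivial star has all its $\partial D$-endpoints on the east/west (blue) arcs; if some separator has all its endpoints on a single blue side, the piece it bounds there avoids both $\partial N(K)$ arcs, while if every separator runs east-to-west it separates the north arc from the south arc, so every complementary piece meets at most one.
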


\begin{proof}
We argue as in the proof of Lemmas~\ref{lemma:adj-bigons} and~\ref{lemma:adj-bigons-2}. 

We again need to deal with trivial arcs. Recall from Definition \ref{def:trivial-edge} that a \emph{trivial arc} is a blue arc of $\Gamma_B$ that is disjoint from the red and green edges, that has endpoints on the same crossing circle, but that does not form a loop in $\Gamma_B$. According to Lemma \ref{no-interior-trivial-arcs}, each trivial arc must have at least one endpoint on $\partial D$. Note also that, by definition, trivial arcs must end at crossing circles, and so their endpoints do not lie on the part of $\partial D$ that maps to $\partial N(K)$.

By Lemma \ref{all-but-one-edge-trivial}, if all but one of the edges of a region of $\Gamma_B$ are trivial, then the remaining edge is also trivial. Hence, if one edge of a bigon of $\Gamma_B$ is trivial, then so is the other. We call this a \emph{trivial bigon}. Any bigon that shares an edge with a trivial bigon is also trivial, and therefore trivial bigons patch together to form discs called \emph{trivial bigon families}. If more than one trivial bigon family is incident to an interior vertex, then we consider all the trivial bigon families incident to this vertex, and call it a \emph{trivial star}.

As in Lemmas~\ref{lemma:adj-bigons} and~\ref{lemma:adj-bigons-2}, we consider
outermost disks in the complement of the collection of trivial stars and trivial arcs with both endpoints on $\partial D$, as well as innermost disks bounded by edge loops or containing a connected component of $\Gamma_B$. If one of these disks intersects $\partial D$ in a single point, or in a single arc that does not meet $\partial N(K)$, we pass to this subdisk, and the argument proceeds exactly as in Lemma~\ref{lemma:adj-bigons}.

If one of these subdisks intersects $\partial N(K)$, then we can arrange that the subdisk intersects at most one of the arcs that maps to $\partial N(K)$. This subdisk may have exceptional vertices, just as in the proof of Lemma~\ref{lemma:adj-bigons}. We double this disk along the arc that maps to $\partial N(K)$, to obtain a new disk.
Double the graph $\Gamma_B$ to form a graph $\Gamma_B^+$. The valence one vertices of $\Gamma_B$ on $\phi^{-1}(\partial N(K))$ become midpoints of edges of $\Gamma_B^+$.
Hence, every vertex in the interior of the new disk has valence at least $R_\tw$, and every unexceptional vertex on the boundary has valence at least $(R_\tw/2) +1$.
There are two collections of exceptional vertices on the boundary, consisting of at most six vertices. Now apply the argument in Step~2 of Lemma~\ref{lemma:adj-bigons}, except at the final stage, collapse each of the two collections of exceptional vertices to a single exceptional vertex, to ensure no more than two exceptional vertices when finished. The result is a graph $\Gamma$ with properties (a) and (c) as before. Because we now allow at most two trivial bigons meeting a vertex to be collapsed, the argument for adjacent bigons in (b) of that proof must be adjusted: adjacent bigons in $\Gamma$ come from at most four collections of adjacent bigons in $\Gamma_B^+$, collapsing at most three triangles and squares, but no more because our disk was outermost. Now, continuing as in the proof of Lemma~\ref{lemma:adj-bigons}, we obtain a graph $\Gamma$ to which the hypotheses of Lemma~\ref{lemma:planar-graph2} apply, and $\Gamma$ has a collection of more than $(R_\tw/6)-1$ adjacent bigons.
All but at most three came from a non-trivial bigon of $\Gamma_B^+$. These are divided into at most four collections of adjacent non-trivial bigons of $\Gamma_B^+$. So $\Gamma_B^+$ has more than $(R_\tw/24) -1$ adjacent non-trivial bigons. Hence $\Gamma_B$ has more than $(R_\tw/24)-1$ adjacent non-trivial bigons, or more than $(R_\tw/24)-1$ adjacent triangles, each with one edge mapping to $\partial N(K)$. 

So suppose now that $D$ is connected with no edge loops, contains no trivial arcs with both endpoints on $\partial D$, and no trivial stars. Then we do not pass to a subdisk of $D$. Instead, we apply the procedure given in Step~2 of the proof of Lemma~\ref{lemma:adj-bigons}, where trivial bigon families are removed, and replaced by vertices on $\partial D$, producing a connected graph $\Gamma$ satisfying properties (a), (b), and (c) as before. That is, $\Gamma$ has no monogons, its bigons come from collections of adjacent non-trivial bigons of $\Gamma_B$ plus no more than one square or two triangles of $\Gamma_B$, grouping at most three collections of non-trivial adjacent bigons of $\Gamma_B$ into adjacent bigons in $\Gamma$. Finally, every interior vertex of $\Gamma$ has valence at least $R_\tw$ and every boundary vertex disjoint from $\phi^{-1}(\partial N(K))$ has valence at least $(R_\tw/2)+1$. In this case, note there are no exceptional vertices because we did not pass to a subdisk. 

Now the hypotheses of Lemma~\ref{lemma:small-valence-IxI2} apply to $\Gamma$. Thus $\Gamma$ has a collection of more than $(R_\tw/8)-1$ adjacent bigons or triangles with an edge on $I\times(\partial I)$. All but at most two of these came from a non-trivial bigon of $\Gamma_B$, and these are divided into at most three collections of adjacent non-trivial bigons of $\Gamma_B$. So $\Gamma_B$ has more than $(R_\tw/24)-1$ adjacent non-trivial bigons. 
\end{proof}


\begin{lemma}\label{lemma:htpc-novertices}
If $i=0$ and $N_\tw \geq 72$, then the graph $\Gamma_B$ of Lemma~\ref{lemma:htpc-graph} contains no blue vertices.  Similarly, if $i=2$ and $N_\tw \geq 121$, then the graph $\Gamma_B$ contains no blue vertices.  
That is, $\phi(D)$ meets no crossing circles.
\end{lemma}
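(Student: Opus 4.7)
The plan is to argue by contradiction, following exactly the same template used for Theorems \ref{thm:Bincompressible} and \ref{thm:bdryincompressible}. Assume that $\Gamma_B$ contains at least one blue vertex, i.e., some point of $D$ maps to a crossing circle of $L$. As noted at the beginning of Section \ref{sec:surfaces}, all of the results of Sections \ref{sec:surfaces}, \ref{sec:trisquares} and \ref{sec:boundary} apply to the graph $\Gamma_B$ arising from Lemma \ref{lemma:htpc-graph}, after we choose $\phi$ so that the complexity of Definition \ref{def:complexity} is minimized. In particular, Lemma \ref{lemma:no-monogons} tells us that $\Gamma_B$ has no monogons, and Lemma \ref{lemma:no-boundary-trivial-arcs} tells us that no trivial edges lie in $\partial D$. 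Together with the assumed existence of a blue vertex, these verify the hypotheses of Lemma \ref{lemma:adj-bigons-3}.

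The next step is to feed in the numerical bounds on $N_\tw$. For $i=0$, taking $N_\tw \geq 72$ yields $R_\tw \geq 72$ by \eqref{eqn:Rtw}, hence $(R_\tw/24)-1 \geq 2$. Lemma \ref{lemma:adj-bigons-3} then forces the existence of at least three adjacent non-trivial bigons in $\Gamma_B$, or at least three adjacent triangles each having one edge on $\phi^{-1}(\partial N(K))$. The first possibility is ruled out by Proposition \ref{prop:L0-no-3adjbigons}, and the second by Lemma \ref{lemma:no-3BBKtriangles}. For $i=2$, taking $N_\tw \geq 121$ yields $R_\tw \geq 2\lceil 121/2\rceil -2 = 120$, so $(R_\tw/24)-1 \geq 4$. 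Lemma \ref{lemma:adj-bigons-3} then produces at least five adjacent non-trivial bigons (forbidden by Proposition \ref{prop:L2-no-5adjbigons}) or at least five adjacent triangles with one edge on $\phi^{-1}(\partial N(K))$ (forbidden by Lemma \ref{lemma:no-3BBKtriangles}). In either value of $i$, both alternatives of Lemma \ref{lemma:adj-bigons-3} lead to a contradiction, and so $\Gamma_B$ can have no blue vertex.

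Since the deduction is a direct application of the machinery already in place, no serious obstacle is expected. The only point that requires any care is the arithmetic check tying $N_\tw$ to $R_\tw$ and then to the threshold $(R_\tw/24)-1$ appearing in Lemma \ref{lemma:adj-bigons-3}; this dictates the thresholds $72$ and $121$ stated in the lemma. The extra factor compared with Lemma \ref{lemma:adj-bigons} (i.e., $24$ rather than $18$) accounts for the additional collapsing of trivial bigon families near the two arcs of $\partial D$ mapping to $\partial N(K)$, and is precisely what pushes the required bounds up from the values $54$ and $91$ of Theorem \ref{thm:Bincompressible} to the present ones.
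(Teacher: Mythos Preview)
Your proof is correct and follows essentially the same approach as the paper: verify the hypotheses of Lemma~\ref{lemma:adj-bigons-3} via Lemmas~\ref{lemma:no-monogons} and~\ref{lemma:no-boundary-trivial-arcs}, compute $R_\tw$ from $N_\tw$ via~\eqref{eqn:Rtw}, and then contradict Proposition~\ref{prop:L0-no-3adjbigons}, Proposition~\ref{prop:L2-no-5adjbigons}, or Lemma~\ref{lemma:no-3BBKtriangles}. Your arithmetic checks and the commentary on why the thresholds increase from $54,91$ to $72,121$ are accurate and match the paper's reasoning.
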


\begin{proof}
Lemma~\ref{lemma:no-monogons} implies that $\Gamma_B$ contains no monogons.  Lemma~\ref{lemma:no-boundary-trivial-arcs} implies that it contains no trivial arc that is a subset of $\partial D$.  So if $\Gamma_B$ contains a blue vertex, then Lemma~\ref{lemma:adj-bigons-3} applies.  If $N_\tw \geq 72$ and $i=0$, then by equation~\eqref{eqn:Rtw}, $R_\tw \geq 72$, so $\Gamma_B$ must contain more than two adjacent non-trivial bigons or adjacent triangles.  Similarly, if $i=2$ and $N_\tw\geq 121$, then $R_\tw\geq 120$, and $\Gamma_B$ must contain more than four adjacent non-trivial bigons or adjacent triangles.  This contradicts either Proposition~\ref{prop:L0-no-3adjbigons} or~\ref{prop:L2-no-5adjbigons}, or Lemma~\ref{lemma:no-3BBKtriangles}.  Hence there can be no blue vertices on $\Gamma_B$.
\end{proof}

As before, consider the red surface $R_i$ embedded in $S^3\setminus L_{B,i}$, and the graph $\Gamma_{BR}$, with $\Gamma_B$ coming from Lemma~\ref{lemma:htpc-graph}.  

\begin{lemma}\label{lemma:htpc-straightarcs}
The graph $\Gamma_B$ consists only of arcs whose two endpoints are on north and south sides of $\partial D$.  The red edges of the graph $\Gamma_{BR}$ consist of arcs with endpoints on distinct sides of $D$ (north, south, east, west).
\end{lemma}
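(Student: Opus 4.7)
The plan is to combine the already-established no-closed-curve and no-blue-vertex results (Lemmas~\ref{lemma:scc} and~\ref{lemma:htpc-novertices}), the no-red--blue-bigon lemma (Lemma~\ref{lemma:red-blue-bigons}), and the boundary-$\pi_1$-injectivity of $S_{B,i}$ (Theorem~\ref{thm:bdryincompressible}) with innermost-disk reductions against the minimality of the complexity of $\phi$ from Definition~\ref{def:complexity}.

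For the first assertion, Lemmas~\ref{lemma:scc} and~\ref{lemma:htpc-novertices} give that $\Gamma_B$ is a disjoint collection of properly embedded arcs in $D$. Since blue edges can meet $\bdy D$ only at vertices (of which there are none) or at points of $\phi^{-1}(\bdy N(K))$, every endpoint lies on the north or south side; in particular the east and west sides, which lie entirely in $\phi^{-1}(f(S_{B,i}))$ and contain no interior vertex of $\Gamma_B$, are components of $\Gamma_B$ running from the NE/NW corner on $N$ to the SE/SW corner on $S$. Suppose, for contradiction, that some interior arc $\alpha$ of $\Gamma_B$ has both endpoints on $N$. Choose $\alpha$ innermost: it cuts off a subdisk $D'\subset D$ bounded by $\alpha$ and a subarc $\beta\subset N$ whose interior meets no further component of $\Gamma_B$. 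Lift $\phi|_\alpha$ to an arc $\tilde\alpha\subset S_{B,i}$ with endpoints on $\bdy S_{B,i}$. The disk $\phi|_{D'}$ gives a rel-endpoint homotopy of $f(\tilde\alpha)$ into $\bdy N(K)$, so Theorem~\ref{thm:bdryincompressible} produces a rel-endpoint homotopy in $S_{B,i}$ from $\tilde\alpha$ into $\bdy S_{B,i}$. Composing the two homotopies supports a modification of $\phi$, supported in a neighborhood of $D'$, that pushes $\phi(\alpha)$ entirely off $f(S_{B,i})$; this strictly reduces the complexity of $\phi$, contradicting minimality.

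For the second assertion, I first show that no red arc crosses any interior blue arc. If a red arc $r$ meets a blue arc $b$ in two or more points, two consecutive such intersections $p,q$ along $r$ bound, together with the subarc of $b$ between them, a disk in $D$; choosing $(r,b)$ so that this disk is innermost over all such pairs makes the blue subarc a single blue edge of $\Gamma_{BR}$, yielding a red--blue bigon forbidden by Lemma~\ref{lemma:red-blue-bigons}. Hence each red arc meets each blue arc in at most one point. If some red arc $r$ meets an interior blue arc $b$ in exactly one point $p$, pick such a configuration minimizing the total number of red--blue intersections in $D$. By Part~1, $b$ runs from $N$ to $S$, and the red edge $e$ of $r$ leaving $p$ in one direction meets $\bdy D$; together with a subarc of $b$ from $p$ to one of its endpoints and a subarc of $\bdy D$, $e$ bounds a subdisk $D''\subset D$. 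By the intersection-minimality, $D''$ contains no other red arc crossing any blue arc, and by Part~1 any blue arc entering $D''$ must exit through $b$ or through $e$, both of which are impossible (the former because blue arcs in $D$ are disjoint from $b$, the latter by the single-intersection conclusion just proved). Hence $D''$ is disjoint from all blue arcs, and then $e$ together with an outermost portion of $b$ or a boundary subarc yields a further red--blue bigon, contradicting Lemma~\ref{lemma:red-blue-bigons}. Finally, for a red edge with both endpoints on a single side of $\bdy D$, the enclosed subdisk is by the previous step disjoint from all blue arcs; the lift of $\phi|_e$ to an arc in $R_i$ is boundary-parallel in $S^3\setminus{\rm int}(N(K))$, and applying boundary incompressibility of $R_i$ as the red checkerboard surface of the prime alternating diagram $K$ (Menasco--Thistlethwaite) produces a rel-endpoint homotopy in $R_i$ into $\bdy R_i$; combining with $\phi|_{D''}$ gives a complexity-reducing modification of $\phi$, the final contradiction.

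The main obstacle is the middle sub-step of the second assertion, ruling out a single isolated red--blue interior intersection, since the existence of a red--blue bigon is not immediate from one transverse crossing. The resolution is to use Part~1 to restrict the ways blue arcs can enter the subdisk $D''$ cut off by $e$, then to exploit minimality of the total red--blue intersection count in order to produce an innermost bigon inside $D''$ and invoke Lemma~\ref{lemma:red-blue-bigons}.
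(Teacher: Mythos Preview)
Your treatment of the blue arcs is essentially the paper's argument and is fine.

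The second assertion, however, has a genuine error. You attempt to prove as an intermediate step that ``no red arc crosses any interior blue arc.'' This claim is false, and the subsequent Lemma~\ref{lemma:htpc-opparcs} (and the analysis after it) explicitly relies on the possibility that red arcs run east--west while blue arcs run north--south, in which case each red arc crosses every interior blue arc exactly once. Lemma~\ref{lemma:red-blue-bigons} only forbids red--blue \emph{bigons} in $\Gamma_{BR}$; it does not forbid transverse red--blue intersections. Your argument for the single-intersection case does not actually produce a bigon: a red edge $e$ of $\Gamma_{BR}$ leaving the intersection point $p$ need not end on $\partial D$---it can end on another blue arc---so the subdisk $D''$ you describe need not exist.

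There are two further gaps. You never rule out red simple closed curves. And your final reduction, invoking boundary incompressibility of $R_i$ for a red arc with both endpoints on a single side of $\partial D$, only applies when that side maps to $\partial N(K)$ (north or south); it says nothing when both endpoints lie on the east or west side, which maps into $S_{B,i}$, not into $\partial N(K)$.

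The paper's argument is more direct: it eliminates red closed curves (those disjoint from blue bound disks in $R_i$; those meeting blue force a red--blue bigon), rules out both endpoints on east or west by observing that an innermost such arc together with the blue boundary arc $e_1$ or $e_2$ forms a red--blue bigon, and rules out both endpoints on north or both on south via boundary incompressibility of the checkerboard surface $R_i$. No claim about red--blue disjointness is needed or made.
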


\begin{proof}
By Lemma~\ref{lemma:htpc-novertices}, we may assume that $\Gamma_{BR}$
consists of red and blue arcs and simple closed curves, but no blue vertices. 
By Lemma~\ref{lemma:scc}, we may assume there are no blue simple closed curves.  Because there are no blue vertices, no blue edges can have an endpoint on the east or west sides (mapping to $e_1$ and $e_2$).  By Theorem~\ref{thm:bdryincompressible}, any blue arc with both endpoints on $\partial N(K)$ on the north (or both on the south) is trivial in the blue surface, and so we may replace $D'$ with a disk that does not meet that blue arc.
This will reduce the number of vertices of $\Gamma_B$, contradicting our assumption that the graph has minimum complexity.  Thus blue arcs run from north to south.  

As for the red, a red simple closed curve disjoint from blue bounds a disk on red, so $D'$ could be replaced by a disk that does not meet this red curve.  A red simple closed curve that is not disjoint from the blue would imply the existence of a red--blue bigon, contradicting Lemma~\ref{lemma:red-blue-bigons}.  Hence we may assume there are no red simple closed curves, whether or not they meet blue.  Because there are no red--blue bigons by Lemma~\ref{lemma:red-blue-bigons}, no red edge of $\Gamma$ can have both endpoints on the east or west side of $D$.  Finally, the red surface $R_i$ is a checkerboard surface for $K_i$, hence is boundary incompressible.
Thus any red arc with both endpoints on $\partial N(K)$ on the north (or both on the south) is trivial in the red surface, and so we may replace $D'$ with a disk that does not meet that red arc, without increasing the number of vertices of $\Gamma_B$.  This move contradicts the assumption that complexity is as small as possible.  The result follows.
\end{proof}

\begin{lemma}\label{lemma:htpc-opparcs}
The graph $\Gamma_{BR}$ consists of red and blue arcs with endpoints on opposite sides of $D$ (north--south, or east--west).
\end{lemma}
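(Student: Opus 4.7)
The plan is a proof by contradiction, in the style of the innermost-disk arguments of Section~\ref{sec:surfaces}. By Lemma~\ref{lemma:htpc-straightarcs} the blue arcs already run between opposite sides, so I only need to rule out red edges of $\Gamma_{BR}$ whose endpoints lie on two adjacent sides of $D$. By the symmetry of the four sides I may assume such a red edge $r$ runs from a point $w$ on the west side (so $\phi(w)\in e_1\cap R_i$) to a point $n$ on the north side (so $\phi(n)\in\partial N(K)\cap R_i$), and I choose $r$ so that the corner region $T\subset D$ it cuts off, together with the adjacent portions of the west and north sides, is innermost: no other red edge between adjacent sides lies inside $T$.

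First, I reduce to a pure triangular subdisk $T''\subseteq T$ whose boundary maps to three arcs, one in $R_i$, one in $B_i\subset S_{B,i}$, and one in $\partial N(K)$, and whose interior is disjoint from every edge of $\Gamma_{BRG}$. Red edges inside $T$ are ruled out immediately: two red edges cannot cross (they both lie in the embedded surface $R_i$), so any red edge in $T$ has both endpoints on $\partial T\setminus r$, which by Lemma~\ref{lemma:htpc-straightarcs} forces one endpoint on the west portion and one on the north portion, contradicting the innermost choice of $r$. Then Lemmas~\ref{lemma:htpc-straightarcs} and~\ref{lemma:htpc-novertices} force every blue edge meeting $T$ to enter through the north side and exit through $r$ (Lemma~\ref{lemma:red-blue-bigons} prevents an entry and exit both through $r$); taking the blue edge whose segment in $T$ lies closest to $n$ produces a subtriangle $T''$, with blue side either a segment of that blue edge or, if no blue edge meets $T$, the subarc of $e_1$ from $w$ to the northwest corner. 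Any green edges inside $T''$ are then ruled out by further innermost reductions using Lemmas~\ref{lemma:red-green-bigons}, \ref{lemma:blue-green-bigons}, \ref{lemma:NoGreenLoop}, and the absence of blue vertices.

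Second, I use $T''$ to contradict the minimality of $\phi$. The boundary of $\phi(T'')$ is a triangle with sides $\alpha\subset R_i$, $\beta\subset B_i$, and $\gamma\subset\partial N(K)$, meeting in three vertices, exactly one of which (the red-blue intersection $x=\alpha\cap\beta$) lies in the interior of both $R_i$ and $B_i$. A collar of $R_i$ intersects $\phi(T'')$ in a bigon with one side on $R_i$, so pushing $\phi(T'')$ across this collar produces a new map $\phi'$ that agrees with $\phi$ outside a neighborhood of $T''$ but with one fewer edge in $\phi'^{-1}(R_i)$, and thus one fewer vertex at $x$. Because $T''$ is disjoint from every crossing circle (Lemma~\ref{lemma:htpc-novertices}), the number of vertices of $\Gamma_B$ is unchanged, so the complexity of Definition~\ref{def:complexity} strictly decreases. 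When $\beta$ lies on $e_1$ rather than on a blue edge in the interior of $D$, the push must instead be realized by an isotopy of $a_1$ within $S_{B,i}$ rel $\partial S_{B,i}$; this is where Theorem~\ref{thm:bdryincompressible} enters, upgrading the ambient disk $\phi(T'')$ to a homotopy taking place inside $S_{B,i}$. The main obstacle lies in precisely this last subcase: one has to verify that sliding the endpoint of $a_1$ along $\partial S_{B,i}$ across $\phi(T'')$ produces an arc in the same (still essential) homotopy class of $a_1$, and that boundary-$\pi_1$-injectivity is indeed strong enough to ensure the ambient triangular disk descends to a disk in $S_{B,i}$.
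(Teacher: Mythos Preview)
Your reduction in the first stage to an innermost triangular subdisk $T''$ with one red side $\alpha$, one blue side $\beta$, and one side $\gamma$ on $\partial N(K)$ is correct and matches the paper's setup.

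The gap is in your second stage. The step ``push $\phi(T'')$ across a collar of $R_i$ to remove the red edge and the vertex $x$'' does not do what you claim. Only the side $\alpha$ of $\partial T''$ lies on $R_i$; the other two sides $\beta$ and $\gamma$ lie on $B_i$ and $\partial N(K)$. A collar of $R_i$ meets $\phi(T'')$ in a rectangle (a product neighbourhood of $\alpha$), not a bigon, and pushing through it merely translates $\phi(T'')$ to a parallel disk still meeting $R_i$ in (a parallel copy of) $\alpha$. There is no general mechanism that converts the existence of $T''$ into a homotopy of $\phi$ removing the red–blue intersection at $x$; that intersection corresponds to an honest crossing of $K_{B,i}$, and eliminating it requires knowing something about the diagram near that crossing.

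What the paper does instead is exactly this diagrammatic analysis. Since $T''$ has empty interior intersection with red and blue, it maps to one side of the projection plane, and one reads off how $\alpha$, $\beta$, $\gamma$ sit in the diagram of $K_{B,i}$. There are two configurations (Figure~\ref{fig:redblue-corner}): either the link–endpoints of $\alpha$ and $\beta$ straddle an over-crossing, in which case closing them up yields a red–blue bigon and one argues as in Lemma~\ref{lemma:red-blue-bigons} to contradict primality; or they lie on opposite sides of a single strand with no crossing between, in which case closing up gives a simple closed curve meeting the diagram twice, and Lemma~\ref{lemma:K2-0-prime} (with an induction on crossing disks) shows it bounds no crossings. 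Only then can one conclude that $\alpha$ and $\beta$ are each parallel, \emph{within $R_i$ and $B_i$ respectively}, to the same crossing arc, and use those surface homotopies to slide $\phi$ and drop a vertex of $\Gamma_{BR}$. Primality is doing essential work here; your collar argument attempts to bypass it and cannot.

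Your appeal to Theorem~\ref{thm:bdryincompressible} in the $\beta\subset e_1$ subcase is also off target: one endpoint of $\beta$ is at $\phi(x)$, which lies on a crossing arc in the interior of $B_i$, not on $\partial N(K)$, so boundary--$\pi_1$--injectivity does not apply to $\beta$ directly.
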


\begin{proof}
By the preceding lemma, blue arcs run north to south, as desired.  Hence we need to show there are no red arcs running from north to east, north to west, south to east, or south to west.  Any such red arc would have an endpoint on the east or west side mapping to the arc $e_1$ or $e_2$.  It may meet other blue arcs, running north to south, but in any case, the arc cuts off a triangle with one side on blue, one side on red, and one side on $\partial N(K)$.  By considering the region of $D$ where the arc meets the north or south side, we may take such a triangle to have interior disjoint from all other red and blue edges.

Because the triangle is disjoint from all crossing circles, we may sketch its image into the diagram of $L_{B,i}$, assuming without loss of generality that the triangle maps into the region above the plane of projection.  The blue arc has one endpoint on a strand of the link and one endpoint at a crossing.  The red has one endpoint on the same strand of the link (i.e.\ the portion of $K_i$ running between two adjacent under--crossings), and the other on the same crossing.  There are two ways that a red and a blue endpoint can meet the same strand of the link between under--crossings.  One way is if they are on the same side of that strand, but with that strand running over a crossing between them, as in Figure~\ref{fig:redblue-corner}, left.  The other is if they are on opposite sides of a strand that does not run over a crossing, as in Figure~\ref{fig:redblue-corner}, right.

\begin{figure}
  \includegraphics{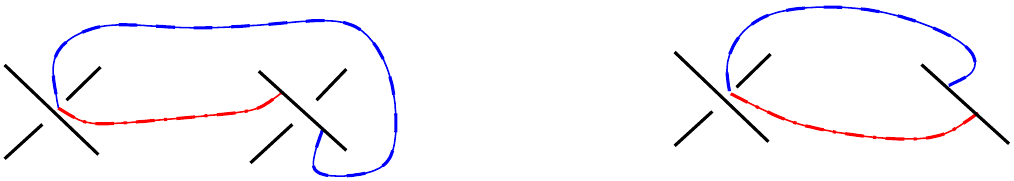}
  \caption{A triangle with one edge on red, one on blue, and one on the knot}
  \label{fig:redblue-corner}
\end{figure}

Consider the left of Figure~\ref{fig:redblue-corner}.  By connecting the endpoints of arcs on the link, and pushing the triangle into the plane of projection at that point, we obtain a red--blue bigon.  Arguing as in the proof of Lemma~\ref{lemma:red-blue-bigons}, we conclude there is a contradiction to primality.  Thus if there is a triangle, it must lie in the diagram as on the right of Figure~\ref{fig:redblue-corner}.

As for the figure on the right, we may connect the endpoints of the arcs to form a closed curve $\gamma$ meeting the diagram twice.  We will use Lemma~\ref{lemma:K2-0-prime} to show there are no crossings in the interior of $\gamma$.  If there are such crossings, then the red arc of $\gamma$ must run through a crossing circle, and since the triangular region bounds a disk disjoint from the crossing circle, the blue arc of $\gamma$ must also run through the crossing disk.  Then that crossing disk splits $\gamma$ into two closed curves, which can be pushed to meet the diagram twice, meeting one fewer crossing circle.  By induction on the number of crossing circles met by such a curve, we conclude there are no crossings of $K_{B,i}$ in the interior of $\gamma$.

Thus the red and blue arcs in that figure are in fact parallel to a single strand of the link.  Hence they are both homotopic to a portion of the arc running from the top of the crossing shown in that figure to its base.  Use this homotopy to slide the image of $D$ to this crossing arc, removing the intersection of the red and blue arcs and removing the triangle.
The result has one fewer vertex of $\Gamma_{BR}$, and does not affect the number of vertices of $\Gamma_B$, contradicting our assumption that complexity is as small as possible.
\end{proof}

Since red arcs cannot intersect red arcs, Lemma~\ref{lemma:htpc-opparcs} implies that either all red arcs run from north to south, or all red arcs run from east to west.  

Suppose all blue arcs and all red arcs run from north to south.  Because there are no red--blue bigons by Lemma~\ref{lemma:red-blue-bigons}, all such edges are disjoint.  Then either all edges are blue, and there are no red edges of intersection at all, or there is a rectangle with one blue side and one red side, with north and south edges on $K$, and with interior disjoint from the red and blue surfaces.  The next lemma deals with the latter case.

\begin{lemma}\label{lemma:htpc-noredbluerect}
If the graph $\Gamma_{BR}$ cuts $D$ into a subrectangle with two opposite sides mapped to $\partial N(K)$, one side on blue, one side on red, and interior disjoint from blue and red, then the blue and red sides of that rectangle are homotopic to the same crossing arc, and the homotopies can be taken to lie entirely in the blue and red surfaces, $S_{B,i}$ and $R_i$, respectively.
\end{lemma}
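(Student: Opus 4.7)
The plan is to push the image of the rectangle onto the link diagram and apply a primality argument analogous to those used in Lemmas~\ref{lemma:red-blue-bigons} and~\ref{lemma:htpc-opparcs}. Since the interior of the rectangle is disjoint from the red and blue surfaces, and its boundary arcs on $\partial N(K)$ keep the interior in $S^3 \setminus \mathrm{int}(N(K))$, the interior maps into a single connected component of $S^3 \setminus (R_i \cup B_i \cup K)$. There are two such components, above and below the plane of projection, and without loss of generality we may assume the rectangle maps above. Then the blue side $\beta$ projects to an embedded arc in a single blue region of the diagram of $K_{B,i}$ with endpoints on $K$, the red side $\rho$ projects to an embedded arc in a single red region with endpoints on $K$, and the two sides on $\partial N(K)$ can be homotoped so that they project to arcs running along strands of $K$.

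Next, I would construct a closed curve $\gamma$ in the projection plane by concatenating $\beta$, a strand of $K$ joining an endpoint of $\beta$ to an endpoint of $\rho$, the reverse of $\rho$, and another strand of $K$ returning to the starting endpoint of $\beta$. This curve meets the diagram of $K_{B,i}$ in exactly two points---the crossings at which the two connecting strands of $K$ terminate. If $\gamma$ is disjoint from all crossing disks of $L_{B,i}$, then Lemma~\ref{lemma:K2-0-prime} forces $\gamma$ to enclose no crossings. If $\gamma$ meets a crossing disk, I would apply the inductive argument from Lemma~\ref{lemma:red-blue-bigons}: split $\gamma$ along an arc on the crossing disk into two smaller closed curves, each meeting the diagram twice, and keep the one on the side where $\gamma$ had enclosed crossings. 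This gives a strictly smaller closed curve meeting one fewer crossing disk, and by induction we reduce to a curve disjoint from all crossing disks and enclosing no crossings.

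Once $\gamma$ encloses no crossings and bounds an empty disk in the projection plane, the only configuration compatible with $\beta$ lying in a single blue region and $\rho$ in a single red region is that both $\beta$ and $\rho$ are parallel, in their respective regions, to the crossing arc at the unique crossing shared by the endpoints of the two connecting strands of $K$. The homotopy from $\beta$ to this crossing arc occurs entirely within the blue region of the diagram, which lies in $B_i \subset S_{B,i}$, and similarly the homotopy from $\rho$ occurs within the red region, which lies in $R_i$.

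The main obstacle will be the inductive primality step: we must verify that the crossing-disk cutting move always reduces the number of crossing-disk intersections, and we must separately rule out the $(2,2)$-torus link configuration in $K_{B,i}$, which would otherwise evade the primality conclusion; this case is excluded using Lemma~\ref{lemma:22torus}. A secondary subtlety is ensuring that the projections of the $\partial N(K)$ sides of the rectangle can be isotoped to lie along strands of $K$ without introducing extraneous intersections with the diagram, which follows from the rectangle interior lying entirely in one half-space.
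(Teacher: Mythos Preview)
Your high-level strategy matches the paper's: map the rectangle above the projection plane, form a closed curve $\gamma$ from the blue and red sides together with short connecting arcs at north and south, and appeal to primality. However, your execution skips a case distinction that the paper makes explicit, and your conclusion only holds in one of the cases.

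The north and south sides of the rectangle each map to an arc on $\partial N(K)$ lying along a single overstrand. At each end, the blue endpoint and the red endpoint can relate to this strand in two ways: either they lie on the same side of the strand and straddle an overcrossing between them, or they lie on opposite sides of a strand segment that passes over no crossing (compare Figure~\ref{fig:redblue-corner}). This gives three global configurations---both ends straddle; exactly one end straddles; neither end straddles---and they behave differently.

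When both ends straddle overcrossings, the closed curve $\gamma$ meets the diagram at two \emph{crossings} with crossings on both sides; the primality statement of Lemma~\ref{lemma:K2-0-prime} does not directly apply, and one must instead argue exactly as in Lemma~\ref{lemma:red-blue-bigons} to reach a contradiction. When neither end straddles, $\gamma$ meets the diagram transversely in two edges, primality gives an empty region on one side, and the blue and red arcs are both parallel to a single strand and hence inessential---there is no crossing arc in this picture at all. The paper eliminates this case by observing that one can then homotope $D$ to remove these intersections, contradicting minimality of complexity. Only in the mixed case does primality yield a curve bounding a crossing-free region containing a single crossing, and only then are $\beta$ and $\rho$ genuinely homotopic in their respective surfaces to the same crossing arc.

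Your assertion that ``the only configuration compatible with $\beta$ lying in a single blue region and $\rho$ in a single red region is that both $\beta$ and $\rho$ are parallel \dots\ to the crossing arc at the unique crossing shared by the endpoints'' is therefore unjustified: in two of the three configurations there is no such crossing, and those configurations must be eliminated by separate arguments. The invocation of Lemma~\ref{lemma:22torus} is also out of place; that lemma plays no role in this proof.
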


Recall that a \emph{crossing arc} is an arc in the link complement that runs straight from the top of a crossing to the bottom in the diagram.

\begin{proof}
Consider such a rectangle.  Abuse notation slightly and give the blue arc on the west side the label $e_1$, and the red arc on the east the label $e_2$, and call the rectangle $D$.  Because the interior of $D$ does not meet blue or red surfaces, it can be mapped into the complement of $N(K_{B,i})$, missing checkerboard surfaces, hence it must be mapped completely above or below the plane of projection of $K_{B,i}$. Without loss of generality, assume it is mapped above.  Then the arcs of $\bdy D$ on $\bdy N(K_{B,i})$ each lie on a single strand of the diagram, i.e.\ on a strand running between two undercrossings. Endpoints of $e_1$ and $e_2$ either straddle an overcrossing or lie on either side of a strand.

There are three cases to consider: first, endpoints of $e_1$ and $e_2$ straddle overcrossings at both ends; second, one set of endpoints straddles an overcrossing and one set lies on opposite sides of a strand; and third, both sets of endpoints lie on opposite sides of strands.  The cases are shown in Figure~\ref{fig:redblue-rect2}.

\begin{figure}
  \includegraphics{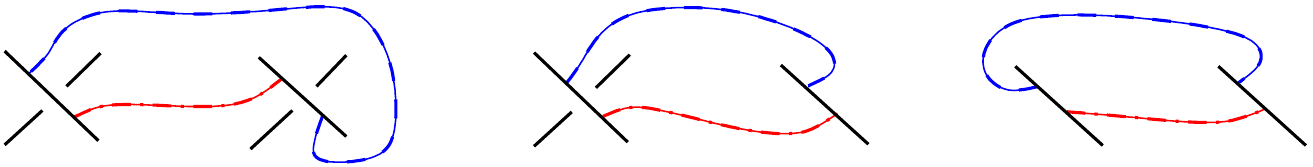}
  \caption{Possible images of red--blue rectangles}
  \label{fig:redblue-rect2}
\end{figure}

In the first case, we may connect red and blue arcs to form a closed curve meeting the diagram twice at crossings.  As in the proof of Lemma~\ref{lemma:red-blue-bigons}, this gives a contradiction.

In the second and third case, we connect red and blue arcs to form a closed curve $\gamma$ meeting the diagram twice.  As in the proof of the previous lemma, we may use Lemma~\ref{lemma:K2-0-prime} to argue that there are no crossings of $K_{B,i}$ on one side of $\gamma$.  In the third case, blue and red edges are parallel and not essential, and we can modify $D$ by homotoping off this region of the diagram,
removing these intersections with red and blue surfaces, and reducing complexity, contradicting our minimality assumption.  In the second case, $e_1$ and $e_2$ are homotopic to the same crossing arc, as desired.  Notice that the homotopies taking these arcs to the crossing arc lie entirely in the blue or red surface, respectively.
\end{proof}

\begin{lemma}\label{lemma:htpc-northsouth-red}
If the graph $\Gamma_{BR}$ consists of disjoint red and blue arcs on $D$, all running north to south, then $e_1$ and $e_2$ are each homotopic in the blue surface to arcs in the same subsurface associated with a twist region of $K_i$.
\end{lemma}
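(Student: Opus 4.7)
The plan is to exploit the simple rectangular structure that $\Gamma_{BR}$ induces on $D$ under the hypotheses. Since all arcs of $\Gamma_{BR}$ are disjoint and run from the north to the south side of $D$, they cut $D$ into $k+1$ vertical subrectangles $D_0, \dots, D_k$, where $D_0$ has west side $e_1$, $D_k$ has east side $e_2$, and each interior rectangle has two colored sides coming from $\Gamma_{BR}$ together with sub-arcs of $\partial N(K)$ as north and south boundaries. Each colored arc bounding a rectangle lies inside a single region of the diagram of $K_{B,i}$ (it has no interior vertices, so it cannot pass through a crossing), and each sub-arc of $\partial N(K)$ forming the north or south of a rectangle lies along a single edge of the diagram of $K_{B,i}$, since no intervening arc of $\Gamma_{BR}$ provides an intersection with another surface between adjacent endpoints. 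The interior of each $D_j$ is disjoint from the red and blue surfaces, and so maps entirely above or entirely below the plane of projection.

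For each rectangle $D_j$, I will analyze its east and west sides according to their colors. In the red--blue (or blue--red) case, Lemma~\ref{lemma:htpc-noredbluerect} applies directly: both sides are homotopic, in their respective surfaces, to the same crossing arc, and this crossing arc lies in the subsurfaces associated with its twist region. In the blue--blue case, I push the boundary of $D_j$ to the projection plane to obtain a closed curve $\gamma$ meeting the diagram of $K_{B,i}$ in exactly two crossings (at the ends of the edges of $K_{B,i}$ on which the endpoints of the two blue arcs sit), with sides on the blue surface. Lemma~\ref{lemma:blue-twist-reduced} then forces $\gamma$ to bound a string of red bigons, so $\gamma$ lies in a neighborhood of a single twist region, and both blue arcs are homotopic in $B_i$ into the blue subsurface associated with this twist region. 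The red--red case is analogous but uses Lemma~\ref{lemma:K2-0-prime} in place of blue twist reducedness: the closed curve obtained by pushing to the projection plane meets the diagram in two crossings, and either it bounds no crossings of $K_{B,i}$ (allowing a homotopy of $D$ that reduces complexity, contradicting minimality of $\phi'$) or it encloses a twist region, yielding the desired homotopy of both red arcs in $R_i$ into the red subsurface of that twist region.

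Having identified a twist region $T_j$ for each rectangle $D_j$, I need to show $T_j = T_{j+1}$ for all $j$. The arc $\alpha_{j+1}$ shared by $D_j$ and $D_{j+1}$ is homotopic in its surface to an arc in the subsurface of $T_j$ and also to an arc in the subsurface of $T_{j+1}$. If $T_j \neq T_{j+1}$, the target subsurfaces are disjoint and lie in distinct portions of the diagram, forcing $\alpha_{j+1}$ to be inessential in the appropriate surface, which contradicts our minimality choice and the fact that $\alpha_{j+1}$ is a genuine edge of $\Gamma_{BR}$. Thus all $T_j$ coincide with a single twist region $T$ of $K_{B,i}$, and hence of $K_i$ (since the twist regions of $K_{B,i}$ either are, or are contained in, twist regions of $K_i$ by construction). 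Consequently $e_1$ and $e_2$ are each homotopic in $S_{B,i}$ to arcs in the blue subsurface of $T$.

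The main obstacle will be making the chaining argument rigorous, and in particular ensuring that intermediate arcs of opposite color (red arcs $\alpha_{j+1}$ appearing between blue rectangles) genuinely pin down the same twist region from both neighboring rectangles. A related subtlety is handling the red--red case when $i=0$, where Lemma~\ref{lemma:K2-0-prime} does not give primeness of $K_{B,0}$ directly but only a controlled failure through crossing disks, so one must mimic the induction on intersections with crossing disks used in the proof of Lemma~\ref{lemma:red-blue-bigons} to recover the twist-region conclusion.
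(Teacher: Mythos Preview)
Your overall decomposition into rectangles and the treatment of the red--blue and blue--blue cases are essentially what the paper does (via Lemma~\ref{lemma:htpc-noredbluerect} and Lemma~\ref{lemma:blue-twist-reduced} respectively). The genuine gap is in the red--red case.

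The curve $\gamma$ you build from a red--red rectangle meets the diagram of $K_{B,i}$ at two \emph{crossings}, with its two sides in the red surface. Lemma~\ref{lemma:K2-0-prime} concerns curves meeting the diagram twice in \emph{interiors of edges}, so it simply does not apply here, and the induction on crossing disks from Lemma~\ref{lemma:red-blue-bigons} does not salvage this. What would be needed is that $K_{B,i}$ is \emph{red} twist reduced, but the paper explicitly notes that this may fail (only blue twist reducedness is established in Lemma~\ref{lemma:blue-twist-reduced}). So your dichotomy ``either $\gamma$ bounds no crossings or it encloses a twist region'' is not justified.

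The paper handles the red--red case by a different mechanism: an induction along the chain of rectangles. Since $e_1$ and $e_2$ are blue, the chain must contain at least one red--blue rectangle, and Lemma~\ref{lemma:htpc-noredbluerect} shows its red side is homotopic in $R_i$ to a specific crossing arc. This is the base of the induction. For a red--red rectangle with sides $\gamma_1, \gamma_2$, assuming inductively that $\gamma_1$ is homotopic in $R_i$ to a crossing arc, one slides $\gamma$ across the strand of $K_{B,i}$ parallel to $\gamma_1$ to obtain a new closed curve $\alpha$ that now meets the diagram twice in edge interiors. One then checks that $\alpha$ avoids all crossing circles and crossing disks (using that $D$ has no blue vertices), so it lifts to a curve in the diagram of $K$ meeting $K$ twice; primality of $K$ then forces $\gamma_2$ to be homotopic in $R_i$ to a crossing arc as well. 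This inductive step, and not a direct appeal to primality or twist reducedness of $K_{B,i}$, is what is missing from your argument.
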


\begin{proof}
We will show that in this case, each arc of $\Gamma_{BR}$ is homotopic, in the surface $S_{B,i}$ or $R_i$ containing it, into a subsurface corresponding to a twist region of $K_i$. We will also show that successive arcs lie in subsurfaces corresponding to the same twist region. Now, two arcs, both lying in $S_{B,i}$ or both lying in $R_i$, and lying in subsurfaces corresponding to distinct twist regions, cannot be homotopic in those surfaces. Hence, we will deduce that $e_1$ and $e_2$ are homotopic in $S_{B,i}$ to the subsurface corresponding to the same twist region.

In the case where successive arcs are red and blue, Lemma~\ref{lemma:htpc-noredbluerect}, applied to the part of $D$ lying between these arcs, gives this claim.
Thus we need to show that when there are two adjacent blue arcs of $\Gamma_{BR}$, or two adjacent red arcs, the result still holds.  

In the blue case, we have a rectangle with two sides on blue, two sides on $\bdy N(K_{B,i})$, and interior disjoint from red and blue.  It must be mapped by $\phi$ entirely to one side of the projection plane.  Its edges on $N(K_{B,i})$ must run over crossings, else we could homotope $D$ to remove both intersections, contradicting our requirement that $\Gamma_{BR}$ be minimal. Hence it defines a simple closed curve $\gamma$ in the diagram meeting the knot in exactly two crossings.  Because $K_{B,i}$ is blue twist reduced by Lemma~\ref{lemma:blue-twist-reduced}, the curve $\gamma$ encircles a collection of red bigons.  Hence we may isotope the blue arcs on $S_{B,i}$, relative to their endpoints on $\bdy N(K_{B,i})$,
to lie in a neighborhood of the twist region of $K_{B,i}$ containing those red bigons.
Note this is a twist region of $K_i$ as well, since the blue surfaces lie on the outside of the twist region, so the result follows in this case.

In case that there are two adjacent red arcs in $D$, the argument is similar.  Again the rectangle must be mapped entirely to one side of the projection plane of $K_{B,i}$, and it defines a simple closed curve $\gamma$ in the diagram meeting the knot in exactly two crossings, with arcs $\gamma_1$ and $\gamma_2$ in the red surface running between these crossings.  The diagram of $K_{B,i}$ may not be red twist reduced, which means $\gamma$ is not required to bound blue bigons on one side, a priori.  However, by induction we may assume that one of the red arcs, say $\gamma_1$, is homotopic to a crossing arc in a twist region of $K$.

The arc $\gamma_1$ has endpoints on overstrands of distinct crossings. Since $\gamma_1$ is homotopic in $R_i$ to a crossing arc of a single crossing, one of the overstrands must run directly to the understrand of the other crossing, and the arc must be homotopic to that strand of the knot.  Then we may slide $\gamma$ in the diagram of $K_{B,i}$ to the opposite side of this strand, forming a closed curve $\alpha$ consisting of $\gamma_2$ and an arc parallel to the knot strand parallel to $\gamma_1$, and meeting the diagram twice.
This closed curve $\alpha$ must be disjoint from the crossing circles of $L_{B,i}$, because any such intersection would imply that $\gamma$ linked a crossing circle, and hence that $D$ contained a blue vertex corresponding to this crossing circle, which is a contradiction.

Moreover, $\alpha$ must be disjoint from all the crossing disks, for if it were to intersect a crossing disk, it would do so twice, once in the blue surface and once in the red. Then $\gamma$ would also intersect this crossing disk twice, once in $\gamma_1$, and once in $\gamma_2$. We may then form a simple closed curve in the diagram for $L_{B,i}$ that starts at a crossing where $\gamma_1$ and $\gamma_2$ meet, runs along $\gamma_1$ as far as the crossing disk, runs along the crossing disk to $\gamma_2$ and then back along $\gamma_2$ to the original crossing. By choosing the crossing disk appropriately, we may ensure that this curve intersects no other crossing disks. After a small isotopy, it can then be made disjoint from the crossing disks, and so it then specifies a simple closed curve in the diagram for $K$ which is disjoint from $K$ except at a single crossing. This implies that $K$ was not prime, which is a contradiction.

Hence, $\alpha$ corresponds to a simple closed curve in the diagram of $K$ that hits $K$ twice. By the primality of $K$, it bounds a region of the diagram with no crossings. Therefore, $\gamma_2$ is homotopic in $R_i$ to a crossing arc, as required.
\end{proof}

It follows from Lemma~\ref{lemma:htpc-northsouth-red} that if $D$ meets the red surface in vertical arcs, then Theorem~\ref{thm:htpcarcs} holds.  So we assume that $D$ does not meet the red surface in vertical arcs.
Then the blue surface cuts $D$ into rectangles with north and south sides on $K$ and east and west sides on $S_{B,i}$, and interiors disjoint from blue.  Because each rectangle is disjoint from all vertices (crossing circles), it can be embedded in $S^3\setminus K_{B,i}$.  The embedding will put east and west sides of the boundary of the rectangle on the blue checkerboard surface of $K_{B,i}$, north and south sides on the link $\partial N(K_{B,i})$, and will map the interior into the complement of the blue checkerboard surface in $S^3\setminus {\rm int}(N(K_{B,i}))$.  A rectangle embedded in $S^3\setminus {\rm int}(N(K_{B,i}))$ in this way is a well--known object: it is an \emph{essential product disk}.

\begin{define}
An \emph{essential product disk} for the blue checkerboard surface $B$ of a knot $K$ is an essential disk properly embedded in $S^3\setminus {\rm int}(N(B))$, whose boundary is a rectangle with two opposite sides on $N(B)$ and two opposite sides on $\bdy N(K)$.
\end{define}

Essential product disks have been studied in many other contexts (for example, to identify the guts of a manifold \cite{gabai-kazez, lackenby:volume-alt, fkp:gutsjp}).

To conclude the proof of Theorem~\ref{thm:htpcarcs}, we will consider essential product disks for $K_{B,i}$, and show the boundary of such a disk gives two arcs in the neighborhood of a twist region in the diagram of $K_i$.

\begin{lemma}\label{lemma:htpc-epd}
Let $e_1$ and $e_2$ denote the boundary arcs on the blue surface in an essential product disk for the blue checkerboard surface of $K_{B,i}$.  Then there is a subsurface associated with a twist region of the diagram of $K_i$, and arcs $a_1$ and $a_2$ in that subsurface, such that $e_1$ is homotopic to in $S_{B,i}$ to $a_1$, and $e_2$ is homotopic in $S_{B,i}$ to $a_2$.
\end{lemma}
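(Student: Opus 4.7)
The plan is to use the blue twist reduced property of $K_{B,i}$ (Lemma~\ref{lemma:blue-twist-reduced}) to identify a twist region of $K_{B,i}$ from the essential product disk $E$, and then translate to the corresponding twist region of $K_i$. Denote the two boundary arcs of $E$ on $\partial N(K_{B,i})$ by $\alpha_1, \alpha_2$, so that $\partial E$ is the concatenation $e_1 \cup \alpha_2 \cup e_2 \cup \alpha_1$.

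First I would reduce to the case that the interior of $E$ is disjoint from the red checkerboard surface of $K_{B,i}$. The intersection of $E$ with the red surface consists of embedded arcs and simple closed curves. Simple closed curves bound disks in red regions and can be removed by a homotopy. An arc of intersection with both endpoints on $e_1 \cup e_2$ would produce a red--blue bigon in an outermost position, which is ruled out by Lemma~\ref{lemma:red-blue-bigons} (after handling possible crossing disk intersections via the primality argument of Lemma~\ref{lemma:K2-0-prime}). An outermost arc with both endpoints on $\alpha_1 \cup \alpha_2$ gives a sub-rectangle with one red side, one blue side, and two sides on $\partial N(K_{B,i})$; Lemma~\ref{lemma:htpc-noredbluerect} then identifies the blue and red sides with a common crossing arc, reducing to a simpler EPD. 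Arcs from a blue side to a side on $\partial N(K_{B,i})$ give red--blue--corner triangles as in the proof of Lemma~\ref{lemma:htpc-opparcs}, and are eliminated by analogous moves.

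Second, with $E$ disjoint from both checkerboard surfaces in its interior, it lies entirely on one side of the plane of projection; say above. Project $\partial E$ to the plane to obtain a closed curve $\gamma$: the arcs $e_1, e_2$ project into blue regions with endpoints at crossings, while $\alpha_1, \alpha_2$ project to arcs along the diagram. Next I would argue that after isotopy of $E$, each of $\alpha_1$ and $\alpha_2$ passes over exactly one crossing of $K_{B,i}$. If either passes over none, its endpoints lie on a common edge in a common blue region, and $e_1$ would be homotopic to $e_2$ across that region, contradicting essentiality. If either passes over more than one, primality of $K_{B,i}$ (Lemma~\ref{lemma:K2-0-prime}) lets us split $E$ along an arc lying on a red bigon, giving a simpler EPD to analyze. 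Hence $\gamma$ is a simple closed curve meeting the diagram of $K_{B,i}$ at exactly two crossings with sides on the blue checkerboard surface, and Lemma~\ref{lemma:blue-twist-reduced} implies that $\gamma$ bounds a string of red bigons on one side, forming a twist region $T$ of $K_{B,i}$.

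Third, I would identify $T$ with the corresponding twist region of $K_i$. Both $K_{B,i}$ and $K_i$ are obtained from $K$ by removing crossings from twist regions in pairs, so there is a natural bijection between their twist regions preserving position in the diagram; let $T'$ be the twist region of $K_i$ corresponding to $T$. The arcs $e_1, e_2$ lie in the two blue regions adjacent to $T$, each of which contains the part of $S_{B,i}$ forming the subsurface associated with $T'$. A short homotopy in $S_{B,i}$ then pushes $e_1, e_2$ into this subsurface to give the arcs $a_1, a_2$. The main obstacle will be the second step: arranging that $\gamma$ meets the diagram in exactly two crossings requires carefully maintaining essentiality of $E$ through each isotopy, and the case of $\alpha_i$ passing over multiple crossings in particular needs the primality arguments of Lemma~\ref{lemma:K2-0-prime} to split $E$ into essential sub-EPDs.
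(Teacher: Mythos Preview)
There is a genuine gap in Step~1. Your claim that ``an arc of intersection with both endpoints on $e_1 \cup e_2$ would produce a red--blue bigon in an outermost position'' is only correct when both endpoints lie on the \emph{same} blue arc. A red arc running from $e_1$ to $e_2$ does not cut off any bigon, and none of the moves you list (Lemmas~\ref{lemma:red-blue-bigons}, \ref{lemma:htpc-noredbluerect}, \ref{lemma:htpc-opparcs}) removes it. But in the setting where Lemma~\ref{lemma:htpc-epd} is invoked, the red arcs of $\Gamma_{BR}$ run east--west in $D$, so inside each EPD~$E$ they run precisely from $e_1$ to $e_2$. In other words, the case you have not handled is the only case that actually occurs, and it is the main content of the lemma.

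The paper deals with this case by a different mechanism: it keeps the horizontal red arcs, which cut $E$ into a stack of sub-rectangles, and then uses the Menasco polyhedral decomposition of the alternating link complement. Starting from the top rectangle (which has one side on $\partial N(K_{B,i})$, hence cuts off a single ideal vertex), it applies \cite[Lemma~7]{lackenby:volume-alt} to propagate inductively down the stack, showing every rectangle cuts off a single vertex in each of its red faces. Only then does blue twist reducedness (Lemma~\ref{lemma:blue-twist-reduced}) pin everything to a single twist region. Your Step~2 argument, by contrast, only applies once $E$ is already disjoint from red, and the splitting move you suggest there (``split $E$ along an arc lying on a red bigon'') contradicts that very hypothesis.
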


\begin{proof}
Let $E$ denote the essential product disk.  If $E$ is disjoint from the red checkerboard surface for $K_{B,i}$, then an argument as in the proof of Lemma~\ref{lemma:htpc-northsouth-red}, using the fact that the diagram of $K_{B,i}$ is blue twist reduced, implies the blue edges of $E$ are homotopic to arcs in a subsurface associated to a single twist region of $K_i$.

So suppose the essential product disk $E$ meets the red surface. By Lemma~\ref{lemma:htpc-opparcs}, we may assume intersections with the red surface do not run from a blue edge to $N(K_{B,i})$.  By Lemma~\ref{lemma:htpc-noredbluerect}, if intersections with red have both endpoints on $N(K_{B,i})$, then the blue edges of $E$ are both homotopic to the same crossing arc, hence can be homotoped to lie in the same twist region of the diagram of $K_i$.  Hence we assume the red surface meets $E$ in a sequence of horizontal arcs, cutting it into rectangles.

It is well known that the checkerboard surfaces of a connected alternating link diagram cut the complement into two identical 4--valent ideal polyhedra.  These two polyhedra have edges corresponding to edges of the diagram of the knot, and ideal vertices corresponding to vertices of the knot.  The knot complement is obtained by gluing the same red faces of the two polyhedra by a single clockwise rotation.  Blue faces are glued by a counter clockwise rotation.  See, for example, \cite{menasco:polyhedra} or \cite{lackenby:volume-alt}.  

Our sequence of rectangles making up $E$ has boundary components which lie in the checkerboard surfaces of $K_{B,i}$.  The sequence alternates lying in one polyhedron and then the other, but their boundaries can be sketched into the diagram graph of the knot $K_{B,i}$, using the identification of edges and vertices of the polyhedron with edges and vertices of the diagram.  

Consider the rectangle at the north of $E$, denote it by $E_1$, and the rectangle glued just under it, $E_2$.  The rectangle $E_1$ at the north has one side running along $N(K_{B,i})$, which means it has a side cutting through an ideal vertex of a polyhedron.  Push this off the ideal vertex slightly, to cut off a single vertex of a red face.  Now consider its side of $E_1$ in the other red face.  This is glued by a clockwise turn to a side of $E_2$.  Impose the image of $E_2$ under this clockwise turn on the polyhedron containing $E_1$, and
denote it by $\bar{E}_2$.  By \cite[Lemma~7]{lackenby:volume-alt} (see also \cite[Lemma~4.9]{fkp:gutsjp}), if these sides of $E_1$ and $\bar{E}_2$ intersect in this red face, then they must intersect in two red faces.  But the side of $E_1$ cuts off a single vertex in its other red face, so it can be homotoped such that it does not intersect another side of $\bar{E}_2$.  Hence $E_1$ and $\bar{E}_2$, and thus $E_2$, must each cut off a single vertex in the red face in which they are glued.  The same argument shows that $E_2$ and $E_3$ also both cut off a single vertex in the red face in which they are glued.  By
induction, each rectangle making up $E$ has sides in red faces cutting off a single ideal vertex.

These rectangles map to regions of the diagram meeting the diagram exactly four times, adjacent to two crossings.  We can push the sides in the red faces onto these crossings.  Because the diagram is blue twist reduced by Lemma~\ref{lemma:blue-twist-reduced}, each such rectangle bounds a (possibly empty) string of red bigons.  Hence their boundaries all lie in a neighborhood of the same twist region of the diagram of $K_{B,i}$. Note in this case, the twist region must also be a twist region of $K_i$, since the blue surface is outside the twist region. 
\end{proof}

We can put this together to finish proof of Theorem~\ref{thm:htpcarcs}.

\begin{proof}[Proof of Theorem~\ref{thm:htpcarcs}]
Lemma~\ref{lemma:htpc-graph} implies that two edges $e_1$ and $e_2$ that are homotopic in $S^3\setminus K$ give rise to a mapping of a disk $\phi\co D\to S^3\setminus K$ and a graph $\Gamma_B$.  Provided $N_\tw \geq 72$ if $i=0$, and $N_\tw \geq 121$ if $i=2$, the graph $\Gamma_B$ contains no blue vertices, by Lemma~\ref{lemma:htpc-novertices}.  By Lemmas~\ref{lemma:htpc-straightarcs} and~\ref{lemma:htpc-opparcs}, blue edges of $\Gamma_{BR}$ run north to south, and red edges either all run north to south or all run east to west.  If red run north to south, then Lemma~\ref{lemma:htpc-northsouth-red} implies that $e_1$ and $e_2$ are homotopic to arcs in the same subsurface associated with a twist region.  If red run east to west, then Lemma~\ref{lemma:htpc-epd} implies they are homotopic to arcs in the same subsurface associated with a twist region of $K_i$.
\end{proof}

We finish with a result for regular checkerboard surfaces that follows almost immediately from the previous work.

\begin{prop}\label{prop:htpc-checkerboard}
Let $S$ denote the disjoint union of the two checkerboard surfaces of a prime, twist reduced alternating diagram of a hyperbolic knot $K$.  Suppose $a_1$ and $a_2$ are disjoint essential embedded arcs in $S$ that are not homotopic in $S$, but are homotopic in $S^3\setminus K$ after including $S$ into $S^3\setminus K$.  Then either $a_1$ and $a_2$ are isotopic in $S$ to crossing arcs in the same twist region of the diagram, or they both lie on the same checkerboard surface and both are isotopic in that checkerboard surface to arcs in the same subsurface associated with some twist region of $K$.
\end{prop}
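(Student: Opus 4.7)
The plan is to adapt the proof of Theorem~\ref{thm:htpcarcs}. We are free to choose $N_\tw$ as large as we wish, so fix $N_\tw$ both at least $121$ and larger than the number of crossings in every twist region of $K$. Then no crossing circles are added, $K_i=K$, and the twisted checkerboard surfaces $S_{B,i}$ and $S_{R,i}$ coincide with the ordinary checkerboard surfaces $B$ and $R$. If $a_1$ and $a_2$ both lie on the same checkerboard surface, Theorem~\ref{thm:htpcarcs} applies directly and gives the second alternative.

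Suppose instead that $a_1\subset B$ and $a_2\subset R$. Exactly as in Lemma~\ref{lemma:htpc-graph}, the homotopy between their images in $S^3\setminus K$ produces a map $\phi\co D\to S^3\setminus{\rm int}(N(K))$, with $\partial D$ a square whose west side maps to $a_1$, east to $a_2$, and north and south to $\partial N(K)$. Choose $\phi$ of minimal complexity, and set $\Gamma_{BR}=\phi^{-1}(B\cup R)$. Because no crossing circles are present, $\Gamma_{BR}$ has no interior vertices. The arguments of Lemmas~\ref{lemma:scc}, \ref{lemma:red-blue-bigons}, \ref{lemma:htpc-straightarcs}, and~\ref{lemma:htpc-opparcs} go through unchanged, invoking the classical $\pi_1$- and boundary-$\pi_1$-injectivity of the checkerboard surfaces of an alternating link. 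In particular every interior edge of $\Gamma_{BR}$ is an arc running north to south; the possibility of red arcs running east to west is excluded here, because such an arc would require an endpoint on the blue west side of $\partial D$.

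Hence $\Gamma_{BR}$ cuts $D$ into a horizontal stack of vertical subrectangles, each with north and south sides on $\partial N(K)$ and vertical sides labeled blue or red, with the westmost side $a_1\subset B$ and the eastmost $a_2\subset R$. For each opposite-color subrectangle, Lemma~\ref{lemma:htpc-noredbluerect} identifies both vertical sides, via isotopies in $B$ and $R$ respectively, with a common crossing arc. For each same-color subrectangle, the blue-twist-reducedness of the diagram (Lemma~\ref{lemma:blue-twist-reduced}) together with the argument of Lemma~\ref{lemma:htpc-northsouth-red} places both sides into subsurfaces of a common twist region. Chaining these identifications from west to east determines a single twist region of $K$ such that $a_1$ and $a_2$ are each isotopic in their own checkerboard surface to crossing arcs lying in that twist region, giving the first alternative of the conclusion.

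The main obstacle will be verifying that the chain of identifications is globally consistent: the crossing arc singled out by any opposite-color rectangle and the twist region singled out by any same-color rectangle must all be forced into a single twist region of the diagram. This follows from the fact that adjacent subrectangles share a portion of their north and south sides on $\partial N(K)$, so the twist regions forced by adjacent rectangles agree, and the crossing arcs produced by opposite-color rectangles lie in the twist region containing the shared strands.
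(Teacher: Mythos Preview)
Your approach is essentially the paper's own: choose $N_\tw$ large enough that no crossing circles appear, invoke Theorem~\ref{thm:htpcarcs} for the same-colour case, and in the mixed-colour case analyse the north--south rectangles cut out by $\Gamma_{BR}$.  Two details need tightening.

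First, your explanation of why no arc runs east--west is slightly off.  The point is not that a red arc would need an endpoint on the blue west side (red arcs \emph{can} meet the blue west side at crossings); rather, since the red surface is embedded, $\phi^{-1}(R)$ is a $1$--submanifold of $D$ whose components are disjoint, so no interior red arc can end on the east side (which is already a red component), and symmetrically no interior blue arc can end on the west side.  Combined with Lemmas~\ref{lemma:red-blue-bigons} and~\ref{lemma:htpc-opparcs}, this forces all interior arcs north--south.

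Second, your chaining argument has a gap.  Adjacent rectangles do \emph{not} share a portion of their north or south sides; they share only the common vertical arc $\beta$.  More importantly, citing Lemma~\ref{lemma:htpc-northsouth-red} for a same-colour rectangle gives only that both sides lie in the subsurface of a common twist region, not that both are homotopic to the \emph{same crossing arc}.  You need the stronger statement to reach the first alternative of the proposition.  The paper supplies exactly this: since the colours differ at the two ends of $D$, there is at least one opposite-colour rectangle, so by induction one side of each same-colour rectangle may be assumed homotopic to a crossing arc; then the twist-reducedness argument (two arcs enclosing a string of bigons, one of which is already a crossing arc) forces the enclosed region to be a single crossing, and the other side is homotopic to that same crossing arc.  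Without this step, you have only placed $a_1$ and $a_2$ into twist-region subsurfaces, which for the non-disk colour is not enough to conclude they are crossing arcs.
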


\begin{proof}
Given a prime, twist reduced alternating diagram $K$, let $C$ denote the maximal number of crossings in any twist region of $K$.  Let $N_\tw$ be the maximum of $121$ and $C$.
Then the diagram $K_2$ obtained from that of $K$ by removing pairs of crossings from each twist region of $K$ with more than $N_\tw$ crossings is identical to the diagram of $K$.  Moreover, the diagrams of $K_i$ and $K_{B,i}$ are also identical, and the surfaces $R_i$ and $S_{B,i}$ are identical to the checkerboard surfaces of $K$.  We therefore apply the above results to these knots and these surfaces.  In particular, Theorem~\ref{thm:htpcarcs} implies that if $e_1$ and $e_2$ lie on the same surface, either red or blue, then the two arcs are homotopic in that surface into the same subsurface associated with a twist region.  Thus we only need to finish the case that $e_1$ and $e_2$ lie on different surfaces, say $e_1$ on blue and $e_2$ on red.

First, modify Lemma~\ref{lemma:htpc-graph} in a straightforward way to allow these two arcs to lie on distinct surfaces.  We obtain a graph $\Gamma_{BR}$, which we assume, as usual, has minimal complexity.  There will be no vertices at all in the graph $\Gamma_{BR}$, since there are no crossing circles added to adjust the diagram of $K$, so Lemma~\ref{lemma:htpc-novertices} trivially holds.  We may argue just as in Lemma~\ref{lemma:htpc-straightarcs} that no red or blue arc has both endpoints on the same side of $D$ (north, south, east, or west).  By considering the same triangles of Lemma~\ref{lemma:htpc-opparcs}, or those with red and blue switched, we may argue that red and blue arcs either run north to south, or east to west, just as in the conclusion of that lemma.  However, because we have two distinct colors on east and west, and because the red and blue surfaces are embedded, no arc may run east to west.  Thus all arcs run north to south, and they cut $D$ into rectangles with two sides on $\bdy N(K)$, and two sides on red or blue, and interior disjoint from these surfaces.

If there are only the two original edges, and $\Gamma_{BR}$ is disjoint from the red and blue surfaces otherwise, then we are done by Lemma~\ref{lemma:htpc-noredbluerect}.  Similarly, if the edges of $\Gamma_{BR}$ alternate red and blue, then each subrectangle gives a homotopy to the same crossing arc.  Since $e_1$ and $e_2$ are on different surfaces, there must be at least one subrectangle with sides on different surfaces, so Lemma~\ref{lemma:htpc-noredbluerect} implies that the two arcs of this subrectangle are homotopic to a crossing arc, with the homotopy taken within the respective surface.  

Now suppose there is a subrectangle with both sides on the same (red or blue) surface.  By induction, we may assume that one of the arcs is homotopic to a crossing arc in its surface.  Arguing as in Lemma~\ref{lemma:htpc-northsouth-red}, the two arcs on either side of the rectangle will be homotopic to arcs which together encircle a twist region.  Since one of those arcs is homotopic to a crossing arc, that arc runs from one side of a single crossing of that twist region to the other.  Hence both arcs encircle a trivial twist region, consisting of one crossing, and the other arc is also homotopic to the same crossing arc.  Putting this all together implies the result.  
\end{proof}

\bibliographystyle{amsplain}
\bibliography{references}

\end{document}